\numberwithin{equation}{section}
\theoremstyle{plain}
        \newtheorem{theorem}{Theorem}[section]
        \newtheorem{lemma}[theorem]{Lemma}
        \newtheorem{proposition}[theorem]{Proposition}
        \newtheorem{corollary}[theorem]{Corollary}
        \newtheorem{conjecture}[theorem]{Conjecture}
        \newtheorem{propdef}[theorem]{Proposition and Definition}
\theoremstyle{definition}
        \newtheorem{definition}[theorem]{Definition}
        \newtheorem{remark}[theorem]{Remark}
        \newtheorem{example}{Example}[section]
\DeclareMathAlphabet{\mathboondoxcal}{U}{BOONDOX-cal}{m}{n}
\SetMathAlphabet{\mathboondoxcal}{bold}{U}{BOONDOX-cal}{b}{n}
\DeclareMathAlphabet{\mathbboondoxcal} {U}{BOONDOX-cal}{b}{n}
\newcommand{\orbO}{\mathboondoxcal{O}}
\newcommand{\smorbO}{{\scriptstyle\mathboondoxcal{O}}}
\newcommand{\smorbQ}{{\scriptstyle\mathboondoxcal{Q}}}
\newcommand{\N}{{\mathbb N}}
\newcommand{\C}{{\mathbb C}}
\newcommand{\R}{{\mathbb R}}
\newcommand{\sfG}{\mathsf{G}}
\newcommand{\sfH}{\mathsf{H}}
\newcommand{\End}{\operatorname{End}}
\newcommand{\Tot}{\operatorname{Tot}}
\newcommand{\supp}{\operatorname{supp}}
\newcommand{\id}{\operatorname{id}}
\newcommand{\prtwo}{\operatorname{pr}_2}
\newcommand{\colim}{\operatornamewithlimits{colim}}
\newcommand{\calD}{\mathcal{D}}
\newcommand{\calA}{\mathcal{A}}
\newcommand{\wcalA}{\widetilde{\mathcal{A}}}
\newcommand{\wA}{\widetilde{A}}
\newcommand{\calB}{\mathcal{B}}
\newcommand{\calL}{\mathcal{L}}
\newcommand{\calJ}{\mathcal{J}}
\newcommand{\calS}{\mathcal{S}}
\newcommand{\calC}{\mathcal{C}}
\newcommand{\calO}{\mathcal{O}}
\newcommand{\calK}{\mathcal{K}}
\newcommand{\scrB}{\mathscr{B}}
\newcommand{\scrC}{\mathscr{C}}
\newcommand{\scrD}{\mathscr{D}}
\newcommand{\frakg}{\mathfrak{g}}
\newcommand{\frakk}{\mathfrak{k}}
\newcommand{\frakm}{\mathfrak{m}}
\newcommand{\Ad}{\operatorname{Ad}}
\newcommand{\sheafC}{\hat{\mathscr{C}}}
\newcommand{\HHsheaf}{\mathscr{H}\mathscr{H}}
\newcommand{\im}{\operatorname{im}}
\newcommand{\hatotimes}{\hat{\otimes}}
\newcommand{\ltwist}[2]{{^{#2}}{#1}}
\newcommand{\rOmega}[2]{\Omega^{#1}_{\textup{rel},#2}}
\newcommand{\hrOmega}[2]{\Omega^{#1}_{\textup{hrel},#2}}
\newcommand{\brOmega}[2]{\Omega^{#1}_{\textup{brel},#2}}
\newcommand{\tildeU}{\widetilde{U}}
\newcommand{\doublegroupoid}[4]{\xymatrix{#1  \ar@<2pt>[d] \ar@<-2pt>[d] & #2 \ar@<2pt>[l] 
\ar@<-2pt>[l] \ar@<2pt>[d] \ar@<-2pt>[d] \\ #3  & #4 \ar@<2pt>[l] \ar@<-2pt>[l]}}
\title{On the Hochschild homology of convolution algebras of proper Lie groupoids}
\author{M.J.~Pflaum, H. Posthuma,~\textrm{and} X.~Tang}
\address{\newline
Markus J. Pflaum, {\tt markus.pflaum@colorado.edu}\newline
         \indent {\rm Department of Mathematics, University of Colorado,
         Boulder, USA}
         \newline
        Hessel Posthuma, {\tt H.B.Posthuma@uva.nl}\newline
         \indent {\rm Korteweg-de Vries Institute for Mathematics,
        University of Amsterdam,
         The Netherlands} 
         \newline
        Xiang Tang, {\tt xtang@math.wustl.edu}   \newline
         \indent {\rm  Department of Mathematics, Washington University,
         St.~Louis, USA}}
\begin{document}
\maketitle
\begin{abstract}
We study the Hochschild homology of the convolution algebra of a proper Lie groupoid by introducing a convolution sheaf over the space of orbits. We develop a localization result for the associated Hochschild homology sheaf, and prove that the Hochschild homology sheaf at each stalk is quasi-isomorphic to the stalk at the origin of the Hochschild homology of the convolution algebra of its linearization, which is the transformation groupoid of a  linear action of a compact isotropy group on a vector space.  We then explain Brylinski's ansatz  to compute the Hochschild homology of the transformation groupoid of a compact group action on a manifold. We verify Brylinski's conjecture for the case of smooth circle actions that the Hochschild homology is given by basic relative forms on the associated inertia space.
\end{abstract}

\section*{Introduction}
\label{Sec:Intro}
Let $M$ be a smooth manifold, and $\calC^\infty(M)$ be the algebra of smooth functions on $M$. Connes' version  \cite{ConNDG} of the seminal Hochschild-Kostant-Rosenberg theorem \cite{HocKosRosDFRAA} states that the Hochschild homology of $\calC^\infty(M)$ is isomorphic  to the graded vector space of differential forms on $M$. In this paper, we aim to establish tools for a general Hochschild-Kostant-Rosenberg type theorem for proper Lie groupoids. 

Recall that a Lie groupoid $\sfG\rightrightarrows M$ is proper if the map $\sfG \to M\times M$, $g\mapsto (s(g), t(g))$ is a proper map, where $s(g)$ and $t(g)$ are the source and target of $g\in \sfG$. When the source and target maps are both local diffeomorphisms, the groupoid $\sfG\rightrightarrows M$ is called \'etale. In efforts by many authors, e.g. \cite{BroDavNis, BryNis,  ConNG, CrainicCyc,FeiTsy, Ponge, Wassermann}, the Hochschild and cyclic homology theory of \'etale Lie groupoids has been unvealed. The Hochschild and cyclic homology of a proper \'etale Lie groupoid
was explicitly computed by Brylinksi and Nistor \cite{BryNis}. Let us explain this result in the case of a finite group $\Gamma$ action on a smooth manifold $M$, the transformation groupoid $\Gamma\ltimes M\rightrightarrows M$ for a finite group 
$\Gamma$ action on $M$. 

The convolution groupoid algebra associated to the transformation groupoid $\Gamma\ltimes M\rightrightarrows M$ is the crossed product algebra $\calC^\infty(M)\rtimes \Gamma$, which consists of $\calC^\infty(M)$-valued functions on $\Gamma$ equipped with the convolution product, e.g.\ for $f,g\in \calC^\infty(M)\rtimes \Gamma$, 
\[
f\ast g(\gamma)=\sum_{\alpha\beta=\gamma} \beta^*\big(f(\alpha)\big)\cdot  g(\beta). 
\]
The algebra $\calC^\infty(M)\rtimes \Gamma$ is naturally a Fr\'echet algebra. The Hochschild homology of the algebra $\calC^\infty(M)\rtimes \Gamma$ as a bornological algebra is given by the following formula the  proof of which is recalled in Corollary \ref{cor:finitegroup}.
\begin{equation*}\label{eq:finitegroup}
HH_\bullet\big( \calC^\infty(M)\rtimes \Gamma  \big) \cong  \left(\bigoplus_{\gamma \in \Gamma} \Omega^\bullet (M^\gamma) \right)^\Gamma,
\end{equation*}
where $M^\gamma$ is the $\gamma$-fixed point submanifold, and $\Gamma$ acts on the disjoint union  $\coprod_{\gamma \in \Gamma} M^\gamma$ by $\gamma'(\gamma,x)=(\gamma' \gamma(\gamma')^{-1}, \gamma' x)$. Recall that the so called loop space $\Lambda_0(\Gamma, M)$ of the transformation groupoid $\Gamma \ltimes M\rightrightarrows M$ is defined as 
\[
\Lambda_0(\Gamma, M):=\coprod_{\gamma \in \Gamma} M^\gamma,
\]
equipped with the same action of $\Gamma$ as above. In other words, the Hochschild homology of $\calC^\infty(M)\rtimes \Gamma$ is the space of differential forms on the quotient $\Lambda_0(\Gamma, M)/\Gamma$, which is called the associated inertia orbifold.  We would like to remark that just as the classical Hochschild-Kostant-Rosenberg theorem, the above identification can be realized as  an isomorphism of sheaves over the quotient $M/\Gamma$.  This makes Hochschild and cylic homology of $\calC^\infty(M)\rtimes \Gamma$ the right object to work with in the study of orbifold index theory, see e.g.\ \cite{PflPosTanCyc}.

Our goal in this project is to extend the study of Hochschild homology of proper \'etale groupoids to general proper Lie groupoids, which are natural generalizations of transformation groupoids for proper Lie group actions. The key new challenge from the study of (proper) \'etale groupoids is that orbits of a general proper Lie groupoid have different dimensions. This turns the orbit space of a proper Lie groupoid into a stratified space with a significantly more complicated singularity structure than an orbifold.  

Our main result is to introduce a sheaf $ \HHsheaf_\bullet$ on the orbit space $X:=M/\sfG$ of a proper Lie groupoid $\sfG\rightrightarrows M$, whose space of global sections computes the Hochschild homology of the convolution algebra of $\sfG$. To achieve this, we start with introducing a sheaf $\calA$ of convolution algebras on the orbit space $X$ in Definition \ref{propdefn:sheaf}. Using the localization method from \cite{BraPflHAWFSS} we introduce the Hochschild homology sheaf $\HHsheaf_\bullet(\calA)$ for $\calA$ as a sheaf of bornological algebras over $X$. Moreover, we prove the following sheafification theorem for the Hochschild homology of the convolution algebra $\calA$ of the groupoid $\sfG$. \vspace{2mm}

\noindent\textbf{Theorem \ref{thm:hochschild-homology-global-sections}.}
\textit{Let $\calA$ be the convolution sheaf of a proper Lie groupoid $\sfG$. 
  Then the natural map in Hochschild homology
  \[
    HH_\bullet \big( \calA (X)\big) \to \HHsheaf_\bullet (\calA) (X)
    = \Gamma \big(X,  \HHsheaf_\bullet (\calA) \big) 
  \]
  is an isomorphism.}\vspace{2mm}

To determine the homology sheaf $\HHsheaf_\bullet(\calA)$, we study its stalk at an orbit $\calO\in X$. Using the linearization result of proper Lie groupoid developed by Weinstein and Zung (c.f. \cite{CraStrLTPLG, delHFerRMLG,PflPosTanGOSPLG, WeiLRPG, ZunPGMMLAC}), we obtain a linear model of the stalk $\HHsheaf_{\bullet, \calO}(\calA)$ in Proposition \ref{prop:local-model} as a linear compact group action on a vector space.  This result leads us to focus on the  Hochschild homology of the convolution algebra
$\calC^\infty (M) \rtimes G $ associated to a compact Lie group action on a smooth manifold $M$ in the second part of this article. 

The Hochschild homology of compact Lie group actions was studied by several authors, e.g. \cite{BloGetECHED}, \cite{BryAAGAH,BryCHET}. Brylinski \cite{BryAAGAH,BryCHET} proposed a geometric model of basic relative forms along the idea of the Grauert-Grothendieck forms to compute the Hochschild homology. However, a major part of the proof is missing in  \cite{BryAAGAH,BryCHET}. We decided to turn this result into the main conjecture of this paper in Section \ref{sec:actioncase}. \vspace{2mm}

\noindent\textbf{Conjecture \ref{BryConjBHF}.}
\textit{The Hochschild homology of the crossed product algebra $\calC^\infty(M)\rtimes G$ associated to a compact Lie group
  action on a smooth manifold $M$ is isomorphic to the space of basic relative forms on the loop space
  $\Lambda_0 (G\ltimes M) =\{(g,p)\in G\times M\mid gp = p\}$.}\vspace{2mm}

Block and Getzler \cite{BloGetECHED} introduced an interesting Cartan model for the cyclic homology of the crossed product algebra $\calC^\infty(M)\rtimes G$. However, the Block-Getzler model is not a sheaf on the orbit space $M/G$, but a sheaf on the space of conjugacy classes of $G$. This makes it impossible to localize the sheaf to an orbit of the group action in the orbit space. It is worth pointing out that the truncation of the Block-Getzler Cartan model at $E^1$-page provides a complex to compute the Hochschild homology of $\calC^\infty(M)\rtimes G$. However, the differential $\iota$ introduced in \cite[Section 1]{BloGetECHED} is nontrivial, and  makes it challenging to explicitly identify the Hochschild homology of $\calC^\infty(M)\rtimes G$ as the space of basic relative forms. We refer the reader to Remark \ref{rem:block-getzler} for a more detail discussion about the Block-Getzler model. 

In the last part of this paper, we prove Conjecture \ref{BryConjBHF} in the case where the group $G$  is $S^1$; see Proposition \ref{prop:equivariant-koszul}.
Our proof relies on a careful study of the stratification of the loop space $\Lambda_0(S^1\ltimes M)\subset S^1\times M$. The crucial property we use in our computation is that at its singular point, $\Lambda_0(S^1\ltimes M)$ locally looks like the union of the hyperplane $\{x_0=0\}$ and the line $\{x_1=\cdots =x_n=0\}$ in $\mathbb{R}^{n+1}$, which are transverse to each other. The loop space $\Lambda_0(G\ltimes M)$ for a general
$G$-manifold $M$ is much more complicated to describe. This has stopped us from extending our result for $S^1$-actions to more general compact group actions. It is foreseeable that some combinatorial structures describing the stratifications of the loop spaces and real algebraic geometry tools characterizing basic relative forms on the loop spaces are needed to solve Conjecture \ref{BryConjBHF} in full generality. We plan to come back to this problem in the near future. 

As is mentioned above, the study of Hochschild and cyclic homology of the convolution algebra of a proper Lie groupoid is closely related to the study of the groupoid index theory, e.g. \cite{PflPosTanCyc}, \cite{PflPosTanLLIT}. We expect that the study of the Hochschild homology and the generalized Hochschild-Kostant-Rosenberg theorem will eventually lead to the correct definition of basic relative forms for proper Lie groupoids, where the right index theorem will be established. \\ 

\noindent{\bf Acknowledgements}: We would like to thank Marius Crainic, Ralf Meyer, Rapha\"el Ponge and Michael Puschnigg for inspiring discussions. Pflaum's research is partially supported by Simons Foundation award number 359389
and NSF award OAC 1934725. Tang's research is partially supported by the NSF awards  DMS 1800666, 1952551. 

%
%
\section{The convolution sheaf of a proper Lie groupoid}
Throughout this paper, $\sfG\rightrightarrows M$ denotes a Lie groupoid over a 
base manifold $M$. Elements of $M$ are called points of the groupoid, those of $\sfG$ its arrows.
The symbols $s,t:\sfG\to M$ denote the source and target map, respectively, and 
$u :M \rightarrow \sfG$ the unit map. 
By definition of a Lie groupoid, $s$ and $t$ are assumed to be smooth submersions. This implies that
the space of $k$-tuples of composable arrows
\[
  \sfG_k:=\{(g_1,\ldots,g_k)\in\sfG^k \mid s(g_i)=t(g_{i+1}) 
  \text{ for $i=1,\ldots,k-1$}\}
\]
is a smooth manifold, and multiplication of arrows 
\[
  m: \sfG_2\to\sfG, \: (g_1,g_2) \mapsto g_1 \, g_2
\]
a smooth map.

If $g\in \sfG$ is an arrow with $s(g)=x $ and $t(g)=y$, 
we denote such an arrow sometimes by $g:y \leftarrow x$, and write $\sfG (y,x)$ for the space of arrows 
with source $x$ and target $y$. 
The $s$-fiber over $x$, i.e.~the manifold $s^{-1} (x)$, will be denoted 
by $\sfG ( - , x)$, the $t$-fiber  over  $y$ by $\sfG (y, -)$. 
Note that for each object $x\in M$ multiplication of arrows induces
on $\sfG(x,x)$ a group structure. This group is called the \emph{isotropy 
group} of $x$ and is denoted by $\sfG_x$. The union of all isotropy groups 
\[
  \Lambda_0 \sfG := \bigcup_{x\in M} \sfG_x = \{ g\in \sfG \mid s(g)=t(g) \}
\]
will be called the \emph{loop space} of $\sfG$.

Given a Lie groupoid $\sfG\rightrightarrows M$ two points 
$x,y \in M$ are said to lie in the same orbit if there is an arrow $g: y \leftarrow x$. 
In the following, we will always write $\calO_x$ for the orbit containing $x$, and 
$M/\sfG$ for the space of orbits of the groupoid $\sfG$.
We assume further that the orbit space always carries the quotient topology with respect to the
canonical map $\pi: M \to M/\sfG$.
Note that $M/\sfG$ need not be Hausdorff unless $\sfG$ is a proper Lie groupoid, which  means 
that the map $(s,t):\sfG \rightarrow M\times M$ is a  proper map. 

Sometimes, we need to specify to which groupoid a particular structure map belongs to. In such 
a situation we will write $s_\sfG$, $m_\sfG$, $\pi_\sfG$ and so on.

In the following, we will define a sheaf of algebras $\calA$ on $M/\sfG$ in such a way that the 
algebra $\calA_\textup{c}(M/\sfG)$ of compactly supported global sections of $\calA$ coincides with the 
smooth convolution algebra of the groupoid. To this end, we use a smooth left Haar measure on $\sfG$.

Recall that by a smooth left Haar measure on $\sfG$ one understands a family of 
measures $(\lambda^x)_{x\in M}$  such that the following properties hold true:
\begin{enumerate}[(H1)]
\item For every $x \in \sfG_0$, $\lambda^x$ is a positive measure on $\sfG(x,-)$ with $\supp \lambda^x = \sfG(x,-)$.
\item For every $g\in \sfG$, the family $(\lambda^x)_{x\in M}$ is invariant under left 
      multiplication 
      \[ L_g : \sfG(s(g),-) \to \sfG(t(g),-), \: h \mapsto gh \]
      or in other words 
      \[ 
          \int\limits_{\sfG (s(g),-) } \! u(gh) \, d\lambda^{s(g)} (h) = 
          \int\limits_{\sfG (t(g),-) } \! u(h) \, d\lambda^{t(g)} (h) \quad
          \text{for all $ u\in \calC^\infty_\textup{c} (\sfG)$}.
      \]
\item The system is smooth in the sense that for every $u \in \calC^\infty_\textup{c} (\sfG)$ the 
      map
      \[
          M \to \C, \:  x \mapsto \int\limits_{\sfG (x,-) } \! u (h) \, d\lambda^x (h) 
      \]
      is smooth. 
\end{enumerate}
Let us fix a smooth left Haar measure $(\lambda^x)_{x\in M}$ on $\sfG$.
Given an open set $U \subset M/\sfG$ we first put  
\begin{equation}
\label{eq:DefPreImgsOpen}
  U_0 := \pi^{-1}(U), \quad U_1 := s^{-1} (U_0) \subset \sfG_1
  \quad\text{and} \quad
  U_{k+1} := \bigcap_{i=1}^k \sigma_i^{-1} ( U_k) \subset \sfG_{k+1}
  \text{ for all $k\in \N^*$},
\end{equation}
where $\sigma_i :\sfG_{k+1} \rightarrow  \sfG_k$, $(g_1,\ldots , g_{k+1}) \mapsto 
      (g_1,\ldots , g_ig_{i+1}, \ldots  , g_k)$.
Then we define 
\begin{equation}\label{eq:calAU}
  \calA(U):= \big\{ f \in \calC^\infty \big( U_1 \big) 
             \mid \supp f \text{ is longitudinally compact} \big\} \ .
\end{equation}
Hereby, a subset $K \subset \sfG$ is called \emph{longitudinally compact}, 
if for every compact subset $C \subset M/\sfG$ the intersection 
$K \cap  s^{-1} \pi^{-1} (C)$ is compact.
Obviously, every $\calA(U)$ is a linear space, and the map which assigns 
to an open $U\subset M/\sfG$ the space $\calA(U)$ forms a sheaf on $M/\sfG$ which in 
the following will be denoted by $\calA$ or by $\calA_\sfG$ if we want to
emphasize the underlying groupoid. The section space 
$\calA(U)$ over $U\subset M/\sfG$ open becomes an associative algebra with the \emph{convolution product}
\begin{equation}
\label{eq:convolution-product}
 f_1 \ast f_2 \, (g) := \int\limits_{\sfG (t(g),-)} \! f_1 (h) \, f_2 (h^{-1}g) \, d\lambda^{t(g)} (h) \ ,
 \quad f_1,f_2 \in  \calA(U), \: g \in  \sfG \ .
\end{equation}
The convolution product is compatible with the restriction maps, hence
$\calA$ becomes a sheaf of algebras on $M/\sfG$. 

Let us assume from now on that the groupoid $\sfG$ is proper. Recall from 
\cite{PflPosTanGOSPLG} that then the orbit space $M/\sfG$ carries the structure
of a differentiable stratified space in a canonical way. The structure sheaf 
$\calC^\infty_{M/\sfG}$  coincides with the sheaf of continuous functions $\varphi :U \to \R$ 
with $U \subset M/\sfG$ open such that $\varphi \circ \pi \in \calC^\infty ( U_1 )$. 
Now observe that the action
\[ 
  \calC^\infty_{M/\sfG} (U) \times \calA (U) \rightarrow  \calA (U), \: 
  (\varphi , f ) \mapsto \varphi f := \Big( U_1 \ni g \mapsto 
  \varphi \big(\pi s(g)\big) f (g)  \in \R \Big)
\]
commutes with the convolution product, and turns  $\calA$ into  a 
$\calC^\infty_{M/\sfG}$-module sheaf. 

\begin{propdef}\label{propdefn:sheaf}
  Given a proper Lie groupoid $\sfG \rightrightarrows M$, the associated sheaf 
  $\calA$ is a fine sheaf of algebras over the orbit space $M/\sfG$ which in 
  addition carries the structure of a $\calC^\infty_{M/\sfG}$-module sheaf. 
  The space $\calA_\textup{c}(M/\sfG)$ of global sections of $\calA$ with compact 
  support coincides with the \emph{smooth convolution algebra} of $\sfG$. 
  We  call $\calA$ the \emph{convolution sheaf} of $\sfG$.
\end{propdef}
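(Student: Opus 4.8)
The plan is to establish, in turn, the four assertions bundled into the Proposition and Definition: that $U \mapsto \calA(U)$ is a sheaf, that the convolution product makes it a sheaf of algebras, that it is fine as a $\calC^\infty_{M/\sfG}$-module sheaf, and that its compactly supported global sections recover the smooth convolution algebra. Since the preamble already records the presheaf structure, the convolution product and the module action, I would concentrate the proof on the gluing axiom, on checking that convolution and the module action close up inside $\calA$, on fineness, and on the identification of global sections.

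First I would verify the sheaf axioms. Because $\calC^\infty$ is a sheaf on $\sfG_1$ and $(U_i)_1 = s^{-1}\pi^{-1}(U_i)$ form an open cover of $U_1$ whenever $\{U_i\}$ covers $U$, separation is immediate and any compatible family $(f_i)$ glues to a smooth $f \in \calC^\infty(U_1)$. The only substantive point is that $\supp f$ remains longitudinally compact, which I would control by the estimate $\supp f \cap s^{-1}\pi^{-1}(C) \subset \bigcup_i \big(\supp f_i \cap s^{-1}\pi^{-1}(C)\big)$ for compact $C \subset M/\sfG$, using that properness makes $M/\sfG$ Hausdorff and $\pi^{-1}(C)$ closed; each term on the right is compact by hypothesis, and the care lies in how these pieces combine near the boundary of $U_1$. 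For the algebra structure I would check that $f_1 \ast f_2$ is well defined and again lies in $\calA(U)$: for fixed $g$ the integrand is supported on $\{h : h^{-1}g \in \supp f_2\}$, which is compact because $\supp f_2$ meets the orbit of $s(g)$ in a compact set, while globally $\supp(f_1 \ast f_2)$ is contained in the product $\supp f_1 \cdot \supp f_2$, whose intersection with each tube $s^{-1}\pi^{-1}(C)$ is the continuous image of a compact fibre product and hence compact. Restriction-compatibility is immediate from the pointwise formula, and associativity is the standard Fubini computation using the left-invariance (H2) of the Haar system.

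Next I would treat the module structure and fineness together. That $\varphi f \in \calA(U)$ holds since $\varphi \circ \pi \circ s$ is smooth on $U_1$ by the very definition of the structure sheaf and $\supp(\varphi f) \subset \supp f$ is a closed subset of a longitudinally compact set; compatibility with convolution reduces to the observation that in the integral defining $f_1 \ast f_2$ the points $s(h)$ and $s(g)$ lie in the same orbit, so that $\varphi \circ \pi \circ s$ is constant along the fibre and may be pulled outside the integral. Granting this, fineness is formal: as recalled above $M/\sfG$ is a differentiable stratified space, hence paracompact, and $\calC^\infty_{M/\sfG}$ admits smooth partitions of unity $\{\varphi_i\}$ subordinate to any locally finite open cover $\{U_i\}$; multiplication by $\varphi_i$ then defines module endomorphisms $\omega_i$ of $\calA$ with $\supp \omega_i \subset U_i$ and $\sum_i \omega_i = \id$, which is precisely the defining property of a fine sheaf.

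Finally I would identify $\calA_\textup{c}(M/\sfG)$ with the smooth convolution algebra $\calC^\infty_\textup{c}(\sfG)$. A global section is an $f \in \calC^\infty(\sfG)$ with longitudinally compact support, whose support as a section of $\calA$ equals $\overline{\pi(s(\supp f))}$; compact support of the section means this set is a compact $C \subset M/\sfG$, and then $\supp f = \supp f \cap s^{-1}\pi^{-1}(C)$ is compact by longitudinal compactness, so $f \in \calC^\infty_\textup{c}(\sfG)$, while conversely any compactly supported $f$ trivially satisfies both conditions; as the convolution restricts to the usual groupoid convolution this is an algebra isomorphism. I expect the main obstacle throughout to be the bookkeeping of the longitudinal-compactness condition — a properness-of-support requirement rather than an ordinary compact-support condition — which must be shown to be preserved under gluing, under convolution, and under multiplication by structure functions; the gluing step is where one must be most careful about how supports behave near $\partial U_1$.
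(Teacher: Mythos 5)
Most of your proof is sound and fills in details the paper leaves implicit (the paper records these properties as immediate consequences of the construction, with no separate proof). The genuine gap is in the gluing argument, exactly where you yourself flag that care is needed. There are two problems. First, your inclusion $\supp f \cap s^{-1}\pi^{-1}(C) \subset \bigcup_i \big(\supp f_i \cap s^{-1}\pi^{-1}(C)\big)$ is correct (each point of $\supp f$ lies in some open $(U_i)_1$ and is approximated there by points where $f=f_i\neq 0$), but it controls nothing: the right-hand side is in general an \emph{infinite} union of compact sets, and compactness of the left-hand side does not follow. Second, your claim that "each term on the right is compact by hypothesis" silently requires longitudinal compactness to be tested against \emph{every} compact $C \subset M/\sfG$, and with that absolute reading the statement you are proving breaks down: if supports are closures in $U_1$, the restriction maps of $\calA$ are not even well defined (unit groupoid on $\R$, $f$ with $\supp f = [-1,\frac12]$, $V=(0,1)$, $C=[0,1]$); if instead supports are closures in $\sfG$, gluing genuinely fails (take the product of the unit groupoid on $\R$ with the pair groupoid on $\R$, $U=(0,1)$ covered by $U_n=(\frac{1}{n+2},\frac1n)$, and a section whose support runs off to infinity in the pair-groupoid direction as $t\to 0$: it is longitudinally compact over each $U_n$ but not over $U$). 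So the condition must be read \emph{relative to} $U$, i.e.\ $\supp f \cap s^{-1}\pi^{-1}(C)$ compact for every compact $C \subset U$; under that (only workable) reading, your "by hypothesis" step is simply unavailable for a general compact $C$.

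The missing idea that repairs the argument is a finite localization of $C$ rather than a single global estimate: given compact $C \subset U$, choose finitely many members $U_{i_1},\dots,U_{i_n}$ of the cover containing $C$ in their union and write $C = C_1 \cup \dots \cup C_n$ with each $C_j \subset U_{i_j}$ compact (possible since $M/\sfG$ is locally compact Hausdorff). For each $j$, every point of $\supp f \cap s^{-1}\pi^{-1}(C_j)$ lies in the open set $(U_{i_j})_1$, hence, by the same openness argument you already use, in $\supp f_{i_j}$; thus
\[
  \supp f \cap s^{-1}\pi^{-1}(C_j) \subset \supp f_{i_j} \cap s^{-1}\pi^{-1}(C_j)\ ,
\]
and the right-hand side is compact by the hypothesis on $f_{i_j}$, which is now applicable because $C_j \subset U_{i_j}$. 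Since $\supp f \cap s^{-1}\pi^{-1}(C_j)$ is closed in $U_1$ and contained in a compact subset of $U_1$, it is itself compact, and the finite union over $j$ gives compactness of $\supp f \cap s^{-1}\pi^{-1}(C)$. With this step inserted, and the relative reading of longitudinal compactness used consistently (it causes no difficulty in your convolution-support, module, fineness, and global-sections arguments, which are otherwise correct), your proof goes through.
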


For later purposes, we equip the spaces $\calA (U)$ with a locally convex topology 
and a convex bornology. To this end, observe first that for every longitudinally 
compact subset $K\subset U_1$ the space   
\[
  \calA (M/\sfG;K) := \big\{ f \in \calC^\infty (\sfG) \mid \supp f \subset K \big\}
\]
inherits from $\calC^\infty (\sfG)$ the structure of a Fr\'echet space. 
Moreover, since $\calC^\infty (\sfG)$ is nuclear, $\calA (M/\sfG;K)$ has to be nuclear
as well by \cite[Prop.~50.1]{TreTVSDK}.
By separability of $U$ there exists a (countable) exhaustion of $U_1$ by longitudinally 
compact sets, i.e.~a family $(K_n)_{n\in \N}$ of longitudinally compact subset of $U_1$ such 
that $K_n \subset K_{n+1}^\circ$ for all $n\in \N$,
and such that $\bigcup_{n\in \N} K_n = U_1$. The space $\calA (U)$ can then be identified with the inductive limit 
of the strict inductive system  of nuclear Fr\'echet spaces $\big( \calA (M/\sfG;K_n) \big)_{n\in \N}$. 
It is straightforward to check that 
the resulting inductive limit topology on  $\calA (U)$ does not depend on the particular choice of 
the exhaustion $(K_n)_{n\in \N}$. Thus, $\calA (U)$ becomes a nuclear LF-space, where nuclearity 
follows from \cite[Prop.~50.1]{TreTVSDK}. As an LF-space, $\calA (U)$ carries a natural bornology given by 
the von Neumann bounded sets, i.e.~by the sets 
$S \subset \calA (U)$ which are absorbed by each neighborhood of $0$. 
In other words, a subset $S \subset \calA (U)$ is bounded if all $f\in S$ are 
supported in a fixed longitudinally compact subset $K\subset U_1$, and if the 
set of functions $D(S)$ is uniformly bounded for every compactly supported 
differential operator $D$ on $U_1$. 

The bornological point of view is particularly convenient when considering tensor 
products. In particular one has the following fundamental property.
\begin{proposition}
\label{Prop:ProdConvSheaves}
 Let $\sfG \rightrightarrows M$, and  $\sfH\rightrightarrows N$ be proper Lie groupoids.
 Denote by $M/\sfG$ and $N/\sfH$ their respective orbit spaces. 
 Then $M/\sfG\times N/\sfH$ is diffeomorphic as a differentiable stratified space to the orbit 
 space of the product groupoid $\sfG \times \sfH \rightrightarrows M\times N$. 
 Moreover, there is a natural  isomorphism   
\begin{equation}
  \label{eq:tprodconvalg}
  \calA_\sfG (U) \hat{\otimes} \calA_\sfH (V) \cong \calA_{\sfG\times \sfH} (U \times V)
\end{equation}
 for any two open sets $U \subset M/\sfG$ and $V\subset N/\sfH$. 
\end{proposition}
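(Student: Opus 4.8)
The plan is to treat the two assertions in turn, the second being the substantial one.

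For the identification of orbit spaces, first I would observe that on the level of sets the orbit of a point $(x,y)\in M\times N$ under $\sfG\times\sfH$ is exactly $\calO_x\times\calO_y$, so that $(M\times N)/(\sfG\times\sfH) = (M/\sfG)\times(N/\sfH)$ as sets, with orbit projection $\pi_{\sfG\times\sfH}=\pi_\sfG\times\pi_\sfH$. For any Lie groupoid the orbit projection is an open continuous surjection, hence so is the product $\pi_\sfG\times\pi_\sfH$; an open continuous surjection is a quotient map onto the product space, and since it coincides set-theoretically with $\pi_{\sfG\times\sfH}$, the quotient topology on $(M\times N)/(\sfG\times\sfH)$ agrees with the product topology. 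The differentiable stratified structures then match almost tautologically: by definition the structure sheaf consists of those continuous functions whose pullback along the orbit projection is smooth, and since $\pi_{\sfG\times\sfH}=\pi_\sfG\times\pi_\sfH$ and smoothness on $M\times N$ is detected factorwise, a function on an open $W\subset(M/\sfG)\times(N/\sfH)$ is a section of $\calC^\infty_{(M\times N)/(\sfG\times\sfH)}$ precisely when it is a section of the product structure sheaf. Finally, the orbit type stratification of a product action is the product of the orbit type stratifications, which is the input I would borrow from \cite{PflPosTanGOSPLG}; this upgrades the homeomorphism to an isomorphism of differentiable stratified spaces.

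For the tensor product isomorphism I would first unwind the defining preimages. Writing $U_1=s_\sfG^{-1}\pi_\sfG^{-1}(U)$ and $V_1=s_\sfH^{-1}\pi_\sfH^{-1}(V)$, the equality $\pi_{\sfG\times\sfH}^{-1}(U\times V)=\pi_\sfG^{-1}(U)\times\pi_\sfH^{-1}(V)$ together with $s_{\sfG\times\sfH}=(s_\sfG,s_\sfH)$ gives $(U\times V)_1=U_1\times V_1$, so $\calA_{\sfG\times\sfH}(U\times V)$ is a space of longitudinally compactly supported smooth functions on $U_1\times V_1$. Next I would fix exhaustions $(K_n)$ of $U_1$ and $(L_n)$ of $V_1$ by longitudinally compact sets and verify that the products $K_n\times L_n$ form an exhaustion of $U_1\times V_1$ by longitudinally compact sets: given compact $C\subset(M/\sfG)\times(N/\sfH)$ one has $C\subset C_1\times C_2$ with $C_1,C_2$ its compact projections, whence $(K_n\times L_n)\cap s_{\sfG\times\sfH}^{-1}\pi_{\sfG\times\sfH}^{-1}(C)$ is a closed subset of the compact product $\big(K_n\cap s_\sfG^{-1}\pi_\sfG^{-1}(C_1)\big)\times\big(L_n\cap s_\sfH^{-1}\pi_\sfH^{-1}(C_2)\big)$. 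Because the inductive limit topology is independent of the chosen exhaustion, I am then free to compute $\calA_{\sfG\times\sfH}(U\times V)$ as $\colim_n \calA\big((M\times N)/(\sfG\times\sfH);K_n\times L_n\big)$, which sidesteps any cofinality question about arbitrary longitudinally compact subsets of the product.

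The heart of the argument is the levelwise identity
\[
  \calA(M/\sfG;K_n)\hatotimes\calA(N/\sfH;L_n)\;\cong\;\calA\big((M\times N)/(\sfG\times\sfH);K_n\times L_n\big).
\]
Each factor is a closed subspace of the nuclear Fr\'echet space $\calC^\infty(\sfG)$, respectively $\calC^\infty(\sfH)$, cut out by a support condition, and the target is the analogous support subspace of $\calC^\infty(\sfG\times\sfH)$. I would obtain this from the Schwartz kernel theorem $\calC^\infty(\sfG)\hatotimes\calC^\infty(\sfH)\cong\calC^\infty(\sfG\times\sfH)$ \cite{TreTVSDK}, combined with the fact that the completed tensor product by a nuclear Fr\'echet space is exact and respects closed subspaces: the image of $\calA(M/\sfG;K_n)\hatotimes\calA(N/\sfH;L_n)$ is the closure of the span of elementary tensors $f\otimes g$, each supported in $K_n\times L_n$, and one checks this closure exhausts the support subspace. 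Since $\hatotimes$ commutes with the relevant countable inductive limits and, for these nuclear spaces, the bornological and projective completed tensor products agree, passing to the colimit in $n$ yields $\calA_\sfG(U)\hatotimes\calA_\sfH(V)\cong\calA_{\sfG\times\sfH}(U\times V)$ as topological vector spaces carrying their von Neumann bornologies. I expect this exactness and closed-subspace step, namely pushing the support conditions through the completed tensor product, to be the main obstacle, since density of the elementary tensors inside a support subspace that may have empty interior is delicate; the extra room afforded by $K_n\subset K_{n+1}^\circ$ in the exhaustion is precisely what I would exploit to approximate while keeping supports controlled. Finally, equipping $\sfG\times\sfH$ with the product Haar system $(\lambda^x\times\mu^y)$, a direct computation shows that convolution on the product groupoid factors on elementary tensors as $(f_1\ast f_2)\otimes(g_1\ast g_2)$, because the integral over $(\sfG\times\sfH)\big((x,y),-\big)=\sfG(x,-)\times\sfH(y,-)$ splits; both convolutions being continuous bilinear maps, density of elementary tensors upgrades the identification to an isomorphism of algebras, natural in $U$ and $V$.
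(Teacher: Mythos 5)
Your proposal is correct and follows essentially the same route as the paper's proof: choose exhaustions $(K_n)$ and $(L_n)$ of $U_1$ and $V_1$ by longitudinally compact sets, identify $\calA_\sfG(M/\sfG;K_n)\hat{\otimes}\calA_\sfH(N/\sfH;L_n)$ with $\calA_{\sfG\times\sfH}(M/\sfG\times N/\sfH;K_n\times L_n)$ levelwise, and commute the completed bornological tensor product with the countable colimits. The only differences are citation versus derivation — the paper simply invokes \cite[Prop.~51.6]{TreTVSDK} for the levelwise support-subspace identity and \cite[Cor.~2.30]{MeyACH} for the colimit step, where you sketch these by hand (your cutoff idea exploiting the room $K_n\subset K_{n+1}^\circ$ is the standard way the density issue is resolved) — and you additionally verify that the isomorphism respects the convolution products, a point the paper's proof leaves implicit.
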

\begin{proof}
  The first claim is a consequence of the fact that  two elements
  $(x,y), (x',y') \in M\times N$ lie in the same $(\sfG \times \sfH)$-orbit
  if and only if $x$ and $x'$ lie in the same $\sfG$-orbit and
  $y$ and $y'$ lie in the same $\sfH$-orbit.
  Let us prove the second claim. 
  Let $(K_n)_{n\in \N}$ be an exhaustion of $U_1 := s_\sfG^{-1} \pi_\sfG^{-1} (U) $ by longitudinally compact subsets and $(L_m)_{m\in \N}$
  an exhaustion of $V_1:= s_\sfH^{-1} \pi_\sfH^{-1} (V)$ by such sets. Since $\calA_\sfG (U)$ coincides with the inductive limit
  $\colim\limits_{n\in \N}\calA_\sfG (M/\sfG;K_n)$ and $\calA_\sfH (V)$  with
  $\colim\limits_{m\in \N}\calA_\sfH (N/\sfH;L_m)$, \cite[Cor.~2.30]{MeyACH} entails that 
  \begin{equation}
    \label{eq:IndLimitTensorProduct}
     \calA_\sfG(U) \hat{\otimes} \calA_\sfH (V) \cong 
     \colim\limits_{n\in \N} \calA_\sfG (M/\sfG;K_n) \hat{\otimes} \calA_\sfH (N/\sfH;L_n) .
  \end{equation}
  Now observe that $\calA_\sfG (M/\sfG;K_n) \hat{\otimes} \calA_\sfH (N/\sfH;L_m) \cong 
  \calA_{\sfG\times \sfH}(M/\sfG\times N/\sfH; K_n \times L_m)$ by \cite[Prop.~51.6]{TreTVSDK}, 
  and that $( K_n \times L_n)_{n\in \N}$  is an exhaustion of $U \times V$ by longitudinally compact subsets. 
  Together with Eq.~\eqref{eq:IndLimitTensorProduct} this proves the claim.
\end{proof}

%
%
\section{Localization of the Hochschild chain complex}
\label{Sec:LHCH}
In this section, we apply the localization method in  Hochschild  
homology theory, partially following \cite{BraPflHAWFSS}, to the Hochschild chain complex
of the convolution algebra. 

\subsection{Sheaves of bornological algebras over a differentiable space}
We start with a  (reduced separated second countable) differentiable space
$(X, \calC^\infty)$ 
and assume that $\calA$ is a sheaf of $\R$-algebras on $X$. We will denote by 
$A=\calA (X)$ its space of global sections. We assume further that $\calA$ is a 
$\calC^\infty_X$-module sheaf and that every section space $\calA (U)$
with $U\subset X$ open carries the structure of a nuclear LF-space such that 
each of the restriction maps $\calA(U) \to  \calA (V)$ is continuous and 
multiplication in $\calA (U)$ is separately continuous.
Finally, it is assumed that the action $\calC^\infty (U) \times \calA (U) \to \calA(U)$
is continuous.

As a consequence of our assumptions, each of the spaces $\calA(U)$ carries a natural 
bornology, namely the one consisting of all von Neumann bounded subsets, i.e.~of all subsets 
$B \subset \calA(U)$ which are absorbed by every neighborhood of the origin.
Moreover, by \cite[Lemma 1.30]{MeyLACH}, separate continuity of multiplication in $\calA(U)$
entails that the product map is a jointly bounded map, hence induces a bounded map 
$\calA(U) \hat{\otimes} \calA(U) \to \calA (U)$ on the complete (projective) bornological tensor product 
of $\calA(U)$ with itself.  

\begin{remark}
   \begin{enumerate}
   \item 
     We refer to Appendix \ref{AppCyHomBornAlg} for basic definitions and to 
     \cite{MeyLACH}  for further details on 
     bornological vector spaces, their (complete projective) tensor products,
     and the use of these concepts within cyclic homology theory. 
     We always assume the bornologies in this paper to be convex vector bornologies.
   \item
     In this paper, we  will often silently make use of the fact, that for two nuclear 
     LF-spaces 
     $V_1$ and $V_2$ their complete bornological tensor product $V_1 \hat{\otimes} V_2$ 
     naturally concides (up to natural equivalence) with the complete  inductive tensor product 
     $V_1 \hat{\otimes}_\iota V_2 $ endowed with the bornology of von Neumann bounded sets.
     Moreover, $V_1 \hat{\otimes}_\iota V_2 $ is again a nuclear  LF-space. We refer to \cite[A.1.4]{MeyACH}
     for a proof of these propositions. 
     Note that for Fr\'echet spaces the  projective and inductive topological 
     tensor product coincide. 
   \end{enumerate}
\end{remark}

\begin{definition}
  A sheaf of algebras $\calA$ defined over a differentiable space 
  $(X,\calC_X^\infty)$ such that the above assumptions are fulfilled will be called 
  a \emph{sheaf of bornological algebras over} $(X,\calC_X^\infty)$.  
  If all $\calA (U)$  are unital and the restriction maps 
  $\calA (U) \rightarrow \calA (V)$ are unital homomorphisms, we say that $\calA$ is a 
  \emph{sheaf of unital bornological algebras} or just that  $\calA (U)$ is \emph{unital}.
  If every section space $\calA (U)$  is an H-unital algebra, we
  call $\calA$ a \emph{sheaf of H-unital bornological algebras} or briefly  \emph{H-unital}.
  Finally, we call  $\calA$ an \emph{admissible sheaf of bornological algebras}
  if $\calA$ is H-unital and if for each $k\in \N^*$ the 
  presheaf assigning to an open $U\subset X$ the $k$-times complete bornological 
  tensor product $\calA(U)^{\hat{\otimes}k}$ is even a sheaf on $X$.
\end{definition}

\begin{example}
  \begin{enumerate}
  \item The structure sheaf $\calC_X^\infty$ of a differentiable space 
        $(X,\calC_X^\infty)$ is an example of an admissible sheaf of unital 
        bornological algebras over $(X,\calC_X^\infty)$. 
  \item Given a proper Lie groupoid $\sfG$, the convolution sheaf $\calA$ is an admissible
        sheaf of bornological algebras over the orbit space $(X,\calC_X^\infty)$ of the 
        groupoid. This follows by construction of $\calA$, 
        Prop.~\ref{Prop:ProdConvSheaves} and \cite[Prop.~2]{CraMoeFGCH}, 
        which entails H-unitality of each of the section spaces $\calA(U)$.
  \end{enumerate}
\end{example}

\subsection{The Hochschild homology sheaf}

Assume that $\calA$ is a sheaf of bornological algebras over 
the differentiable space $(X,\calC_X^\infty)$. We will construct the Hochschild homology sheaf
$\HHsheaf_\bullet (\calA)$ associated to $\calA$  as a generalization of Hochschild homology for algebras;
see \cite{LodCH} for the latter and Appendix \ref{AppCyHomBornAlg} for basic definitions and notation used.

For each $k\in \N^*$ let $\scrC_k (\calA)$ denote the presheaf on $X$ which assigns to an open $U\subset X$ 
the $(k+)$-times complete bornological tensor product $\calA(U)^{\hat{\otimes}(k+1)}$. 
Note that in general,  $\scrC_k (\calA)$ is not a sheaf. We denote by 
$\sheafC_k (\calA)$ the sheafification of $\scrC_k (\calA)$. Observe that 
for $V\subset U \subset X$ open the Hochschild boundary 
\[
  b : \scrC_k (\calA) (U)  \to \scrC_{k-1} (\calA) (U)
\]
commutes with the restriction maps $r^U_V: \scrC_k (\calA) (U) \to \scrC_k (\calA) (V)$,
hence we obtain a complex of presheaves $\big( \scrC_\bullet (\calA) , b \big)$
and by the universal property of the sheafification a sheaf complex 
$\big( \sheafC_\bullet (\calA) , b \big)$.
The Hochschild homology sheaf $\HHsheaf_\bullet (\calA)$ is now defined as the 
homology sheaf of $\big( \sheafC_\bullet (\calA) , b \big)$ that means
\[
  \HHsheaf_k (\calA) := 
  \ker \big( b : \sheafC_k (\calA) \to \sheafC_{k-1} (\calA) \big) / 
  \im \big( b : \sheafC_{k+1} (\calA) \to \sheafC_k (\calA) \big) .
\]
By construction,  the stalk $\HHsheaf_k (\calA)_\smorbO$, $ \smorbO \in X$
coincides with the $k$-th Hochschild homology $HH_k (\calA_\smorbO )$ of the stalk
$\calA_\smorbO$. On the other hand, $HH_k (\calA (X) )$ need in general not coincide
with the space  $\HHsheaf_k (\calA) (X)$ of global sections of the $k$-th 
Hochschild homology sheaf. The main goal of this section is to prove 
the following result which is crucial for our study of the Hochschild homology 
of the convolution algebra of a proper Lie groupoid, but also might be intersting by its own.  
Its proof will cover the remainder of Section \ref{Sec:LHCH}.

\begin{theorem}
\label{thm:hochschild-homology-global-sections}
  Let $\calA$ be the convolution sheaf of a proper Lie groupoid $\sfG$. 
  Then the natural map in Hochschild homology
  \[
    HH_\bullet \big( \calA (X)\big) \to \HHsheaf_\bullet (\calA) (X)
    = \Gamma \big(X,  \HHsheaf_\bullet (\calA) \big) 
  \]
  is an isomorphism.
\end{theorem}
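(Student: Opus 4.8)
The plan is to read the statement as the assertion that, for the chain complex of sheaves $\big(\sheafC_\bullet(\calA),b\big)$, the operations of taking homology and of taking global sections commute, and to deduce this from a hyperhomology argument. First I would invoke admissibility of the convolution sheaf: since each tensor-power presheaf $\scrC_k(\calA)\colon U\mapsto \calA(U)^{\hat{\otimes}(k+1)}$ is already a sheaf, the sheafification is superfluous, so $\sheafC_\bullet(\calA)=\scrC_\bullet(\calA)$ and $\Gamma\big(X,\scrC_k(\calA)\big)=\calA(X)^{\hat{\otimes}(k+1)}$. Hence the homology of the global-section complex $\big(\Gamma(X,\scrC_\bullet(\calA)),b\big)$ is by definition $HH_\bullet(\calA(X))$, while the homology sheaves of $\big(\scrC_\bullet(\calA),b\big)$ are the $\HHsheaf_\bullet(\calA)$; the non-negatively graded complex $\scrC_\bullet(\calA)$ is the object to which I would apply the hyperhomology machinery.

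The two inputs I would establish are acyclicity statements. The orbit space $X=M/\sfG$ is a paracompact, second countable differentiable stratified space and therefore admits smooth partitions of unity subordinate to any open cover. Since $\calA$ is a $\calC^\infty_X$-module sheaf, so is each $\scrC_k(\calA)$ (acting through one tensor slot), so the chain sheaves are fine, hence soft, hence $\Gamma$-acyclic. The decisive further point is that the $\calC^\infty_X$-action factors through the center of each convolution algebra $\calA(U)$ --- multiplication by a function pulled back from the orbit space commutes with convolution --- so the Hochschild boundary $b$ is $\calC^\infty_X$-linear. Consequently each homology sheaf $\HHsheaf_k(\calA)$, being a subquotient of $\scrC_k(\calA)$ by $\calC^\infty_X$-linear maps, is again a $\calC^\infty_X$-module sheaf, hence fine, soft, and $\Gamma$-acyclic.

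With both acyclicities in hand, the conclusion follows from the standard comparison of the two routes to the hyperhomology $\mathbb{H}_\bullet\big(X,\scrC_\bullet(\calA)\big)$. On one side, $\Gamma$-acyclicity of the chain sheaves means the complex of global sections already computes the hyperhomology, so $HH_\bullet(\calA(X))=H_\bullet\big(\Gamma(X,\scrC_\bullet(\calA)),b\big)\cong \mathbb{H}_\bullet\big(X,\scrC_\bullet(\calA)\big)$. On the other side, the spectral sequence of the complex of sheaves
\[ E^2_{p,q}=H^p\big(X,\HHsheaf_q(\calA)\big)\;\Longrightarrow\;\mathbb{H}_{q-p}\big(X,\scrC_\bullet(\calA)\big) \]
collapses onto the column $p=0$ because the homology sheaves are $\Gamma$-acyclic, giving $\mathbb{H}_\bullet\big(X,\scrC_\bullet(\calA)\big)\cong \Gamma\big(X,\HHsheaf_\bullet(\calA)\big)$. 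Composing the two edge homomorphisms through the common abutment reproduces the natural comparison map of the theorem, which is thereby an isomorphism.

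The step I expect to be the main obstacle is the $\Gamma$-acyclicity of the homology sheaves $\HHsheaf_\bullet(\calA)$, and within it the two supporting facts: the $\calC^\infty_X$-linearity of $b$, which hinges on the centrality of the base-function action inside the convolution algebra, and the availability of smooth partitions of unity on the singular stratified orbit space $X$ rather than on a smooth manifold. A secondary technical matter is to legitimize the sheaf-cohomological formalism (softness, $\Gamma$-acyclicity, the hyperhomology spectral sequence) in the present nuclear-LF / bornological setting; this I would handle by noting that all of these notions refer only to the underlying sheaves of $\calC^\infty_X$-modules, equivalently of $\R$-vector spaces, to which the classical theory applies. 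This is precisely the localization philosophy of \cite{BraPflHAWFSS} adapted to the convolution sheaf.
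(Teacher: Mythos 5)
Your second half — fineness of the chain sheaves and of the homology sheaves via the $\calC^\infty_X$-module structure and the $\calC^\infty_X$-linearity of $b$, followed by the collapsing hypercohomology spectral sequence — is exactly the formal part of the paper's proof. The gap is your first step: the claim that sheafification is superfluous, i.e.\ that $\sheafC_k(\calA)=\scrC_k(\calA)$ and hence $\Gamma\big(X,\sheafC_k(\calA)\big)=\calA(X)^{\hat{\otimes}(k+1)}$. This is false: the presheaf $\scrC_k(\calA)$ is not even separated. Whenever $X$ contains more than one orbit, pick nonzero $f_0,f_1\in\calA(X)$ whose supports project under $\pi\circ s$ to disjoint closed subsets of $X$; every point of $X$ then has a neighborhood $V$ missing one of these two sets, so $(f_0\otimes f_1)|_V=f_0|_V\otimes f_1|_V=0$ for all $V$ in a suitable cover of $X$, although $f_0\otimes f_1\neq 0$ in $\calA(X)^{\hat{\otimes}2}$. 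Consequently sheafification genuinely changes the complex: a global section of $\sheafC_k(\calA)$ is only a compatible family of germs of chains defined near each orbit ``diagonally'', and the natural chain map $C_k\big(\calA(X)\big)\to\Gamma\big(X,\sheafC_k(\calA)\big)$ is not an isomorphism (the example above shows it is not even injective). You were misled by the literal wording of the admissibility definition; what actually holds, by Proposition \ref{Prop:ProdConvSheaves}, is that $\calA(U)^{\hat{\otimes}(k+1)}\cong\calA_{\sfG^{k+1}}(U^{k+1})$, so the tensor powers assemble into a sheaf on $X^{k+1}$, not on $X$ — the sets $V^{k+1}$, with $V$ running through a cover of $U$, do not cover $U^{k+1}$.

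Closing precisely this gap is the analytic core of the paper's proof, namely Proposition \ref{prop:isomorphism-global-section-space}: the map $\eta: C_\bullet\big(\calA(X)\big)\to\Gamma\big(X,\sheafC_\bullet(\calA)\big)$ is a \emph{quasi-isomorphism}, not an isomorphism, and proving this cannot be done formally. The paper uses the localization homotopies of Section \ref{sec:localization-homotopies}: the operators $\Psi_\varepsilon$, given by multiplication with cutoff functions $\Psi_{k,\varepsilon}$ concentrated near the diagonal of $X^{k+1}$, are chain maps homotopic to the identity (Lemma \ref{lem:localization-homotopies}, whose non-unital case needs H-unitality of the convolution algebras); with them one lifts a global section $s=([c_i])$ of $\sheafC_k(\calA)$ to an honest global chain $c=\sum_i\Psi_{\varepsilon_i}(\varphi_i c_i)$, which gives surjectivity on homology, while injectivity on homology rests on Lemma \ref{lem:cycle-not-meeting-diagonal}, stating that a Hochschild cycle whose support misses the diagonal is a boundary. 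Without this input, or an equivalent substitute, your identification $HH_\bullet\big(\calA(X)\big)\cong H_\bullet\big(\Gamma(X,\sheafC_\bullet(\calA))\big)$ — and with it the whole argument — does not go through.
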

Before we can spell out the proof we need several auxiliary tools and results.
\subsection{The localization homotopies}\label{sec:localization-homotopies}

Throughout this paragraph we assume that $\calA(X)$ is an admissible sheaf of bornological 
algebras over the differentiable space $(X,\calC^\infty_X)$. 

To construct the localization morphisms, observe that the complex $C_\bullet (A)$
inherits from $A=\calA(X)$ the structure of a $\calC^\infty (X)$-module. More precisely, 
the corresponding action is given by 
\begin{equation}
  \label{eq:DefActionSmoothFctChains1}
  \calC^\infty (X) \times C_k(A) \rightarrow C_k(A), \quad 
  (\varphi , a_0 \otimes \ldots \otimes a_k) \mapsto ( \varphi a_0 ) \otimes a_1 \otimes \ldots \otimes a_k \: .
\end{equation}
By definition, it is immediate that the $\calC^\infty (X)$-action commutes with the 
operators $b$ and  $b'$ hence induces a chain map
$ \calC^\infty (X) \times C_\bullet (A) \to C_\bullet (A)$. 
In a similar fashion we define an action of 
$\calC^\infty (X^{k+1}) \cong \big( \calC^\infty (X)\big)^{\hat{\otimes} (k+1)}$ on $C_k(A)$ by
\begin{equation}
  \label{eq:DefActionSmoothFctChains2}
  (\varphi_0 \otimes \ldots \otimes \varphi_k , a_0 \otimes \ldots \otimes a_k) \mapsto 
  ( \varphi_0 a_0 ) \otimes \ldots \otimes (\varphi_k a_k ).
\end{equation}
This allows us to speak of the \emph{support} of a chain $c \in C_k(A)$. It is defined 
as the complement of the largest open subset $U$ in $ X^{k+1}$ such that 
$\varphi \cdot c = 0$ for all $\varphi \in \calC^\infty (X)$ with 
$\supp \varphi \subset U$. 

Next choose a  metric $d: X \times X \to \R$  such that the 
function $d^2$ lies in $\calC^\infty (X\times X)$.
Such a metric exists by Corollary \ref{Cor:SmoothMetricAppendix}.
Then fix a smooth function 
$\varrho : \R \rightarrow [0,1]$ which has support in $(-\infty , \frac 34]$ and  
satisfies $\varrho (r) =1$ for $r\leq \frac 12$. For $\varepsilon >0$ we denote by 
$\varrho_\varepsilon$ the rescaled function $r \mapsto \varrho (\frac{s}{\varepsilon^2} )$. 
Now define functions $\Psi_{k,i,\varepsilon} \in \calC^\infty (X^{k+1})$ for $k\in \N$ and 
$i= 0, \ldots, k$ by
\begin{equation}
  \Psi_{k,i,\varepsilon} (x_0, \ldots, x_k) =  \prod_{j=0}^{i-1} 
  \varrho_\varepsilon \big( d^2(  x_j  , x_{j+1}) \big),
  \quad \text{where $x_0,\ldots , x_k \in X$ and $x_{k+1}:= x_0$} \: . 
\end{equation} 
Moreover, put $\Psi_{k,\varepsilon} := \Psi_{k,k+1,\varepsilon}$. Using the 
$\calC^\infty (X^{k+1})$-action on $C_k(A)$ we obtain for each $\varepsilon >0$ a graded 
map of degree $0$
\[
  \Psi_\varepsilon : C_\bullet (A) \to C_\bullet(A), \: C_k(A) \ni c \mapsto 
  \Psi_{k,\varepsilon} c \: .
\]
One immediately checks that $\Psi_\varepsilon$ commutes with the face maps $b_i$ and the cyclic operator $t_k$.
Hence, $\Psi_\varepsilon$ is a chain map. One  even has more. 

\begin{lemma}\label{lem:localization-homotopies}
  Let $\calA$ be an admissible sheaf of bornological algebras over the differentiable 
  space $(X,\calC^\infty)$, and put $A:= \calA(X)$. Let $d$ be a metric on $X$ such that
  $d^2$ is smooth and fix a smooth map $\varrho:\R\to [0,1]$ with support in $(-\infty,\frac 34]$
  such that  $\varrho|_{(\infty,\frac12]}=1$. 
  Then, for each $\varepsilon >0$, the chain map $\Psi_\varepsilon: C_\bullet (A) \to C_\bullet(A)$ is 
  homotopic to the identity morphism on $C_\bullet (A)$.
\end{lemma}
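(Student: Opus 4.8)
The plan is to construct an explicit contracting homotopy $H=\{H_k:C_k(A)\to C_{k+1}(A)\}_{k\in\N}$ of bounded $\calC^\infty(X)$-module maps with $bH+Hb=\id-\Psi_\varepsilon$. The guiding observation is geometric: since $\varrho_\varepsilon\equiv 1$ on $[0,\tfrac{\varepsilon^2}{2}]$, the function $\Psi_{k,\varepsilon}$ equals $1$ on the ``cyclic near-diagonal'' where all consecutive distances $d(x_j,x_{j+1})$ are small, so $\id-\Psi_\varepsilon$ is multiplication by $1-\Psi_{k,\varepsilon}$, a function supported off the diagonal. Writing $f_j:=\varrho_\varepsilon(d^2(x_j,x_{j+1}))$ (cyclically, $x_{k+1}=x_0$), the defect telescopes edge by edge,
\[
 1-\Psi_{k,\varepsilon}=\sum_{i=0}^{k}\Psi_{k,i,\varepsilon}\,\bigl(1-f_i\bigr),
\]
and each summand is supported where $x_i$ and $x_{i+1}$ are separated. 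This reduces the lemma to trivializing the contribution of a single separated edge, which is exactly the situation where a bar-type contraction applies.

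For the homotopy itself I would use degree-raising insertion maps decorated by cutoffs. Writing $s_i:C_k(A)\to C_{k+1}(A)$ for the map that inserts a (local/approximate) unit into slot $i$, set
\[
 H_k:=\sum_{i=0}^{k}(-1)^i\,\Theta_{k,i,\varepsilon}\cdot s_i ,
\]
where $\Theta_{k,i,\varepsilon}\in\calC^\infty(X^{k+2})$ is assembled from the factors $\varrho_\varepsilon(d^2(\,\cdot\,,\,\cdot\,))$ on the edges of the inserted configuration $(x_0,\dots,x_i,\xi,x_{i+1},\dots,x_k)$, chosen so that collapsing the inserted coordinate $\xi$ onto either neighbour (i.e.\ applying $b_i$ or $b_{i+1}$) recovers the neighbouring partial cutoffs $\Psi_{k,i,\varepsilon}$ and $\Psi_{k,i+1,\varepsilon}$; the key point is that $\varrho_\varepsilon(d^2(x_i,\xi))$ and $\varrho_\varepsilon(d^2(\xi,x_{i+1}))$ each become $1$ under such a collapse. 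I would then compute $bH_k+H_{k-1}b$ using the (semi-)simplicial relations $b_js_i=s_{i-1}b_j$ for $j<i$, $b_is_i=b_{i+1}s_i=\id$, and $b_js_i=s_ib_{j-1}$ for $j>i+1$, together with the already-established fact that $\Psi_\varepsilon$ commutes with every face map $b_j$ and with $t_k$. With the chosen decorations the cross terms cancel in pairs and the boundary contributions telescope, leaving precisely $\sum_i\Psi_{k,i,\varepsilon}(1-f_i)=\id-\Psi_\varepsilon$.

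To know that $H$ is a genuine morphism of bornological chain complexes I would verify boundedness: multiplication by the smooth functions $\Theta_{k,i,\varepsilon}$ is a bounded realization of the $\calC^\infty(X^{k+2})$-action (smoothness of $d^2$ guarantees $\Theta_{k,i,\varepsilon}\in\calC^\infty(X^{k+2})$), and the insertion $s_i$ is bounded because it is induced by a bounded map on the complete bornological tensor products of the nuclear LF-spaces $\calA(U)$. Since all maps involved are $\calC^\infty$-linear and bounded, $H$ has the required regularity and restricts well to open subsets.

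I expect the principal obstacle to be the absence of a global unit in $A$: the degree-raising insertions $s_i$ that drive the telescoping are not canonically defined for a merely H-unital algebra. This is precisely where admissibility, hence H-unitality, of $\calA$ enters — one supplies the insertions from an approximate/local unit of the convolution algebra, or equivalently from a contracting homotopy of the bar complex guaranteed by H-unitality, and one must check that the decoration $\Theta_{k,i,\varepsilon}$, being supported where the cut edge is separated, localizes the insertion to a region on which a local unit genuinely acts as a unit, so that the identities $b_is_i=b_{i+1}s_i=\id$ hold on the relevant support. Keeping this construction simultaneously bounded, $\calC^\infty$-compatible, and compatible with restriction to open sets (so that it later sheafifies) is the technical heart of the argument; by comparison the sign bookkeeping in the telescoping is routine.
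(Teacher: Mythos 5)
Your unital-case construction is essentially the one in the paper: there the homotopy is built from the maps $\eta_{k,i,\varepsilon}(c) := \Psi_{k+1,i,\varepsilon}\cdot (s_{k,i-1}c)$, i.e.\ a unit insertion decorated by the partial cutoff evaluated on the extended configuration, and the alternating sum $H_{k,\varepsilon} = \sum_{i=1}^{k+1}(-1)^{i+1}\eta_{k,i,\varepsilon}$ satisfies $bH_{k,\varepsilon}+H_{k-1,\varepsilon}b = \id - \Psi_\varepsilon$. Your telescoping identity $1-\Psi_{k,\varepsilon}=\sum_{i}\Psi_{k,i,\varepsilon}(1-f_i)$ is precisely what that cancellation realizes, so up to this point your argument and the paper's agree.

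The genuine gap is the non-unital case, which is the case the lemma actually has to cover: admissible sheaves are only assumed H-unital, and the convolution sheaf of a proper Lie groupoid is \emph{not} unital. Your proposal to supply the insertions from an ``approximate/local unit'' localized by the cutoffs does not work here. For a non-\'etale proper Lie groupoid the smooth convolution algebra has no local units at all (an element acting as a genuine unit on a given function would have to be a delta distribution along the unit space), and a general admissible sheaf of bornological algebras certainly provides none; an approximate unit would yield the identities $b_is_i=\id$ only in a limit, destroying the exact algebraic cancellation your telescoping requires. Your alternative suggestion --- extract the insertions from a bar-complex contraction guaranteed by H-unitality --- is closer in spirit to a workable argument, but you never carry it out, and it is not a drop-in replacement: a contracting homotopy of the $b'$-complex is not an insertion map and does not obey the simplicial identities on which your computation of $bH_k+H_{k-1}b$ rests. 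The paper resolves this by a different device: form the unitalization $\wcalA := \calA\oplus\calC^\infty_X$ (a sheaf of \emph{unital} bornological algebras), run the unital homotopy $H$ on $C_\bullet(\wA)$, and transfer it back via $F := \kappa\,(\id - i_*q_*)\,H\,\iota$, where $\iota: C_\bullet(A)\to \ker_\bullet q_*$ is the canonical embedding --- a quasi-isomorphism by H-unitality --- and $\kappa$ is an algebraic chain left inverse of $\iota$, which exists because these complexes are bounded below; a two-line computation then gives $bF+Fb=\id-\Psi_\varepsilon$. Note finally that $\kappa$, hence $F$, need not be bounded: the lemma asserts only an algebraic chain homotopy, so your insistence on producing a bounded homotopy is both more than is needed and more than the paper's construction delivers in the non-unital case.
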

\begin{proof}
 Let us first consider the case, where $\calA$ is a sheaf of unital algebras. The
 Hochschild chain complex then is a simplicial module with face maps $b_i$ and
 the degeneracy maps
 \[
   s_{k,i} : C_k (A) \rightarrow C_{k+1} (A), \; a_0 \otimes \ldots \otimes a_k \mapsto 
   a_0 \otimes \ldots \otimes a_i \otimes 1 \otimes a_{i+1} \otimes \ldots \otimes a_k  
   \ ,
 \]
 where $k\in \N$, $i = 0, \ldots , k$.
 Define $\calC^\infty (X)$-module maps $\eta_{k,i,\varepsilon} : C_k (A) \rightarrow C_{k+1} (A)$ 
 for $k\in \N$, $i=1,\cdots,k+2$ and $\varepsilon >0$ by 
  \begin{equation}
    \eta_{k,i,\varepsilon} (c) :=
    \begin{cases}
      \Psi_{k+1,i,\varepsilon}\cdot ( s_{k,i-1} c  )& \text{for $i\leq k+1$},\\
      0 &\text{for $i=k+2$}.
    \end{cases}
  \end{equation}
  Moreover, put $C_{-1} (A) := \{ 0\}$ and let  $\eta_{-1,1,\varepsilon}: C_{-1} (A) \to C_0 (A)$
  be the $0$-map. For $k\geq 1$ and $i = 2,\cdots, k$ one then computes 
\begin{equation*}
  \begin{split}
    ( b \eta_{k,i,\varepsilon} +  \eta_{k-1,i,\varepsilon} b) c  = \, &
    (-1)^{i-1} \Psi_{k,i-1,\varepsilon}  c \, +  \Psi_{k,i-1,\varepsilon} \sum_{j=0}^{i-2} \,
       (-1)^j \,  s_{k-1,i-2} b_{k,j} c \, + \\
       & + (-1)^i \Psi_{k,i,\varepsilon} c \, + \Psi_{k,i,\varepsilon} \sum_{j=0}^{i-1} \,
       (-1)^j \, s_{k-1,i-1} b_{k,j} c  \ .
  \end{split}
\end{equation*}
For the case $i=1$ one obtains
\begin{equation*} 
  ( b \eta_{k,1,\varepsilon} + \eta_{k-1,1,\varepsilon} b) c
     = c  \, - \, \Psi_{k,1,\varepsilon} c \, + \, \Psi_{k,1,\varepsilon} s_{k-1,0} b_{k,0} c \ ,
\end{equation*}
and for $i=k+1$  
\begin{equation*}
  ( b \eta_{k,k+1,\varepsilon} +  \eta_{k-1,k+1,\varepsilon} b ) c  = 
   \Psi_{k,k,\varepsilon} (-1)^k c \, + \, \Psi_{k,k,\varepsilon} \sum_{j=0}^{k-1}
      \, (-1)^j \, s_{k-1,k-1} b_{k,j} c  \, + \,
      (-1)^{k+1} \Psi_{k,\varepsilon} \, c  . 
\end{equation*}
Finally, one checks for $k=0$ and $i=1$
\begin{equation*}
  ( b \eta_{0,1,\varepsilon} + \eta_{-1,1,\varepsilon} b) c = b \eta_{0,1,\varepsilon} c = 0 \ .
\end{equation*}
These formulas immediately entail that the maps
  \begin{displaymath}
    \begin{split}
      H_{k,\varepsilon} & = \sum_{i=1}^{k+1} \, (-1)^{i+1} \, \eta_{k,i,\varepsilon} : 
      C_k (A)\rightarrow C_{k+1} (A) 
    \end{split}
  \end{displaymath}
  form a homotopy between the identity  and the localization
  morphism $\Psi_\varepsilon$. More precisely,
  \begin{align}
  \label{EqHomHom}
      \big( b H_{k,\varepsilon} + H_{k-1,\varepsilon} b \big) c & = c - \Psi_\varepsilon \, c
      \quad \text{for all } k\in \N \text{ and } c\in C_k (A) \ .
  \end{align}     
  This finishes the proof of the claim in the unital case.

  Now let us consider the general case, where   $\calA$ is assumed to be a sheaf of H-unital but not necessarily 
  unital algebras. Consider the  direct sum of sheaves $\calA \oplus \calC^\infty_X$,
  denote it by $\wcalA$, and put $\wA :=\wcalA(X)$. 
  We turn $\wcalA$ into a sheaf of unital bornological algebras by defining 
  the product of $(f_1,h_1), (f_2,h_2) \in \wcalA(U)$ as 
  \begin{equation}
  \label{eq:product}
    (f_1,h_1)\cdot (f_2,h_2) := (h_1f_1 + h_2f_1 + f_1\, f_2 , h_1\, h_2) .   
  \end{equation}
  One obtains a split short exact sequence in the category of bornological algebras  
  \begin{displaymath}
  \xymatrix{
    0 \ar[r]  & A  \ar[r]  & \wA  \ar[r]^q  & \calC^\infty (X) \ar@<1ex>@{-->}[l]^i \ar[r] & 0 
    } \ .
  \end{displaymath}
  This gives rise to a diagram of chain complexes and chain maps
  \begin{equation}
  \label{diag}
  \xymatrix{
    0 \ar[r] & \ker_\bullet q_* \ar@{^{(}->}[r] \ar@<1ex>@{-->}[d]^{\kappa} &  C_\bullet (\wA) \ar[r]^{q_*} & C_\bullet (\calC^\infty (X)) \ar@<1ex>@{-->}[l]^{i_*} \ar[r] & 0\\
    &  C_\bullet (A) \ar[u]^{\iota} ,
  }
  \end{equation}
  where the row is split exact, and $\iota$ denotes the canonical embedding. 
  Since $A$ is H-unital, $\iota$ is a quasi-isomorphsm. Because the chain complexes 
  $\ker_\bullet q_*$ and $C_\bullet (A)$ are bounded from below, there exists a chain map $\kappa$ 
  which is left inverse to $\iota$.
  Note that the components $\kappa_k$  need not be bounded maps between bornological spaces. 
  By construction, $\Psi_\varepsilon$ acts on each of the chain complexes
  within the diagram, and all chain maps (besides possibly $\kappa$) commute with this action. 
  By the first part of the proof we have an algebraic homotopy 
  $H : C_\bullet (\wA) \to C_{\bullet+1} (\wA)$ such that 
  \[
      \id - \Psi_\varepsilon = b H + H b \: .
  \] 
  Define  $F : C_\bullet (A) \to C_{\bullet+1} (A)$ by $F := \kappa (\id -i_*q_*) H \iota$.
  Note that $F$ is well-defined indeed, since $q_*(\id -i_*q_*) =0$. 
  Now compute for $c \in C_k (A)$ 
  \[
    (bF +Fb)c = \kappa (\id -i_*q_*) (bH + H b) \iota c =
    \kappa (\id -i_*q_*) (\iota c -\Psi_\varepsilon \iota c) = c -\Psi_\varepsilon \, c \: .
  \]
  Hence $F$ is a homotopy between the identity and $\Psi_\varepsilon$ and the claim is proved.
\end{proof} 

\begin{lemma}
  \label{lem:homotopy-subordinate-partition-unity}
   Let $\calA$ be an admissible sheaf of bornological algebras over the differentiable 
   space $(X,\calC^\infty)$, put $A:= \calA(X)$, and let the metric $d$ and the cut-off function
   $\varrho$ as in the preceding lemma. Assume that 
   $(\varphi_l)_{l\in \N}$ is a smooth locally finite partition of unity and that $(\varepsilon_l)_{l\in \N}$ 
   a sequence of positive real numbers. Then 
   \begin{equation}
     \label{eq:def-localization-morphism}
     \Psi: C_\bullet (A) \to C_\bullet (A),\enspace
     C_k (A)\ni c \mapsto \sum_{l\in \N} \varphi_l\Psi_{\varepsilon_l}c \ .   
   \end{equation}
   is a chain map  and there exists a homotopy between the identity
   on $C_\bullet (A)$ and $\Psi$. 
\end{lemma}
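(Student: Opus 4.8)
The plan is to mimic the two-step structure of the proof of Lemma \ref{lem:localization-homotopies}: first treat the unital case by literally averaging the explicit localization homotopies against the partition of unity, and then bootstrap to the H-unital case via the unitalization $\wcalA = \calA \oplus \calC^\infty_X$. Before constructing any homotopy I would record that $\Psi$ is a genuine chain map. The operator $\varphi_l \Psi_{\varepsilon_l}$ on $C_k(A)$ is nothing but the action through \eqref{eq:DefActionSmoothFctChains2} of the function $(x_0,\ldots,x_k)\mapsto \varphi_l(x_0)\,\Psi_{k,\varepsilon_l}(x_0,\ldots,x_k)\in\calC^\infty(X^{k+1})$. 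Since $(\varphi_l)_{l\in\N}$ is locally finite, the sum $\Theta_k:=\sum_{l}\varphi_l(x_0)\,\Psi_{k,\varepsilon_l}$ converges in $\calC^\infty(X^{k+1})$ to a smooth function, so $\Psi$ is the (bounded) action of $\Theta_k$. Each summand $\varphi_l\Psi_{\varepsilon_l}$ commutes with $b$, because $\Psi_{\varepsilon_l}$ does and because the first-slot action of $\varphi_l\in\calC^\infty(X)$ commutes with $b$ and $b'$; hence so does $\Psi$.

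In the unital case I would take the explicit homotopies $H_{\bullet,\varepsilon_l}$ built in the proof of Lemma \ref{lem:localization-homotopies}, which satisfy $b\,H_{\bullet,\varepsilon_l} + H_{\bullet,\varepsilon_l}\,b = \id - \Psi_{\varepsilon_l}$, and set
\[
  \mathcal{H} := \sum_{l\in\N} \varphi_l\, H_{\bullet,\varepsilon_l} \colon C_\bullet(A)\to C_{\bullet+1}(A),
\]
where $\varphi_l$ acts on the first tensor slot of $C_{\bullet+1}(A)$. The homotopy identity then follows from the one-line computation
\[
  b\,\mathcal{H} + \mathcal{H}\,b
  = \sum_{l} \varphi_l\,\big(b\,H_{\bullet,\varepsilon_l} + H_{\bullet,\varepsilon_l}\,b\big)
  = \sum_{l} \varphi_l\,(\id - \Psi_{\varepsilon_l})
  = \id - \Psi,
\]
where the first equality uses that $\varphi_l$ commutes with $b$, and the last uses $\sum_l \varphi_l = 1$ together with the definition of $\Psi$.

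For the general H-unital case I would transport this through the split exact sequence $0\to A\to \wA \xrightarrow{\,q\,}\calC^\infty(X)\to 0$ of diagram \eqref{diag}, exactly as in Lemma \ref{lem:localization-homotopies}. Applying the unital case to $\wcalA$ produces a homotopy $\mathcal{H}^{\wA}$ on $C_\bullet(\wA)$ with $b\,\mathcal{H}^{\wA}+\mathcal{H}^{\wA}\,b = \id - \Psi$; since $\Psi$ is the action of $\Theta_k$, it commutes with the support-preserving chain maps $\iota$, $i_*$, $q_*$. Setting $\mathcal{F} := \kappa(\id - i_*q_*)\,\mathcal{H}^{\wA}\,\iota$ and using $q_*\iota = 0$, $\kappa\iota = \id$ and $\Psi\iota = \iota\Psi$, the same manipulation as at the end of the proof of Lemma \ref{lem:localization-homotopies} yields $b\,\mathcal{F}+\mathcal{F}\,b = \id - \Psi$ on $C_\bullet(A)$, which proves the claim.

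The one genuine subtlety, and the step I would be most careful about, is the well-definedness of the infinite sum $\mathcal{H}$ and the interchange $b\sum_l = \sum_l b$ used in the displayed computation. These are not topological limits in the LF-space $C_{\bullet+1}(A)$, since the partial sums leave every Fr\'echet step as the supports $\supp\varphi_l$ exhaust $X$; they are \emph{locally finite} sums. What makes both the convergence and the interchange legitimate is that the relevant operators are local with respect to the orbit-space projection of supports: the homotopies $H_{\bullet,\varepsilon_l}$ are assembled from degeneracies and the diagonal cut-offs $\Psi_{k,i,\varepsilon_l}$, and $b$ is local because the source and target of any arrow lie in the same orbit, so convolution cannot move a support across orbits. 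Consequently, near any fixed compact subset of $X^{k+2}$ all but finitely many summands vanish, each summand preserves longitudinal compactness of supports, and $b$ commutes termwise with the sum. As in Lemma \ref{lem:localization-homotopies}, the components of $\mathcal{H}$ (and of $\mathcal{F}$, through $\kappa$) need not be bounded, which is consistent with the statement, as only the existence of an algebraic homotopy is asserted.
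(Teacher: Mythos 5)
Your proposal is correct and takes essentially the same route as the paper: $\Psi$ is a chain map by local finiteness of the sum, the unital case is handled by summing the homotopies $H_{\bullet,\varepsilon_l}$ of Lemma \ref{lem:localization-homotopies} against the partition of unity (your $\sum_{l}\varphi_l H_{\bullet,\varepsilon_l}$ coincides with the paper's $\sum_{l}H_{\bullet,\varepsilon_l}\varphi_l$, since first-slot multiplication commutes with the degeneracies and the cut-off multiplications), and the H-unital case is bootstrapped through the unitalization exactly as in the paper via $F=\kappa(\id-i_*q_*)H\iota$. Your additional care about well-definedness of the locally finite sums and the termwise interchange with $b$ is sound, and simply makes explicit what the paper settles with its one-line appeal to local finiteness.
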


\begin{proof}
  Recall that the action of $\calC^\infty(X)$ commutes with the Hochschild boundary and that each
  $\Psi_{\varepsilon_l}$ is a chain map. Since $(\varphi_l)_{l\in \N}$ is a locally finite smooth partition
  of unity, $\Psi$ then has to be a chain map by construction.  
  
  Now assume that  $\calA$ is a sheaf of unital algebras. Let
  $H_{\bullet,\varepsilon_l}:C_\bullet (A)\to C_{\bullet+1}$ be the homotopy from the preceding lemma which
  fulfills Equation \eqref{EqHomHom} with $\varepsilon = \varepsilon_l$. 
  For all $k\in \N$ let $H_k$ be the map
  \[ 
     H_k :  C_k (A) \to  C_{k+1} (A), \enspace c \mapsto \sum_{l\in \N} H_{k,\varepsilon_l} \varphi_l \, c \ . 
  \]
  Then
  \begin{align}
  \label{EqHomHom2}
    \big( b H_k + H_{k-1} b \big) c & =
    \sum_{li\in \N} \left( \varphi_l c -  \Psi_{\varepsilon_l} \varphi_l \, c \right) = c - \Psi \, c
    \quad \text{for all } k\in \N \text{ and } c\in C_k (A) \ .
  \end{align}   
  Hence $H$ is a homotopy  between the identity  and  $\Psi$ which proves the claim in the unital case. 

  In the non-unital case define the unitalizations $\wcalA$ and $\wA$ as before and let $q_*$, $i_*$, $\iota$,
  $\kappa$ denote the chain maps as in  Diagram \eqref{diag}. 
  Let $H : C_\bullet (\wA) \to C_{\bullet+1} (\wA)$ be the algebraic homotopy constructed
  for the unital case. In particular this means that 
  \[
      \id - \Psi = b H + H b \: .
  \] 
  Defining $F : C_\bullet (A) \to C_{\bullet+1} (A)$ by $F := \kappa (\id -i_*q_*) H \iota$
  then gives a homotopy between the identity on $C_\bullet (A)$ and $\Psi$. 
\end{proof}

\begin{lemma}
\label{lem:cycle-not-meeting-diagonal}
   Let $\calA$ be an admissible sheaf of bornological algebras over the differentiable 
   space $(X,\calC^\infty)$, put $A:= \calA(X)$ and let $c \in C_k(A)$ be  a Hochschild cycle.
  If the support of $c$ does not meet the diagonal,  then $c$ is a Hochschild boundary. 
\end{lemma}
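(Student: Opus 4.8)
The plan is to deduce the statement from the partition-of-unity localization homotopy of Lemma \ref{lem:homotopy-subordinate-partition-unity}. Recall that for \emph{any} smooth locally finite partition of unity $(\varphi_l)_{l\in\N}$ on $X$ and any sequence $(\varepsilon_l)_{l\in\N}$ of positive reals, the resulting chain map $\Psi = \sum_{l\in\N} \varphi_l\,\Psi_{\varepsilon_l}$ is homotopic to the identity on $C_\bullet(A)$; let $H$ denote such a homotopy, so that $\id - \Psi = bH + Hb$. Applying this to our cycle $c\in C_k(A)$ and using $bc=0$ gives $c - \Psi c = b(Hc)$. Hence, once we exhibit a choice of $(\varphi_l)$ and $(\varepsilon_l)$ for which $\Psi c = 0$, we may conclude $c = b(Hc)$, so that $c$ is a boundary. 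The whole task is therefore to arrange the localizing data so that the localized cycle vanishes. (The case $k=0$ is trivial: there the diagonal is all of $X = X^{0+1}$, so the hypothesis forces $\supp c = \emptyset$, i.e.\ $c=0$; assume $k\geq 1$ from now on.)

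To this end I would first choose $(\varphi_l)_{l\in\N}$ subordinate to a locally finite cover of $X$ by relatively compact open sets, so that each $\supp\varphi_l$ is compact; this is possible because $X$, being a separated, reduced, second countable differentiable space, is locally compact and paracompact. Since $\Psi c = \sum_l \varphi_l\,\Psi_{\varepsilon_l} c$ is a locally finite sum, it suffices to choose each $\varepsilon_l$ so that the individual term $\varphi_l\,\Psi_{\varepsilon_l} c$ vanishes. Now $\varphi_l\,\Psi_{\varepsilon_l}$ acts on $C_k(A)$ as multiplication by the function $(x_0,\ldots,x_k)\mapsto \varphi_l(x_0)\,\Psi_{k,\varepsilon_l}(x_0,\ldots,x_k)$ on $X^{k+1}$, whose support is contained in
\[
  \operatorname{pr}_0^{-1}(\supp\varphi_l)\ \cap\ \big\{(x_0,\ldots,x_k)\ :\ d(x_j,x_{j+1})\le\tfrac{\sqrt{3}}{2}\varepsilon_l\ \text{for } j=0,\ldots,k,\ x_{k+1}:=x_0\big\}.
\]
By the very definition of the support of a chain, $\varphi_l\,\Psi_{\varepsilon_l} c$ vanishes as soon as this set is disjoint from $\supp c$, so it is enough to arrange that disjointness.

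The key point is a uniform separation estimate. Write $\vartheta(x):=\max_{0\le j\le k} d(x_j,x_{j+1})$ (cyclically, $x_{k+1}:=x_0$), so that the cut-off support above is contained in $\{\vartheta\le\tfrac{\sqrt{3}}{2}\varepsilon_l\}$, and set $\Delta(\supp\varphi_l):=\{(p,\ldots,p):p\in\supp\varphi_l\}\subset X^{k+1}$, which is compact. By hypothesis $\supp c$ is closed and disjoint from the diagonal, hence from $\Delta(\supp\varphi_l)$; since $d$ induces the topology of $X$, the two sets lie at a positive distance in the maximum product metric, and we may set $m_l:=\operatorname{dist}\big(\supp c,\Delta(\supp\varphi_l)\big)/k>0$. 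On the other hand, for $x$ with $x_0\in\supp\varphi_l$ the diagonal point $(x_0,\ldots,x_0)$ belongs to $\Delta(\supp\varphi_l)$ and satisfies $\max_j d(x_j,x_0)\le k\,\vartheta(x)$. Consequently any $x\in\supp c$ with $x_0\in\supp\varphi_l$ obeys $k\,\vartheta(x)\ge \operatorname{dist}\big(\supp c,\Delta(\supp\varphi_l)\big)=k\,m_l$, that is $\vartheta(x)\ge m_l$. Choosing $\varepsilon_l$ with $\tfrac{\sqrt{3}}{2}\varepsilon_l<m_l$ then makes the displayed set disjoint from $\supp c$, whence $\varphi_l\,\Psi_{\varepsilon_l} c=0$ for every $l$ and therefore $\Psi c=0$, completing the argument.

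The main obstacle I anticipate is exactly the passage from the pointwise disjointness of $\supp c$ and the diagonal to the \emph{uniform} lower bound $m_l>0$: two disjoint closed sets in a noncompact space may have zero distance, so it is essential that each $\supp\varphi_l$, and hence $\Delta(\supp\varphi_l)$, be compact — this is the reason for choosing the partition of unity subordinate to a relatively compact cover. A secondary point to confirm is that the metric $d$ of Corollary \ref{Cor:SmoothMetricAppendix} induces the topology of $X$, so that closedness of $\supp c$ genuinely yields positive distance from compact subsets of its complement; granting this, the estimate and the choice of $(\varepsilon_l)$ go through as above.
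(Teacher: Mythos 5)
Your proof is correct and follows essentially the same route as the paper's: both invoke the homotopy $\id - \Psi = bH + Hb$ from Lemma \ref{lem:homotopy-subordinate-partition-unity} for a partition of unity subordinate to a cover by relatively compact sets, choose the $\varepsilon_l$ small enough (via a compactness argument comparing $\supp c$ with the diagonal) so that $\Psi c = 0$, and conclude $c = bH(c)$. Your explicit triangle-inequality estimate giving $\vartheta(x)\geq m_l$ is just a more detailed version of the paper's unproved assertion that suitable $\varepsilon_n$ exist over each compact piece, and the point you flag about $d$ inducing the topology of $X$ does hold, since $d$ is pulled back along a proper embedding into $\R^\infty$.
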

\begin{proof}
  Assume that the support of the Hochschild cycle $c$ does not meet the diagonal and
  let $U = X^{k+1} \setminus \supp c$.   
  Then $U$ is an open neighborhood of the diagonal. By 
  Corollary \ref{Cor:SmoothMetricAppendix}  there exists a complete 
  metric $d:X \times X \to \R$ such that $d^2 \in \calC^\infty (X \times X)$. 
  Choose a compact exhaustion $(K_n)_{n\in \N}$ of $X$ which means that each $K_n$
  is compact, $K_n \subset K_{n+1}^\circ$ for all $n\in \N$ and
  $\bigcup_{n\in \N}K_n =X$. For each $n\in \N$ there then exists an $\varepsilon_n >0$
  such that all $(x_0,\ldots,x_k) \in K_n^{k+1}$ are in $U$ whenever
  $d(x_j,x_{j+1}) < \varepsilon_n$ for $j=0, \ldots , k$ and $x_{k+1}:=x_0$.
  Choose a locally finite smooth partition of unity $(\varphi_l)_{l\in \N}$
  subordinate to the open covering $(K_n^\circ)_{n\in \N}$ and let
  $\Psi: C_\bullet (A) \to C_\bullet (A)$ be the associated chain map defined by 
  \eqref{eq:def-localization-morphism}. According to 
  Lemma \ref{lem:homotopy-subordinate-partition-unity} there then exists a chain
  homotopy $H$ between the identity on $C_\bullet (A)$ and $\Psi$. Since the support
  of $c$ does not meet $U$ one obtains  
  \[
    c = c - \Psi_\varepsilon c = bH (c) \ , 
  \]
  so $c$ is a Hochschild boundary indeed. 
\end{proof}

\begin{proposition}
\label{prop:isomorphism-global-section-space}
  Assume to be given a proper Lie groupoid with orbit space $X$ and 
  convolution sheaf $\calA$. Let $A = \calA (X)$  and $\sheafC_\bullet (\calA )$ be the
  sheaf complex of Hochschild chains. Denote for each $\smorbO\in X$ and each chain
  $c \in C_\bullet \big(\calA (U) \big)$ defined on a  neighborhood $U\subset X$ of
  $\smorbO$ by
  $[c]_{\smorbO}$ the germ of $c$ at $\smorbO$ that is the image of $c$ in the stalk
  $\sheafC_{\bullet,\smorbO} (\calA) = \colim\limits_{V \in \mathcal{N}(\smorbO)} C_\bullet (\calA (V))$,
  where $\mathcal{N} (\smorbO)$ denotes the filter basis of open neighborhoods of $\smorbO$.
  Then the chain map 
  \[
     \eta : C_\bullet ( A ) \to \Gamma \big( X , \sheafC_\bullet (\calA)\big), 
     \enspace c \mapsto \big( [c]_{\smorbO}\big)_{\smorbO\in X} 
  \]
  is a quasi-isomorphism.
\end{proposition}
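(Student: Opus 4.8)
The plan is to recognize $\eta$ as the map induced on global sections by sheafification and to control it through its kernel. Iterating Proposition \ref{Prop:ProdConvSheaves} gives $\scrC_k(\calA)(U) = \calA(U)^{\hat{\otimes}(k+1)} \cong \calA_{\sfG^{k+1}}(U^{k+1})$, so the stalk $\sheafC_{k,\smorbO}(\calA) = \colim_{V} \calA(V)^{\hat{\otimes}(k+1)}$ is the stalk of the convolution sheaf of the product groupoid $\sfG^{k+1}$ at the diagonal point $(\smorbO,\dots,\smorbO)$; thus $\sheafC_k(\calA)$ only ``sees'' the small diagonal $\Delta = \{(\smorbO,\dots,\smorbO)\}\subset X^{k+1}$, and $\eta_k$ sends a chain to the family of its germs along $\Delta$. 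Using the notion of support attached to the $\calC^\infty(X^{k+1})$-action in \eqref{eq:DefActionSmoothFctChains2}, one checks that $[c]_{\smorbO}=0$ for every $\smorbO$ precisely when $c$ vanishes on an open neighborhood of $\Delta$, so that
\[
   \ker \eta_k = \mathscr{K}_k := \big\{\, c \in C_k(A) \mid \supp c \cap \Delta = \emptyset \,\big\}.
\]
Hence $\mathscr{K}_\bullet$ is a subcomplex, and it suffices to prove (A) that $\eta$ is surjective and (B) that $\mathscr{K}_\bullet$ is acyclic: the long exact homology sequence of $0 \to \mathscr{K}_\bullet \to C_\bullet(A) \xrightarrow{\eta} \Gamma(X,\sheafC_\bullet(\calA)) \to 0$ then forces $\eta$ to be a quasi-isomorphism.

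For (A) I would combine fineness of $\calA$ with the localization operators $\Psi_\varepsilon$. Given a global section $\sigma$ of $\sheafC_k(\calA)$, sheafification furnishes a locally finite cover $(U_l)$, a subordinate partition of unity $(\varphi_l)$ with $\supp \varphi_l \subset U_l$, and representatives $c_l \in \calA(U_l)^{\hat{\otimes}(k+1)}$ whose germs agree with those of $\sigma$ throughout $U_l$. Because $\varrho_\varepsilon \equiv 1$ near $0$, the cut-off $\Psi_{k,\varepsilon}$ is identically $1$ on a neighborhood of $\Delta$; consequently $\Psi_\varepsilon$ induces the identity on every stalk $\sheafC_{k,\smorbO}(\calA)$, and the germ of $\varphi_l \Psi_{\varepsilon_l} c_l$ at $\smorbO$ equals $\varphi_l(\smorbO)\,\sigma_{\smorbO}$. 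Choosing each $\varepsilon_l$ small enough, the localized chain $\varphi_l \Psi_{\varepsilon_l} c_l$ is supported so near the diagonal that, its zeroth coordinate already lying in $\supp\varphi_l$, all of its coordinates lie in a longitudinally compact subset of $U_l^{k+1}$; this permits extension by zero to a global chain $\hat c_l \in C_k(A)$. The locally finite sum $\tilde\sigma := \sum_l \hat c_l$ then lies in $C_k(A)$, and its germ at every $\smorbO$ equals $\sum_l \varphi_l(\smorbO)\,\sigma_{\smorbO} = \sigma_{\smorbO}$, so $\eta(\tilde\sigma)=\sigma$. Making this support bookkeeping rigorous --- the extension by zero, the longitudinal compactness, and the local finiteness guaranteeing $\tilde\sigma \in \calA(X)^{\hat{\otimes}(k+1)}$ with the prescribed germs --- is the step I expect to be the main obstacle of the whole proof.

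For (B) I would refine Lemma \ref{lem:cycle-not-meeting-diagonal}. For a cycle $c \in \mathscr{K}_k$, run its proof: choose the parameters $\varepsilon_l$ small relative to the distance from $\supp c$ to $\Delta$, so that $\Psi c = 0$ and therefore $c = bHc$ with $H$ the homotopy of Lemma \ref{lem:homotopy-subordinate-partition-unity}. The point is that $Hc$ again lies in $\mathscr{K}_{k+1}$: each of its summands is $\Psi_{k+1,i,\varepsilon_l}\, s_{k,i-1}(\varphi_l c)$, where the degeneracy inserts a single free coordinate and $\Psi_{k+1,i,\varepsilon_l}$ only forces the first $i$ consecutive coordinates to be $\varepsilon_l$-close; a support point lying on the full diagonal of $X^{k+2}$ would force all coordinates of the underlying point of $\supp c$ to coincide, contradicting $\supp c \cap \Delta = \emptyset$ once $\varepsilon_l$ is small. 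Thus $Hc \in \mathscr{K}_{k+1}$ and $c$ bounds inside $\mathscr{K}_\bullet$, proving acyclicity in the unital case. In the H-unital case one passes, as in Lemma \ref{lem:localization-homotopies}, to the unitalization $\wcalA = \calA\oplus\calC^\infty_X$; since $\iota$, $q_*$ and $i_*$ all respect the two-step filtration of chains by whether their support meets the diagonal, I would choose the splitting $\kappa$ compatibly with this filtration, so that the transported homotopy $F = \kappa(\id - i_*q_*)H\iota$ still produces off-diagonal bounding chains. Feeding (A) and (B) into the long exact sequence then completes the argument.
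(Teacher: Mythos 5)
Your reduction of the statement to (A) chain-level surjectivity of $\eta$ plus (B) acyclicity of the off-diagonal subcomplex $\mathscr{K}_\bullet=\ker\eta$ is a legitimate alternative strategy, and part (A) is essentially correct: since $\Psi_{k,\varepsilon}\equiv 1$ on a neighborhood of the diagonal of $X^{k+1}$, multiplication by it induces the identity on every stalk, so the lifted chain $\sum_l\varphi_l\Psi_{\varepsilon_l}c_l$ (which is, up to the harmless order of $\varphi_l$ and $\Psi_{\varepsilon_l}$, exactly the chain the paper builds in Eq.~\eqref{eq:definition-lifting-chain}) has germ $\sigma_\smorbO$ at every $\smorbO$. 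One imprecision: the germ of $\varphi_l\Psi_{\varepsilon_l}c_l$ at $\smorbO$ is $\varphi_l\cdot\sigma_\smorbO$, the $\calC^\infty$-module action on the zeroth tensor factor, \emph{not} the scalar multiple $\varphi_l(\smorbO)\,\sigma_\smorbO$; the conclusion survives because $\sum_l\varphi_l=1$ acts as the identity. Note that this germ-preservation property of $\Psi_\varepsilon$ is used by the paper itself in the injectivity half of its proof, and your chain-level surjectivity is in fact slightly stronger than what the paper establishes (surjectivity of $H_\bullet\eta$ only, via the homotopies and for cycles).

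The genuine gap is in (B), precisely in the H-unital case --- which is the only relevant one, since the convolution algebra of a proper Lie groupoid has no unit. Your long exact sequence forces you to prove that every cycle in $\mathscr{K}_k$ bounds \emph{inside} $\mathscr{K}_{k+1}$; this is strictly stronger than Lemma \ref{lem:cycle-not-meeting-diagonal}, which only produces a bounding chain somewhere in $C_{k+1}(A)$. Your support analysis does work in the unital case: the degeneracies $s_{k,i-1}$ send the diagonal of $X^{k+2}$ into the diagonal of $X^{k+1}$, so $H$ preserves off-diagonal supports there. But the homotopy actually available for $A$ is $F=\kappa(\id-i_*q_*)H\iota$, and $\kappa$ is produced by abstract homological algebra from H-unitality; the paper explicitly warns that its components need not even be bounded, and it comes with no support control whatsoever. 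Your one-sentence remedy ``choose the splitting $\kappa$ compatibly with this filtration'' is precisely the missing proof: a filtration-compatible left inverse exists only if the acyclic quotient $\ker_\bullet q_*/C_\bullet(A)$ admits a contracting homotopy preserving the off-diagonal filtration, i.e.\ a support-controlled refinement of H-unitality of the convolution algebra (say via local units), and you give no argument for this. Observe that the paper's proof is organized so as never to need acyclicity of $\ker\eta$: injectivity of $H_\bullet\eta$ is obtained by building from a section $s$ with $\eta(e)=bs$ a chain $c$ such that $\supp(bc-e)\cap\Delta=\emptyset$ and then invoking Lemma \ref{lem:cycle-not-meeting-diagonal}, for which bounding in the ambient complex suffices. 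So either supply the support-controlled splitting (or a direct proof that off-diagonal cycles bound off-diagonally), or abandon the long exact sequence and argue injectivity as the paper does; as written, your proof is incomplete.
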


\begin{proof}
  Consider a section $s \in \Gamma \big( X , \sheafC_k (\calA)\big)$.
  Then there exists a (countable) open covering $(U_i)_{i\in I}$ of the orbit space $X$
  and a family $(c_i)_{i\in I}$ of $k$-chains $c_i \in  C_k \big(\calA (U_i) \big)$ such
  that $[c_i]_{\smorbO} = s(\smorbO)$ for all $i\in I$ and $\smorbO \in U_i$.
  After possibly passing to a finer (still countable) and locally finite covering one
  can assume that there exists a partition of unity $(\varphi_i)_{i\in I}$ 
  by functions $\varphi_i \in \calC^\infty (X)$ such that $\supp \varphi_i \subset \subset U_i$ 
  for all $i\in I$. If $s$ is a cycle, then we can achieve after possible passing to an even finer
  locally finite covering that each $c_i$ is a Hochschild cycle as well. 
  Choose a metric $d : X \times X \to \R$ such that $d^2 \in \calC^\infty (X\times X)$.
  For each $i$ there then exists $\varepsilon_i >0$ such that the space of all
  $\smorbO\in X$ with $d (\smorbO, \supp \varphi_i) \leq (k+1) \varepsilon_i$
  is a compact subset of $U_i$. 
  The chain $\Psi_{\varepsilon_i} (\varphi_i c_i)$ then has compact support in
  $U_i^{k+1}$. Extend it by $0$ to a smooth function on $X^{k+1}$ and denote the
  thus obtained $k$-chain also by  $\Psi_{\varepsilon_i} (\varphi_i c_i)$. Now put 
  \begin{equation}
    \label{eq:definition-lifting-chain}
     c := \sum_{i\in I} \Psi_{\varepsilon_i} (\varphi_i c_i) \ .
  \end{equation}
  Then $c \in C_k ( A) $  is well-defined since the sum in the definition of 
  $c$ is locally finite. For every $\smorbO \in X$ now choose an open neigborhood
  $W_\smorbO$ meeting only finitely many of the elements of the covering
  $(U_i)_{i\in I}$.  Denote by $I_\smorbO$ the set of indices $i\in I$ such that
  $U_i \cap W_\smorbO\neq \emptyset$. Then each $I_\smorbO$ is finite. 
  Next let $H_i : C_\bullet \big( \calA (U_i)\big) \to C_{\bullet+1} \big( \calA (U_i)\big)$ 
  be the homotopy operator constructed in the proof of Lemma
  \ref{lem:localization-homotopies} such that 
  \[
    bH_i +H_i b = \id - \Psi_{\varepsilon_i}  \: .
  \]
  Let $e_i = H_i (\varphi_i c_i)$ for $i \in I_\smorbO$ and put
  $e_\smorbO = \sum_{i\in I_\smorbO} e_i|_{W_\smorbO^{k+2}}$.
  Then $e_\smorbO \in C_{k+1} \big( \calA (W_\smorbO)\big)$. Now compute for
  $\smorbQ \in W_\smorbO$
  \begin{equation*}
    \begin{split}
      s(\smorbQ) - [c]_\smorbQ \, & = \sum_{i\in I_\smorbO}
      \left[\varphi_i c_i \right](\smorbQ)-\left[\Psi_{\varepsilon_i}(\varphi_i c_i)\right]_\smorbQ = 
      \sum_{i\in I_\smorbO}\left[ b e_i \right]_\smorbQ+\left[ H_i(\varphi_i b  c_i )\right]_\smorbQ=\\ 
      & = \left[ b e_\smorbO \right]_\smorbQ+\sum_{i\in I_\smorbO}\left[ H_i (\varphi_i b c_i)\right]_\smorbQ \ .
    \end{split}
  \end{equation*}
  Hence one obtains, whenever $s$ is a cycle,
  \[
    s(\smorbQ) - [c]_\smorbQ  = \left[ b e_\smorbO \right]_\smorbQ  \quad \text{for all }
    \smorbO \in X, \: \smorbQ \in W_\smorbO\ .
  \]
  This means that $s$  and $\eta (c)$  define the same homology class. 
  So the induced morphism between homologies 
  $H_\bullet\eta : HH_\bullet(A)\to H_\bullet\big(\Gamma\big(X,\sheafC_\bullet (\calA)\big)\big)$ is surjective. 
  It remains to show  that $H_\bullet\eta$ is injective. To this end assume that $e \in C_k (A)$  
  is a cycle such that $H_\bullet\eta (e) = 0$. Then $\eta (e) = bs $ for some 
  $s \in  \Gamma \big( X , \sheafC_{k+1} (\calA)\big)$. As before, associate to $s$ a
  sufficiently fine locally finite open cover $(U_i)_{i\in I}$ together with a
  subordinate smooth partition of unity $(\varphi_i)_{i\in I}$ and
  $c_i \in  C_{k+1}(\calA(U_i))$ such that $[c_i]_\smorbO = s(\smorbO)$ for all
  $\smorbO \in U_i$. Let $W_\smorbO$ and $I_\smorbO$ also be as above. Define
  $c \in C_{k+1}(A)$ by Eq.~\eqref{eq:definition-lifting-chain}. 
  Now compute for $\smorbQ\in W_\smorbO$
  \begin{equation*}
    \begin{split}
      [bc - e ]_\smorbQ \,&=
      \sum_{i\in I_\smorbO}\left[ b \Psi_{\varepsilon_i}(\varphi_i c_i)\right]_\smorbQ
      - [\varphi_i e]_\smorbQ 
      =\sum_{i\in I_\smorbO}\left[\Psi_{\varepsilon_i} (\varphi_i b c_i)  \right]_\smorbQ -
      [\varphi_i e]_\smorbQ = \\ & = 
      \sum_{i\in I_\smorbO}  \left[ \varphi_i b c_i \right]_\smorbQ - [\varphi_i e]_\smorbQ =
      \sum_{i\in I_\smorbO}  (\varphi_i b s)(\smorbQ) - (\varphi_i bs)(\smorbQ) = 0 \ .
    \end{split}
  \end{equation*}
  Therefore, $bc - e\in C_k(A)$ is a $k$-cycle such that its support does not meet the
  diagonal. By Lemma \ref{lem:cycle-not-meeting-diagonal},
  $bc - e$ is a boundary which means that the homology of $e$ is trivial.
  Hence $H_\bullet\eta$ is an isomorphism. 
\end{proof}

Now we have all the tools to verify our main localization result.

\begin{proof}[Proof of Theorem \ref{thm:hochschild-homology-global-sections}]
  First note that we can regard every chain complex of sheaves  $\calD_\bullet$ as a cochain complex 
  of sheaves under the duality $\calD^n := \calD_{-n}$ for all integers $n$. 
  We therefore have the hypercohomology 
  $\mathbb{H}_{n} (X, \calD_\bullet) := \mathbb{H}^{-n} (X, \calD^{\bullet})$;
  see \cite[Appendix]{WeiCHS}, where the  case of cochain complexes of sheaves not 
  necessarily bounded below as we have it here is considered. 
  Observe that $\big( \sheafC_\bullet (\calA) , b \big)$ and  
  $\big( \HHsheaf_\bullet (\calA),0\big)$ are quasi-isomorphic sheaf complexes,
  hence their hypercohomologies coincide. 
  Recall that for a cochain complex of fine sheaves $\calD^\bullet$
  \[
    \mathbb{H}^{n} (X, \calD^{\bullet}) = 
    H^{n} \big( \Gamma (X , \calD^{\bullet})\big) 
    \ .
  \]
  Since both $\sheafC_\bullet (\calA)$ and $\HHsheaf_\bullet (\calA)$ are 
  complexes of fine sheaves, these observations together with Proposition  \ref{prop:isomorphism-global-section-space}
  now entails for natural $n$
  \begin{equation*}
    \begin{split}
      HH_n \big( \calA (X)\big) \, & =  H_n \big( \Gamma (X , \sheafC_\bullet (\calA)\big) =  
    \mathbb{H}_{n} (X, \sheafC_\bullet (\calA)) = \\ & = \mathbb{H}_{n} (X,  \HHsheaf_{\bullet} (\calA) ) =
    H_n \big( \Gamma (X , \HHsheaf_{\bullet})\big) =
    \Gamma \big(X,  \HHsheaf_n (\calA) \big) \ .
    \end{split}
  \end{equation*}
  This is the claim. 
\end{proof}



\section{Computation at a stalk}
Recall that $\sfG\rightrightarrows M$ is a proper Lie groupoid, $X$ is its orbit space, and
$\calA_G$ is the convolution sheaf of $\sfG$ (Definition \ref{propdefn:sheaf}).
Given  an orbit $\orbO\in X$ of $\sfG$, we introduce in this section a linear
model of the groupoid around the stalk and use it in
Proposition \ref{prop:local-model} to construct a quasi-isomorphism
between the stalk complex $\sheafC_{\bullet,\smorbO}(\calA_\sfG)$ and the
corresponding of the linear model. We divide the construction into two steps.

\subsection{Reduction to the linear model}
Let us recall the linearization result for the groupoid $\sfG$ around an orbit $\orbO$.  
Let $N\orbO\to \orbO$ be the normal bundle of the closed submanifold $\orbO$ in $M$, 
and $\sfG_{|\orbO} \rightrightarrows \orbO$ be the restriction of the groupoid $\sfG$ to $\orbO$.
$\sfG_{|\orbO}$ acts on $N\orbO$ canonically.
And we use $\sfG_{|\orbO}\ltimes N\orbO\rightrightarrows N\orbO$ to denote the associated
transformation groupoid. As in Definition \ref{propdefn:sheaf}, let $\calA_{N\orbO}$ be the
sheaf of convolution algebras on $X_{N\orbO}=N\orbO/ \sfG_{|\orbO}$, the orbit space associated to
the groupoid $\sfG_{|\orbO}\ltimes N\orbO$. Accordingly, we can consider the presheaf of chain
complexes $\scrC_\bullet(\calA_{N\orbO})$ and  the associated sheaf complex
$\sheafC_\bullet(\calA_{N\orbO})$ as in Proposition \ref{prop:isomorphism-global-section-space}.
In what follows in this subsection, we will identify the stalk $\sheafC_{\bullet,\smorbO}(\calA_\sfG)$
with the linearized model $\sheafC_{\bullet,\smorbO}(\calA_{N\orbO})$, which is the stalk of the sheaf
$\sheafC_\bullet(\calA_{N\orbO})$ at the zero section of $N\orbO$. 

The main tool to identify the above two stalks is the linearization result of proper Lie groupoids developed by Weinstein \cite{WeiLRPG} and Zung \cite{ZunPGMMLAC} (See also \cite{CraStrLTPLG,PflPosTanGOSPLG,delHFerRMLG}). The particular approach we take below is from \cite{PflPosTanGOSPLG}.
Fix a transversely invariant metric $g$ on $M$. Given a function $\delta: \orbO\to \mathbb{R}_{>0}$, let $T^\delta_{\orbO, N\orbO}$ be the $\delta$-neighborhood of the zero section in $N\orbO$. According to
\cite[Theorem 4.1]{PflPosTanGOSPLG}, there exists a continuous map $\delta: \orbO\to \mathbb{R}_{>0}$ such that the exponential map $\exp_{|T^\delta_{\orbO, N\orbO}}: T^\delta_{\orbO, N\orbO}\to T^\delta_\orbO:=\exp(T^\delta_{\orbO, N\orbO})\subset M$ is a diffeomorphism. Furthermore, the exponential map $\exp_{|T^\delta_{\orbO, N\orbO}}$ lifts to an isomorphism $\Theta$ of the following groupoids
\begin{equation}\label{eq:tubular}
\Theta:\big(\sfG_{|\orbO}\ltimes N\orbO \big)_{|T^\delta_{\orbO, N\orbO}}\to \sfG_{|T^\delta_\orbO}. 
\end{equation}

\begin{lemma}\label{prop:stalk-linearization}
  For each orbit $\orbO \subset M$, the pullback map $\Theta^*$ defines a quasi-isomorphism
  $\Theta_{\bullet,\smorbO}$ from the stalk complex $\sheafC_{\bullet,\smorbO}(\calA_\sfG)$ to
  the stalk complex $\sheafC_{\bullet,\smorbO}(\calA_{N\orbO})$. 
\end{lemma}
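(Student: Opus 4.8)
The plan is to turn the groupoid isomorphism $\Theta$ of Equation \eqref{eq:tubular} into an honest isomorphism of convolution sheaves over a small neighborhood of the orbit, and then to pass to germs at $\orbO$. First I would record that the exponential diffeomorphism $\exp_{|T^\delta_{\orbO,N\orbO}}$ descends to a homeomorphism $\bar\Theta$ of orbit spaces, identifying an open neighborhood of the zero section $\smorbO=[0]$ in $X_{N\orbO}=N\orbO/\sfG_{|\orbO}$ with the neighborhood $T^\delta_\orbO/\sfG\subset X$ of $\orbO$, and carrying $[0]$ to $\orbO$. Since the exponential neighborhoods $T^\delta_\orbO$ (for shrinking $\delta$) form a cofinal family among neighborhoods of the closed orbit $\orbO$, the two stalk colimits
\[
  \sheafC_{\bullet,\smorbO}(\calA_\sfG)=\colim_{V}C_\bullet\big(\calA_\sfG(V)\big),\qquad
  \sheafC_{\bullet,\smorbO}(\calA_{N\orbO})=\colim_{W}C_\bullet\big(\calA_{N\orbO}(W)\big)
\]
run over cofinal systems of neighborhoods that correspond to each other under $\bar\Theta$ (see Proposition \ref{prop:isomorphism-global-section-space} for the colimit description of the stalks).

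Next I would promote $\Theta$ to an isomorphism of convolution algebras over corresponding open sets. As a diffeomorphism of Lie groupoids, $\Theta$ has a pullback $\Theta^*$ that carries smooth functions with longitudinally compact support on $\sfG_{|T^\delta_\orbO}$ to such functions on the linear model restricted to the tube, and it is bounded for the respective LF/bornological structures. The point requiring care is the Haar system, since $\Theta$ need not intertwine the two fixed choices. I would dispose of this as follows: any two smooth left Haar systems on a fixed proper Lie groupoid differ by multiplication with a positive smooth function $\rho$ pulled back from the orbit space, because left invariance forces the Radon--Nikodym factor to be constant along orbits; and one checks directly that $f\mapsto\big(g\mapsto\rho(\pi t(g))^{-1}f(g)\big)$ is then an algebra isomorphism between the two resulting convolution algebras. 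Hence the Hochschild homology of the stalk is insensitive to the Haar choice, and I may assume without loss of generality that the germ of the Haar system on $\sfG_{|\orbO}\ltimes N\orbO$ near the zero section is $\Theta^*$ of the one on $\sfG$. With this normalization $\Theta^*$ is literally an isomorphism of convolution algebras $\calA_\sfG(V)\to\calA_{N\orbO}\big(\bar\Theta^{-1}V\big)$ for every $V\subset T^\delta_\orbO/\sfG$, compatible with the sheaf restriction maps.

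With the algebra isomorphism in hand the rest is formal. Being a bounded algebra isomorphism, $\Theta^*$ induces isomorphisms on all completed bornological tensor powers $\calA(V)^{\hat{\otimes}(k+1)}$, using the tensor-product behaviour of the convolution sheaves recorded in Proposition \ref{Prop:ProdConvSheaves}, hence an isomorphism of the presheaves $\scrC_\bullet$ intertwining the Hochschild boundary $b$. By the universal property of sheafification this yields an isomorphism of the sheaf complexes $\big(\sheafC_\bullet(\calA_\sfG),b\big)$ and $\big(\sheafC_\bullet(\calA_{N\orbO}),b\big)$ over the corresponding neighborhoods. Taking the colimit over the cofinal systems identified in the first step, I obtain an isomorphism of the two stalk complexes commuting with $b$; in particular $\Theta_{\bullet,\smorbO}$ is a quasi-isomorphism.

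I expect the main obstacle to be the second step: making precise that $\Theta^*$, which is defined only on the tubular neighborhood and relates the two Haar systems only up to an orbit-constant factor, is a genuine isomorphism of the relevant convolution algebras, with the longitudinal-compactness support condition and the bornological tensor structure all intact. The conceptual chain ``groupoid isomorphism $\Rightarrow$ convolution-algebra isomorphism $\Rightarrow$ isomorphism of Hochschild complexes'' is standard; the actual work lies in verifying that the locality of $\Theta$ and the Haar-system twist do no harm once one passes to germs at the orbit.
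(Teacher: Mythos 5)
Your argument has a genuine gap, and it sits exactly at the step you set aside as harmless. The claim that $\Theta^*$ is ``literally an isomorphism of convolution algebras $\calA_\sfG(V)\to\calA_{N\orbO}(\bar\Theta^{-1}V)$ for every $V\subset T^\delta_\orbO/\sfG$'' is false. By \eqref{eq:calAU}, a section of $\calA_\sfG$ over an orbit-space open set $V$ is a function on $V_1=s^{-1}\big(\pi^{-1}(V)\big)$, and $\pi^{-1}(V)$ is \emph{saturated}: it contains every orbit meeting it in its entirety. The tube $T^\delta_\orbO$, on the other hand, is not $\sfG$-invariant: $\delta:\orbO\to\R_{>0}$ is in general a non-constant function on a possibly non-compact orbit, so orbits near $\orbO$ enter and leave the tube, and $\pi^{-1}(V)\not\subset T^\delta_\orbO$ for \emph{every} neighborhood $V$ of $\smorbO$. (Concretely: for the proper translation action of $\R$ on $\R^2$, $\orbO=\R\times\{0\}$, and a tube whose radius $\delta(x)$ tends to $0$ as $|x|\to\infty$, no nearby orbit lies inside the tube.) Hence $\calA_\sfG(V)$ contains sections whose longitudinally compact support includes arrows with source or target outside $T^\delta_\orbO$, where $\Theta^*$ is simply undefined; the linearization \eqref{eq:tubular} identifies only the restricted groupoid $\sfG_{|T^\delta_\orbO}$, i.e.\ arrows with \emph{both} endpoints in the tube, with the corresponding piece of the linear model. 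So there is no isomorphism of section algebras over fixed corresponding neighborhoods, hence no isomorphism of the presheaves $\scrC_\bullet$, and your cofinality argument --- which correctly matches the neighborhoods but not the section spaces over them --- never gets off the ground. A telling warning sign is that your route would make the stalk complexes \emph{isomorphic}, which is strictly stronger than the quasi-isomorphism the lemma asserts.

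This saturation problem is precisely what the paper's proof is organized around, and it is why only a quasi-isomorphism comes out. Given a germ $[f_0\otimes\cdots\otimes f_k]$, longitudinal compactness shows that the sources and targets of the supports over $\orbO$ itself form a compact set $K_{f_0,\ldots,f_k}\subset\orbO$; one then chooses a precompact open $K\supset K_{f_0,\ldots,f_k}$ and $\varepsilon>0$ with $T^\varepsilon_K\subset T^\delta_\orbO$, and applies the localization chain map $\Psi_\varepsilon$ of Lemma \ref{lem:localization-homotopies} (together with passage to a sufficiently small, \emph{chain-dependent} neighborhood of $\smorbO$) to replace the representing chain by a chain-homotopic one supported in $\sfG_{|T^\varepsilon_K}$; only to that compressed chain can $\Theta^*$ be applied, and the inverse is built the same way from $\Theta^{-1}$. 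Since the compression is a homotopy rather than an algebra morphism, the comparison of stalk complexes is a quasi-isomorphism and nothing more. Finally, the point you singled out as the main obstacle --- the Haar systems --- is secondary, and your sub-claim about it is also incorrect: two smooth left Haar systems on a Lie groupoid differ by the factor $\rho_0\circ s$ for an \emph{arbitrary} positive $\rho_0\in\calC^\infty(M)$ (left invariance forces nothing more), not by a function pulled back from the orbit space; the rescaling $f\mapsto(\rho_0\circ s)\cdot f$ is still an algebra isomorphism, so that part of your argument is repairable --- but it was never where the difficulty lay.
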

\begin{proof}
  We explain how $\Theta_{\bullet,\smorbO}$ is defined on $\sheafC_{\bullet,\smorbO}(\calA_\sfG)$.
  Let  $[f_0\otimes \cdots \otimes f_k]\in \sheafC_{k,\smorbO}(\calA_\sfG)$ be a germ of a $k$-chain at
  $\orbO\in X$.  Let $U$ be a neighborhood of $\orbO$ in $X$ such that $f_0\otimes \cdots \otimes f_k$
  is a section of $\scrC_k(\calA(U))$ which is mapped to $[f_0\otimes \cdots \otimes f_k]$ in
  the stalk complex $\sheafC_{\bullet,\smorbO} (\calA_\sfG)$ under the canonical map $\eta$ from
  Proposition \ref{prop:isomorphism-global-section-space}.
  By (\ref{eq:calAU}), the support of each of the maps $f_0, \cdots, f_k$ is longitudinally compact.
  In particular,  $\supp(f_i)\cap s^{-1}(\orbO)$ ($i=0,\cdots, k$) is compact. Therefore,
  \[ s\big(\supp(f_i)\cap s^{-1}(\orbO)\big)=t\big(\supp(f_i)\cap s^{-1}(\orbO)\big)\]
  and the union $K_{f_0, \cdots, f_k}:=\bigcup_{i=0}^ks\big(\supp(f_i)\cap s^{-1}(\orbO)\big)$ is also compact in $\orbO$.  

Let $K$ be a precompact open subset of $\orbO$ containing $K_{f_0, \cdots, f_k}$ as a proper subset.  Observe that the closure of $K$ is compact in $\orbO$. Hence, there is a positive constant $\varepsilon$ such that the $\varepsilon$-neighborhood $T^\varepsilon_{K}$ of $K$ is contained inside the $\delta$-neighborhood $T^\delta_{\orbO}$, the range of the linearization map $\Theta$ in (\ref{eq:tubular}). Applying the homotopy map $\Psi_
\varepsilon$ defined in Lemma \ref{lem:localization-homotopies} to $f_0\otimes \cdots \otimes f_k$, we may assume without loss of generality that the support of $f_0,\cdots, f_n$ is contained inside $T^\varepsilon_{K}$, and therefore inside the $\delta$-neighborhood $T^\delta_{\orbO}$. 
Accordingly, the pullback function $\Theta^*(f_0\otimes \cdots \otimes f_k)$ is well defined and supported in 
\[
\big(\sfG_{|\orbO}\ltimes N\orbO\big)_{|\Theta^{-1}(T^\varepsilon_{K})}\times \cdots \times \big(\sfG_{|\orbO}\ltimes N\orbO\big)_{|\Theta^{-1}(T^\varepsilon_{K})}.
\] 
Let $U^\varepsilon_{\orbO}$ be the $\varepsilon$-neighborhood of $\orbO$ in $N\orbO/\sfG_{|\orbO}$. By the definition of $\Theta$, it is not difficult to check that $\Theta^*(f_i)$ is supported inside $\big(\sfG|_\orbO \rtimes N\orbO\big)|_{\Theta^{-1}(T^\varepsilon_{K})}$ for $i=0,\cdots, k$ and therefore $\Theta^*(f_0\otimes \cdots \otimes f_k )$ is a well defined $k$-chain in $\scrC_k\big(\calA_{N\orbO}(U^\varepsilon_{\orbO})\big).$ Define
$\Theta_{\bullet,\orbO}\big( [f_0\otimes\cdots \otimes f_k]\big)\in
\sheafC_{\bullet,\smorbO}(\calA_{N\orbO})$ to be the germ of $\Theta^*(f_0\otimes \cdots \otimes f_k )$ at the point $\orbO$ in the orbit space $X_{N\orbO} = N\orbO/\sfG_{|\orbO}$.
It is worth pointing out that the construction of
$\Theta_{\bullet,\orbO} \big( [f_0\otimes\cdots \otimes f_k]\big)$ is independent of the choices of the subset $K$ and the constant $\varepsilon$.
Analogously, using the inverse map $\Theta^{-1}$, we can construct the inverse morphism
$(\Theta^{-1})_{\bullet,\orbO}$ from $\sheafC_{\bullet,\smorbO}(\calA_{N\orbO})$ to $\sheafC_{\bullet,\smorbO}(\calA_\sfG)$, and therefore prove that $\Theta_{\bullet,\orbO}$ is a quasi-isomorphism. We leave the details to the diligent reader. 
\end{proof}

\subsection{Computation of the linear model}
We compute in this subsection the cohomology of $C_\bullet(\calA_{N\orbO})$.
Our method is inspired by the work of Crainic and Moerdijk \cite{CraMoeFGCH}. 

To start with, recall that  we prove in \cite[Cor. 3.11 ]{PflPosTanGOSPLG} that for a proper Lie groupoid $\sfG\rightrightarrows M$, given $x\in M$, there is a neighborhood $U$ of $x$ in $M$  diffeomorphic to $O\times V_x$ where $O$ is an open ball in the orbit $\orbO$ through $x$ centered at $x$, and $V_x$ is a $\sfG_x$ --the isotropy group of $\sfG$ at $x$-- invariant open ball in $N_x \orbO$ centered at the origin. Under this diffeomorphism $\sfG_{|U}$ is isomorphic to the product of the pair groupoid $O\times O\rightrightarrows O$ and the transformation groupoid $\sfG_x\ltimes V_x\rightrightarrows V_x$.  Applying this result to the the transformation groupoid $\sfG_{|\orbO}\ltimes N\orbO\rightrightarrows N\orbO$, we conclude that given any $x\in \orbO$, there is an open ball $O$ of $x$ in $\orbO$ such that the restricted normal bundle $U_x:=N\orbO_{|O}$ is diffeomorphic to $N_x\orbO \times O$ and $\big(\sfG_{|\orbO}\ltimes N\orbO\big)_{|U_x}$ is isomorphic to the product of the pair groupoid $O\times O$ and the transformation groupoid $\sfG_x\ltimes N_x\orbO$.  

Following the above local description of $\sfG_{|\orbO}\ltimes N\orbO$, we choose a covering $( O_x)_{x\in \orbO}$
of the orbit $\orbO$, and therefore also a covering $\mathfrak{U}:= ( U_x)_{x\in \orbO}$,
$U_x:=O_x\times N_x\orbO$, of $N\orbO$. We choose a locally finite countable subcovering $(O_i)_{i\in I}$ of
$\orbO$ and the associated covering $(U_i)_{i\in I}$ of $N\orbO$. Choose
$\varphi_i\in \calC_\textup{c}^\infty(\orbO)$ such that $\varphi_i^2$ is a partition of unity subordinate to the
open covering $( O_i)_{i\in I}$ of $\orbO$. Lift $\varphi_i\in \calC_\textup{c}^\infty(\orbO)$ to
$\tilde{\varphi}_i\in \calC^\infty (N\orbO)$ that is let it be constant along the fiber direction. As $\varphi_i$ is compactly supported, $\tilde{\varphi}_i$ is longitudinally compactly supported and therefore belongs to $\calA_{N\orbO}$. 
Now consider the groupoid $\sfH_{\mathfrak{U}}$ over the disjoint union $\sqcup U_i$, such that arrows from $U_i$ to $U_j$ are arrows in $\sfG_{|\orbO}\ltimes N\orbO$ starting from $U_i$ and ending in $U_j$. Composition of arrows in $\sfG_{|\orbO}\ltimes N\orbO$ equips $\sfH_{\mathfrak{U}}$ with a natural Lie groupoid structure that is Morita equivalent to $\sfG_{|\orbO}\ltimes N\orbO$. 
As a consequence of this the orbit spaces of the groupoids
$\sfG_{|\orbO}\ltimes N\orbO$ and $\sfH_{\mathfrak{U}}$ are therefore naturally homeorphic,
actually even diffeomorphic in the sense of differentiable spaces. We therefore identify them.

The following lemma is essentially due to Crainic and Moerdijk \cite{CraMoeFGCH}. 
\begin{lemma}\label{lem:covering} The map
  $\Lambda: A(\sfG_{|\orbO}\ltimes N\orbO):=
  \Gamma\big(\calA_{\sfG_{|\orbO}\ltimes N\orbO}\big)\to
  A(\sfH_{\mathfrak{U}}):=\Gamma\big(\calA_{\sfH_{\mathfrak{U}}}\big)$ defined by
\[
\Lambda(f):=(\tilde{\varphi}_i f \tilde{\varphi}_j)_{i,j}
\]
is an algebra homomorphism  which induces a quasi-isomorphism $\Lambda_\bullet$ from $C_\bullet\big(A(\sfG_{|\orbO}\ltimes N\orbO)\big)$ to $C_\bullet\big(A(\sfH_{\mathfrak{U}})\big)$.
In addition, $\Lambda$ induces a quasi-isomorphism of sheaf complexes
\[
  \Lambda_\bullet: \sheafC_{\bullet} (\calA_{\sfG_{|\orbO}\ltimes }) \to
  \sheafC_{\bullet} (\calA_{\sfH_{\mathfrak{U}}})
\]
over their joint orbit space
$ N\orbO/\sfG_{|\orbO} \cong (\sfH_{\mathfrak{U}})_0/\sfH_{\mathfrak{U}}$. 
\end{lemma}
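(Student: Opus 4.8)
\emph{That $\Lambda$ is an algebra homomorphism.} The plan begins by unwinding the convolution product on the ``matrix'' algebra $A(\sfH_{\mathfrak{U}})$: the product of $(a_{ij})$ and $(b_{ij})$ is $\big(\sum_k a_{ik}\ast b_{kj}\big)_{ij}$. Since $\tilde\varphi_i$ acts on the convolution algebra by pointwise multiplication along the target and $\tilde\varphi_j$ along the source, the inner factor of $(\tilde\varphi_i f\tilde\varphi_k)\ast(\tilde\varphi_k g\tilde\varphi_j)$ contributes the value $\tilde\varphi_k^2$ at the shared object; summing over $k$ and using $\sum_i\tilde\varphi_i^2=1$ collapses the expression to $\tilde\varphi_i(f\ast g)\tilde\varphi_j$, whence $\Lambda(f)\,\Lambda(g)=\Lambda(f\ast g)$. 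Boundedness of $\Lambda$ is immediate because the $\tilde\varphi_i$ are longitudinally compactly supported and the cover $(O_i)_{i\in I}$ is locally finite, so that only finitely many entries are nonzero over any longitudinally compact set; the same observation shows that $\Lambda$ commutes with restriction to opens of the orbit space and hence defines a morphism of presheaves $\scrC_\bullet(\calA_{\sfG_{|\orbO}\ltimes N\orbO})\to\scrC_\bullet(\calA_{\sfH_{\mathfrak{U}}})$.

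\emph{Construction of a homotopy inverse.} The heart of the argument is to produce, following the Morita-invariance computation of Crainic and Moerdijk \cite{CraMoeFGCH}, a homotopy inverse to $\Lambda_\bullet$ at the chain level. The candidate is the generalized trace, sending $M^0\otimes\cdots\otimes M^k$ with $M^l=(M^l_{ij})$ to $\sum_{i_0,\dots,i_k}M^0_{i_0i_1}\otimes M^1_{i_1i_2}\otimes\cdots\otimes M^k_{i_ki_0}$. By local finiteness of the cover together with longitudinal compactness of supports this sum is locally finite, so $\tau$ is a well-defined bounded chain map that is again compatible with restriction. I would then write down the explicit simplicial homotopies witnessing $\tau_\bullet\Lambda_\bullet\simeq\id$ and $\Lambda_\bullet\tau_\bullet\simeq\id$; the discrepancy between the ``spread-out'' embedding $\Lambda$ and the usual corner embedding is absorbed by the homotopy exactly as in the classical matrix case, and the identity $\sum_i\tilde\varphi_i^2=1$ is once more what makes the telescoping cancellations work. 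Since the algebras are only H-unital, this is run on the unitalizations $\wcalA$ and transported back through the H-unitality quasi-isomorphism $\iota$, precisely as in the proof of Lemma \ref{lem:localization-homotopies}. All of these maps and homotopies are assembled from the globally defined $\tilde\varphi_i$, the face and degeneracy operators, and the convolution product, so they are morphisms, respectively homotopies, of presheaves.

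\emph{Conclusion and main obstacle.} Evaluating the resulting presheaf-level homotopy equivalence on the global open set $N\orbO/\sfG_{|\orbO}$, where $\scrC_\bullet(\calA)$ returns the full Hochschild complex $C_\bullet\big(\calA(N\orbO/\sfG_{|\orbO})\big)$, yields the quasi-isomorphism $\Lambda_\bullet:C_\bullet\big(A(\sfG_{|\orbO}\ltimes N\orbO)\big)\to C_\bullet\big(A(\sfH_{\mathfrak{U}})\big)$. Sheafifying the same data turns $\Lambda_\bullet$ into a homotopy equivalence, hence a quasi-isomorphism, of the sheaf complexes $\sheafC_\bullet(\calA_{\sfG_{|\orbO}\ltimes N\orbO})\to\sheafC_\bullet(\calA_{\sfH_{\mathfrak{U}}})$ over the joint orbit space; here one uses that $\sfH_{\mathfrak{U}}$, being Morita equivalent to the proper groupoid $\sfG_{|\orbO}\ltimes N\orbO$ and sharing its orbit space, is again proper, so that $\calA_{\sfH_{\mathfrak{U}}}$ is a convolution sheaf in the sense of Definition \ref{propdefn:sheaf}. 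The main obstacle is precisely the middle step in the bornological, non-unital setting: one must verify that the generalized trace and every comparison homotopy are bounded and that the series defining them converge, which rests on the interplay between local finiteness of the cover and longitudinal compactness of supports, and one must check that the unitalization device of Lemma \ref{lem:localization-homotopies} can be carried out compatibly over every orbit-space open, so that the homotopies genuinely descend to the sheaf complexes.
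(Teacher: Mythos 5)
Your proposal is correct and takes essentially the same route as the paper: the paper's own proof of Lemma \ref{lem:covering} simply cites \cite[Lemma 5]{CraMoeFGCH} and notes that the only modification is replacing compactly supported functions by the longitudinally compactly supported section algebras $A(\sfG_{|\orbO}\ltimes N\orbO)$ and $A(\sfH_{\mathfrak{U}})$. What you spell out -- the partition-of-unity identity $\sum_i\tilde{\varphi}_i^2=1$ for the homomorphism property, the generalized trace as homotopy inverse with local finiteness ensuring well-definedness, the unitalization/H-unitality device of Lemma \ref{lem:localization-homotopies} (which works here because $\Lambda$ and the trace are $\calC^\infty$-linear over the joint orbit space), and the observation that all maps and homotopies are restriction-compatible and hence sheafify -- is exactly the Crainic--Moerdijk argument the paper defers to.
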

\begin{proof}
  The proof of the claim is a straightforward generalization of the one of \cite[Lemma 5]{CraMoeFGCH}.
  The slight difference here is that we work with the algebras $A(\sfG_{|\orbO}\ltimes N\orbO)$ and
  $A(\sfH_{\mathfrak{U}})$ instead of the algebra of compactly supported functions.
  We skip the proof here to avoid repetition.
\end{proof}

Next, the groupoid $\sfH_{\mathfrak{U}}$ can be described more explicitly as follows. Firstly, index the open sets in the covering $(U_i)_{i\in I}$ by natural numbers so in other words assume $I \subset\N^*$. After possibly reindexing
again, one can assume that if $k\in I$, then $l\in I$ for all $1\leq l\leq k$. 
Secondly, given $i$, write $x \in U_i$ as $(x_{\textup{v}}, x_{\textup{o}})$ where $x_{\textup{v}}\in N_{x_i}\orbO$ and $x_{\textup{o}}\in O_i$. Choose a diffeomorphism $\psi_i: O_i\to \mathbb{R}^k$, where $k=\dim(\orbO)$.  Thirdly, for any $1< i\in I$, choose an arrow $g_i\in \sfG$ from $x_1$ to $x_i$. The arrow $g_i$ induces an isomorphism between $N_{x_1}\orbO$ and $N_{x_i}\orbO$, and conjugation by $g_i$ defines an isomorphism from $\sfG_{x_i}$ to $\sfG_{x_1}$. Accordingly, $g_i$ induces a groupoid isomorphism between $\sfG_{x_1}\ltimes N_{x_1}\orbO$ and $\sfG_{x_i}\ltimes N_{x_i}\orbO$. 

\begin{lemma}\label{lem:productgroupoid}
The groupoid $\sfH_{\mathfrak{U}}$ is isomorphic to the product groupoid
\[
  \big(\sfG_{x_1}\ltimes N_{x_1}\orbO\big)\times (I\times I)
  \times (\mathbb{R}^k\times \mathbb{R}^k) \ .
\]  
\end{lemma}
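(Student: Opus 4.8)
The plan is to write down an explicit isomorphism $\Phi$, exploiting the local product decomposition of $\sfG_{|\orbO}\ltimes N\orbO$ over each chart $U_i$. Recall that over $U_i$ one has an isomorphism $\big(\sfG_{|\orbO}\ltimes N\orbO\big)_{|U_i}\cong (O_i\times O_i)\times\big(\sfG_{x_i}\ltimes N_{x_i}\orbO\big)$, which in particular trivializes the normal bundle as $U_i\cong O_i\times N_{x_i}\orbO$ and supplies, for each $a\in O_i$, a canonical arrow $\tau_i^a\colon a\leftarrow x_i$ (the image of the pair $(a,x_i)$ together with the unit of $\sfG_{x_i}$), depending smoothly on $a$. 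Setting $g_1:=u(x_1)$ and combining with the chosen arrows $g_i\colon x_i\leftarrow x_1$, the assignments $a\mapsto \tau_i^a g_i$ give smooth local sections over $O_i$ of the target fibration $t\colon\sfG_{|\orbO}(-,x_1)\to\orbO$, i.e.\ local trivializations of the principal $\sfG_{x_1}$-bundle $\sfG_{|\orbO}(-,x_1)\to\orbO$. These are the only structures I would need.

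On objects I define $\Phi_0\colon \bigsqcup_{i\in I}U_i \to N_{x_1}\orbO\times I\times\R^k$ by sending $(i,x_{\textup{o}},x_{\textup{v}})$, with $x_{\textup{o}}\in O_i$ and $x_{\textup{v}}\in N_{x_i}\orbO$, to $\big(g_i^{-1}\cdot x_{\textup{v}},\,i,\,\psi_i(x_{\textup{o}})\big)$; this is a diffeomorphism with an evident inverse. On arrows, an element of $\sfH_{\mathfrak{U}}$ from $U_i$ to $U_j$ is an arrow $(g,v)$ of $\sfG_{|\orbO}\ltimes N\orbO$ with $g\colon b\leftarrow a$, $a\in O_i$, $b\in O_j$, and $v=\tau_i^a\cdot x_{\textup{v}}$ for its fibre coordinate $x_{\textup{v}}$ at $i$. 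I send it to
\[
\Phi(g,v)=\big((h,\,g_i^{-1}\cdot x_{\textup{v}}),\,(j,i),\,(\psi_j(b),\psi_i(a))\big),\qquad
h:=(\tau_j^b g_j)^{-1}\, g\,(\tau_i^a g_i)\in\sfG_{x_1}\ .
\]
The two pair-groupoid components are forced by source and target, so only the isotropy component $h$ carries genuine information: it is exactly the expression of $g$ in the local trivializations of the principal bundle.

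The remaining steps are verifications. First, $\Phi$ lands in the product groupoid: using $v=\tau_i^a\cdot x_{\textup{v}}$ and $g\cdot v=\tau_j^b\cdot y_{\textup{v}}$ one computes $h\cdot(g_i^{-1}\cdot x_{\textup{v}})=g_j^{-1}\cdot y_{\textup{v}}$, so $(h,g_i^{-1}x_{\textup{v}})$ is indeed an arrow $\Phi_0(\text{source})\to\Phi_0(\text{target})$ of $\sfG_{x_1}\ltimes N_{x_1}\orbO$. Second, functoriality: for composable $(g',v')$ and $(g,v)$ the telescoping $(\tau_l^c g_l)^{-1}g'(\tau_j^b g_j)\cdot(\tau_j^b g_j)^{-1}g(\tau_i^a g_i)=(\tau_l^c g_l)^{-1}(g'g)(\tau_i^a g_i)$ shows the isotropy components multiply correctly, the pair-groupoid factors compose tautologically, and units go to units since $\tau_i^a g_i$ conjugates the unit at $a$ to that at $x_1$. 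Third, bijectivity and smoothness follow from the explicit inverse: given $\big((h,u),(j,i),(\beta,\alpha)\big)$ one recovers $a=\psi_i^{-1}(\alpha)$, $b=\psi_j^{-1}(\beta)$, $x_{\textup{v}}=g_i\cdot u$, and $g=(\tau_j^b g_j)\,h\,(\tau_i^a g_i)^{-1}$, all depending smoothly on the data.

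I expect the main obstacle to be purely bookkeeping: keeping the two pair-groupoid factors — the index set $I$ from the cover and the coordinate $\R^k\cong O_i$ within a chart — cleanly separated from the isotropy factor, and checking that the normal-bundle action is transported consistently through the conjugation, namely the identity $h\cdot(g_i^{-1}x_{\textup{v}})=g_j^{-1}y_{\textup{v}}$ and the telescoping for composition. Once the canonical arrows $\tau_i^a$ are in hand no analytic input is required; everything is formal groupoid algebra, so the real content lies in setting up the local trivializations correctly and verifying the cocycle-type cancellations. Note also that, because $\sfH_{\mathfrak{U}}$ lives over the \emph{disjoint} union $\bigsqcup_i U_i$, only local (chart-by-chart) sections of the bundle are needed, the transition between charts being absorbed into the $I\times I$ factor; hence no global triviality of $\sfG_{|\orbO}(-,x_1)\to\orbO$ is assumed.
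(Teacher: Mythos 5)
Your proof is correct and takes essentially the same route as the paper: your canonical arrows $\tau_i^a$ are exactly the paper's auxiliary arrows $h_{x_i}=\big((\id,0),(s(h)_{\textup{o}},x_i)\big)$ coming from the local product decomposition, and your formula $h=(\tau_j^b g_j)^{-1}\,g\,(\tau_i^a g_i)$ coincides with the paper's isotropy component $g_j^{-1}h_{x_j}^{-1}h\,h_{x_i}g_i$, with the same explicit two-sided inverse. The only difference is cosmetic: you separate the $\sfG_{x_1}$-element from the normal vector and verify the compatibility $h\cdot(g_i^{-1}x_{\textup{v}})=g_j^{-1}y_{\textup{v}}$ explicitly, a check the paper leaves to the reader.
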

\begin{proof}
We define groupoid morphisms 
\[
  \Phi:  \: \sfH_{\mathfrak{U}}\to \big(\sfG_{x_1}\ltimes N_{x_1}\orbO\big)\times (I\times I)\times (\mathbb{R}^k\times \mathbb{R}^k)
\] and
\[
  \Psi: \: \big(\sfG_{x_1}\ltimes N_{x_1}\orbO\big)\times (I\times I)\times (\mathbb{R}^k\times \mathbb{R}^k)\to \sfH_{\mathfrak{U}}. 
\]
Given an arrow $h\in \sfH_{\mathfrak{U}}$ with source in $U_i$ and target in $U_j$, we consider
$(s(h)_{\textup{o}}, x_i)\in O_i\times O_i$ and $(t(h)_{\textup{o}}, x_j)\in O_j\times O_j$. Define $h_{x_i}\in (\sfG_{x_i}\ltimes N_{x_i}\orbO)\times (O_i\times O_i)$ (and $h_{x_j}\in (\sfG_{x_j}\ltimes N_{x_j}\orbO)\times (O_i\times O_i)$) by 
$h_{x_i}=\big((\id, 0), (s(h)_{\textup{o}}, x_i)\big)$ (and $h_{x_j}=\big((\id, 0), (t(h)_{\textup{o}}, x_j)\big)$).
The arrow $g_j^{-1}h_{x_j}^{-1}h h_{x_i}g_i$ belongs to ${\sfH_\mathfrak{U}}_{|U_1}$ and its component in
$O_1\times O_1$ is $(x_1, x_1)$. The arrow $\Phi (h)$ now is defined to be
\[
\Phi(h):=\big(g_j^{-1}h_{x_j}^{-1}h h_{x_i}g_i, (i,j), (\psi(s(h_{ij}), t(h_{ij}))\big)\in \big(\sfG_{x_1}\ltimes N_{x_1}\orbO\big)\times (I\times I)\times (\mathbb{R}^k\times \mathbb{R}^k). 
\]
Similarly, given $(k, (i,j), (y_i, y_j))\in \big(\sfG_{x_1}\ltimes N_{x_1}\orbO\big)\times (I\times I)\times (\mathbb{R}^k\times \mathbb{R}^k)$, define 
\[
  h_{y_i}:=\big((\id, 0), (\psi_i^{-1}(y_i), x_i)\big)\in \sfG_{|U_i},\qquad
  h_{y_j}:=\big((\id, 0), (\psi^{-1}_j(y_j), x_j)\big)\in \sfG_{|U_j},
\] 
and $h_1:=\big(k, (x_1, x_1)\big)\in \sfG_{|U_1}$.  Notice $g_{j}h_1g_{i}^{-1}$ is an arrow in
$\sfH_{\mathfrak{U}}$ starting from $x_i$ and ending at $x_j$.
We can now define $\Psi(k,(i,j), (y_i, y_j))$  to be
\[
\Psi(k,(i,j), (y_i, y_j)):=h_{y_j}g_{j}h_1g_{i}^{-1}h_{y_i}^{-1}\in \sfH_{\mathfrak{U}}. 
\]
It is straightforward to check that $\Phi$ and $\Psi$ are groupoid morphisms and inverse
to each other.  
\end{proof}

Let $A\big((\sfG_{x_1}\ltimes N_{x_1}\orbO)\times (I\times I)\times (\mathbb{R}^k\times \mathbb{R}^k) \big)$
be the space of global sections of the convolution sheaf
$\calA_{(\sfG_{x_1}\ltimes N_{x_1}\orbO)\times (I\times I)\times (\mathbb{R}^k\times \mathbb{R}^k)}$. 
With the maps $\Phi$ and $\Psi$ introduced in Lemma \ref{lem:productgroupoid}, we have the following induced isomorphisms of chain complexes,
\[
\begin{split}
\Phi_\bullet \!:\:\, &C_\bullet \Big(A\big(\big(\sfG_{x_1}\ltimes N_{x_1}\orbO\big)\times (I\times I)\times (\mathbb{R}^k\times \mathbb{R}^k) \big)\Big)\to C_\bullet\big(A(\sfH_{\mathfrak{U}})\big),\\
\Psi_\bullet \!:\:\, &C_\bullet\big(A(\sfH_{\mathfrak{U}})\big)\to C_\bullet \Big(A\big(\big(\sfG_{x_1}\ltimes N_{x_1}\orbO\big)\times (I\times I)\times (\mathbb{R}^k\times \mathbb{R}^k) \big)\Big). 
\end{split}
\]
Since they are induced by an ismorphism of groupoids, we also obtain a pair of mutually inverse
isomorphisms of complexes of sheaves which are denoted by the same symbols,
\[
\begin{split}
  \Phi_\bullet \!:\:\, &\sheafC_{\bullet} \left(\calA_{(\sfG_{x_1}\ltimes N_{x_1}\orbO)\times (I\times I)\times (\mathbb{R}^k\times \mathbb{R}^k)}\right)\to
  \sheafC_\bullet\left(\calA_{\sfH_{\mathfrak{U}}}\right),\\
  \Psi_\bullet \!:\:\, &\sheafC_\bullet\left(\calA_{\sfH_{\mathfrak{U}}}\right)\to
  \sheafC_\bullet \left(\calA_{(\sfG_{x_1}\ltimes N_{x_1}\orbO)\times (I\times I)\times (\mathbb{R}^k\times \mathbb{R}^k)}\right). 
\end{split}
\]

Observe that both  groupoids $I\times I$ and $\mathbb{R}^k\times \mathbb{R}^k$ have only one orbit. Therefore, longitudinally compactly supported functions on them are the same as compactly supported functions. Observe that $\calC^\infty(\sfG_{x_1}\ltimes N_{x_1}\orbO)$ is the algebra of longitudinally compactly supported smooth functions on $\sfG_{x_1}\ltimes N_{x_1}\orbO$. By Lemma \ref{lem:productgroupoid}, the groupoid algebra $A(\sfH_{\mathfrak{U}})$ is isomorphic to $A\big((\sfG_{x_1}\ltimes N_{x_1}\orbO)\times (I\times I)\times (\mathbb{R}^k\times \mathbb{R}^k) \big)$. The latter can be identified with
$\calC^\infty(\sfG_{x_1}\ltimes N_{x_1}\orbO)\,\hat{\otimes}\,
\R^{I\times I}\,\hat{\otimes}\,
\calC_\textup{c}^\infty(\mathbb{R}^k\times \mathbb{R}^k)$,
where $\R^{I\times I}$ is the space of finitely supported functions on
$I\times I$.
Note that $I\times I$ and $\mathbb{R}^k\times \mathbb{R}^k$ both carry the structure of a pair groupoid, so the corresponding products on $\R^{I\times I}$ and
$\calC_\textup{c}^\infty(\mathbb{R}^k\times \mathbb{R}^k)$ are given in both cases by
convolution which we denote as usual by $*$.
Let $\tau_I$ be the trace on $\R^{I\times I}$ defined by
\[
  \tau_I(d) :=\sum_i d_{ii} \ ,
  \quad d = (d_{ij})_{i,j \in I} \in \R^{I\times I}
\] 
and let $\tau_{\mathbb{R}^k}$ be the trace on
$\calC_\textup{c}^\infty (\mathbb{R}^k\times \mathbb{R}^k)$ given by
\[
  \tau_{\mathbb{R}^k}(\alpha):=\int_{\mathbb{R}^k} \alpha(x,x)dx \ ,
  \quad \alpha \in \calC_\textup{c}^\infty (\mathbb{R}^k\times \mathbb{R}^k) \ ,
\]
where $dx$ is the Lebesgue measure on $\mathbb{R}^k$. Define a map 
\[
  \tau_m: C_m\big(\calC^\infty(\sfG_{x_1}\ltimes N_{x_1}\orbO)\,\hat{\otimes}\,
  \R^{I\times I}\,\hat{\otimes}\,
  \calC_\textup{c}^\infty(\mathbb{R}^k\times \mathbb{R}^k)\big)\to
  C_m\big(\calC^\infty(\sfG_{x_1}\ltimes N_{x_1}\orbO)\big)
\]
as follows:
\[
\begin{split}
  \tau_m \, & \big( ( f_0\otimes \cdots \otimes f_m)\otimes (d_0\otimes \cdots\otimes d_m)
  \otimes (\alpha_0\otimes \cdots \otimes \alpha_m)\big)\\
  &:=\tau_I(d_0 * \cdots * d_m)
  \tau_{\mathbb{R}^k}(\alpha_0 * \cdots * \alpha_m) \, f_0\otimes \cdots \otimes f_m\ , \\
  &f_0 ,\cdots , f_m \in \calC^\infty(\sfG_{x_1}\ltimes N_{x_1}\orbO) , \:
   d_0, \cdots , d_m \in  \R^{I\times I}, \:
   \alpha_0,\cdots,\alpha_m\in\calC_\textup{c}^\infty (\mathbb{R}^k\times \mathbb{R}^k)  \ . 
\end{split}
\]
It is easy to check using the tracial property of $\tau_I$ and $\tau_{\mathbb{R}^k}$ that $\tau_\bullet$ is a chain map. Moreover, observe that the whole argument works not only for the
global section algebra $\calC^\infty(\sfG_{x_1}\ltimes N_{x_1}\orbO)$ but for any
of the section algebras $\calC^\infty(\sfG_{x_1}\ltimes V)$ with $V \subset N_x\orbO$
an open $\sfG_{x_1}$-invariant subspace. So eventually we obtain a morphism of sheaf complexes 
\[
    \tau_\bullet :
    \sheafC_\bullet \big(\calA_{\calC^\infty(\sfG_{x_1}\ltimes N_{x_1}\orbO)\, \hat{\otimes}\,
    \R^{I\times I} \, \hat{\otimes} \,
    \calC_\textup{c}^\infty(\mathbb{R}^k\times \mathbb{R}^k)}\big)
    \to \sheafC_\bullet \big(\calA_{\calC^\infty(\sfG_{x_1}\ltimes N_{x_1}\orbO)}\big) \ .
  \]
  over the orbit space $N_{x_1}\orbO/\sfG_{x_1}$.
\begin{lemma}\label{lem:linearmodel}
  The chain map
  \[
    \tau_\bullet : C_\bullet \big(\calC^\infty(\sfG_{x_1}\ltimes N_{x_1}\orbO)\, \hat{\otimes}\,
    \R^{I\times I} \, \hat{\otimes} \,
    \calC_\textup{c}^\infty(\mathbb{R}^k\times \mathbb{R}^k)\big)\to C_\bullet \big(\calC^\infty(\sfG_{x_1}\ltimes N_{x_1}\orbO)\big)
  \]
  is a quasi-isomorphism. More generally,
  
  \[
    \tau_\bullet :
    \sheafC_\bullet \big(\calA_{\calC^\infty(\sfG_{x_1}\ltimes N_{x_1}\orbO)\, \hat{\otimes}\,
    \R^{I\times I} \, \hat{\otimes} \,
    \calC_\textup{c}^\infty(\mathbb{R}^k\times \mathbb{R}^k)}\big)
    \to \sheafC_\bullet \big(\calA_{\calC^\infty(\sfG_{x_1}\ltimes N_{x_1}\orbO)}\big)
  \]
  is an isomorphism of complexes of sheaves.
\end{lemma}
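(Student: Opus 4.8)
The plan is to recognize $\tau_\bullet$ as the \emph{generalized trace map} of a Morita equivalence and to prove it is a quasi-isomorphism by producing an explicit homotopy inverse. Write $B:=\calC^\infty(\sfG_{x_1}\ltimes N_{x_1}\orbO)$ for the base algebra and set $R:=\R^{I\times I}\,\hat{\otimes}\,\calC_\textup{c}^\infty(\mathbb{R}^k\times\mathbb{R}^k)$, equipped with the tracial functional $\tau_R:=\tau_I\,\hat{\otimes}\,\tau_{\mathbb{R}^k}$. Since $\tau_I(d_0*\cdots*d_m)\,\tau_{\mathbb{R}^k}(\alpha_0*\cdots*\alpha_m)=\tau_R(r_0*\cdots*r_m)$ for $r_i:=d_i\otimes\alpha_i\in R$, the map $\tau_\bullet$ is exactly the generalized trace $C_\bullet(B\,\hat{\otimes}\,R)\to C_\bullet(B)$ attached to $\tau_R$. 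As the two tensor factors of $R$ act on independent variables, $\tau_\bullet$ factors as the generalized trace collapsing $\R^{I\times I}$ (over the base $B\,\hat{\otimes}\,\calC_\textup{c}^\infty(\mathbb{R}^k\times\mathbb{R}^k)$) followed by the one collapsing $\calC_\textup{c}^\infty(\mathbb{R}^k\times\mathbb{R}^k)$ (over the base $B$). It therefore suffices to establish a single Morita-invariance statement and apply it to each factor, the base being H-unital in both cases by \cite[Prop.~2]{CraMoeFGCH} and stability of H-unitality under $\hat{\otimes}$.

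The statement to prove is: for an H-unital bornological algebra $D$ and a matrix-type factor $S$ carrying a trace $\tau_S$ and an idempotent $e=e*e$ with $\tau_S(e)=1$ and $e*S*e=\R\,e$, the generalized trace $\mathrm{tr}_\bullet:C_\bullet(D\,\hat{\otimes}\,S)\to C_\bullet(D)$ is a quasi-isomorphism. I would verify the hypotheses for the two factors: for $\R^{I\times I}$ take $e=E_{11}$; for $\calC_\textup{c}^\infty(\mathbb{R}^k\times\mathbb{R}^k)$ take the kernel $e(x,y)=\phi(x)\phi(y)$ with $\phi\in\calC_\textup{c}^\infty(\mathbb{R}^k)$ and $\int\phi^2=1$, for which $e*e=e$ and $\tau_{\mathbb{R}^k}(e)=1$. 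The homotopy inverse is the inclusion $\mathrm{inc}_\bullet:C_\bullet(D)\to C_\bullet(D\,\hat{\otimes}\,S)$, $d_0\otimes\cdots\otimes d_m\mapsto(d_0\otimes e)\otimes\cdots\otimes(d_m\otimes e)$; idempotency of $e$ together with $\tau_S(e)=1$ gives $\mathrm{tr}_\bullet\circ\mathrm{inc}_\bullet=\id$ at once, so the content is the homotopy $\mathrm{inc}_\bullet\circ\mathrm{tr}_\bullet\simeq\id$.

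For the discrete factor this homotopy is classical: every chain involves only finitely many finitely supported matrices, hence lies in $M_F(D)$ for a finite $F\subset I$, where $\mathrm{inc}_\bullet\circ\mathrm{tr}_\bullet\simeq\id$ is the Morita-invariance homotopy of \cite[\S1.2]{LodCH}. That homotopy is built solely from the matrix units $E_{ij}$, so it is compatible with the inclusions $M_F\hookrightarrow M_{F'}$, and passing to the filtered colimit $\R^{I\times I}=\colim_F M_F(\R)$ (along which the Hochschild complex and its homology commute) yields the claim for this factor. For the continuous factor I would first identify $\calC_\textup{c}^\infty(\mathbb{R}^k\times\mathbb{R}^k)\cong\calC_\textup{c}^\infty(\mathbb{R}^k)\,\hat{\otimes}\,\calC_\textup{c}^\infty(\mathbb{R}^k)$ via the kernel theorem \cite[Prop.~51.6]{TreTVSDK}; under this identification the convolution product becomes the rank-one rule $(u\otimes v)*(u'\otimes v')=\big(\int v\,u'\big)\,u\otimes v'$, exhibiting $\calC_\textup{c}^\infty(\mathbb{R}^k\times\mathbb{R}^k)$ as the elementary algebra of the nondegenerate pairing $\langle v,u'\rangle=\int v\,u'$, with one-dimensional corner $e*\calC_\textup{c}^\infty(\mathbb{R}^k\times\mathbb{R}^k)*e=\R\,e$. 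The Morita homotopy then runs with $E_{11}$ replaced by $e$ and the matrix-unit bookkeeping replaced by this ket--bra calculus.

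The main obstacle is precisely this continuous factor: unlike the discrete one it cannot be exhausted by finite matrix blocks, so the contracting homotopy must be written intrinsically in the bornological framework, and one must check that all insertion and contraction operators are bounded for the von Neumann bornology and compatible with the completed projective bornological tensor products. H-unitality of all algebras involved, guaranteed by \cite[Prop.~2]{CraMoeFGCH} and preserved under $\hat{\otimes}$, is what makes the non-unital homotopy argument run. Finally, the sheaf statement follows by naturality, exactly as remarked just before the lemma: the trace $\tau_\bullet$, the inclusion $\mathrm{inc}_\bullet$, the idempotent $e$, and the homotopy are all defined over an arbitrary open $\sfG_{x_1}$-invariant $V\subset N_{x_1}\orbO$ in place of $N_{x_1}\orbO$ and commute with the restriction maps. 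Hence $\tau_\bullet$ is a morphism of sheaf complexes which is a quasi-isomorphism on every stalk, and therefore induces an isomorphism of the associated Hochschild homology sheaves, that is, an isomorphism of complexes of sheaves.
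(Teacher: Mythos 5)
Your structural reading agrees with the paper's own proof up to the point where the real work begins: $\tau_\bullet$ is a generalized trace, the candidate homotopy inverse is exactly the paper's $j_\alpha(f)=f\otimes\delta_{(1,1)}\otimes\alpha$ with $\alpha=\beta\otimes\beta$, $\int\beta^2=1$, and $\tau_\bullet\circ j_{\alpha,\bullet}=\id$ is immediate. The gap lies in the only direction that carries content, $j_{\alpha,\bullet}\circ\tau_\bullet\simeq\id$, and it sits precisely where you flag ``the main obstacle'' and then move on. First, your discrete-factor argument does not go through as stated: the Morita homotopy of \cite[Sec.~1.2]{LodCH} inserts bare matrix units $E_{ij}$ (i.e.\ matrix units times the unit of the coefficient algebra) as stand-alone tensor factors, and these lie in $M_F(D)$ only when $D$ is unital. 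Here $D$ is $\calC^\infty(\sfG_{x_1}\ltimes N_{x_1}\orbO)\hatotimes\calC_\textup{c}^\infty(\R^k\times\R^k)$, respectively the convolution algebra itself, neither of which is unital, so the homotopy formula is not even well defined. Invoking H-unitality as ``what makes the non-unital homotopy argument run'' is a placeholder, not an argument: H-unitality enters through excision, concretely through a unitalization, the long exact sequence of the resulting split extension, and the five lemma, and none of that machinery appears in your text. Second, for the continuous factor you never construct the homotopy at all: a smoothing kernel in $\calC_\textup{c}^\infty(\R^k\times\R^k)$ is not a finite sum of rank-one kernels $u\otimes v$, so the matrix-unit bookkeeping cannot be transferred by analogy; the finite sums of the matrix case must become integrals, and one must then prove that the resulting insertion and contraction operators are bounded for the bornologies and compatible with $\hatotimes$. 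You correctly identify this as what needs to be done, but do not do it.

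For comparison, the paper resolves both points in one stroke: it extends $\tau_\bullet$ and $j_{\alpha,\bullet}$ to the unitalization $\widetilde{\calC}^\infty(\sfG_{x_1}\ltimes N_{x_1}\orbO)$ (the direct sum with $\calC^\infty(N_{x_1}\orbO)^{\sfG_{x_1}}$, with product \eqref{eq:product}), applies the homotopy of \cite[Lemma 6]{CraMoeFGCH} --- which is proved for unital algebras and is engineered exactly for such smooth-kernel algebras --- and then descends to the non-unital convolution algebra via H-unitality, the long exact sequence of the split extension \eqref{eq:unitalization}, and the five lemma; the sheaf statement follows since everything is natural in the open set, as you also observe. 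To repair your proposal you would either have to carry out the integral (``ket--bra'') homotopy with the required boundedness checks, or adopt the unitalization-plus-five-lemma route. Note also that your two-step factorization of the trace silently requires H-unitality of the intermediate algebra $\calC^\infty(\sfG_{x_1}\ltimes N_{x_1}\orbO)\hatotimes\calC_\textup{c}^\infty(\R^k\times\R^k)$, which you assert via ``stability of H-unitality under $\hatotimes$'' without proof or reference; this is an additional obligation your route incurs and the paper's route avoids.
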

\begin{proof}
Choose a function $\beta\in \calC_\textup{c}^\infty(\mathbb{R}^k)$ such that 
\[
\int_{\mathbb{R}^k} \beta^2(x)dx=1. 
\]
Let $\alpha\in \calC_\textup{c}^\infty(\mathbb{R}^k\times \mathbb{R}^k)$ be the function
$\beta\otimes \beta$. Define an algebra morphism 
\[
  j_\alpha: \calC^\infty(\sfG_{x_1}\ltimes N_{x_1}\orbO)\to
  \calC^\infty(\sfG_{x_1}\ltimes N_{x_1}\orbO)\,\hat{\otimes}\,\R^{I\times I} \, \hat{\otimes} \,\calC_\textup{c}^\infty(\mathbb{R}^k\times \mathbb{R}^k)
\]
by 
\[
j_\alpha(f)=f\otimes \delta_{(1,1)}\otimes \alpha \ ,
\]
where $\delta_{(1,1)}$ is the function on $I\times I$ that is $1$ on $(1,1)$
and $0$ otherwise.  $j_{\alpha,\bullet}$ is the induced map on the cochain complex. 
It is easy to check $\tau_\bullet \circ j_{\alpha,\bullet}=\id$. Applying
$j_{\alpha, \bullet}\circ \tau_\bullet$ to
\[ (f_0\otimes \cdots \otimes f_m)\otimes (d_0\otimes \cdots \otimes d_m)\otimes (\alpha_0\otimes \cdots \otimes \alpha_m)\]
gives 
\[
\tau_I (d_0 * \cdots * g_m)\tau_{\mathbb{R}^k}(\alpha_0 * \cdots * \alpha_m)\big( f_0\otimes \cdots \otimes f_m\big)\otimes \big(\delta_{1,1}\otimes\cdots \otimes \delta_{1,1}\big)\otimes \big(\alpha\otimes \cdots \otimes \alpha\big). 
\]

Following  the proof of Lemma \ref{lem:localization-homotopies}, we consider the unital
algebra $\widetilde{\calC}^\infty(\sfG_{x_1}\ltimes N_{x_1}\orbO)$ which is the direct sum of
$\calC^\infty(\sfG_{x_1}\ltimes N_{x_1}\orbO)$ with $\calC^\infty(N_{x_1}\orbO)^{\sfG_{x_1}}$
and product structure given by Eq.~\eqref{eq:product}.
We then have the following split exact sequence in the category of bornological algebras
\begin{equation}\label{eq:unitalization}
  0\rightarrow \calC^\infty(\sfG_{x_1}\ltimes N_{x_1}\orbO)\rightarrow
  \widetilde{\calC}^\infty(\sfG_{x_1}\ltimes N_{x_1}\orbO)\rightarrow
  \calC^\infty(N_{x_1})^{\sfG_{x_1}}  \rightarrow 0.
\end{equation}
It is not hard to see that the chain maps $\tau_\bullet$ and $j_{\alpha, \bullet}$ extend to
the corresponding versions of the algebras $ \widetilde{\calC}^\infty(\sfG_{x_1}\ltimes N_{x_1}\orbO)$ and $\calC^\infty(N_{x_1})^{\sfG_{x_1}}$. As both algebras are unital, the homotopy maps constructed in the proof of \cite[Lemma 6]{CraMoeFGCH} can be applied to conclude that $j_{\alpha, \bullet}\tau_\bullet$ is a quasi-isomorphism for $ \widetilde{\calC}^\infty(\sfG_{x_1}\ltimes N_{x_1}\orbO)$ and $\calC^\infty(N_{x_1})^{\sfG_{x_1}}$. As the algebra $\calC^\infty(\sfG_{x_1}\ltimes N_{x_1})$ is $H$-unital, we consider the long exact sequence associated to the short exact sequence (\ref{eq:unitalization}). As $j_{\alpha, \bullet}$ and $\tau_{\bullet}$ are quasi-isomorphisms on
$ \widetilde{\calC}^\infty(\sfG_{x_1}\ltimes N_{x_1}\orbO)$ and $\calC^\infty(N_{x_1})^{\sfG_{x_1}}$, we conclude by the five lemma that $\tau_\bullet$ and $j_{\alpha, \bullet}$ are also quasi-isomorphisms for $\calC^\infty(\sfG_{x_1}\ltimes N_{x_1}\orbO)$.
The argument generalizes immediately to the sheaf case.
\end{proof}

Summarizing Lemma \ref{prop:stalk-linearization} -- Lemma \ref{lem:linearmodel}, we thus
obtain the following local model for the stalk complex $\sheafC_{\bullet,\smorbO}(\calA_\sfG)$.
\begin{proposition}\label{prop:local-model}
  For every orbit $\orbO\in X$ the composition
  $L_{\bullet,\orbO}:=\tau_{\bullet,0} \circ \Psi_{\bullet,0}\circ \Lambda_{\bullet,0}\circ
  \Theta_{\bullet,\orbO}$, where $\tau_{\bullet,0}$, $\Psi_{\bullet,0}$, and $\Lambda_{\bullet,0}$
  denote the respective sheaf morphisms localized at the zero sections, 
  is a quasi-isomorphism,
\[
\begin{split}
L_{\bullet,\orbO}: \sheafC_{\bullet,\orbO}(\calA_{\sfG})&\stackrel{\Theta_{\bullet,\orbO}}{\longrightarrow}\sheafC_{\bullet,\orbO} \big(\calA_{\sfG_{|\orbO}\ltimes N\orbO}\big)\stackrel{\Lambda_{\bullet,0}}{\longrightarrow}\sheafC_{\bullet,0} \Big(\calA_{\sfH_{\mathfrak{U}}}\Big)
\\
&\stackrel{\Psi_{\bullet,0}}{\longrightarrow} \sheafC_{\bullet,0} \big( \calA_{(\sfG_{x_1}\ltimes N_{x_1}\orbO) \times (I\times I)\times (\mathbb{R}^k\times \mathbb{R}^k)}\big)\stackrel{\tau_{\bullet,0}}{\longrightarrow}\sheafC_{\bullet,0} \big(\calA_{\sfG_{x_1}\ltimes N_{x_1}\orbO}\big).
\end{split}
\]
\end{proposition}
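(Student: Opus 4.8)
The plan is to obtain the statement as a purely formal consequence of the chain of quasi-isomorphisms established in Lemmas \ref{prop:stalk-linearization}, \ref{lem:covering}, \ref{lem:productgroupoid}, and \ref{lem:linearmodel}, together with the observation that passing to the stalk at a point is an exact operation. Indeed, the proposition is advertised as a summary, so the real content has already been discharged in the individual lemmas, and what remains is to compose them correctly.

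First I would record the general principle that makes the localizations legitimate. For any morphism $\phi_\bullet$ of complexes of sheaves over a differentiable space $Y$ and any point $y\in Y$, the induced map $\phi_{\bullet,y}$ on the stalk is the filtered colimit of $\phi_\bullet$ over the open neighborhoods of $y$. Since filtered colimits of $\R$-vector spaces are exact, they commute with the formation of homology; hence if $\phi_\bullet$ is a quasi-isomorphism of sheaf complexes, meaning it induces an isomorphism on homology sheaves, then $\phi_{\bullet,y}$ is a quasi-isomorphism of stalk complexes for every $y$, and an isomorphism of sheaf complexes localizes to an isomorphism of stalk complexes. This is precisely the mechanism already invoked in identifying $\HHsheaf_k (\calA)_\smorbO$ with $HH_k(\calA_\smorbO)$.

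Next I would assemble the four maps, tracking base points under the successive identifications. By Lemma \ref{prop:stalk-linearization}, $\Theta_{\bullet,\orbO}$ is a quasi-isomorphism from the stalk at $\orbO$ of $\sheafC_\bullet(\calA_\sfG)$ to the stalk at the zero orbit of $\sheafC_\bullet(\calA_{\sfG_{|\orbO}\ltimes N\orbO})$. By Lemma \ref{lem:covering}, $\Lambda_\bullet$ is a quasi-isomorphism of sheaf complexes over the common orbit space $N\orbO/\sfG_{|\orbO}\cong (\sfH_{\mathfrak{U}})_0/\sfH_{\mathfrak{U}}$, so its localization $\Lambda_{\bullet,0}$ at the zero section is a quasi-isomorphism of stalk complexes by the principle above. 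By the groupoid isomorphism of Lemma \ref{lem:productgroupoid}, $\Psi_\bullet$ is an isomorphism of sheaf complexes, hence $\Psi_{\bullet,0}$ is an isomorphism on stalks; and Lemma \ref{lem:linearmodel} gives that $\tau_\bullet$ is an isomorphism of sheaf complexes over $N_{x_1}\orbO/\sfG_{x_1}$, so $\tau_{\bullet,0}$ is an isomorphism on stalks. Under the identifications of orbit spaces used throughout, the point $\orbO$ corresponds in each stage to the zero orbit, so the four localized maps are genuinely composable at matching stalks.

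The conclusion is then immediate: $L_{\bullet,\orbO} = \tau_{\bullet,0}\circ \Psi_{\bullet,0}\circ \Lambda_{\bullet,0}\circ \Theta_{\bullet,\orbO}$ is a composition of quasi-isomorphisms of complexes of $\R$-vector spaces, and such a composition is again a quasi-isomorphism. I expect no serious obstacle in this last step; the only point that demands genuine care is the bookkeeping of the orbit-space identifications and their distinguished zero points, so that the stalk localizations are taken over corresponding points and the four arrows truly compose. The substantive work has already been carried out in the preceding lemmas.
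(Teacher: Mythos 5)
Your proposal is correct and coincides with the paper's own treatment: the paper offers no separate argument for Proposition \ref{prop:local-model}, but presents it precisely as the composition of the quasi-isomorphisms from Lemmas \ref{prop:stalk-linearization}--\ref{lem:linearmodel}, localized at the matching stalks. Your only addition is to spell out the (correct) principle that sheaf-level quasi-isomorphisms restrict to stalk-level ones because stalks are exact filtered colimits, which the paper leaves implicit.
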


\section{Basic relative forms}
Let $M$ be a smooth manifold equipped with a left action of a compact Lie group
$G$ which we write as $(g,x)\mapsto gx,$ for $g\in G, x\in M$. Associated to
this action is the Lie groupoid $G\ltimes M\rightrightarrows M$ with source map
given by the projection $(g,x)\mapsto x$ and target given by the action
$(g,x)\mapsto gx$. The {\em loop space}
$\Lambda_0(G\ltimes M)\subset G\times M$ coincides in this case with the
disjoint union of all fixed point sets $M^g\subset M$ for $g\in G$:
\[
  \Lambda_0(G\ltimes M):=\big\{(g,p)\in G\times M \mid gp=p\big\}=
  \bigcup_{g\in G}\{g\}\times M^g \ .
\]
For fixed $g\in G$, the fixed point subset $M^g\subset M$ is a closed
submanifold but it can wildly vary as $g$ varies over $G$. Therefore, the
loop space $\Lambda_0(G\ltimes M)$ is a singular subset of $G\times M$. If we
let $G$ act on $G\times M$ by 
\[
h\cdot (g,p):=(hgh^{-1},hp), \quad h\in G , \: (g,p)\in G \times M \ ,
\]
this action preserves $\Lambda_0(G\ltimes M)\subset G\times M$ sending $M^g$ to $M^{hgh^{-1}}$.
In \cite{BryAAGAH,BryCHET}, Brylinski introduces the notion of {\em basic relative forms}.
Intuitively, a basic relative $k$-form is a smooth family
$(\omega_g)_{g\in G} \in \prod_{g\in G} \Omega^k(M^g)$ of  
differential forms on fixed point subspaces which are
\begin{enumerate}[(i)]
\item\label{ite:horizontal}
  {\em horizontal} that is $i_{\xi_{M^g}}\omega_g=0$ for all
  $g\in G$ and $\xi\in {\rm Lie}(G_g)$, and
\item\label{ite:invariant}
  $G$-{\em invariant} which means that
  $h^*\omega_g=\omega_{h^{-1}gh}$ for all  $g,h \in G$.
\end{enumerate}
Here, $G_g:=Z_G(g)$ denotes the centralizer of $g\in G$, which acts on $M^g$.
Because of the singular nature of $\Lambda_0$, one needs to make sense of what is exactly
meant by a {\em smooth} family of differential forms. There are two solutions for this: 

\subsection*{(A) Sheaf theory}
In the sense of Grauert--Grothendieck and following Brylinski \cite{BryCHET}, we define the sheaf
of relative forms on $\Lambda_0(G\ltimes M)$ as the quotient sheaf
\[
  \rOmega{k}{\Lambda_0}:=\iota^{-1}\left(\Omega^k_{G\ltimes M\to G}\slash  \left( \calJ \Omega^k_{G\ltimes M\to G} + d_\textup{rel}\calJ \wedge \Omega^{k-1}_{G\ltimes M\to G} \right) \right) \ .
\]
Here, $\Omega^k_{G\ltimes M\to G}$ denotes the sheaf of $k$-forms on $G\times M$ relative to the projection 
${\rm pr}_1:G\times M\to G$ and $\iota$ the canonical injection
$\Lambda_0(G\ltimes M) \hookrightarrow G\ltimes M$.
A form $\omega\in \Omega^k_{G\ltimes M\to G} (\widetilde{U})$ for
$\widetilde{U}\subset G\ltimes M$ open is given by a smooth global section
of the vector bundle $s^*\bigwedge^kT^*M$ that is by an element
$\omega \in \Gamma^\infty (\widetilde{U},s^*\bigwedge^kT^*M)$.
The de Rham differential on $M$ defines a differential
$d_\textup{rel}: \Omega^k_{G\ltimes M\to G}\to  \Omega^{k+1}_{G\ltimes M\to G}$.
Finally, $\calJ$ denotes the vanishing ideal of smooth functions on $G\times M$ that restrict to zero on
$\Lambda_0(G\ltimes M)\subset G\times M$. Note that
$\calJ \Omega^\bullet_{G\ltimes M\to G} + d_\textup{rel}\calJ \wedge \Omega^\bullet_{G\ltimes M\to G} $ is a
differential graded ideal in the sheaf complex $\left( \Omega^k_{G\ltimes M\to G}, d_\textup{rel}\right)$,
so $\rOmega{\bullet}{\Lambda_0}$ becomes a sheaf of differential graded algebras
on the loop space. For open $U\subset \Lambda_0(G\ltimes M)$, an element of
$\rOmega{k}{\Lambda_0} (U)$ can now be understood as an equivalence class $[\omega]_{\Lambda_0}$
of forms $\omega \in \Omega^k_{G\ltimes M\to G} (\widetilde{U})$ defined
on some open $\widetilde{U}\subset G\ltimes M$ such that $U = \widetilde{U}\cap \Lambda_0(G\ltimes M)$.
This explains the definition of the sheaf complex of relative forms on the singular space
$\Lambda_0(G\ltimes M)$; confer also \cite{PflPosTanGGCDSSCB}.
Next observe that the map which associates to each $p\in M$ the conormal space 
$N^*_p := \big( T_pM / T_p\calO_p \big)^* $ is a generalized subdistribution of the cotangent bundle
$T^*M$ in the sense of Stefan-Suessmann, cf.~\cite{SteISVF,SusOFVFID,JotRatSniSRDS}.
In the language of \cite{DraLeeParRicSDFG}, $N^*$ is a cosmooth generalized distribution.  
The restriction of $N^*$ to each orbit, and even to each  stratum of $M$ of a fixed isotropy type, is
a vector bundle, cf.~\cite{PflPosTanGOSPLG}. 
Henceforth,  the pullback distribution $s^*\bigwedge^k N^*$ is naturally a cosmooth generalized subdistribution
of $\bigwedge^k T^*G\ltimes M$. We define the space $\hrOmega{k}{\Lambda_0\sfG} (U)$ of 
\emph{horizontal relative $k$-forms on the loop space} (over $U$)
as the  subspace 
\[
  \hrOmega{k}{\Lambda_0\sfG} (U) := \big\{ [\omega]_{\Lambda_0} \in \rOmega{k}{\Lambda_0\sfG} (U) \mid
  \omega_{(g,p)} \in {\bigwedge}^kN^*_p \text{ for all } (g,p)\in U \big\} 
  \, .
\]
This implements the above condition \eqref{ite:horizontal}.
Observe that the action of $G$ on $TN$ leaves the orbits invariant, hence induces also an action on the
conormal distribution $N^*$ in a canonical way \cite[Sec.~3]{PflPosTanGOSPLG}.
Call a section $[\omega]_{\Lambda_0} \in \hrOmega{k}{\Lambda_0} (U)$ \emph{invariant}, if 
\begin{equation}
  \label{eq:DefInvHorForms}
  \omega_{hgh^{-1},hp} (hv_1 , \ldots , hv_k ) =  \omega_{(g,p)} (v_1 , \ldots , v_k )
\end{equation}
for all $(g,p)\in U \subset \Lambda_0 \sfG $, $h\in G $ such that $(hgh^{-1},hp)\in U$ and
$v_1 , \ldots , v_k \in N_p$. Note that the invariance of $[\omega]_{\Lambda_0}$ does not depend on the
particular choice of the representative $\omega$ such that $\omega_p \in \bigwedge^kN^*_p$.
Condition \eqref{ite:invariant} is covered by defining the space $\brOmega{k}{\Lambda_0} (U)$ of
\emph{basic relative $k$-forms on the loop space} (over $U$) now as the space of all invariant
horizontal relative $k$-forms 
$[\omega]_{\Lambda_0} \in \hrOmega{k}{\Lambda_0\sfG} (U)$. Obviously, one thus obtains sheaves
$\hrOmega{k}{\Lambda_0}$ and $\brOmega{k}{\Lambda_0}$ on the loop space $\Lambda_0(G\ltimes M)$.
We will call the push forward $\pi_* s_* \brOmega{k}{\Lambda_0}$ by the source map $s$ and canonical projection
$\pi:M \to X =M/G$ sheaf of basic relative functions as well and denote it also 
by the symbol $\brOmega{k}{\Lambda_0}$. This will not lead to any confusion. 
The interpretion of basic relative forms as smooth families of forms on the fixed point manifolds is
still missing, but will become visible in the following approach.

\subsection*{(B) Differential Geometry}
From a more differential geometric perspective, we consider the family of vector bundles
$F\to \Lambda_0$ defined by $F_{(g,p)}:=T_p^*M^g$ for $(g,p)\in \Lambda_0(G\ltimes M)$. Of course,
this does not define a (topological) vector bundle over the inertia space
$\Lambda_0 (G\ltimes M)$ because in general the rank jumps discontinuously but it is again a
cosmooth generalized distribution. Using the canonical projection $s^*T^*M|_{\Lambda_0}\to F$ we say that
a local section $\omega\in\Gamma(U,\bigwedge^kF)$ over $U\subset\Lambda_0$ is {\em smooth} if for each
$(g,p)\in U$ there exist open neighborhoods $O\subset G$ of $g$ and $V\subset M$ of $p$
together with a \emph{locally representing} smooth $k$-form
$\omega_{O,V} \in \Gamma^\infty (O \times V, \bigwedge^k s^* T^*M)$ such that 
$(O \times V) \cap \Lambda_0 \subset U $ and $\omega_{(h,q)} = \big[\omega_{O,V}\big]_{(h,q)}$ for all  
$(h,q) \in (O \times V) \cap \Lambda_0(G\ltimes M)$.  Hence a smooth section $\omega$ can be identified
with the smooth family $(\omega_g)_{g\in \operatorname{pr}_G(U)}$  of forms 
$\omega_g \in \Omega^k \Big( s \big( U \cap (\{ g \} \times M^g)\big)\Big)$ wich are uniquely defined 
by the condition that ${\omega_g}_{|V^g} = \iota_{V^g}^* \omega_{O,V}$ for all $g \in O$  
and all pairs $(O,V)$ with locally representing forms $\omega_{O,V}$ as before.
The $\iota_{V^g} : V^g \hookrightarrow V$ hereby are  the canonical embeddings of the fixed point manifolds $V^g$. 
We denote the space of all smooth sections of $\bigwedge^k F$ over  $U$ by 
$\Gamma^\infty (U,\bigwedge^k F )$ or  $\Gamma^\infty_{\bigwedge^k F} (U)$. Obviously, $\Gamma^\infty_{\bigwedge^k F}$ 
becomes a sheaf on $\Lambda_0$. 

\begin{proposition}
\label{prop:factorization-relative-forms}
  The canonical sheaf morphism $\theta^k:\iota^{-1}\Gamma^\infty_{\bigwedge^k s^* T^*M}  \to  \Gamma^\infty_{\bigwedge^k F}$
  factors through a unique epimorphism of sheaves $\Theta^k:\rOmega{^\bullet}{\Lambda_0} \to  \Gamma^\infty_{\bigwedge^k F}$ making the following diagram commutative:
  \[
  \xymatrix{\iota^{-1}\Gamma^\infty_{\bigwedge^k s^* T^*M} \ar[rr]^{\theta^k}\ar[d]&&\Gamma^\infty_{\bigwedge^k F}\\
  \rOmega{^\bullet}{\Lambda_0}\ar[rru]_{\Theta^k}&&}
  \]
\end{proposition}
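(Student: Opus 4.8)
The plan is to realize $\theta^k$ explicitly as a fibrewise restriction and then verify that it annihilates the differential graded ideal by which $\rOmega{\bullet}{\Lambda_0}$ is defined; the universal property of the quotient then produces the factorization $\Theta^k$. Recall first that $\Omega^k_{G\ltimes M\to G}=\Gamma^\infty_{\bigwedge^k s^* T^*M}$, so the left vertical map is nothing but $\iota^{-1}$ applied to the quotient morphism $\Omega^k_{G\ltimes M\to G}\to\Omega^k_{G\ltimes M\to G}/(\calJ\,\Omega^k_{G\ltimes M\to G}+d_\textup{rel}\calJ\wedge\Omega^{k-1}_{G\ltimes M\to G})$; since $\iota^{-1}$ is exact this is a sheaf epimorphism, and consequently any factorization of $\theta^k$ through it is automatically unique. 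Over a point $(g,p)\in\Lambda_0$ the canonical projection $s^*T^*M|_{\Lambda_0}\to F$ is the restriction $T_p^*M\to T_p^*M^g$ dual to the inclusion $T_pM^g\hookrightarrow T_pM$; passing to exterior powers it becomes an algebra homomorphism $\bigwedge^\bullet T_p^*M\to\bigwedge^\bullet T_p^*M^g$, so $\theta^k$ is multiplicative in the sense that $\theta^{i+j}(\alpha\wedge\beta)=\theta^i(\alpha)\wedge\theta^j(\beta)$.

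First I would dispatch the summand $\calJ\,\Omega^k_{G\ltimes M\to G}$. If $f\in\calJ$ and $\omega\in\Omega^k_{G\ltimes M\to G}$, then at every $(g,p)\in\Lambda_0$ we have $(f\omega)_{(g,p)}=f(g,p)\,\omega_{(g,p)}=0$, so its restriction to $\bigwedge^k T_p^*M^g$ vanishes at each point and the germ of $\theta^k(f\omega)$ in $\Gamma^\infty_{\bigwedge^k F}$ is zero. The genuine content lies in the summand $d_\textup{rel}\calJ\wedge\Omega^{k-1}_{G\ltimes M\to G}$, and by multiplicativity of $\theta^k$ it suffices to show that $\theta^1(d_\textup{rel}f)=0$ for every $f\in\calJ$. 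Fix $(g,p)\in\Lambda_0$ and recall $\{g\}\times M^g=\Lambda_0\cap(\{g\}\times M)$, so the function $q\mapsto f(g,q)$ vanishes on $M^g$ near $p$. As $d_\textup{rel}$ differentiates only in the $M$-direction, $(d_\textup{rel}f)_{(g,p)}=d_p\big(f(g,\,\cdot\,)\big)\in T_p^*M$, and because $f(g,\,\cdot\,)|_{M^g}\equiv 0$ this covector annihilates $T_pM^g$; therefore its image under $T_p^*M\to T_p^*M^g$ is zero. Performing this at every $(g',p')$ in a neighborhood of $(g,p)$ in $\Lambda_0$ shows that the corresponding germ in $\Gamma^\infty_{\bigwedge^k F}$ vanishes.

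Together these two vanishing statements show that $\theta^k$ kills $\iota^{-1}\big(\calJ\,\Omega^k_{G\ltimes M\to G}+d_\textup{rel}\calJ\wedge\Omega^{k-1}_{G\ltimes M\to G}\big)=\ker$ of the left vertical map, so it descends to a unique morphism $\Theta^k$ through the quotient. To see that $\Theta^k$ is an epimorphism I would invoke the definition of smoothness of sections of $\bigwedge^k F$: every germ of $\Gamma^\infty_{\bigwedge^k F}$ is, by construction, represented locally by a genuine smooth form $\omega_{O,V}\in\Gamma^\infty(O\times V,\bigwedge^k s^*T^*M)$, which is precisely a local lift along $\theta^k$; hence $\theta^k$ is epi, and since $\theta^k=\Theta^k\circ(\text{quotient})$, so is $\Theta^k$. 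The main obstacle — really the only step with geometric substance — is the vanishing of $d_\textup{rel}f$ on $TM^g$ for $f\in\calJ$; everything else is formal sheaf theory, and the crux is the identification of $M^g$ as the \emph{entire} fibre of $\Lambda_0$ over $g$, which forces $f(g,\,\cdot\,)$ to vanish on the whole fixed-point manifold rather than merely at $p$.
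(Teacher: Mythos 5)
Your proof is correct and follows essentially the same route as the paper's: both arguments reduce to showing that the two summands of the differential graded ideal are killed by $\theta^k$ — pointwise vanishing for $\calJ\,\Omega^k_{G\ltimes M\to G}$, and for $d_\textup{rel}\calJ\wedge\Omega^{k-1}_{G\ltimes M\to G}$ the key fact that $d_\textup{rel}f$ annihilates $T_pM^g$ because $f(g,\cdot)$ vanishes on the whole fixed-point manifold $M^g$ — and then factor through the quotient. The only differences are cosmetic: the paper carries out the key step in coordinates adapted to $V^g$ and obtains surjectivity from softness of the $\calC^\infty_{\Lambda_0}$-module sheaf $\Gamma^\infty_{\bigwedge^k F}$, whereas you phrase the same vanishing invariantly (via multiplicativity of the fibrewise restriction) and obtain the epimorphism property at the stalk level directly from the definition of smooth sections of $\bigwedge^k F$, which indeed suffices for a sheaf epimorphism.
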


\begin{proof}  
The claim follows by showing that for open $\tildeU \subset G \times M$ 
  and $U := \tildeU \cap \Lambda_0 (G \ltimes M)$ the canonical map 
  $\theta^k_{\tildeU}: \Gamma^\infty (\tildeU , \bigwedge^k s^* T^*M) \to   \Gamma^\infty (U ,\bigwedge^k F)$, 
  $\omega \mapsto [\omega]$ is surjective and has
  \[
    \calK (\tildeU) := \calJ (\tildeU) \, \Gamma^\infty (\tildeU , {\bigwedge}^k s^* T^*M) + 
    d_\textup{rel}\calJ (\tildeU) \wedge  \Gamma^\infty (\tildeU,{\bigwedge}^{k-1} s^* T^*M)
  \]
  contained in its kernel.

  The sheaf  $\Gamma^\infty_{\bigwedge^k F}$ is a $\calC^\infty_{\Lambda_0}$-module sheaf, hence a soft sheaf. This entails surjectivity
  of $\theta^k_{\tildeU}$. Assume that $\omega \in  \Gamma^\infty (\tildeU,\bigwedge^k s^* T^*M)$ is of the form $\omega = f\, \varrho$ 
  for some $f \in \calJ (\tildeU)$ and $\varrho  \in  \Gamma^\infty (\tildeU,\bigwedge^k s^* T^*M)$. Then 
  \[
   \theta^k_{\tildeU} (\omega)_{(g,p)} = \theta^k_U(f\varrho)_{(g,p)} = f(q,p) \varrho_{(q,p)} =0 \quad 
   \text{for all } (g,p)\in U  \ . 
  \]
  Now assume $\omega = d_\textup{rel}f \wedge \varrho$ with $f$ as before and $\varrho  \in  \Gamma^\infty (\tildeU,\bigwedge^{k-1} s^* T^*M)$. 
  To prove that $ \theta^k_U(\omega) =0 $ it suffices to show that $ \iota^*_{U^g_g} \omega =0$ for all $g \in \operatorname{pr}_G(U)$. 
  Fix some $g \in \operatorname{pr}_G(U)$ and $p \in U^g_g$ and choose an open coordinate neighborhood $V \subset M$ with coordinates 
  $(x_1, \ldots , x_d): V \hookrightarrow \R^d$ such that $V \subset U_g$,
  $({x_1}_{|V^g}, \ldots , {x_k}_{|V^g}): V^g \hookrightarrow \R^k$ is  a local coordinate system of $M^g$ over $V^g$ and such that 
  $V^g$ is the zero locus of the coordinate functions $(x_{k+1},\ldots , x_d): V \hookrightarrow \R^{d-k}$.  
  After possibly shrinking $V$ there exists an open neighborhood $O$ of $g$ in $G$ such that $O \times V \subset \tildeU$. 
  Extend the coordinate functions $(x_1,\ldots,x_d)$ to smooth functions on $O\times V$ constant along the fibers 
  of the source map. 
  Then we have $d_\textup{rel}f = \sum_{l=1}^d \frac{\partial f}{\partial x_l} dx_l $. Since $\frac{\partial f}{\partial x_l} (g,p) =0$
  for $p\in V^g$ and $1 \leq l \leq k$ and since $\iota^*_{V^g} dx_l=0$ for  $k < l \leq d$ one gets
  \[
    \iota^*_{V^g}\iota^*_{U^g_g} \omega =  \iota^*_{V^g} \big( d_\textup{rel}f \wedge \varrho \big)  = 
    \sum_{l=1}^d  \Big( \iota^*_{V^g} \frac{\partial f}{\partial x_l}\Big) \, \big(\iota^*_{V^g} dx_l \big) \wedge \big(\iota^*_{V^g} \varrho \big) 
    = 0 \ ,
  \]
  where, by slight abuse of notation, we have also used the symbol $\iota_{V^g}$ for the embedding
  $V^g \hookrightarrow U$, $p \mapsto (g,p)$. 
  So $\iota^*_{U^g_g} \omega = 0$ and $\calK(\tildeU)$ is in the kernel
  of $\theta^k_{\tildeU}$. Hence $\theta^k_{\tildeU}$ factors through some linear map
  \[ \Theta^k_U: \rOmega{k}{\Lambda_0}(U) \to \Gamma^\infty (U,{\bigwedge}^k F) \ . \]
This proves the claim.
 \end{proof}
\begin{remark}
  Conjecturally, the morphism $\Theta^k$ is an isomorphism, showing that the sheaf theoretic approach (A)
  and the differential geometric approach (B) above leads to the same definition of basic relative forms.
  Below, in Section \ref{sec:circleaction}, we prove this conjecture for the case of an $S^1$-action.
  In the general case this conjecture remains open.

  Note that the image of the sheaf of horizontal relative $k$-forms under $\Theta^k$ coincides exactly
  with those families of forms  $(\omega_g)_{g\in \operatorname{pr}_G(U)}$ fulfilling condition
  \eqref{ite:horizontal} above. Since $G$ naturally acts on the generalized distribution $F$ and
  $\Theta^k$ is obviously equivariant by construction, the original conditions by Brylinski are recovered
  now also in the differential geometric picture of relative forms. 
\end{remark}
\begin{remark}
\label{rem:block-getzler}  
In \cite{BloGetECHED}, Block and Getzler define a sheaf on $G$ whose stalk at $g\in G$ is given by the space of $G_g$-equivariant 
differential forms on $M^g$. There are two differentials on this sheaf, $d$ and $\iota$, together constituting the equivariant differential 
$D:=d+\iota$, which, under an HKR-type map correspond to the Hochschild and cyclic differential on the crossed product algebra 
$G\ltimes C^\infty(M)$. Taking cohomology with respect to $\iota$ only leads to a very similar definition of basic relative forms as above, 
however notice that the basic relative forms defined above form a sheaf over the quotient $M\slash G$, not the group $G$. 
\end{remark}


%
%
\section{The group action case}\label{sec:actioncase}
In this section  we consider the action of a compact Lie group $G$ on a complete bornological algebra $A$
and then specialize to the case where $A$ is the algebra of smooth functions
on a smooth $G$-manifold $M$. The general assumption hereby is always that the action
$\alpha: G\times A$, $(g,a)\mapsto g\cdot a$ is smooth in the sense of \cite{KriMicCSGA} that is if each smooth curve in
$G\times A$ is mapped by $\alpha$ to a smooth curve in $A$. 
This is automatically guaranteed when $G$ acts by diffeomorphisms on the manifold $M$ and $A=\calC^\infty (M)$.
Under the assumptions made the associated \emph{smooth crossed product} $G\ltimes A$ is given by $\calC^\infty(G,A)$
equipped with the product
\begin{equation}
  \label{eq:convolution-product-left-action-algebra}
    (f_1\ast f_2)(g):=\int_Gf_1(h) \, (h\cdot f_2(h^{-1}g)) \,  dh \ ,\quad
    f_1,f_2\in \calC^\infty(G,A), \: g \in G \ .
\end{equation}
\subsection{The equivariant Hochschild complex}
To compute the Hochschild homology of the smooth crossed product $G\ltimes A$,
consider the bigraded vector space 
\[
  C=
  \bigoplus_{p,q\geq 0} C_{p,q},\quad
  \text{with} \quad C_{p,q}:=\calC^\infty(G^{(p+1)},A^{\otimes(q+1)}).
\]
There exists a bi-simplicial structure on $C$ given by face maps
$\delta_i^v:C_{p,q}\to C_{p,q-1}$, $0\leq i\leq q$ and
$\delta_j^h:C_{p,q}\to C_{p-1,q}$, $0\leq j\leq p$ defined as follows.
The vertical maps are given by 
\[
  \delta_i^v(F)(g_0,\ldots,g_p):=
  \begin{cases} b_i(F( g_0,\ldots,g_p))&\text{for  } 0\leq i\leq q-1,\\
    b^{(g_0\cdots g_p)^{-1}}_q(F(g_0,\ldots,g_p))&\text{for  } i\leq q,
  \end{cases}
\]
where the $b_i$ for $0\leq i\leq q-1$ are the first $q-1$ simplicial maps multiplying the
$i$'th and $i+1$'th entry in $A^{\otimes(q+1)}$ underlying the Hochschild chain complex of $A$,
and $b_q^g$ is the $g$-twisted version of the last one:
\[
  b^g_q(a_0\otimes\ldots\otimes a_q):=(g\cdot a_q)a_0\otimes a_1\otimes\ldots\otimes a_{q-1}\ ,
  \quad a_0,\ldots ,a_q \in A ,  \: g\in G \ .
\]
The horizontal maps are defined by
\[
  \delta_j^h(F)(g_0,\ldots,g_{p-1}):=
  \begin{cases}\int_GF(g_0,\ldots, h,h^{-1}g_j,\ldots g_{p-1})\, dh&\text{for  }
    0\leq j\leq p-1,\\
    \int_G h\cdot F(h^{-1}g_0,g_1,\ldots,g_{p-1},h)\, dh & \text{for  }j=p,
  \end{cases}
\]
where, in the second line, $h$ acts diagonally on $A^{\otimes(q+1)}$.
The following observations now hold true. 

\paragraph{$(i)$} The diagonal complex $\operatorname{diag}(C_{\bullet,\bullet}):=\bigoplus_{k\geq 0}C_{k,k}$
equipped with the differential
\[ d_\textup{diag}:=\sum_i(-1)^i\delta^h_i\delta^v_i \]
is isomorphic to the Hochschild complex 
$C_k(G\ltimes A)=\calC^\infty\big( G^{(k+1)},A^{\otimes(k+1)}\big)$ of the smooth crossed product
algebra $G \ltimes A$ via the isomorphism
$\overline{\phantom{F}}: \operatorname{diag}(C_{\bullet,\bullet}) \to C_\bullet(G \ltimes A))$,
$F\mapsto \overline{F}$ defined by 
\begin{equation}
\label{iso-c}
\overline{F}(g_0,\ldots,g_k):=(g_k^{-1}\cdots g_0^{-1}\otimes g_k^{-1}\cdots g_1^{-1}\otimes\ldots\otimes g_k^{-1}) \cdot F(g_0,\ldots,g_k), \quad F\in C_{k,k} \ ,
\end{equation}
where the pre-factor on the right hand side acts componentwise via the action of $G$ on $A$.
\paragraph{$(ii)$} The vertical differential $\delta^v$ in the total complex is given by a twisted version of the standard Hochschild complex of the algebra $A$. The horizontal differential $\delta^h$ in the $q$-th row can be interpreted as 
the Hochschild differential of the convolution algebra $\calC^\infty(G)$ with values in the $G$-bimodule $\calC^\infty(G,A^{\otimes (q+1)})$ with bimodule structure
\[
  (g\cdot f)(h):=g(f(g^{-1}h)),\quad (f\cdot g)(h):=f(hg),\quad f\in \calC^\infty(G,A^{\otimes (k+1)}),
  \: g,h \in G \ .
\]
The homology of this complex is isomorphic to the group homology of $G$ with values in the adjoint
module $\calC^\infty(G,A^{\otimes (k+1)})_{\rm ad}$ given by $\calC^\infty(G,A^{\otimes (k+1)})$
equipped with the diagonal action:
\[
  H_\bullet \big( \calC^\infty(G),\calC^\infty(G,A^{\otimes (q+1)}) \big)\cong
  H_\bullet^{\rm diff}\big( G , \calC^\infty(G,A^{\otimes (q+1)})_{\rm ad} \big) \ .
\]  
Because $G$ is a compact Lie group, its group homology vanishes except for the zeroth degree:
\[
  H_k^{\rm diff}\big( G,\calC^\infty(G,A^{\otimes (k+1)})_{\rm ad} \big)=
  \begin{cases}
    \calC^\infty(G,A^{\otimes (k+1)})_{\rm ad}^{\rm inv} & \text{for } k=0 , \\
    0&\text{for } k >0.
  \end{cases}
\]
\paragraph{$(iii)$} Filtering the total complex by rows, we obtain a spectral sequence with $E^2$-terms
\[
  E^2_{0,q}\cong \calC^\infty(G,A^{\otimes(q+1)})^{\rm inv},\qquad E^2_{p,q}=0 \text{ for } p\geq 1.
\]
The spectral sequence therefore collapses and the cohomology of the total complex is computed by
the complex $C^G_\bullet (A):=\calC^\infty(G,A^{\otimes(\bullet+1)})^{\rm inv}$ equipped with
the twisted Hochschild differential
\[
  (b_{\rm tw}f)(g):=\sum_{i=0}^q(-1)^ib_i(f(g))+(-1)^{q+1}b_{q+1}^{g^{-1}}(f(g))\ ,
  \quad f \in \calC^\infty(G,A^{\otimes(q +1)}) , \: g \in G \ .
\]
This complex is called the {\em equivariant Hochschild complex} in \cite{BloGetECHED}.
\paragraph{$(iv)$} By the Eilenberg--Zilber theorem, the diagonal complex is quasi-isomorphic to the total complex ${\rm Tot}(C_{\bullet,\bullet})$ with $\delta_{\rm Tot}:=\delta^h+\delta^v$ where the horizontal and vertical differentials are given by the usual formulas 
$\delta^{h,v}:=\sum_i(-1)^i\delta_i^{h,v}$. There is an explicit formula for the map $EZ: {\rm diag}(C_{\bullet,\bullet})\to {\rm Tot}(C_{\bullet,\bullet})$ implementing this quasi-isomorphism.
Combining items $(i)-(iv)$ above we conclude that the following holds.

\begin{proposition}
\label{prop:qismequivariantcplx}
Given a complete bornological algebra $A$ with a smooth left $G$-action, the composition
\[
  \widetilde{\phantom{F}}: C_\bullet(G \ltimes A )\stackrel{\overline{\phantom{F}}}{\longrightarrow}
  {\rm diag}(C)_\bullet\stackrel{EZ}{\longrightarrow}{\rm Tot}(C_{\bullet,\bullet})\longrightarrow
  C^G_\bullet(A)
\]
is a quasi-isomorphism of complexes. The explicit formula is given by mapping a chain
$F\in C_k(\calC^\infty(G,A))$ to the equivariant Hochschild chain $\widetilde{F}\in C^G_{k}(A)$ defined by
\[
\widetilde{F}(g):=\int_{G^{k}}(g^{-1}h_1\cdots h_k\otimes 1\otimes h_1\otimes \ldots\otimes h_1\cdots h_{k-1})F(h_k^{-1}\cdots h_1^{-1}g,h_1,\ldots,h_k)dh_1\cdots dh_k.
\]
\end{proposition}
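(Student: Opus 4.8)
The plan is to assemble observations $(i)$--$(iv)$ into a single chain of (quasi-)isomorphisms and then to read off the explicit formula for $\widetilde{\phantom{F}}$ by tracing a chain through each map. The three arrows in the statement are, respectively, the isomorphism of $(i)$, the Eilenberg--Zilber map of $(iv)$, and the edge morphism onto the equivariant Hochschild complex coming from the spectral sequence of $(iii)$; once each is known to induce an isomorphism on homology, so does their composite.

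First I would check that $\overline{\phantom{F}}$ from $(i)$ is genuinely an isomorphism of complexes $\big(\operatorname{diag}(C_{\bullet,\bullet}),d_{\rm diag}\big)\to C_\bullet(G\ltimes A)$. Since the prefactor in \eqref{iso-c} is a componentwise application of the $G$-action, the map is bijective with an explicit inverse; what must be verified is that it intertwines $d_{\rm diag}=\sum_i(-1)^i\delta^h_i\delta^v_i$ with the Hochschild boundary of the convolution algebra, which is a direct computation from the definitions of $\delta^v_i$, $\delta^h_j$ and the product \eqref{eq:convolution-product-left-action-algebra}.

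Next I would justify the collapse of the row-filtration spectral sequence in $(iii)$. The decisive input is compactness of $G$: integration against the normalized Haar measure supplies a contracting homotopy for the horizontal (group-homology) complex in each positive degree, so that the differentiable group homology $H^{\rm diff}_p\big(G,\calC^\infty(G,A^{\otimes(q+1)})_{\rm ad}\big)$ vanishes for $p\geq 1$ and equals $\calC^\infty(G,A^{\otimes(q+1)})^{\rm inv}$ for $p=0$. Hence $E^1$ is concentrated in the column $p=0$, the spectral sequence degenerates, and the edge map ${\rm Tot}(C_{\bullet,\bullet})\to C^G_\bullet(A)$ is a quasi-isomorphism onto the equivariant Hochschild complex carrying the twisted differential $b_{\rm tw}$. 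Combining this with the Eilenberg--Zilber quasi-isomorphism $(iv)$ and the isomorphism $(i)$ establishes that $\widetilde{\phantom{F}}$ is a quasi-isomorphism.

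It then remains to derive the closed formula for $\widetilde F$. I would substitute the shuffle form of the Eilenberg--Zilber map into the composite applied to $F\in C_{k,k}$; under the projection to $C^G_k(A)$ only the bidegree-$(0,k)$ component survives, and the horizontal degeneracies assemble into the iterated integral $\int_{G^k}dh_1\cdots dh_k$. Carefully collecting the $G$-action prefactors produced by $\overline{\phantom{F}}$, by the twist in the last vertical face $b^g_q$, and by the shuffle then yields the stated expression for $\widetilde F(g)$. The main obstacle is twofold. On the analytic side one must ensure that averaging over $G$ and the Eilenberg--Zilber homotopy are compatible with the complete bornological tensor products and with smoothness of the action, so that all integrals over $G$ and $G^k$ land in smooth, bornologically bounded chains; here compactness of $G$ together with nuclearity of the spaces involved is exactly what is needed. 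On the combinatorial side, the genuinely laborious part is the bookkeeping of the numerous $G$-twists in verifying the explicit formula for $\widetilde F$ --- routine but error-prone, and where most of the real work lies.
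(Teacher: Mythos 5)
Your proposal is correct and follows essentially the same route as the paper, whose proof of Proposition \ref{prop:qismequivariantcplx} is precisely the combination of observations $(i)$--$(iv)$: the chain isomorphism $\overline{\phantom{F}}$, the vanishing of differentiable group homology of the compact group $G$ via averaging, the resulting collapse of the row-filtration spectral sequence onto the invariants column, and the Eilenberg--Zilber quasi-isomorphism, with the explicit formula obtained by tracing a diagonal chain through the composite. Your added attention to compatibility of the Haar integration and the EZ map with the complete bornological tensor products is a reasonable refinement of what the paper leaves implicit.
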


\begin{remark}\label{rem:qismequivariantcplx}
 %
 %
  This result has originally been proved by Brylinski in \cite{ BryAAGAH,BryCHET}.  Observe that a right $G$-action
  $\beta$ on an algebra $A$ can be changed to a left $G$-action $\alpha$ on an algebra $A$ by
  $\alpha(g)(a):=\beta(g^{-1})(a)$. Let $A^{\operatorname{op}}$ be the opposite algebra of $A$ and assume
  that $\beta$ defines a right $G$ action on $A^{\operatorname{op}}$. Use $A^{\operatorname{op}}\rtimes_\beta G$ to
  denote the (right) crossed product algebra defined by the right $G$ action on $A^{\operatorname{op}}$.
  Define a map  $\Phi: G{_\alpha \ltimes} A\to A^{\operatorname{op}}\rtimes_\beta G$ by $\Phi(f)(g):=f(g^{-1})$.
  One directly checks the following identity, 
 \[
  \Phi (f_1\ast_{G{_\alpha \ltimes} A} f_2)= \Phi (f_2)\ast_{A^{\operatorname{op}}\rtimes_\beta G}\Phi(f_1), 
 \] 
 and concludes that the map $\Phi$ induces an isomorphism of algebras 
\[
 G{_\alpha \ltimes} A \cong \big(A^{\operatorname{op}}\rtimes_\beta G\big)^{\operatorname{op}}. 
\]
Furthermore notice that for a general algebra $\mathfrak{A}$, the algebra $\mathfrak{A}\otimes \mathfrak{A}^{\operatorname{op}}$ is naturally isomorphic to $\mathfrak{A}^{\operatorname{op}}\otimes \mathfrak{A}$ and therefore $HH_\bullet(\mathfrak{A})\cong  HH_\bullet(\mathfrak{A}^{\operatorname{op}})$ since the corresponding Bar resolutions coincide. Applying this
observation to $\big(A^{\operatorname{op}}\rtimes_\beta G\big)^{\operatorname{op}}$, one concludes that 
\[
  HH_\bullet(G{_\alpha \ltimes} A )\cong HH_\bullet\big(A^{\operatorname{op}}\rtimes_\beta G\big),
\]
and that Proposition \ref{prop:qismequivariantcplx} holds also true  for a smooth right $G$-action on an algebra $A$
meaning that there is a quasi-isomorphism of chain complexes
\[
  \widehat{\phantom{F}}: C_\bullet( A\rtimes G ) \longrightarrow
  C^G_\bullet(A^{\operatorname{op}})  \ .
\]  
Note that for a right $G$-action the convolution product on $\calC^\infty (G,A)$ is given  
by
\begin{equation}
  \label{eq:convolution-product-right-action-algebra}
    (f_1\ast f_2)(g):=\int_G ( f_1(h) \cdot (h^{-1}g)) \, f_2(h^{-1}g) \,  dh \ ,\quad
    f_1,f_2\in \calC^\infty(G,A), \: g \in G \ .
\end{equation}
Throughout this paper, as it is more natural to have a left $G$-action on a manifold $M$, we will
work with a right $G$-action on $\calC^\infty(M)$.  
\end{remark}

\subsection{The $G$-manifold case}
\label{sec:group-manifold-case}
Let $M$ be a manifold endowed with a smooth left $G$-action. Denote by $X = M/G$ the space of $G$-orbits in $M$ and
by $\pi :  M \to X$ the canonical projection. We consider the action groupoid
$\sfG = G\ltimes M\rightrightarrows M$ and the corresponding 
convolution sheaf $\calA = \calA_{G\ltimes M}$ over $X$.
It is straightforward to check that in the case of $A=\calC^\infty (M)$ the product defined by
Eq.~\eqref{eq:convolution-product-right-action-algebra} coincides with the convolution product on
$\calA(M/G) \cong \calC^\infty ( G\ltimes M) \cong  \calC^\infty ( G,A) $ given by
Eq.~\eqref{eq:convolution-product}. Hence $\calA(M/G)$ coincides with $A \rtimes G$.
According to Proposition \ref{prop:qismequivariantcplx} and Remark \ref{rem:qismequivariantcplx},
we then have for each $G$-invariant open $V \subset M$ a quasi-isomorphism between Hochschild chain complexes
\[
  \widehat{\phantom{F}}|_{V/G} : C_\bullet \big( \calA ( V/G ) \big) \to
  C_\bullet^G \big(\calC^\infty (V)\big) \cong
  C_\bullet \big(\calC^\infty (V), \calA ( V/G )\big) \ .
\]
To compute the Hochschild homology $HH_\bullet \big( \calA ( V/G ) \big)$
it therefore suffices to determine the homology of the complex
$C_\bullet \big(\calC^\infty (V), \calA ( V/G )\big)$ which we will consider
in the following. Recall that $\calA (V/G)$ is isomorphic as a bornological
vector space to the completed tensor product
$\calC^\infty (G) \hatotimes \calC^\infty (V)$ and that $\calA (V/G)$
carries the (twisted) $\calC^\infty (V)$-bimodule structure  
\[
  \calC^\infty (V) \hatotimes \calA(V/G) \hatotimes  \calC^\infty (V) \to \calA(V/G), \:
  f \otimes a \otimes f' \mapsto \Big( G \times V \ni (g,v) \mapsto f(g v) \, a(g,v) f'(v) \in \R \Big).
\]
Since the bimodule structure is compatible with restrictions $r^U_V$
for $G$-invariant open subsets $V \subset U \subset M$ 
one obtains a complex of presheaves  which assigns to every open $V/G$ with $V \subset M$
open and $G$-invariant the complex $C_\bullet (\calC^\infty (V),\calA (V/G))$.
Sheafification gives rise to a sheaf complex which we denote by
$\sheafC_\bullet  \big( \calC^\infty_M , \calA  \big)$.
Since
$C_\bullet  \big( \calC^\infty (V) , \calA (V/G)\big)\cong\calA (V/G) \hatotimes C_\bullet (\calC^\infty (V))$
for all $G$-invariant open $V\subset M$, this sheaf complex can be written as
\[
 \sheafC_\bullet  \big( \calC^\infty_M , \calA\big) = 
 \calA \hatotimes  \pi_* \sheafC_\bullet  \big( \calC^\infty_M \big) \ ,
\]
where, as before, $\sheafC_\bullet  \big( \calC^\infty_M \big)$ denotes   the Hochschild sheaf complex of
$\calC^\infty_M$. We now have the following result.

\begin{proposition}
  Assume to be given a $G$-manifold $M$, let $\calA$ be the convolution sheaf
  of the associated action groupoid $G\ltimes M \rightrightarrows M$ on the
  orbit space $X =M/G$, and put  $A = \calA(X)$. Then the chain map
  \[
  \varrho : C_\bullet \big(\calC^\infty (M), A \big) \rightarrow
  \Gamma \big( X, \sheafC_\bullet  ( \calC^\infty_M , \calA)\big), \enspace
  c \mapsto ([c]_\smorbO)_{\smorbO\in X}
  \]
  which associates to every $k$-chain $c\in  C_k\big( \calC^\infty (M), A \big) $
  the section $([c]_\smorbO)_{\smorbO\in X}$, where $[c]_\smorbO$ denotes the
  germ of $c$ in the stalk  $\sheafC_{\bullet,\smorbO}(\calC^\infty_M ,\calA)$,
  is a quasi-isomorphism.  
\end{proposition}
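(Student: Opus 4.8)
The plan is to transfer the localization argument of Proposition \ref{prop:isomorphism-global-section-space} to the present relative setting, where the role of the absolute complex $C_\bullet(A)$ is played by $C_\bullet\big(\calC^\infty(M), A\big)$ with its twisted bimodule coefficients. First I would observe that in degree $k$ this complex carries a natural action of $\calC^\infty(X^{k+1}) \cong \big(\calC^\infty(X)\big)^{\hatotimes(k+1)}$: the $0$-th tensor factor acts on the coefficient slot $A = \calA(X)$ through the $\calC^\infty_X$-module structure of $\calA$, while the factors $1 \leq i \leq k$ act on the algebra slots $\calC^\infty(M)$ through pullback along $\pi$ followed by pointwise multiplication. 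As in \eqref{eq:DefActionSmoothFctChains2} this action commutes with the face maps $b_i$, so it allows one to speak of the support of a chain in $X^{k+1}$ and turns each cutoff $\Psi_{k,\varepsilon}$ into a chain map $\Psi_\varepsilon$ on $C_\bullet\big(\calC^\infty(M), A\big)$.

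Second --- and this is where the relative setting in fact simplifies matters --- since the algebra $\calC^\infty(M)$, and likewise each $\calC^\infty(V)$ for $G$-invariant open $V$, is unital, the relative Hochschild complex is a simplicial module whose degeneracies $s_{k,i}$ insert the unit $1 \in \calC^\infty(M)$ into an algebra slot. Consequently the unital case of Lemma \ref{lem:localization-homotopies} applies verbatim, with no recourse to the unitalization trick: the operators $\eta_{k,i,\varepsilon}$ and the homotopy $H_{k,\varepsilon} = \sum_i (-1)^{i+1}\eta_{k,i,\varepsilon}$ are given by the same formulas and satisfy $bH + Hb = \id - \Psi_\varepsilon$. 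Summing against a locally finite partition of unity exactly as in Lemma \ref{lem:homotopy-subordinate-partition-unity} then produces, for any smooth partition of unity $(\varphi_l)$ and positive scales $(\varepsilon_l)$, a chain map $\Psi$ homotopic to the identity; and the analogue of Lemma \ref{lem:cycle-not-meeting-diagonal} follows, namely that a relative Hochschild cycle whose support avoids the diagonal of $X^{k+1}$ is a boundary.

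With these tools in place the proof of Proposition \ref{prop:isomorphism-global-section-space} transcribes to the map $\varrho$. For surjectivity on homology I would represent a global section $s$ of $\sheafC_k(\calC^\infty_M,\calA)$ by local cycles $c_i \in C_k\big(\calC^\infty(V_i), \calA(V_i/G)\big)$ on a locally finite $G$-invariant cover, cut them off with a subordinate partition of unity $(\varphi_i)$ and localize by $\Psi_{\varepsilon_i}$ so that each $\Psi_{\varepsilon_i}(\varphi_i c_i)$ has compact support in $(V_i/G)^{k+1}$, and then assemble the locally finite sum $c := \sum_i \Psi_{\varepsilon_i}(\varphi_i c_i) \in C_k\big(\calC^\infty(M), A\big)$; the homotopy identities show that $s$ and $\varrho(c)$ represent the same homology class. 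For injectivity, if a global cycle $e$ satisfies $\varrho(e) = bs$, the same lifting yields $c$ with $bc - e$ a cycle whose support does not meet the diagonal, hence a boundary by the diagonal lemma.

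The bulk of the work, and the only genuine obstacle, lies in the first two steps: one must verify the relative counterparts of the admissibility hypotheses of Section \ref{Sec:LHCH}. Concretely, one checks that $C_k\big(\calC^\infty(M), A\big) \cong A \hatotimes \calC^\infty(M)^{\hatotimes k}$ is again a nuclear LF-space, that the $\calC^\infty(X^{k+1})$-action and the degeneracies are bounded maps, and that the presheaf $V/G \mapsto C_\bullet\big(\calC^\infty(V), \calA(V/G)\big)$ sheafifies to $\sheafC_\bullet(\calC^\infty_M,\calA) = \calA \hatotimes \pi_* \sheafC_\bullet(\calC^\infty_M)$ so that supports and germs behave as in the absolute case. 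Once these functional-analytic points are settled, the homotopy estimates and the gluing argument are formally identical to those of Proposition \ref{prop:isomorphism-global-section-space}.
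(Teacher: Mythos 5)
Your proposal is correct and takes essentially the same route as the paper, whose own proof simply notes that the sheaves $\sheafC_k(\calC^\infty_M,\calA)$ are fine and then invokes the localization homotopies of Section \ref{sec:localization-homotopies} to argue ``completely analogously'' to Proposition \ref{prop:isomorphism-global-section-space}. Your explicit description of the $\calC^\infty(X^{k+1})$-action on the twisted complex, and your observation that unitality of $\calC^\infty(M)$ makes the unitalization detour of Lemma \ref{lem:localization-homotopies} unnecessary here, are precisely the details the paper leaves to the reader.
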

\begin{proof}
  Observe that the sheaves $\sheafC_k  ( \calC^\infty_M , \calA)$ are
  fine and that
  $\varrho_0 : C_0(\calC^\infty (M),A)\to\Gamma \big( X, \sheafC_0 (\calC^\infty_M ,\calA)\big)$
  is the identity morphism. Using again the homotopies from
  Section \ref{sec:localization-homotopies}, the proof is completely analogous
  to the one of Proposition \ref{prop:isomorphism-global-section-space},
  hence we skip the details. 
\end{proof}

Next, we compare the sheaf complex
$\sheafC_\bullet  ( \calC^\infty_M , \calA)\big)$ with the complex of relative
forms by constructing a morphism of sheaf complexes between them.

\begin{proposition}
\label{prop:chain-map-horizontal-relative-forms} 
  Under the assumptions of the preceding proposition
  define for each open $G$-invariant subset $V \subset M$ and $k\in \N$
  a $\calC^\infty (V/G)$-module map by 
  \[
  \begin{split}  
    \Phi_{k,V/G} : \: C_k  \big( \calC^\infty (V) , \calA (V/G)\big)
    \cong \calA(V/G) \hatotimes \, C_k \big( \calC^\infty (V) \big)
    & \to \rOmega{k}{\Lambda_0} \big( \Lambda_0 (G\ltimes V) \big), \\
     f_0 \otimes f_1 \otimes \ldots \otimes f_k & \mapsto 
    \big[ f_0 \, d (s_{G\ltimes V}^*  f_1 )\wedge  \ldots \wedge  d  (s_{G\ltimes V}^* f_k ) \big]_{\Lambda_0} \ .  
  \end{split}
  \]
  Then the $\Phi_{k,V/G}$ are the components of a morphism of sheaf complexes
  \[
    \Phi_\bullet: \sheafC_\bullet  ( \calC^\infty_M , \calA)\big)
    \to \pi_* (s_{|\Lambda_0})_* \rOmega{\bullet}{\Lambda_0} \ ,
  \] 
  where the differential on $\rOmega{\bullet}{\Lambda_0}$ is given by the
  zero differential. The image of a cycle under $\Phi_\bullet$  lies in the sheaf complex of
  horizontal relative forms $\hrOmega{\bullet}{\Lambda_0}$. 
\end{proposition}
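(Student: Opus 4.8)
The plan is to establish the three assertions in turn: well-definedness of each $\Phi_{k,V/G}$ together with naturality in $V/G$ (giving the sheaf morphism), the chain-map property into the complex with zero differential, and finally the horizontality of the image of a cycle. Throughout I abbreviate $s=s_{G\ltimes V}$ and $t=t_{G\ltimes V}$, and I write $\xi_M$ for the fundamental vector field of $\xi\in\frakg$.

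\emph{Well-definedness and the sheaf morphism.} First I note that $f_0\,d(s^*f_1)\wedge\cdots\wedge d(s^*f_k)$ is a genuine section of $\bigwedge^k s^*T^*M$ over the open subset of $G\ltimes V$ supporting $f_0$, so passing to its class modulo $\calJ\,\Omega^\bullet_{G\ltimes M\to G}+d_\textup{rel}\calJ\wedge\Omega^{\bullet-1}_{G\ltimes M\to G}$ produces a well-defined element of $\rOmega{k}{\Lambda_0}(\Lambda_0(G\ltimes V))$. The map is $\calC^\infty(V/G)$-linear because multiplying $f_0$ by a basic function factors out of the wedge. Naturality in $V$ is immediate since $d$, $s^*$ and $\wedge$ all commute with restriction; as the target presheaf is already a sheaf, the universal property of sheafification then upgrades the family $(\Phi_{k,V/G})$ to a morphism defined on $\sheafC_\bullet(\calC^\infty_M,\calA)$.

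\emph{The chain-map property.} I compute $\Phi_{k-1}(bc)$ for $c=f_0\otimes\cdots\otimes f_k$ at the presheaf level. The inner face maps create products $f_if_{i+1}$ in $\calC^\infty(V)$, to which I apply the Leibniz rule $d(s^*(f_if_{i+1}))=(s^*f_i)\,d(s^*f_{i+1})+(s^*f_{i+1})\,d(s^*f_i)$; the usual Hochschild--Kostant--Rosenberg antisymmetrization then cancels these contributions against the two outer faces in pairs, exactly as in the commutative HKR computation where $\mu\circ b=0$. The only discrepancy is that the last, twisted face yields $(t^*f_k)\,f_0$ in place of $(s^*f_k)\,f_0$, the difference being $(t^*f_k-s^*f_k)\,f_0$. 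Since $s=t$ on $\Lambda_0(G\ltimes V)$, the function $t^*f_k-s^*f_k$ lies in $\calJ$, so this leftover term lies in $\calJ\,\Omega^{k-1}_{G\ltimes M\to G}$ and is annihilated in $\rOmega{k-1}{\Lambda_0}$. Hence $\Phi_{k-1}(bc)=0$, i.e. $\Phi_\bullet$ is a chain map into $(\rOmega{\bullet}{\Lambda_0},0)$.

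\emph{Horizontality of cycles, and the main obstacle.} Because $N^*_p$ is the annihilator of $T_p\calO_p=\{\xi_M(p)\mid\xi\in\frakg\}$, membership of $[\omega]$ in $\hrOmega{k}{\Lambda_0}$ at $(g,p)$ is equivalent to $\iota_{\xi_M}\omega_{(g,p)}=0$ for all $\xi\in\frakg$; so it suffices to prove $\iota_{\xi_M}\Phi_k(c)=0$ on $\Lambda_0$ whenever $c$ is a cycle. Contracting the representative shows $\iota_{\xi_M}\Phi_k(c)=\Phi_{k-1}(e_\xi c)$, where $e_\xi c:=\sum_{j=1}^k(-1)^{j-1}\bigl(f_0\cdot(\xi_Mf_j)\bigr)\otimes f_1\otimes\cdots\widehat{f_j}\cdots\otimes f_k$ is the evident contraction chain. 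To see this vanishes I pass to the faithful function model $C_{k-1}(\calC^\infty(V),\calA(V/G))\cong\calC^\infty(G\times V^{k})$, under which $bc=0$ becomes a scalar identity in the group variable and the $k$ manifold variables. Differentiating this identity once in the group direction (which meets only the twisted occurrence $f_k(gv)$) and once in each relevant $M$-variable along $\xi_M$, then restricting to the locus $gv=v$ cutting out $\Lambda_0$, yields the individual relations $\sum f_0\,(\xi_Mf_j)\,d(s^*f_1)\wedge\cdots\wedge\widehat{d(s^*f_j)}\wedge\cdots\wedge d(s^*f_k)=0$; assembling them gives $\iota_{\xi_M}\Phi_k(c)=0$, so $\Phi_k(c)_{(g,p)}\in\bigwedge^k N^*_p$. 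I expect the main obstacle to be exactly this assembly: the group twist sits only in the last tensor slot, so one differentiation of $bc=0$ detects a single fundamental-vector-field term, and one must combine several directional derivatives of the multivariable identity—carrying the $\Ad_g$-twist that transports the group derivative across the remaining slots, as already visible in degrees $k=1,2$—to reconstruct the full antisymmetric contraction. Organizing these cancellations uniformly in $k$ is the delicate point, whereas the first two paragraphs are routine.
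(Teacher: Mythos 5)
Your first two parts are correct and are essentially the paper's own argument: well-definedness and compatibility with restrictions are routine, and your treatment of the chain-map property (the twisted outer face differs from the untwisted one by a term whose coefficients vanish on $\Lambda_0(G\ltimes V)$, hence lies in $\calJ\,\Omega^{k-1}_{G\ltimes M\to G}$) is the same computation the paper does by evaluating at points $(g,p)$ with $gp=p$. The genuine gap is the third part, and it is not merely unfinished — the strategy fails as stated. Concretely, take $k=2$ and a cycle $c=\sum_\alpha f_0^\alpha\otimes f_1^\alpha\otimes f_2^\alpha$, viewed as the function identity $\sum_\alpha\bigl[f_0^\alpha(g,v)f_1^\alpha(v)f_2^\alpha(v_1)-f_0^\alpha(g,v)f_1^\alpha(v_1)f_2^\alpha(v_1)+f_2^\alpha(gv)f_0^\alpha(g,v)f_1^\alpha(v_1)\bigr]=0$. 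Differentiating once in the group direction and restricting to $v_1=v$, $gv=v$ gives $\sum_\alpha\bigl[f_0^\alpha\,(\xi_Vf_2^\alpha)\,f_1^\alpha+(\partial^G_\xi f_0^\alpha)\,f_1^\alpha f_2^\alpha\bigr]=0$: a scalar relation polluted by the cross-term in $\partial^G_\xi f_0^\alpha$, whose coefficient $f_1^\alpha f_2^\alpha$ does \emph{not} vanish on $\Lambda_0$. (At $k=1$ the corresponding coefficient is $f_1(v)-f_1(gv)\in\calJ$, which is the only reason your degree-one computation closes.) So single directional derivatives of $bc=0$ do not yield the "individual relations" you assert, and the needed $1$-form identity $\sum_\alpha f_0^\alpha\bigl[(\xi_Vf_1^\alpha)\,df_2^\alpha-(\xi_Vf_2^\alpha)\,df_1^\alpha\bigr]=0$ on $\Lambda_0$ is not reached; organizing the elimination of these cross-terms uniformly in $k$ is exactly the missing argument, which you acknowledge but do not supply.

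The missing idea, which is how the paper proves horizontality, is to change resolutions rather than fight the Bar complex. Since the target carries the zero differential, $\Phi_\bullet$ annihilates boundaries, so $\Phi_k(c)$ depends only on the homology class of $c$; and the statement is local, so one may take $V$ a $G$-invariant ball with orthogonal $G$-action. Tensoring the Connes--Koszul resolution of $\calC^\infty(V)$ with $\calA(V/G)$ produces the complex $\bigl(\Omega^\bullet_{G\ltimes V\to G},\, i_{Y_{G\ltimes V}}\bigr)$ with $Y_{G\ltimes V}(g,v)=v-gv$, quasi-isomorphic to $C_\bullet\bigl(\calC^\infty(V),\calA(V/G)\bigr)$, and under the comparison chain map $\Phi_{k,V/G}$ becomes simply restriction of relative forms to the loop space. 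Hence every class is represented by some $[\omega]_{\Lambda_0}$ with $i_{Y_{G\ltimes V}}\omega=0$, and for such $\omega$ horizontality is a one-line computation: for $(g,v)\in\Lambda_0(G\ltimes V)$ and $\xi\in\frakg$,
\[
  0=\left.\frac{d}{dt}\bigl(i_{Y_{G\ltimes V}}\omega\bigr)_{(e^{-t\xi}g,v)}\right|_{t=0}
  =\bigl(-i_{Y_{G\ltimes V}}i_{\xi_G}d^G\omega+i_{\xi_V}\omega\bigr)_{(g,v)}
  =\bigl(i_{\xi_V}\omega\bigr)_{(g,v)} \ ,
\]
the last equality because $Y_{G\ltimes V}$ vanishes on $\Lambda_0$; thus $i_{\xi_V}\omega\in\calJ\,\Omega^{k-1}_{G\ltimes V\to G}$ and $[\omega]_{\Lambda_0}$ is horizontal. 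This replaces your "delicate combinatorial point" by a single differentiation of the Koszul cycle condition. To repair your proof you must either adopt this resolution-comparison route or actually produce the higher-derivative identity your direct approach requires; as written, the horizontality claim is unproven.
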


\begin{proof}
  Let $f_0 \in \calA (V/G)$ and $f_1 , \ldots ,f_k \in \calC^\infty (V)$. Observe first that  
$\Phi_{k,V/G} ( f_0 \otimes f_1 \otimes \ldots \otimes f_k )$ is a relative form indeed since 
$d(s_{G\ltimes V}^*f) \in \Omega^1_{G\ltimes V \to G} (G\ltimes M)$ for all $f \in \calC^\infty (V)$.
Now let $(g,p) \in \Lambda_0(G\ltimes V)$ and compute:
\begin{displaymath}
\begin{split}
  & \Phi_{k-1,V/G}  \, b \, ( f_0 \otimes f_1 \otimes \ldots \otimes f_k ) (g,p) = 
  f_0 (g,p) f_1 (p) \,\big[ d(s_{G\ltimes V}^* f_2) \wedge \ldots \wedge d (s_{G\ltimes V}^* f_k)\big]_{(g,p)} +\\ 
  & \: + \sum_{i=1}^{k-1} (-1)^i f_0 (g,p) f_i(p) \big[ d(s_{G\ltimes V}^* f_1) \wedge \ldots \wedge d (s_{G\ltimes V}^* f_{i-1} ) \wedge d (s_{G\ltimes V}^* f_{i+1} ) \wedge   \ldots \wedge d (s_{G\ltimes V}^* f_k) \big]_{(g,p)}  + \\
  & \: + \sum_{i=1}^{k-1} (-1)^i f_0 (g,p) f_{i+1}(p) \big[ d(s_{G\ltimes V}^* f_1) \wedge \ldots \wedge d (s_{G\ltimes V}^* f_i ) \wedge d (s_{G\ltimes V}^* f_{i+2} ) \wedge  \ldots \wedge d(s_{G\ltimes V}^* f_k) \big]_{(g,p)}  + \\
  & \: + (-1)^k f_k (gp) f_0 (g,p) \, \big[ d(s_{G\ltimes V}^* f_1) \wedge \ldots \wedge d (s_{G\ltimes V}^* f_{k-1})\big]_{(g,p)}
  = 0 \ .
\end{split}
\end{displaymath}
Hence $\Phi_{\bullet,V/G}$ is a chain map in the sense that it intertwines the Hochschild boundary with the zero differential. 

It remains to show that the image of $\Phi_{\bullet,V/G}$ is in the space of horizontal relative forms. 
To this end assume for a moment that $V$ is a $G$-invariant open ball around the origin in some euclidean space $\R^n$ 
which is assumed to carry an orthogonal $G$-action. Consider the Connes--Koszul resolution of $\calC^\infty(V)$ provided in \eqref{eq:ResSmoothFunc}. 
A chain map   between the Connes--Koszul resolution and 
the Bar resolution of $\calC^\infty(V)$ over the identity map $\id_{\calC^\infty(V)}$ in degree $0$ is given by the family of maps 
\[
  \begin{split}
  \Psi_{k,V} : \: \Gamma^\infty (V\times V, E_k) & \to B_k \big( \calC^\infty(V) \big) = \calC^\infty(V\times V) \hatotimes \calC^\infty(V^k),\\
   \omega & \mapsto \Big( (v,w,x_1,\ldots,x_k) \mapsto \omega_{(v,w)}\big(Y(x_1,w), \ldots , Y(x_k,w)  \big) \Big) \ .
  \end{split}
\]
Tensoring the Connes--Koszul resolution of $\calC^\infty (V)$ 
with $\calA^\infty (V/G)$ results in the following complex:
\begin{equation}
  \label{eq:ConvolutionConnesKoszulChainCpl}
       \Omega^d_{G\ltimes V \to G} (V)  \overset{i_{Y_{G\ltimes V}}}{\longrightarrow} \ldots 
       \overset{i_{Y_{G\ltimes V}}}{\longrightarrow}   \Omega^1_{G\ltimes V \to G} (V)
       \overset{i_{Y_{G\ltimes V}}}{\longrightarrow}  \calC^\infty (G\ltimes V) \longrightarrow 0 \ ,
\end{equation} 
where $Y_{G\ltimes V} : G\ltimes V \to  s^*TV$ is defined by $Y_{G\ltimes V}(g,v) =v -gv$. The composition of 
$\id_{\calA^\infty (V/G)} \hatotimes \Psi_{k,V} $ with $\Phi_{k,V/G}$ then is the map which associates 
to each relative form $\omega \in  \Omega^k_{G \ltimes V \to G} (V)$ its restriction $[\omega]_{\Lambda_0}$ 
to the loop space. It therefore suffices to show that for $\omega \in  \Omega^k_{G \ltimes V \to G} (V)$ 
with $i_{Y_{G\ltimes V}} \omega =0$ the restriction to the loop space is a horizontal relative form. 
To verify this let $\xi$ be an element of the Lie algebra $\frakg$ of $G$ and again $(g,v)\in \Lambda_0(G\ltimes V)$. 
Then 
\[
  0 = \left. \frac{d}{dt} \big( i_{Y_{G\ltimes V}} \omega\big)_{(e^{-t\xi} g , v)} \right|_{t=0} = 
   \big( - i_{Y_{G\ltimes V}} i_{\xi_G} d^G\omega + i_{\xi_V} \omega \big)_{(g,v)} = \big( i_{\xi_V} \omega \big)_{(g,v)} \ ,
\]
where $d^G$ denotes the exterior differential with respect to $G$ and $\xi_G$ and $\xi_V$ are the fundamental vector fields
of $\xi$ on $G$ and $V$, respectively. So  $ i_{\xi_V} \omega \in \calJ(V) \Omega^{k-1}_{G \ltimes V \to G} (V)$,
which means that $[\omega]_{\Lambda_0} \in \hrOmega{k}{\Lambda_0}(G\ltimes V)$.

\end{proof}

\begin{proposition}\label{prop:manifold-one-isotropy-type}
  Let $M$ be a $G$-manifold with only one isotropy type and assume that the orbit space $M/G$ is connected.
  Then the following holds true.
  \begin{enumerate}[{\rm (1)}]
  \item\label{ite:quotient-manifold}
    The quotient space $M/G$ carries a unique structure of a smooth manifold 
    such that $\pi : M \to M/G$ is a submersion.
  \item\label{ite:loop-space-manifold}
    The loop space $\Lambda_0 (G\ltimes M)$ is a smooth submanifold of $G\times M$.
  \item\label{ite:isomorphism-basic-relative-forms-slice}
    Let $p\in M$  be a point and $V_p \subset M$ a slice to the orbit through $p$ that is
    \begin{enumerate}[\rm (SL1)]
    \item
      $V_p$ is a $G_p$-invariant submanifold which is transverse to the orbit $\orbO_p  := Gp$ at $p$,
    \item
      $V := GV_p$ is an open neighborhood of the orbit $\orbO_p$ and $V_p$ is closed in $V$,
    \item
      there exists a $G$-equivariant diffeomorphism $\eta: N\orbO_p \to V$
      mapping the normal space $N_p= T_pM/T_p\orbO_p$ onto $V_p$. 
    \end{enumerate}
    Then for every $k$  the map 
    \[
      \Psi_{k,V_p/G_p}:\brOmega{k}{\Lambda_0}\big(\Lambda_0(G\ltimes GV_p)\big)
      \to \brOmega{k}{\Lambda_0} \big(\Lambda_0 (G_p\ltimes V_p)\big)
      \quad \omega \mapsto \omega_{|\Lambda_0(G_p\ltimes V_p)} 
    \]  
    is an isomorphism and the space of basic relative $k$-forms    
    $\brOmega{k}{\Lambda_0} \big(\Lambda_0 (G_p\ltimes V_p)\big)$ coincides naturally with
    $\calC^\infty(G_p)^{G_p} \hatotimes \Omega^k (V_p)$.
  \item\label{ite:quism-twisted-hochschild-homology}
    The chain map 
    \[
      \Phi_{\bullet,M/G} : C_\bullet  \big( \calC^\infty (M) , \calA (M/G)\big)
      \to \hrOmega{\bullet}{\Lambda_0} \big(\Lambda_0 (G\ltimes M)\big)
    \]  
    is a quasi-isomorphism when the graded module
    $\hrOmega{\bullet}{\Lambda_0} \big(\Lambda_0 (G\ltimes M)\big)$ is endowed with the zero differential.  
  \end{enumerate}
\end{proposition}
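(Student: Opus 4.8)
The plan is to exploit that a single isotropy type forces the slice representation to be trivial, reducing every local model to a product to which the classical Hochschild--Kostant--Rosenberg theorem applies.

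\textbf{Parts (1) and (2).} I would start from the slice theorem in the form used in \cite{PflPosTanGOSPLG}: around $p$ the tube $V=GV_p$ is $G$-equivariantly diffeomorphic to $G\times_{G_p}N_p$, with $G_p$ acting on the slice $V_p\cong N_p$ through the slice representation. Since all isotropy groups are conjugate to $G_p$, the isotropy at every slice point equals $G_p$; hence $G_p$ acts \emph{trivially} on $V_p$ and $V\cong (G/G_p)\times V_p$. This exhibits $V_p$ as a chart for $M/G$, proving (1), with $\pi$ a submersion and uniqueness forced by the requirement that $\varphi\in\calC^\infty(M/G)$ iff $\pi^*\varphi\in\calC^\infty(M)$. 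For (2), in this model a pair $(g,hv)$ with $h\in G$, $v\in V_p$ lies in $\Lambda_0(G\ltimes V)$ iff $g\in hG_ph^{-1}$; the parametrisation $(hG_p,\gamma,v)\mapsto(h\gamma h^{-1},hv)$ then identifies $\Lambda_0(G\ltimes V)$ with $(G\times_{G_p}G_p)\times V_p$, where $G_p$ acts on itself by conjugation. This is a smooth fibre bundle over $G/G_p$, hence a submanifold of $G\times M$.

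\textbf{Part (3).} Because $G_p$ acts trivially on $V_p$, the loop space $\Lambda_0(G_p\ltimes V_p)$ is all of $G_p\times V_p$, the vanishing ideal $\calJ$ is zero, and the orbits are points so $N^*\cong T^*V_p$ along the slice. Thus $\rOmega{k}{\Lambda_0}\big(\Lambda_0(G_p\ltimes V_p)\big)=\calC^\infty(G_p)\hatotimes\Omega^k(V_p)$; horizontality is automatic since the fundamental vector fields vanish, and the invariance condition, because $h\in G_p$ acts trivially on $V_p$, reduces to conjugation-invariance of $\gamma\mapsto\omega_\gamma$. This gives the identification $\brOmega{k}{\Lambda_0}\big(\Lambda_0(G_p\ltimes V_p)\big)\cong\calC^\infty(G_p)^{G_p}\hatotimes\Omega^k(V_p)$. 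For the restriction map $\Psi_{k,V_p/G_p}$, I would invoke the standard slice correspondence: every $G$-orbit in $\Lambda_0(G\ltimes GV_p)$ meets the slice part $G_p\times V_p$ in exactly one $G_p$-orbit, using the slice property $hV_p\cap V_p\neq\emptyset\Rightarrow h\in G_p$ together with triviality on $V_p$. Hence $G$-invariant horizontal families correspond bijectively, and smoothly via the associated-bundle construction, to their restrictions to the slice, so $\Psi_{k,V_p/G_p}$ is an isomorphism.

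\textbf{Part (4) and the main obstacle.} I would factor $\Phi_{\bullet,M/G}$ through the quasi-isomorphism $\varrho$ of the preceding proposition, so that it suffices to show that the morphism of fine sheaf complexes $\Phi_\bullet\colon\sheafC_\bullet(\calC^\infty_M,\calA)\to\pi_*(s_{|\Lambda_0})_*\hrOmega{\bullet}{\Lambda_0}$ is a stalkwise quasi-isomorphism. At the stalk over $\orbO_p$ I would reduce to the slice model $G_p\ltimes V_p$ with trivial action, using the stalk reduction of Proposition \ref{prop:local-model} and Lemma \ref{lem:linearmodel} on the one side and Part (3) on the other. There the bimodule $\calA(V_p/G_p)\cong\calC^\infty(G_p)\hatotimes\calC^\infty(V_p)$ is just the symmetric $\calC^\infty(V_p)$-bimodule tensored with $\calC^\infty(G_p)$, so $C_\bullet\big(\calC^\infty(V_p),\calA(V_p/G_p)\big)\cong\calC^\infty(G_p)\hatotimes C_\bullet(\calC^\infty(V_p))$; equivalently, in the Connes--Koszul complex \eqref{eq:ConvolutionConnesKoszulChainCpl} the vector field $Y_{G_p\ltimes V_p}(\gamma,v)=v-\gamma v$ vanishes identically, the Koszul differential is zero, and the homology is the whole complex $\calC^\infty(G_p)\hatotimes\Omega^\bullet(V_p)=\hrOmega{\bullet}{\Lambda_0}\big(\Lambda_0(G_p\ltimes V_p)\big)$, with $\Phi_\bullet$ realising exactly the classical HKR map on the $V_p$-factor. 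The delicate step is this last reduction: carrying the passage from the tube $G\ltimes V$ to the trivially-acted slice $G_p\ltimes V_p$ through \emph{both} the Connes--Koszul complex and the sheaf of horizontal relative forms, and checking that it intertwines $\Phi_\bullet$ with the restriction isomorphism $\Psi_{\bullet,V_p/G_p}$ of Part (3) while respecting the completed bornological tensor products. Once this compatibility is in place, the identity $Y\equiv 0$ on the slice makes the final homology computation immediate.
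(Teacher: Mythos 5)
Parts (1)--(3) of your proposal are correct and follow essentially the paper's own route: (1) and (2) rest on the same key observation that one isotropy type forces $G_p$ to act trivially on the slice, so that $V\cong (G/G_p)\times V_p$ and $\Lambda_0(G\ltimes V)$ becomes the associated bundle $G\times_{G_p}\big(G_p\times V_p\big)$ (the paper realizes this with a local section of $G\to G/G_p$ rather than globally); and your slice-side identification $\brOmega{k}{\Lambda_0}\big(\Lambda_0(G_p\ltimes V_p)\big)\cong\calC^\infty(G_p)^{G_p}\hatotimes\Omega^k(V_p)$ together with the extension of basic forms from the slice to the tube is the same mechanism as the paper's explicit formula \eqref{eq:relation-relform-restriction} with the vertical projection $P^{\textup{v}}$, which supplies the smoothness and horizontality checks that you only gesture at via the associated-bundle picture.

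Part (4), however, contains a genuine gap, and it is not merely the ``delicate step'' you flag at the end: the reduction you propose cannot work. Proposition \ref{prop:local-model} and Lemma \ref{lem:linearmodel} concern the full Hochschild complex $C_\bullet(\calA)$ of the convolution algebra, and Part (3) concerns \emph{basic} (i.e.\ $G$-invariant) relative forms; both of these are Morita-type invariants and do collapse from the tube to the slice. But Part (4) is about the \emph{twisted} complex $C_\bullet\big(\calC^\infty(M),\calA(M/G)\big)$ and about \emph{horizontal}, not invariant, relative forms, and neither of these survives the tube-to-slice reduction. Concretely, for the tube $M=GV_p\cong G/K\times V_p$ (with $K=G_p$) the homology of the twisted complex is $\calC^\infty\big(\Lambda_0(G\ltimes G/K)\big)\hatotimes\Omega^\bullet(V_p)$ --- sections over the \emph{whole} loop space $\Lambda_0(G\ltimes G/K)\cong G\times_K K$ --- whereas your slice model produces only $\calC^\infty(K)\hatotimes\Omega^\bullet(V_p)$. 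These differ whenever $K\neq G$: for $G=S^1$ acting on $M=S^1$ by translation, the tube answer is $\calC^\infty(S^1)$ in degree zero, while your slice model gives $\R$. Only after passing to $G$-invariants (i.e.\ to $HH_\bullet(\calA)$ and to basic forms) do tube and slice agree, which is precisely why the paper does not argue this way. Instead it works directly on the tube: it constructs a complex-valued vector field $Y_G$ on $G\times M$ (the correction term $\sqrt{-1}\,\chi' Z$ is what makes the zero set exactly $\Lambda_0(G\ltimes M)$ and lets one invoke Connes' resolution lemma), verifies that $Y_G$ is Euler-like along $\Lambda_0$, and applies the parametrized Koszul resolution of Proposition \ref{prop:parametrizedkoszuleulervectorfield}, first in the case $V_p=\{p\}$ (i.e.\ $M=G/K$, which carries all of the difficulty) and then for $M=G/K\times V_p$ by a K\"unneth-type splitting in which $Y_G$ acts only on the $G/K$-factor. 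That computation in the $G/K$-direction is the actual mathematical content of Part (4), and your plan has no mechanism to produce it.
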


\begin{proof}
\textit{ad} (\ref{ite:quotient-manifold}).
    It is a well known result about group actions on manifolds that under the assumptions made the
    quotient space $M/G$ carries a unique manifold structure such that
    $\pi : M \to M/G$ is a submersion; see e.g.\ \cite[Sec.~IV.3]{BreICTG} or \cite[Thm.~4.3.10]{PflAGSSS}.
\\ \textit{ad} (\ref{ite:loop-space-manifold}).
    This has been proved in \cite[Prop.~4.4]{FarPflSeaSISCLGA}. Let us outline the
    argument since we need it for the following claims, too. 
    By the assumptions made there exists a compact subgroup $K\subset G$ such that
    every point of $M$ has isotropy type $(K)$. Let $p\in M$ be a point and $G_p$
    its isotropy group. Without loss of generality we can assume that $G_p=K$. 
    Let $V_p\subset M $ be a slice to the orbit $\orbO$ through $p$. The isotropy group
    of an element $q\in V_p$ then has to coincide with $K$, so $V_p^K = V_p$.
    Therefore the map
    \[
        \tau : G/K\times V_p \to M ,\quad (gK,q) \mapsto gq
    \]
    is a $G$-equivariant diffeomorphism onto a neighborhood of $\orbO$.
    Now choose a small enough open neigborhood of $eK$ in $G/K$ and
    a smooth section $\sigma : U \to G$ of the fiber bundle $G\to G/K$.
    The map
    \[
      \widetilde{\tau} : G \times U \times V_p \to G \times \tau(U\times V_p) ,
      \quad (h, gK,q) \mapsto\big(\sigma(gK)h \sigma(gK)^{-1},\sigma (gK) q \big)
    \]
    then is a diffeomorphism onto the open set
    $G\times \tau (U\times V_p)$ of $G\times M$. One observes that
    \[
      \widetilde{\tau} (  K \times U \times V_p)  = \left( G \times \tau(U\times V_p) \right) \cap \Lambda_0 (G\ltimes M) \ ,
    \]
    which shows that $ \Lambda_0 (G\ltimes M)$ is a submanifold of $G\times M$, indeed. 
\\ \textit{ad} (\ref{ite:isomorphism-basic-relative-forms-slice}).
    Put $K=G_p$ as before, let $N = GV_p$, and denote by $\frakg$ and $\frakk$ the Lie
    algebras of $G$ and $K$, respectively. Choose an $\Ad$-invariant inner product 
    on $\frakg$ and let $\frakm$ be the orthogonal complement of $\frakk$ in $\frakg$.
    Next choose for each $q\in N$ an element $h_q \in G$ such that $h_q q \in V_p$. Then
    \[
      \pi^N : N\to \orbO_p , \quad q \mapsto h_q^{-1}p
    \]
    is an equivariant fiber bundle. Let $TN\to N$ be the tangent
    bundle of the total space and $VN \to N$ the vertical bundle. Note that $TN$ and $VN$
    inherit from $N$ the equivariant bundle structures.  
    Now put for $q\in N$
    \[
      H_qN := \operatorname{span} \left\{ \left(\Ad_{h_q^{-1}} (\xi)\right)_N (q) \in T_qN \mid
      \xi \in \frakm\right\} \ ,
    \]
    where $\xi_N$ denotes the fundamental vector field of $\xi$ on  $N$. 
    Then $HN \to N$ becomes an equivariant vector bundle complementary to $VN \to N$.
    Let $P^{\textup{v}}: TN \to VN$ be the corresponding fiberwise projection along $HN$.
    By construction, $P^{\textup{v}}$ is $G$-equivariant.     
    After these preliminary considerations let 
    $\omega \in \brOmega{k}{\Lambda_0}\big(\Lambda_0 (G\ltimes GV_p)\big)$.
    The restriction $\omega|_{\Lambda_0 (K\ltimes V_p)}$ then is a basic relative form again,
    so $\Psi_{k,V_p/K}$ is well defined. Let us show that it is surjective.
    Assume that $\varrho \in  \brOmega{k}{\Lambda_0}\big(\Lambda_0 (K \ltimes V_p)\big)$.
    We then put for  $(g,q) \in  \Lambda_0 (G \ltimes N)$ and $X_1,\ldots ,X_k \in T_q N$
    \begin{equation}
      \label{eq:relation-relform-restriction}
      \omega_{(g,q)} (X_1,\ldots ,X_k) :=
      \varrho_{(h_qgh_q^{-1},h_qq)} \big(  Th_q (P^{\textup{v}} (X_1)),\ldots ,  Th_q (P^{\textup{v}} (X_k))\big) \ ,   
    \end{equation}
    where $Th : TN \to TN$ for $h\in G$ denotes the derivative of the action of $h$ on $N$.
    Since $Tk$ for $k\in K$ acts as identity on $TV_p\subset VN$, the value $\omega_{(g,q)} (X_1,\ldots ,X_k)$
    does not depend on the particular choice of a group element $h_q$ such that
    $h_qq \in V_p$.     
    Moreover, since for fixed $q_0 \in N$ one can find a small enough neighborhood $U$ 
    and choose $h_q$ to depend smoothly on  $q\in U$, $\omega$ is actually a smooth
    differential form on $N$. By construction, it is a relative form. If $X_l \in H_qN$
    for some $l$, then $\omega_{(g,q)} (X_1,\ldots ,X_k)=0$ by definition.
    If $X_l = \big(\Ad_{h_q^{-1}} (\xi)\big)_N  (q)$ for some $\xi \in \frakk$, then $P^{\textup{v}} X_l = X_l$ and
    $Th_q X_l (q) =\xi_N (h_q q)$ which entails by \eqref{eq:relation-relform-restriction}
    that $\omega_{(g,q)} (X_1,\ldots ,X_k)=0$ again since $\varrho $ is a horizontal form. 
    So $\omega$ is a horizontal form. It remains to show that it is $G$-invariant. Let $h\in G$
    and  $(g,q)$ and $X_1,\ldots ,X_k$ as before. Then
    \begin{equation*}
      \begin{split}     
      \omega_{(hgh^{-1},hq)} (Th X_1,\ldots ,Th X_k) \, & =
      \varrho_{(h_qg h_q^{-1},h_qq)} \big(  Th_q Th^{-1} (P^{\textup{v}} (Th X_1)),\ldots ,  Th_q Th^{-1} (P^{\textup{v}} (Th X_k))\big)
      \\ & =  \omega_{(g,q)} (X_1,\ldots ,X_k) \ ,
      \end{split}
    \end{equation*}
    so $\omega$ is $G$-invariant and therefore a basic relative form. Hence $\Psi_{k,V_p/K}$ is surjective.
    To prove injectivity of $\Psi_{k,V_p/K}$ observe that if $\omega\in\brOmega{k}{\Lambda_0}\big(\Lambda_0(G\ltimes GV_p)\big)$
    and $\varrho$ is the restriction $\omega|_{\Lambda_0 (K\ltimes V_p)}$,
    then Eqn.~\eqref{eq:relation-relform-restriction} holds true since $\omega$ is $G$-invariant and horizontal.   
    But this implies that if $\omega|_{\Lambda_0 (K\ltimes V_p)}=0$, then $\omega$ must  be $0$ as well, so 
    $\Psi_{k,V_p/K}$ is injective.  
    It remains to show
    \[
      \brOmega{k}{\Lambda_0} \big(\Lambda_0 (K \ltimes V_p)\big)\cong \calC^\infty(K)^{K} \hatotimes \Omega^k (V_p) \ .
    \]
    To this end observe that $\Lambda_0 (K \ltimes V_p) = K \times V_p$ since $V_p^K =V_p$ which in other words
    means that very $K$-orbit in $V_p$ is a singleton. The claim now follows immediately.  
    \\ \textit{ad} (\ref{ite:quism-twisted-hochschild-homology}).
    By Theorem \ref{thm:hochschild-homology-global-sections} it suffices to verify the claim for the case where
    $M =GV_p$, where $p$ is a point and $V_p$ a slice to the orbit $\orbO$ through $p$. As before let $K$ be the isotropy
    $G_p$. By the slice theorem there exists a $K$-equivariant diffeomorphism
    $\varphi : V_p \to \widetilde{V}_p\subset N_p\orbO$ onto an open zero neighborhood of the normal space $N_p\orbO$.
    Choose a $K$-invariant inner product on  $N_p\orbO$ and a $G$-invariant inner product on the Lie algebra $\frakg$.
    Again as before let $\frakm$ be the orthogonal complement of the Lie algebra $\frakk$ in $\frakg$.
    The inner product on $\frakg$ induces a $G$-invariant riemannian metric on $G$ which then induces a
    $G$-invariant riemannian metric on the homogeneous space $G/K$ by the requirement that $G \to G/K$ is a
    riemannian submersion. Now observe that the map $G/K \times V_p \to M$, $(gK,v) \mapsto gv$  is a $G$-invariant
    diffeomorphism, so we can identify $M$ with $G/K \times V_p$. The chosen riemannian metrics on $G/K$ and $V_p$ then
    induce a $G$-invariant metric on $M$. Since $\C$ is faithfully flat over $\R$ we can assume without loss of generality now
    that smooth functions and forms on $M$ and $G\ltimes M$ are all complex valued, including elements of the convolution algebra.
    Let $e\in N_p\orbO \cong T_pV_p$ be a vector of unit length, and let $Z$ be the vector field on $M$ which maps every point to
    $e$ (along the canonical parallel transport). Next choose a symmetric open neighborhood $U$ of the diagonal of $G/K \times G/K$
    such that for each pair $(gK,hK) \in U$ there is a unique $\xi \in \Ad_h(\frakm)$ such that
    $gK = \exp(\xi)hK$. Denote that $\xi$ by $\exp_{hK}^{-1} (gK)$.
    Let $\chi : G/K \times G/K \to [0,1]$ be  a function with support
    contained in $U$ and such that $\chi =1 $ on a neighborhood of the diagonal.
    Now define the vector field $Y :M\times M \to \operatorname{pr}^*_2 (TM)$ by
    \[
      Y\big( (gK,v),(hK,w) \big) = \chi (gK,hK) \, \left( \exp_{hK}^{-1} (gK), v-w\right) + \sqrt{-1} \chi' (gK,hK) \, Z\big( (gK,v),(hK,w) \big) \ ,
    \]
    where $ \operatorname{pr}_2:M\times M \to M$ is projection onto the second coordinate
    and where the smooth cut-off function $\chi' :  G/K \times G/K \to [0,1]$ vanishes on a neighborhood of the diagonal and
    is identical $1$ on the locus where $\chi \neq 1$. 
    Finally put $E_k:= \operatorname{pr}^*_2 (\bigwedge^k T^*M)$. Then, by \cite[Lemma 44]{ConNDG},
    the complex 
    \[
      \Gamma^\infty (M\times M ,E_{\dim M}) \stackrel{i_Y}{\longrightarrow} \ldots  \stackrel{i_Y}{\longrightarrow}\Gamma^\infty (M\times M ,E_1) \stackrel{i_Y}{\longrightarrow} \calC^\infty (M \times M )\longrightarrow \calC^\infty (M)
    \]
    is a (topologically) projective resolution  of   $\calC^\infty (M)$ as a $\calC^\infty (M)$-bimodule. 
    Tensoring this resolution with the convolution algebra $\calA (G\ltimes M)$ gives the following complex
    of relative forms:
    \begin{equation}
      \label{eq:quasiisomorphic-equivariant-chain-cplxc}
      \Omega^{\dim M}_{G\ltimes M\to G} (G\ltimes M) \stackrel{i_{Y_G}}{\longrightarrow} \ldots  \stackrel{i_{Y_G}}{\longrightarrow}
      \Omega^1_{G\ltimes M\to G} (G\ltimes M)  \longrightarrow \calC^\infty (G\ltimes M) \ , 
    \end{equation}
    where $Y_G: G \times M \to pr^*_2 TM $ is the vector field
    \[
      (g,(hK,v))\mapsto \chi (ghK,hK) \, \left( \exp_{hK}^{-1} (ghK), 0\right)+\sqrt{-1} \chi' (ghK,hK) \, Z\big( (ghK,v),(hK,v) \big)\ .
    \]
The vector field $Y_G$ vanishes on $(g,(hK,v))$ if and only if
$g \in hKh^{-1}$ that is if and only if  $(g,(hK,v))\in \Lambda_0(G\ltimes M)$.
We will use the parametric Koszul resolution
Proposition \ref{prop:parametrizedkoszuleulervectorfield} to show that the complex
    \eqref{eq:quasiisomorphic-equivariant-chain-cplxc} is quasi-isomorphic to the complex of
    horizontal relative forms
    \begin{equation}
       \label{eq:quasiisomorphic-horizontal-relative-chain-cplxc}
      \hrOmega{\dim M}{\Lambda_0}(\Lambda_0(G\ltimes M)) \stackrel{0}{\longrightarrow} \ldots  \stackrel{0}{\longrightarrow}
      \hrOmega{1}{\Lambda_0}(\Lambda_0(G\ltimes M)) \stackrel{0}{\longrightarrow} \calC^\infty (\Lambda_0(G\ltimes M)) \ .
    \end{equation}
    This will then entail the claim.
    So it remains to show that \eqref{eq:quasiisomorphic-equivariant-chain-cplxc} and
    \eqref{eq:quasiisomorphic-horizontal-relative-chain-cplxc} are quasi-isomorphic.
    We first consider the case where $V_p$ consist just of a point. Then $M$ coincides with the
    homogeneous space $G/K$ and $Y_G$  is an Euler-like vector field on its set of zeros
    \[ S = \{ (g,hK)\in G \times G/K \mid g \in hKh^{-1} \} \subset M \ . \]
    Note that $S$ is a submanifold on $M$. That $Y_G$ is Euler-like on $S$ indeed follows from the
    equality
    \[
      \left.\frac{d}{dt}\exp_{hK}^{-1}\big( \exp(t\xi) ghK \big) \right|_{t=0}  =
      \left.\frac{d}{dt}\exp_{hK}^{-1}\big( \exp(t\xi) hK \big) \right|_{t=0}  =
      \xi 
    \]
    for all $(g,hK)\in S$, $\xi \in \Ad_{gh} (\frakm) = \Ad_h (\frakm)$. Hence, by
    Proposition  \ref{prop:parametrizedkoszuleulervectorfield}, the complex 
    \begin{equation*}
      \Omega^{\dim G/K}_{G\ltimes G/K\to G} (G\ltimes G/K) \stackrel{i_{Y_G}}{\longrightarrow} \ldots  \stackrel{i_{Y_G}}{\longrightarrow}
      \Omega^1_{G\ltimes G/K \to G} (G\ltimes G/K)  \longrightarrow \calC^\infty (G\ltimes G/K)  
    \end{equation*}
    is quasi-isomorphic to
     \[  0 \longrightarrow  \ldots \longrightarrow 0\longrightarrow \calC^\infty (S) \ . \] 
    Since $\hrOmega{k}{\Lambda_0} (\Lambda_0 (G\ltimes G/K)) = 0$ for $k \geq 1$, the claim follows
    in the case  $V_p= \{ p\}$. Now consider the case $M = G/K \times V_p$ with $V_p$ an arbitrary manifold
    on which $K$ acts trivially. Observe that in this situation
    \[ \Omega^k_{G\ltimes M\to G} (G\ltimes M)\cong \bigoplus_{0\leq l \leq k}
      \Omega^l_{G\ltimes G/K \to G} (G\ltimes G/K)\hatotimes\Omega^{k-l} (V_p) \]
    and that $Y_G$ acts, near its zero set $S = \Lambda_0(G\ltimes M)$, only on the first components
    \[ \Omega^l_{G\ltimes G/K \to G} (G\ltimes G/K)\ .\]
    Hence the chain complex \eqref{eq:quasiisomorphic-equivariant-chain-cplxc}
    is then quasi-isomorphic to the chain complex
    \[
     \calC^\infty(\Lambda_0 (G\ltimes G/K )) \hatotimes \Omega^\bullet (V_p)
    \]
    with zero differntial. But since 
    \[ \hrOmega{k}{\Lambda_0} (\Lambda_0 (G\ltimes M)) \cong
       \calC^\infty(\Lambda_0 (G\ltimes G/K )) \hatotimes \Omega^k (V_p) \]
    the claim is now proved.  
\end{proof}

\begin{conjecture}[Brylinski {\cite[Prop.~3.4]{BryAAGAH}\& \cite[p.~24, Prop.]{BryCHET}}]\label{BryConjBHF}
  Let $M$ be $G$-manifold and regard $\hrOmega{\bullet}{\Lambda_0} \big(\Lambda_0 (G\ltimes M)\big)$
  as a chain complex endowed with the zero differential. Then the chain map 
   \[
      \Phi_{\bullet,M/G} : C_\bullet  \big( \calC^\infty (M) , \calA (M/G)\big)
      \to \hrOmega{\bullet}{\Lambda_0} \big(\Lambda_0 (G\ltimes M)\big)
  \]
  is a quasi-isomorphism.
\end{conjecture}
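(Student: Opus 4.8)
The plan is to reduce the global statement to a question about stalks and then to a linear model. Both sides of $\Phi_{\bullet,M/G}$ arise as global sections of complexes of fine sheaves over $X=M/G$: the source is $\sheafC_\bullet(\calC^\infty_M,\calA)$, whose homology sheaf computes the local Hochschild homology by the localization isomorphism $\varrho$ established above (in the spirit of Theorem \ref{thm:hochschild-homology-global-sections}), and the target is $\pi_*(s_{|\Lambda_0})_*\hrOmega{\bullet}{\Lambda_0}$, which is a fine sheaf carrying the zero differential. Since the homology of a complex of fine sheaves is computed by its complex of global sections, it suffices to prove that $\Phi_\bullet$ induces an isomorphism on homology at each stalk $\smorbO\in X$.

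Next I would linearize at a stalk. Fix an orbit $\orbO=Gp$ and let $K:=G_p$ be its isotropy group, which is compact because $G$ is. By Proposition \ref{prop:local-model}, combined with the slice theorem identifying a neighborhood of $\orbO$ with $G\times_K V_p$ and the normal space $N_p\orbO$ with a linear $K$-representation, the source stalk at $\orbO$ is quasi-isomorphic to the stalk at the origin of the convolution complex of the transformation groupoid $K\ltimes N_p\orbO$. Because $\Phi_\bullet$ is natural with respect to the linearization isomorphism $\Theta$ of \eqref{eq:tubular} and the slice identifications, the target stalk is identified correspondingly with germs at the origin of horizontal relative forms for $K\ltimes N_p\orbO$. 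The claim thus reduces to the case of a \emph{linear} action of a compact group $K$ on a vector space $V$, localized at the origin.

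The base case of this reduction is exactly the last part of Proposition \ref{prop:manifold-one-isotropy-type}: when the $K$-action on $V$ has a single isotropy type, $\Phi_\bullet$ is already a quasi-isomorphism. A general linear action stratifies $V$ by isotropy type, with the origin as the unique deepest stratum. The natural strategy is an induction on the depth of this stratification. On the punctured complement $V\setminus\{0\}$ every isotropy group is a proper subgroup of $K$, and applying the inductive hypothesis to the slice representations along the lower strata yields the conjecture over $V\setminus\{0\}$. One would then compare the source and target sheaf complexes over a neighborhood of $0$ with their restrictions to the punctured neighborhood by a Mayer--Vietoris or long exact sequence argument, reducing the remaining discrepancy to a purely local computation of the two homologies at the singular point $0$.

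The hard part, and the reason the statement remains a conjecture in general, is precisely this local computation at the origin. The single-isotropy-type case works because the Euler-like vector field $Y_G$ used in Proposition \ref{prop:manifold-one-isotropy-type} vanishes exactly along a \emph{smooth} submanifold equal to the loop space, so the parametrized Koszul resolution of Proposition \ref{prop:parametrizedkoszuleulervectorfield} resolves cleanly. For multiple isotropy types the zero locus $\Lambda_0(K\ltimes V)=\bigcup_{k\in K}\{k\}\times V^k$ is genuinely singular---the fixed-point spaces $V^k$ jump in dimension as $k$ varies---so this Koszul argument degenerates and one must instead control the stratified germ of $\Lambda_0$ at the central points $(k,0)$ together with the real-algebraic conditions cutting out basic relative forms. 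For $G=S^1$ this germ is tractable, since near a singular point $\Lambda_0$ is the transverse union of the hyperplane $\{x_0=0\}$ and the line $\{x_1=\cdots=x_n=0\}$ in $\R^{n+1}$, which is what makes the explicit verification in Section \ref{sec:circleaction} possible; establishing the analogous local description for an arbitrary compact group $G$ is the crux of the general case.
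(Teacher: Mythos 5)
You have written a reduction scheme, not a proof, and you say so yourself; but note that the paper contains no proof of this statement either --- it is posed as a conjecture, and is verified only for actions with a single isotropy type (Proposition \ref{prop:manifold-one-isotropy-type}), for finite groups (Corollary \ref{cor:finitegroup}), and for $G=S^1$ (Section \ref{sec:circleaction}). Measured against what the paper actually does, your skeleton is the same one: both sides of $\Phi_{\bullet,M/G}$ are complexes of fine sheaves on $M/G$, so it suffices to check the induced map on stalks; the stalks are linearized via the slice theorem and Proposition \ref{prop:local-model} to a linear action of the compact isotropy group; and the single-isotropy-type case is disposed of by Proposition \ref{prop:manifold-one-isotropy-type}. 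This is precisely how the paper handles $S^1$: stalks over orbits with finite isotropy follow from (the proof of) Corollary \ref{cor:finitegroup}, and stalks with full $S^1$ isotropy from the explicit Koszul-complex computation of Proposition \ref{prop:equivariant-koszul}.

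The one ingredient you add beyond the paper --- induction on the depth of the isotropy stratification followed by a Mayer--Vietoris comparison between a neighborhood of the origin and the punctured neighborhood --- does not advance matters, and it is worth seeing why. Once the problem has been reduced to stalks of fine sheaves it is already local: the inductive hypothesis settles the stalks over $V\setminus\{0\}$, but the stalk at the origin is a germ at $0$ and is not determined by restrictions to the punctured set, so no exact sequence relating the neighborhood to its punctured version can produce it. Your ``remaining discrepancy'' is therefore not reduced by this step; it is restated. The genuinely missing content --- missing from the paper as well --- is the computation of both homologies at the deepest stalk for a general linear action of a compact group $K$ on $V$, that is, control of the singular germ of $\Lambda_0(K\ltimes V)=\bigcup_{k\in K}\{k\}\times V^k$ at the points $(k,0)$, together with the identification of cycles of the parametrized Koszul complex with horizontal relative forms there. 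For $S^1$ this is tractable because near a singular point the loop space is a transverse union of a hyperplane and a line, which is exactly what drives the vector-field splitting and spectral-sequence argument in Proposition \ref{prop:equivariant-koszul}; for general compact $G$ it is open, which is why the statement is a conjecture.
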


\begin{remark}
  Proposition \ref{prop:manifold-one-isotropy-type} shows that Brylinski's
  conjecture holds true for
  $G$-manifolds having only one isotropy type. Corollary \ref{cor:finitegroup}
  tells that Brylinski's conjecture is true for finite group actions.
  In the following section we will verify it for circle actions. 
\end{remark}


\section{The circle action case}
\label{sec:circleaction}
\subsection{Rotation in a plane}
Let us consider the case of the natural $S^1$-action on $\R^2$ by rotation. 
First we describe the ideal sheaf $\calJ \subset \calC^\infty_{S^1 \ltimes \R^2} $
which consists of smooth functions on open sets of $S^1\times \R^2$ vanishing on 
$\Lambda_0(S^1 \ltimes \R^2)$.  
To this end denote by $x_j : S^1 \times \R^2 \to \R$, $j=1,2$, the projection onto the first respectively second 
coordinate of $\R^2$ and by
$\tau :  S^1\setminus \{ - 1 \} \times \R^2  \to (-\pi,\pi)$ the coordinate map 
$(g,v) \mapsto \operatorname{Arg} (g)$. By  $r = \sqrt{x_1^2 + x_2^2}$ we denote the radial coordinate
and by $B_\varrho (v)$ the open disc of radius $\varrho >0$ around a point $v \in \R^2$.
Note that the loop space $\Lambda_0(S^1 \ltimes \R^2)$ is the disjoint union of the strata
$\{ (1,0) \}$, $ \{ 1\} \times (\R^2\setminus \{0\})$, and $( S^1\setminus \{1\}) \times \{0\}$
and that the loop space is smooth outside the singular point $(1,0)$. 

\begin{proposition}\label{prop:vanishingideal}
  Around the point $(1,0)$, the vanishing ideal $ \calJ \big( (S^1\setminus \{ -1 \})  \times B_\varrho(0) \big) $  
  consists of all smooth $f : (S^1\setminus \{ -1 \})  \times B_\varrho(0)   \to \R$ 
  which can be written in the form
\begin{equation}
  \label{eq:expansion-functions-vanishing-ideal}
  f = f_1 \tau x_1 + f_2 \tau x_2 , \quad \text{where } f_1,f_2 \in \calC^\infty\big( (S^1\setminus \{ -1 \}) 
  \times B_\varrho(0) \big) \ . 
\end{equation}
  Around the stratum $ \{ 1\} \times (\R^2\setminus \{0\})$, a function $f \in \calC^\infty \big( (S^1\setminus \{ -1 \}) \times (\R^2\setminus \{0\})\big)$
  lies in the ideal  $ \calJ \big( ( S^1\setminus \{ -1 \})  \times (\R^2\setminus \{0\})\big)$ if and only if 
  $f$ is of the form $h \tau $ for some  $h \in \calC^\infty \big( (S^1\setminus \{ -1 \})  \times (\R^2\setminus \{0\})\big)$.
  Finally, around the stratum $( S^1\setminus \{1\}) \times \{0\}$, a function
  $f \in \calC^\infty \big( (S^1 \setminus \{ 1\}) \times \R^2\big)$ vanishes on  $\Lambda_0(S^1 \ltimes \R^2)$
  if and only if it is of the form  $f_1 x_1 + f_2 x_2$ with  $f_1,f_2 \in \calC^\infty  \big( (S^1 \setminus \{ 1\}) \times \R^2\big)$. 
\end{proposition}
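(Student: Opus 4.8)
The plan is to prove each of the three assertions by identifying $\Lambda_0(S^1 \ltimes \R^2)$ locally with a coordinate model and then invoking Hadamard's lemma. The geometric input is elementary: since a nontrivial rotation of the plane fixes only the origin, one has $(g,v) \in \Lambda_0(S^1 \ltimes \R^2)$ precisely when $g = 1$ or $v = 0$. In the coordinates $(\tau, x_1, x_2)$ on $(S^1 \setminus \{-1\}) \times \R^2$ this means that the loop space is the union $\{\tau = 0\} \cup \{x_1 = x_2 = 0\}$ of the hyperplane $\{g = 1\}$ and the line $\{v = 0\}$, which meet transversally at $(1,0)$.

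For the two smooth strata I would argue as follows. Near $\{1\} \times (\R^2 \setminus \{0\})$ the constraint $v \neq 0$ forces the loop space to coincide with the smooth hypersurface $\{\tau = 0\}$, so the claim is the standard fact that a smooth function vanishing on $\{\tau = 0\}$ is of the form $h\tau$; this follows from Hadamard's lemma by integrating along the $\tau$-direction, which is legitimate because $\tau$ ranges over the interval $(-\pi,\pi)$. Likewise, near $(S^1 \setminus \{1\}) \times \{0\}$ the constraint $g \neq 1$ forces the loop space to equal the codimension-two coordinate subspace $\{x_1 = x_2 = 0\}$, and the representation $f = f_1 x_1 + f_2 x_2$ is again Hadamard's lemma, now integrated radially over the ball. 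In both cases the \emph{if} direction is immediate.

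The substantial case is the singular point. Here I would show that $\calJ\big((S^1 \setminus \{-1\}) \times B_\varrho(0)\big)$, which as the vanishing ideal of the union is the intersection of the ideals $(\tau)$ and $(x_1, x_2)$, coincides with the ideal $(\tau x_1, \tau x_2)$. The inclusion $\supseteq$ is clear since each generator vanishes on both $\{\tau = 0\}$ and $\{x_1 = x_2 = 0\}$. For the reverse inclusion, take $f \in \calJ$. Vanishing on $\{\tau = 0\}$ together with Hadamard's lemma gives $f = \tau g$ with $g$ smooth on the same domain. Since $f$ also vanishes on the line $\{x_1 = x_2 = 0\}$, I obtain $\tau\, g(\tau,0,0) = 0$ for all $\tau \in (-\pi,\pi)$, whence $g(\tau,0,0) = 0$ for $\tau \neq 0$ and, by continuity of $g$, for all $\tau$. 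Thus $g$ itself vanishes on $\{x_1 = x_2 = 0\}$, and a second application of Hadamard's lemma (integrating radially over $B_\varrho(0)$) yields $g = f_1 x_1 + f_2 x_2$ with $f_1, f_2$ smooth. Substituting back gives the desired expansion $f = f_1\, \tau x_1 + f_2\, \tau x_2$.

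The main obstacle is precisely the transversality step hidden in this last case: one must pass from the Hadamard factorization $f = \tau g$ to the vanishing of the quotient $g$ on the line. This is exactly where the transversality of $\{\tau = 0\}$ and $\{x_1 = x_2 = 0\}$ enters, and it relies on the smoothness of $g$ to conclude vanishing at $\tau = 0$ from vanishing off it. The remaining work is the routine verification that the Hadamard quotients are smooth and that the product domain $(-\pi,\pi)\times B_\varrho(0)$ is star-shaped along each direction of integration, so that the integral formulas apply.
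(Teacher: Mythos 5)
Your proof is correct, and for the only nontrivial case --- the singular point $(1,0)$ --- it takes a genuinely different route from the paper's. The paper invokes the Malgrange preparation theorem: writing $f(t,x)+t = c(t,x)\,(t+a_0(x))$, it uses vanishing of $f$ on the line to get $c(t,0)=1$, vanishing on the hyperplane to get $a_0 \equiv 0$ near the origin (after possibly shrinking $B_\varrho(0)$), hence $f=(c-1)\,t$, and then a parametric Taylor expansion of $c-1$ in $x$ --- which is exactly your second Hadamard step. You replace the preparation-theorem step by a first application of Hadamard's lemma in the $\tau$-direction, $f=\tau g$, followed by the elementary division-and-continuity argument showing that $g$ vanishes on the line $\{x_1=x_2=0\}$; this is precisely where the transversality of the two branches of the loop space enters, as you correctly flag. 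Your route is more elementary (integral-remainder Taylor expansions are all that is used), and it has the small additional advantage of working on the full product neighborhood $(-\pi,\pi)\times B_\varrho(0)$ without shrinking $\varrho$, whereas the paper's argument only reaches the conclusion after possibly passing to a smaller ball --- harmless for the germ-level use made of the proposition later on, but your version is sharper as stated. Your explicit treatment of the two smooth strata by single Hadamard factorizations in the appropriate directions fills in what the paper dismisses in one line as the trivial part of the claim.
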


\begin{proof}
 Since the loop space is smooth at points of the strata  $ \{ 1\} \times (\R^2\setminus \{0\})$ and $( S^1\setminus \{1\}) \times \{0\}$,
 only the case where $f$ is defined on a neighborhood of the singular point $(1,0)$ is non-trivial. 
 So let us assume that  $ f \in \calC^\infty \big( (S^1\setminus \{ -1 \})  \times B_\varrho(0) \big) $   
 vanishes on  $\Lambda_0(S^1 \ltimes \R^2)$. 
 Using the coordinate functions we can consider $f$ as a function of $t \in (-\pi,\pi)$ and $x \in \R^2$.
 By the Malgrange preparation theorem one then has an expansion
 \[
   f(t, x) + t = c(t,x) (t + a_0 (x)) ,
 \]
where $c$ and $a_0$ are smooth and $a_0(0)=0$. Since $t = c(t,0)t$ for all $t\in (-\pi,\pi)$, one has 
$c (t,0)=1$. Putting $t=0$ gives $0= c(0,x)a_0(x)$ for all $x \in B_\varrho(0)$. Since $c(0,0)=1$, one obtains 
$a_0(x)= 0$ for all $x$ in a neighborhood of the origin. After possibly shrinking $B_\varrho (0)$ we can assume
that $a_0=0$. Hence 
\begin{equation}
  \label{eq:intermediate-expansion}
  f(t, x) = (c(t,x) - 1 ) t \ .
\end{equation}
Parametric Taylor expansion of $c(t,x)-1$ gives 
\[
  c(t,x) - 1 = x_1 r_1 (t,x) +  x_2 r_2(t,x), \quad \text{where }  
 r_j (t,x) = \int_0^1 (1-s) \, \partial_j c (t, sx) \, ds , \: j=1,2 \ . 
\] 
Since the functions $r_j$ are smooth, this expansion together with \eqref{eq:intermediate-expansion}
entails \eqref{eq:expansion-functions-vanishing-ideal}.
\end{proof}

\begin{lemma}\label{lem:coordinate-representations-vector-fields}
 The vector fields 
 \begin{equation*}
   \begin{split}
      Y =  Y_{S^1\ltimes \R^2} :\: S^1\times \R^2 \to \R^2 , (g,x) \mapsto x - g x \quad \text{and} \quad
      Z = Z_{S^1\ltimes \R^2} :\: S^1\times \R^2 \to \R^2 , (g,x) \mapsto \frac{x + gx}{2} 
   \end{split}
 \end{equation*}
 have coordinate representations  $Y  =  Y_1  \frac{\partial}{\partial x_1}  + Y_2  \frac{\partial}{\partial x_2}$
 and   $Z  =  Z_1  \frac{\partial}{\partial x_1}  + Z_2  \frac{\partial}{\partial x_2}$ with coefficients given by 
 \begin{equation}
 \label{eq:expansionY} 
 \begin{split}
     Y_1  = x_1 (1-\cos \tau) - x_2 \sin \tau 
     \quad \text {and} \quad 
     Y_2  = x_2 (1 -\cos \tau ) + x_1 \sin \tau 
 \end{split}
 \end{equation}
 respectively by 
 \begin{equation}
   \label{eq:expansionZ}
   \begin{split}
     Z_1   = x_1 (1+\cos \tau) + x_2 \sin \tau  \quad \text {and} \quad 
     Z_2   =  x_2 (1+\cos \tau )- x_1 \sin \tau \ .
   \end{split}
 \end{equation}
 Moreover, the vector fields $Y$ and $Z$ have square norms
 \begin{equation}
   \label{eq:squarenorms}
     \| Y \|^2 = 2 r^2 \, (1 - \cos \tau ) = r^2\tau^2 (\xi \circ\tau)  \quad
     \text{and} \quad 
     \| Z \|^2  = 2 r^2 \, (1+\cos \tau ) \ ,
 \end{equation}
 where $\xi$ is holomorphic with positive values over $(-\pi,\pi)$ and value $1$ at the origin. 
\end{lemma}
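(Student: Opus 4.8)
The plan is to reduce the whole statement to one explicit computation with the rotation matrix, so I would not expect any real difficulty beyond bookkeeping. First I would parametrize $g \in S^1 \setminus \{-1\}$ by its argument $\tau = \operatorname{Arg}(g) \in (-\pi,\pi)$ and record the action on $x = (x_1,x_2) \in \R^2$ componentwise as
\[
  gx = \big( x_1 \cos\tau + x_2 \sin\tau , \; -\, x_1 \sin\tau + x_2 \cos\tau \big) \ .
\]
Substituting this into the definitions of $Y(g,x) = x - gx$ and $Z(g,x)$ from the statement and reading off the two components immediately produces the coordinate representations \eqref{eq:expansionY} and \eqref{eq:expansionZ}; this part is purely mechanical.

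For the square norms in \eqref{eq:squarenorms} I would avoid expanding the squared coefficients and instead exploit that the $S^1$-action is orthogonal. Then $\|gx\| = \|x\| = r$, while $\langle x, gx\rangle = r^2 \cos\tau$ since $gx$ is the rotation of $x$ through the angle $\tau$; a one-line direct check confirms $\langle x, gx\rangle = (x_1^2 + x_2^2)\cos\tau$. Hence
\[
  \|Y\|^2 = \|x - gx\|^2 = 2r^2 - 2\langle x, gx\rangle = 2r^2(1-\cos\tau) \ ,
\]
and the same computation with $x+gx$ in place of $x-gx$ yields $\|Z\|^2 = 2r^2(1+\cos\tau)$. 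One could equally square and add the coefficients in \eqref{eq:expansionY}, \eqref{eq:expansionZ} and invoke $\sin^2\tau + \cos^2\tau = 1$, but the inner-product route is shorter.

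The only step carrying any real content is the factorization $\|Y\|^2 = r^2\tau^2(\xi\circ\tau)$ with $\xi$ holomorphic. Here I would invoke the half-angle identity $1 - \cos\tau = 2\sin^2(\tau/2)$ to rewrite
\[
  2r^2(1-\cos\tau) = r^2\tau^2 \left( \frac{2\sin(\tau/2)}{\tau} \right)^{\!2} \ ,
\]
so the claim holds with $\xi(s) := \big(2\sin(s/2)/s\big)^2$ for $s \neq 0$ and $\xi(0) := 1$. The function $s \mapsto \sin(s/2)/s$ has a removable singularity at the origin (where $\sin(s/2)$ has a simple zero) and therefore extends to an entire function with value $\tfrac12$ at $0$; consequently $\xi$ is holomorphic on all of $\C$, takes the value $1$ at the origin, and is strictly positive on $(-\pi,\pi)$ because $\sin(\tau/2)$ has no zero in $(-\pi,\pi)\setminus\{0\}$. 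This verifies every asserted property of $\xi$. In short, the lemma is a direct computation whose single non-automatic ingredient is recognizing the sinc-type factor and its holomorphy, which the half-angle identity makes transparent.
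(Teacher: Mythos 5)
Your proposal is correct and is in essence the same direct computation as the paper's own proof, with two differences worth recording. For the square norms the paper expands the squared coefficients from \eqref{eq:expansionY} and \eqref{eq:expansionZ} and invokes $\sin^2\tau+\cos^2\tau=1$, whereas you use orthogonality of the rotation action, $\|Y\|^2=2r^2-2\langle x,gx\rangle$ with $\langle x,gx\rangle=r^2\cos\tau$; this is the same content organized slightly more economically. For the factorization $\|Y\|^2=r^2\tau^2(\xi\circ\tau)$ the paper only says ``by power series expansion of $1-\cos t$,'' while you exhibit $\xi$ explicitly via the half-angle identity, $\xi(s)=\bigl(2\sin(s/2)/s\bigr)^2$, and then verify holomorphy (removable singularity), the value $\xi(0)=1$, and positivity on $(-\pi,\pi)$; your version is more complete than the paper's one-line appeal and settles the only non-mechanical point of the lemma. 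One caveat, which applies equally to your write-up and to the paper's proof: the coordinate formulas \eqref{eq:expansionZ} and the norm $\|Z\|^2=2r^2(1+\cos\tau)$ are those of the vector field $x+gx$, not of the stated definition $Z=(x+gx)/2$; the factor $\tfrac12$ is silently dropped, so a literal substitution of the definition would produce \eqref{eq:expansionZ} and the $Z$-part of \eqref{eq:squarenorms} with extra factors $\tfrac12$ and $\tfrac14$. This is an inconsistency in the lemma's statement itself (a typo in the definition or in the displayed formulas), not a gap specific to your argument, but it is worth flagging rather than reproducing.
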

\begin{proof}
 The representations  
\begin{equation*}
\begin{split}
   Y|_{(S^1 \setminus \{ -1\})\times \R^2}  & 
    = \left( x_1 (1-\cos \tau) - x_2 \sin \tau \right) \frac{\partial}{\partial x_1} 
           + \left( x_2 (1 -\cos \tau ) + x_1 \sin \tau  \right) \frac{\partial}{\partial x_2}   \quad \text{and} \\
   Z|_{(S^1 \setminus \{ -1\})\times \R^2} &
    = \left( x_1 (1+\cos \tau) + x_2 \sin \tau  \right) \frac{\partial}{\partial x_1} 
           + \left( x_2 (1+\cos \tau )- x_1 \sin \tau \right) \frac{\partial}{\partial x_2}
\end{split}
\end{equation*}
are immediate by definition of $Y$ and $Z$ and since $S^1$ acts by rotation. 
Note that these formulas still hold true when extending $\tau$ to the whole circle by putting 
$\tau (-1) = \pi$. At $g=-1$ the extended $\tau$ is not continuous then, but compositions with 
the trigonometric functions $\cos$ and $\sin$ are smooth on $S^1$.
For the norms of $Y$ and $Z$ one now obtains
\[
  \| Y \|^2 = x_1^2 (1 -\cos \tau)^2  +  x_2^2  \sin^2 \tau \,
           +  x_2^2 (1-\cos \tau)^2  + x_1^2  \sin^2 \tau  = 2 r^2 \, (1 - \cos \tau ) 
\]
and 
\[
  \| Z \|^2 = x_1^2 (1+\cos \tau)^2  +  x_2^2  \sin^2 \tau 
           +  x_2^2 (1+\cos \tau)^2 + x_1^2  \sin^2 \tau  = 2 r^2 \, (1+\cos \tau ) \ .
\] 
By power series expansion of $1 -\cos t$ one obtains the statement about $\xi$. 
\end{proof}

\begin{lemma}
 For all open subsets $U$ of the loop space $\Lambda_0 = \Lambda_0 (S^1 \ltimes \R^2)$ and all $k\in \N$ the map 
 \[
   \Theta^k_U : \rOmega{k}{\Lambda_0}(U) \to \Gamma^\infty (U,\wedge^k F) 
 \]
 from Prop.~\ref{prop:factorization-relative-forms} is injective. 
\end{lemma}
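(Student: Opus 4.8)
The plan is to verify injectivity of $\Theta^k$ stalkwise, using that a morphism of sheaves is injective precisely when it is injective on every stalk, and to organize the argument by the three strata of $\Lambda_0 = \Lambda_0(S^1\ltimes\R^2)$: the open strata $\{1\}\times(\R^2\setminus\{0\})$ and $(S^1\setminus\{1\})\times\{0\}$, together with the singular point $(1,0)$. Writing $\Omega^j := \Omega^j_{S^1\ltimes\R^2\to S^1}$ for the relative $j$-forms, injectivity of $\Theta^k_U$ amounts to the inclusion $\ker\theta^k\subseteq\calK$ (the reverse of what was shown in Proposition \ref{prop:factorization-relative-forms}), checked locally. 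The case $k=0$ is immediate since $\Theta^0$ is the identity on $\calC^\infty_{\Lambda_0}$, and the case $k\geq 3$ is vacuous because $\Omega^k(\R^2)=0$; so only $k\in\{1,2\}$ need attention. First I would unwind the kernel condition: for a representative $\omega$, having $\Theta^k([\omega]_{\Lambda_0})=0$ means $\theta^k(\omega)_{(g,p)}=\iota_{M^g}^*\omega_{(g,p)}=0$ at every nearby $(g,p)\in\Lambda_0$. Since $M^1=\R^2$ while $M^g=\{0\}$ for $g\neq1$, this imposes no condition at points with $g\neq1$ (there $\bigwedge^kF=0$ for $k\geq1$), and at points with $g=1$ it forces the full covector to vanish, i.e.\ $\omega|_{\{1\}\times\R^2}=0$.

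I would then dispose of the two open strata quickly. Near a point $(1,p)$ with $p\neq0$, Proposition \ref{prop:vanishingideal} gives $\calJ=(\tau)$; as $\tau$ depends only on the $S^1$-variable one has $d_\textup{rel}\tau=0$, so $\calK=\calJ\,\Omega^k=\tau\,\Omega^k$ locally. The kernel condition $\omega|_{g=1}=0$ combined with Hadamard's lemma in the smooth coordinate $\tau$ yields $\omega=\tau\eta$ for a relative form $\eta$, whence $\omega\in\calK$ and the germ vanishes. Near a point $(g,0)$ with $g\neq1$ there is no constraint from the kernel, but here one checks that $\rOmega{k}{\Lambda_0}$ is itself zero for $k\geq1$: Proposition \ref{prop:vanishingideal} gives $\calJ=(x_1,x_2)$, so $dx_j=d_\textup{rel}x_j\in d_\textup{rel}\calJ$, and every $k$-form with $k\geq1$ carries a factor $dx_j$ and hence lies in $\calK$.

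The main work is at the singular point $(1,0)$. There Proposition \ref{prop:vanishingideal} identifies $\calJ=(\tau x_1,\tau x_2)$, reflecting the transverse union of the hyperplane $\{\tau=0\}$ and the line $\{x_1=x_2=0\}$. As above, $\Theta^k([\omega])=0$ forces $\omega|_{\{1\}\times B_\varrho(0)}=0$, so Hadamard's lemma in $\tau$ gives $\omega=\tau\eta$ with $\eta=\sum_{|I|=k}b_I\,dx_I$ a relative $k$-form. Using $d_\textup{rel}\tau=0$ one has the key identity $d_\textup{rel}(\tau x_j)=\tau\,dx_j$, so each generator of $d_\textup{rel}\calJ$ has the shape $\tau\,dx_j$. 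Since $k\geq1$, every monomial $dx_I$ factors as $dx_j\wedge dx_{I\setminus\{j\}}$, whence
\[
  \omega=\tau\eta=\sum_j \big(\tau\,dx_j\big)\wedge\gamma_j=\sum_j d_\textup{rel}(\tau x_j)\wedge\gamma_j\in d_\textup{rel}\calJ\wedge\Omega^{k-1}\subset\calK
\]
for suitable $\gamma_j\in\Omega^{k-1}$ obtained by extracting the factor $dx_j$. Thus $[\omega]_{\Lambda_0}=0$, establishing injectivity at the singular stalk.

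I expect the last step to be the main obstacle: one must combine the explicit generators $\tau x_1,\tau x_2$ of the vanishing ideal from Proposition \ref{prop:vanishingideal} with the relation $d_\textup{rel}\tau=0$, which is exactly what confines $d_\textup{rel}\calJ$ to $\tau\cdot\Omega^1$ instead of producing uncontrolled $dx_j$ terms; this is the point where the transversality of the two strata at $(1,0)$ enters decisively. The only remaining care is to confirm that the Hadamard factorization $\omega=\tau\eta$ is compatible with the relative-form structure, i.e.\ that the cofactors $b_I$ are again smooth sections of the relevant bundle, which is a routine verification.
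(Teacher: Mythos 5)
Your proposal is correct and follows essentially the same route as the paper's proof: the same reduction to a local (stalkwise) verification, the same three-case analysis according to the strata of $\Lambda_0$, the same use of Proposition \ref{prop:vanishingideal} to identify $\calJ$ locally as $(\tau x_1,\tau x_2)$, $(\tau)$, or $(x_1,x_2)$, and the same key identity $d_\textup{rel}(\tau x_j)=\tau\,dx_j$ combined with the Hadamard factorization $\omega=\tau\eta$ to place $\omega$ in the differential graded ideal. The only cosmetic differences are that you make the Hadamard step and the vacuous case $k\geq 3$ explicit, which the paper leaves implicit.
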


\begin{proof}
Since $\rOmega{0}{\Lambda_0}(U) = \calC^\infty (U) = \Gamma^\infty (U,\wedge^0 F)$
and $\Theta^0_U = \id$, 
we only need to prove the claim for $k\geq 1$.   
To this end we have to show that for 
$\omega \in \Gamma^\infty (\widetilde{U}, \wedge^k s^* T^*M)$ with 
$[\omega]_F=0$ the relation $[\omega]_{\Lambda_0} =0$ holds true. 
Here, as before, $\widetilde{U}\subset S^1\times \R^2$ is an 
open subset such that $U = \widetilde{U} \cap \Lambda_0(S^1\ltimes \R^2)$.
In other words we have to show that each such $\omega$ has the form
\[
  \omega = \sum_{l \in L} f_l \omega_l +
  \sum_{j\in J} d_\textup{rel} h_j \wedge \eta_j \ ,
\]
where $L,J$ are finite index sets, $f_l,h_j \in \calJ (\widetilde{U})$, 
$\omega_l \in \Gamma^\infty (\widetilde{U},\wedge^k s^* T^*M)$, 
and $\eta_j \in \Gamma^\infty (\widetilde{U},\wedge^{k-1}s^* T^*M)$. 
Since the involved sheaves are fine, we need to show the claim only locally.
So let $(g,v) \in \Lambda_0(S^1\ltimes \R^2)$. Choose 
$\varrho>0$ and $\varepsilon >0$ with $\varepsilon < \pi $ such that 
$0 \notin B_\varrho(v) $ if  $v \neq 0$ 
and such that 
$e^{\sqrt{-1} t}g \neq 1 $ for all $t$ with $|t|< \varepsilon$ if $g \neq 1$. 
Let
\[
  \widetilde{U}= \left\{  (e^{\sqrt{-1} t}g, w) \in S^1\times \R^2\mid
  |t| < \varepsilon \: \& \: \| v - w \| < \varrho \right\} \ .
\]
Using the coordinate maps $\tau,x_1,x_2$ we now consider three cases.

\textit{1.~Case:}  $g=1$ and $v=0$. Then $F_{(1,w)} = T^*_w \R^2$,
  hence $\omega_{(1,w)} =0$ for all $w$ such that 
  $(1,w) \in \widetilde{U} \cap \Lambda_0$.
  Hence 
  \[
    \omega = \tau \sum_{1\leq i_1< \ldots < i_k \leq 2} 
    \omega_{i_1,\ldots , i_k} dx_1\wedge \ldots \wedge dx_{i_k} 
  \]
  with $\omega_{i_1,\ldots , i_k} \in \calC^\infty (\widetilde{U})$. 
  Now observe that $\tau x_j \in  \calJ (\widetilde{U})$ for $j=1,2$ 
  and that $d_\textup{rel} (\tau x_j) = \tau dx_j$. Therefore 
  $\omega \in d_\textup{rel} \calJ (\widetilde{U}) \wedge \Gamma^\infty (\widetilde{U}, \wedge^{k-1} s^* T^*M)$. 

\textit{2.~Case:} $g \neq 1$ and $v=0$.   
  Then $F_{(h,0)} = 0$ for all $h \in S^1$ with 
  $(h,0) \in \widetilde{U}\cap \Lambda_0$. Hence $\omega$ can be any $k$-form 
  on $\widetilde{U}$. But over $\widetilde{U}$ one has 
  $x_1, x_2\in \calJ(\widetilde{U})$ which entails that 
  \[
    \omega = \sum_{1\leq i_1< \ldots < i_k \leq 2} 
    \omega_{i_1,\ldots , i_k} dx_{i_1}\wedge \ldots \wedge dx_{i_k} \in 
     d_\textup{rel} \calJ (\widetilde{U}) \wedge
     \Gamma^\infty (\widetilde{U}, \wedge^{k-1} s^* T^*M). 
  \]

\textit{3.~Case:}  $g=1$ and $v \neq 0$. 
  Then $F_{(1,w)} = T^*\R^2$ for all $w$ such that $(1,w) \in \widetilde{U}\cap \Lambda_0$. Hence
  \[
    \omega = \tau \sum_{1\leq i_1< \ldots < i_k \leq 2} 
    \omega_{i_1,\ldots , i_k} dx_1\wedge \ldots \wedge dx_{i_k} 
  \]
  with $\omega_{i_1,\ldots , i_k} \in \calC^\infty (\widetilde{U})$. 
  Since $\tau\in \calJ(\widetilde{U})$ one obtains 
  $\omega \in  \calJ(\widetilde{U}) 
   \Gamma^\infty (\widetilde{U}, \wedge^k s^* T^*M)$. 

So in all three cases $\omega$ is in the differential graded ideal
\[ 
\calJ(\widetilde{U}) \Gamma^\infty (\widetilde{U}, \wedge^k s^* T^*M) + 
d_\textup{rel} \calJ(\widetilde{U}) \wedge \Gamma^\infty (\widetilde{U}, \wedge^{k-1} s^* T^*M) 
\]
and $[\omega]_{\Lambda_0} =0$. Hence $\Theta^k_U$ is injective.
\end{proof}

\begin{lemma}
 For every $S^1$-invariant open $V \subset \R^2$ the restriction morphism 
\[ 
  [-]_{\Lambda_0}: \Omega^\bullet_{S^1 \ltimes V \to S^1} (S^1 \ltimes V ) \to 
  \rOmega{\bullet}{\Lambda_0} \big(\Lambda_0(S^1 \ltimes V )\big)
\]
 maps the space of cycles $Z_k \big( \Omega^\bullet_{S^1 \ltimes V \to S^1} (S^1 \ltimes V ) , Y\lrcorner \big)$ 
 onto the space $\hrOmega{k}{\Lambda_0} \big(\Lambda_0(S^1 \ltimes V )\big)$
 of horizontal relative forms. 
\end{lemma}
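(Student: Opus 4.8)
The plan is to establish surjectivity of the restriction map $[-]_{\Lambda_0}$ from Koszul cycles onto horizontal relative forms by producing, for a prescribed horizontal form, an \emph{explicit} global cycle that lifts it. Since the fibre dimension is $2$, only $k\in\{0,1,2\}$ are relevant. In degree $0$ every relative function is a cycle and $\hrOmega{0}{\Lambda_0}=\rOmega{0}{\Lambda_0}$ is by definition a quotient of $\calC^\infty(S^1\ltimes V)$, so the map is onto. In degree $2$ I would first note that $Y\lrcorner$ is injective on $\Omega^2_{S^1\ltimes V\to S^1}$: a relative two–form $b\,dx_1\wedge dx_2$ is a cycle only if $bY_1=bY_2=0$, and by \eqref{eq:squarenorms} the common zero set of $Y_1,Y_2$ is exactly $\Lambda_0$, whose complement is dense, so $b\equiv 0$; dually, using Proposition \ref{prop:vanishingideal} one checks $\hrOmega{2}{\Lambda_0}=0$ (on the plane stratum $\bigwedge^2 N^*=0$ forces the coefficient to vanish on $\{g=1\}$, while near the line stratum the generators $x_1,x_2$ kill all relative two–forms). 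Thus the whole content sits in degree $k=1$.

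The key for $k=1$ is that the second vector field already furnishes a global cycle. Since $S^1$ acts by rotations it preserves the Euclidean norm, so $Y\cdot Z=\tfrac12\big(\|x\|^2-\|gx\|^2\big)=0$, and therefore $Z^\flat:=Z_1\,dx_1+Z_2\,dx_2$ satisfies $Y\lrcorner Z^\flat=Y\cdot Z=0$; that is, $Z^\flat$ is a Koszul $1$-cycle on all of $S^1\ltimes\R^2$ (and well-defined on $S^1\ltimes V$). By \eqref{eq:expansionZ} its restriction to $\{g=1\}$ equals $2(x_1\,dx_1+x_2\,dx_2)$, the radial form spanning the conormal line $N^*_p$ at each $p\neq 0$. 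Consequently any multiple $\tfrac12\tilde c\,Z^\flat$ with $\tilde c\in\calC^\infty(S^1\times V)$ is again a cycle, whose class in $\rOmega{1}{\Lambda_0}$ restricts on $\{g=1\}$ to $\tilde c\,(x_1\,dx_1+x_2\,dx_2)$. (That the restriction of any cycle is horizontal is the content of the last step in the proof of Proposition \ref{prop:chain-map-horizontal-relative-forms}, so the map is well-defined.)

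It remains to show that an arbitrary horizontal relative $1$-form $\bar\omega$ has this shape. Here I would use Proposition \ref{prop:vanishingideal} locally: near the singular point $\calJ$ is generated by $\tau x_1,\tau x_2$ and near the plane stratum by $\tau$, and in both cases $d_\textup{rel}\tau=0$, so the degree-$1$ part of the differential graded ideal consists exactly of relative $1$-forms whose coefficients lie in $\tau\calC^\infty$. Hence the class of a relative $1$-form in $\rOmega{1}{\Lambda_0}$ is determined by its coefficients on $\{g=1\}$, while near the line stratum the generators $x_1,x_2$ give $\rOmega{k}{\Lambda_0}=0$ for $k\geq1$. Representing $\bar\omega$ by $\omega=\omega_1\,dx_1+\omega_2\,dx_2$, horizontality means $\omega_{(1,p)}\in N^*_p$, i.e.\ $x_1\omega_2(1,\cdot)=x_2\omega_1(1,\cdot)$; a Hadamard division argument (observing $\omega_1(1,\cdot)$ vanishes on $\{x_1=0\}$ and $\omega_2(1,\cdot)$ on $\{x_2=0\}$) yields a unique smooth $c$ with $\omega_i(1,\cdot)=c\,x_i$. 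Setting $\tilde c(g,p):=c(p)$, the cycle $\tfrac12\tilde c\,Z^\flat$ has the same $\{g=1\}$-coefficients as $\omega$, hence the same class, so it lifts $\bar\omega$. The local quotients $c$ agree where the radial form is nonzero, so they glue to a single $c\in\calC^\infty(V)$ and $\tfrac12\tilde c\,Z^\flat$ is a genuine global lift; alternatively, since all sheaves involved are fine, local lifts patch via a partition of unity because multiplying a cycle by a function preserves the cycle condition. The hard part will be precisely this degree-$1$ analysis near $(1,0)$: combining the exact description of $\calJ$ with Hadamard division to identify $\hrOmega{1}{\Lambda_0}$ with the radial multiples $c\,(x_1\,dx_1+x_2\,dx_2)$, and checking that passing to the class in $\rOmega{1}{\Lambda_0}$ is insensitive to the behavior off $\{g=1\}$ (equivalently, that division by $\tau$ is legitimate), which is what lets the single explicit cycle $Z^\flat$ carry the whole argument.
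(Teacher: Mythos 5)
Your proposal is correct and follows essentially the paper's own route: the paper likewise produces the lifting cycle as a smooth multiple of the covector dual to $Z(g,x)=\tfrac12(x+gx)$, using that rotations preserve the Euclidean norm so that $Y\lrcorner Z^\flat=0$, and likewise reduces horizontality to the statement that the restriction to $\{g=1\}$ is a smooth multiple of the radial form $x_1\,dx_1+x_2\,dx_2$ (the $k=2$ and $k=0$ cases being handled the same way in both arguments). The only difference is one of implementation, and in your favor: the paper scales by $\varphi/\|Z\|$, which is singular at $g=-1$ and therefore needs the cutoff $\chi$ and a substitution computation to check smoothness at the origin, whereas your unnormalized $\tfrac12\tilde c\,Z^\flat$, with the Hadamard quotient $c$ extracted explicitly as a smooth function of $x$ alone, is manifestly smooth on all of $S^1\times V$ and glues canonically by uniqueness of $c$.
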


\begin{proof}
Since the sheaf $\hrOmega{\bullet}{\Lambda_0}$ is fine it suffices to verify 
this claim for $V \subset \R^2$ of the form 
$V = B_\varrho(0) $ or $ V = B_\varrho (0) \setminus \overline{B}_\sigma (0)$ where
$0 < \sigma < \varrho$. So assume that $1\leq k\leq 2$ and
$[\omega]_{\Lambda_0} \in \hrOmega{k}{\Lambda_0} \big(\Lambda_0(S^1 \ltimes V )\big)$ for some 
relative form $\omega \in \Omega^k_{S^1 \ltimes V \to S^1} (S^1 \ltimes V )$. 
Now observe that $N_v^* = \R dr$ for all $v \in \R^2\setminus \{ 0 \}$ where  
$dr = \frac{1}{r} (dx_1 + dx_2)$. Hence, $\omega|_{\{ 1\} \times V} = 0$ if $k=2$ 
and  $\omega|_{\{ 1\} \times (V\setminus \{ 0\})} = \varphi \, dr$ with 
$\varphi \in \calC^\infty (V \setminus \{ 0 \}) $ if $k=1$. Since the claim for $k=2$ therefore has been 
proved, we assume from now on that $k=1$.
In cartesian coordinates, $\omega = \omega_1 dx_1 + \omega_2 dx_2$ 
with $\omega_j \in \calC^\infty \big(S^1 \times (V \setminus \{ 0 \})\big)$, 
$j=1,2$. Comparing with the expansion in polar coordinates gives 
the following equality over $V \setminus \{ 0 \}$
\begin{equation}
  \label{eq:comparison-polar-coordinates}
  \omega_j ( 1, - ) =  \frac{\varphi}{r} x_i \text{ for } j=1,2  \ .
\end{equation}
Note that if the origin is an element of $V$, then 
$\omega_{(1,0)} =0$, hence $(\omega_j)_{(1,0)} =0$, $j=1,2$. Choose a smooth cut-off function 
$\chi : S^1 \to [0,1]$ such that $\chi$ is identical $1$ on a neighborhood of $1$ and identical
$0$ on a neighborhood of $-1$. 
Now define the $k$-form $\widehat{\omega} \in 
\Omega^k ( S^1 \times V  ) $ by
\[
 \widehat{\omega}_{(g,x)} =  
 \begin{cases}
    \frac{\chi (g) \, \varphi (x)}{\| Z(g,x)\|}  \, \left\langle Z(g,x) ,- \right\rangle :
   \R^2 \to \R & \text{for } 
   g \in \supp \chi \text{ and } x \in V \setminus \{ 0  \}   \ ,
   \\
   0  & \text{for } g \in S^1 \setminus \supp \chi   \text{ or }  x \in V \cap \{  0 \}  \ . 
 \end{cases}
\]
where $\langle - , - \rangle $ is the euclidean inner product on $\R^2$. It needs to be verified 
that $\widehat{\omega}$ is smooth on a neighborhood of $S^1\times \{ 0 \}$ in case the origin is in $V$. 
To simplify notation we denote the composition of a function $f : V \to \R$ with the projection
$S^1 \times V \to$ again by $f$ and likewise for a function $\widetilde{f}: S^1 \to \R$. 
With this notational agreement the formula for $Z$ in \eqref{eq:expansionZ} entails
by \eqref{eq:comparison-polar-coordinates} over $(S^1 \setminus \{ -1 \}) \times (V \setminus \{ 0 \})$
\begin{equation*}
  \begin{split}
    \widehat{\omega}&|_{(S^1 \setminus \{ -1 \})  \times (V \setminus \{ 0 \})} \,  =  \\
    & = \frac{\chi \, \varphi}{r \, \sqrt{2(1+\cos \tau)}} \, 
    \Big( \left( (1+\cos \tau)x_1 + \sin \tau \, x_2 \right) dx_1
    + \left( (1+\cos \tau)x_2 - \sin \tau \, x_1 \right) dx_2 \Big) = \\
    & =  \frac{\chi}{\sqrt{2(1+\cos \tau)}} \, 
     \Big( \left( (1+\cos \tau)\omega_1 + \sin \tau \, \omega_2 \right) dx_1
    + \left( (1+\cos \tau) \omega_2 - \sin \tau \, \omega_1 \right) dx_2 \Big) \ .
  \end{split}
\end{equation*}
The right hand side can be extended by $0$ to a smooth form on $S^1 \times V$, hence
$ \widehat{\omega}$ is smooth. 
Moreover, the restriction of $ \widehat{\omega}$ to $\{ 1 \} \times V$ coincides with the 
restriction $\omega|_{\{ 1 \} \times V}$. Finally check that for $x \neq 0$ and $g\in S^1\setminus \{ -1 \}$
\[
  Y (g,x) \lrcorner \, \widehat{\omega}_{(g,x)} =  
  \frac{\chi (g) \, \varphi (x)}{\|x + gx\|}  \, \left\langle x + gx , x - gx  \right\rangle 
  = 0 \ . 
\]
Hence $\widehat{\omega} \in Z_k \big( \Omega^\bullet_{S^1 \ltimes V \to S^1} (S^1 \ltimes V ) , Y\lrcorner \big)$
and $[ \widehat{\omega}]_{\Lambda_0} = [ \omega]_{\Lambda_0}$. 
\end{proof}

\begin{proposition}
  For each  $S^1$-invariant open $V \subset \R^2$ the chain map 
  \[
    \big( \Omega^\bullet_{S^1 \ltimes V \to S^1} (S^1 \ltimes V ), Y \lrcorner \big) 
    \to
    \big( \hrOmega{\bullet}{\Lambda_0} (\Lambda_0(S^1 \ltimes V )), 0 \big)
  \]
  is a quasi-isomorphism. 
\end{proposition}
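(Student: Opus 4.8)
The plan is to show that the restriction map $[-]_{\Lambda_0}$ appearing in the preceding two lemmas is a chain map inducing an isomorphism on homology. That it is a chain map follows from the fact that the components $Y_1,Y_2$ of $Y$ lie in the vanishing ideal $\calJ$: by \eqref{eq:expansionY}, together with the smoothness of $\tau\mapsto(1-\cos\tau)/\tau$ and of $\tau\mapsto\sin\tau/\tau$, one has $Y_1,Y_2\in(\tau x_1,\tau x_2)=\calJ$ near $(1,0)$ and $Y_1,Y_2\in(x_1,x_2)=\calJ$ along the remaining strata, so that $i_Y\omega\in\calJ\,\Omega^\bullet_{S^1\ltimes V\to S^1}$ and hence $[i_Y\omega]_{\Lambda_0}=0$. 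Since $i_Y$ is $\calC^\infty$-linear, the homology sheaves of $(\Omega^\bullet_{S^1\ltimes V\to S^1},Y\lrcorner)$ are $\calC^\infty$-module sheaves and therefore fine, and the same holds for the $\hrOmega{k}{\Lambda_0}$. Exactly as in the proof of Theorem \ref{thm:hochschild-homology-global-sections}, the softness of these homology sheaves reduces the global statement to showing that the induced morphism of homology sheaves is an isomorphism, which is a stalkwise assertion. Off $\Lambda_0$ the field $Y$ is nonvanishing, and $h:=\|Y\|^{-2}\,Y^\flat\wedge(-)$ satisfies $i_Y h+h\,i_Y=\id$ because $i_Y(Y^\flat\wedge\eta)=\|Y\|^2\eta-Y^\flat\wedge i_Y\eta$ and $i_YY^\flat=\|Y\|^2>0$; hence the complex is contractible there and its homology sheaves are supported on $\Lambda_0$. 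Surjectivity over the stalks on $\Lambda_0$ is the content of the preceding lemma, so the entire proof reduces to the stalkwise injectivity, i.e.\ to computing the Koszul homology of $Y\lrcorner$ at the points of the three strata.

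The crux is the regime near $g=1$, which contains the singular point $(1,0)$. Here I would use the factorization coming from \eqref{eq:expansionY}: since $1-\cos\tau=\sin\tau\,\tan(\tau/2)$, one has $Y_j=\sin\tau\,P_j$ with $P_1=x_1\tan(\tau/2)-x_2$ and $P_2=x_1+x_2\tan(\tau/2)$. The matrix $\left(\begin{smallmatrix} \tan(\tau/2) & -1 \\ 1 & \tan(\tau/2)\end{smallmatrix}\right)$ carrying $(x_1,x_2)$ to $(P_1,P_2)$ has determinant $\sec^2(\tau/2)$, hence is invertible near $\tau=0$; therefore $(P_1,P_2)=(x_1,x_2)$ and $P_1,P_2$ form a regular sequence. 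Consequently $(Y_1,Y_2)=(\sin\tau)(x_1,x_2)=(\tau x_1,\tau x_2)=\calJ$, which identifies $H_0=\calC^\infty/(Y_1,Y_2)$ with $\hrOmega{0}{\Lambda_0}=\calC^\infty/\calJ$. Because $P_1,P_2$ is a regular sequence, the module of syzygies $\{(a,b):aP_1+bP_2=0\}$ is free of rank one on the Koszul syzygy $(-P_2,P_1)$ (a Hadamard-lemma argument, valid since $x_1,x_2$ are coordinates), while the boundaries coming from $\Omega^2$ form $(\sin\tau)\,(-P_2,P_1)$. Hence $H_1\cong\calC^\infty/(\sin\tau)=\calC^\infty/(\tau)$ and $H_2=0$. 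Under $[-]_{\Lambda_0}$ the generator $-P_2\,dx_1+P_1\,dx_2$ restricts over $\{\tau=0\}$ to $-x_1\,dx_1-x_2\,dx_2=-\tfrac12 d_\textup{rel}(r^2)$, which is radial and therefore spans the rank-one space $\hrOmega{1}{\Lambda_0}$; indeed the horizontality condition $\omega_{(1,v)}\in N_v^*=\R\,dr$ forces any representative to be a multiple of $d_\textup{rel}(r^2)$ modulo $\tau\,\Omega^\bullet_{S^1\ltimes V\to S^1}$, the nonradial direction being killed. Thus $H_k\to\hrOmega{k}{\Lambda_0}$ is an isomorphism at every point with $g=1$.

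Along the stratum $(S^1\setminus\{1\})\times\{0\}$ the situation is easier: with $u=1-\cos\tau$ and $w=\sin\tau$ the matrix $\left(\begin{smallmatrix} u & -w \\ w & u\end{smallmatrix}\right)$ has determinant $2(1-\cos\tau)>0$, so $Y_1,Y_2$ is an invertible $x$-linear recombination of $x_1,x_2$, hence a regular sequence whose zero locus is the line $\{x=0\}$. The Koszul complex is then acyclic except in degree zero, where $H_0=\calC^\infty/(x_1,x_2)$ is the algebra of functions on the line and matches $\hrOmega{0}{\Lambda_0}$; and $\hrOmega{k}{\Lambda_0}=0$ for $k\ge1$ there because $d_\textup{rel}x_1,d_\textup{rel}x_2\in d_\textup{rel}\calJ$ already trivialize $\rOmega{k}{\Lambda_0}$, in agreement with $H_1=H_2=0$. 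Combining the two regimes with the vanishing of the homology sheaves off $\Lambda_0$ gives the stalkwise isomorphism, and the global statement follows. I expect the main obstacle to be precisely the singular point $(1,0)$, where $Y$ degenerates and $\Lambda_0$ fails to be smooth; it is exactly the factorization $Y_j=\sin\tau\,P_j$ above that disentangles the two transverse branches $\{\tau=0\}$ and $\{x=0\}$ of the loop space and makes the syzygy computation transparent.
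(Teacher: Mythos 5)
Your proof is correct, but it takes a genuinely different route from the paper's. The paper inherits surjectivity from the preceding lemma and then proves injectivity head-on, at the level of sections over $V$: every Koszul cycle with $[\omega]_{\Lambda_0}=0$ is exhibited as $Y\lrcorner\,\eta$, in the three degrees separately --- degree $0$ by writing $\omega=\omega_1\tau x_1+\omega_2\tau x_2$ (Proposition \ref{prop:vanishingideal}) and explicitly inverting the matrix $\left(\begin{smallmatrix}1-\cos\tau&\sin\tau\\-\sin\tau&1-\cos\tau\end{smallmatrix}\right)$, the only issue being smoothness of $\tau\sin\tau/(2(1-\cos\tau))$; degree $2$ trivially; and degree $1$, the hard case, by passing to the complex coordinate $z$, extracting $\Omega=z\Phi$ from the cycle condition by Taylor expansion, and thereby showing that the candidate potential $\varphi=\|Y\|^{-2}(-Y_2\omega_1+Y_1\omega_2)$ extends smoothly across $\Lambda_0$. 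You instead localize: both complexes consist of fine sheaves, so the hypercohomology argument of Theorem \ref{thm:hochschild-homology-global-sections} reduces everything to stalks; your homotopy $\|Y\|^{-2}Y^\flat\wedge(-)$ disposes of the stalks off $\Lambda_0$; and on $\Lambda_0$ you compute the Koszul homology outright from the factorization $Y_j=\sin\tau\,P_j$, with $(P_1,P_2)$ an invertible recombination of the coordinates, the syzygies being controlled by a parametric Hadamard-lemma argument, and you match this against a direct description of the stalks of $\hrOmega{k}{\Lambda_0}$. Your route avoids complex analysis entirely (all smoothness questions are absorbed into parametric Taylor expansion), it makes the actual homology visible --- in particular the nonzero stalk $H_1\cong\calC^\infty/(\tau)$ along $\{1\}\times V$, which the paper's injectivity argument never names --- and it is structurally the same stalkwise strategy the paper itself deploys for $\C^m$ in Proposition \ref{prop:equivariant-koszul}, so it scales better; the paper's route stays elementary at the level of global sections, produces explicit primitives, and its complex-variable trick is the germ of the higher-dimensional computation. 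Two small patches you should make: (i) in the regime $g=1$ you verify the matching in degrees $0$ and $1$ but in degree $2$ you only compute $H_2=0$; add that the stalks of $\hrOmega{2}{\Lambda_0}$ vanish there as well, since ${\bigwedge}^2N_p^*=0$ for $p\neq 0$ forces any horizontal representative to vanish on $\{\tau=0\}$ and hence to lie in $\tau\,\Omega^2_{S^1\ltimes V\to S^1}$; (ii) note that $\tan(\tau/2)$ is defined only on $S^1\setminus\{-1\}$, which is harmless because you invoke the factorization only near $g=1$, while near $g=-1$ your second computation applies, with $1-\cos\tau$ and $\sin\tau$ understood as the globally smooth functions $1-\operatorname{Re}(g)$ and $\operatorname{Im}(g)$.
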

\begin{proof}
It remains to prove that every 
$\omega \in Z_k \big( \Omega^\bullet_{S^1 \ltimes V \to S^1} (S^1 \ltimes V ), Y \lrcorner \big)$ 
which satisfies the condition $ [ \omega]_{\Lambda_0} = 0$ is of the form 
$\omega =  Y \lrcorner \, \eta $ for some 
$\eta \in  \Omega^{k+1}_{S^1 \ltimes V \to S^1} (S^1 \ltimes V )$. 
Let us show this. We consider the three non-trivial cases $k=0,2,1$ separately.
  
\textit{1.~Case:} $k=0$. Then $\omega$ is a smooth function on $S^1 \ltimes V$ vanishing
on $\Lambda_0$.  By Prop.~\ref{prop:vanishingideal}, the function $\omega$ can  be expanded over $S^1\setminus \{ -1\} \times V$ 
in the form
\[
 \omega|_{S^1\setminus \{ -1\} \times V} = \omega_1\tau x_1 + \omega_2\tau x_2 \ ,\quad\text{where } \omega_1,\omega_2 
  \in \calC^\infty (S^1\setminus \{ -1\} \times V) \ .
\]
Moreover, the interior product of a form $\eta = \eta_1 dx_1 + \eta_2 dx_2 \in\Omega^1_{S^1 \ltimes V \to S^1} (S^1 \ltimes V )$ 
with the vector field $Y$ has the form
\[
  Y \lrcorner\, \eta = Y_1 \eta_1 + Y_2\eta_2 = (x_1(1- \cos\tau)-x_2\sin \tau)\eta_1 + (x_2(1-\cos\tau)-x_1\sin \tau)\eta_2 \ .
\]
This means that it suffices to find $\eta_1,\eta_2 \in \calC^\infty (S^1 \ltimes V) $  which solve the system of equations  
\begin{equation}
\label{eq:system-equation-coefficient-fcts}
\begin{split}
   \omega_1\tau& =  (1- \cos\tau) \eta_1 + (\sin \tau) \eta_2 \ , \\ 
   \omega_2\tau & =  - (\sin \tau) \eta_1 + (1 -\cos \tau) \eta_2 \ . 
\end{split}
\end{equation}
The $1$-form $\eta = \eta_1 dx_1 + \eta_2 dx_2$ will then satisfy  $Y \lrcorner\, \eta = \omega$ which will prove the first case. 
The functions
\begin{equation*}
\label{eq:linear-equation-coefficient-fcts}
\begin{split}
   \eta_1& = \frac{\tau(1-\cos \tau)}{(1-\cos \tau)^2+\sin^2\tau} \omega_1 
             - \frac{\tau\sin \tau}{(1-\cos \tau)^2+\sin^2\tau} \omega_2 =
               \frac{\tau}{2} \omega_1 
             -   \frac{\tau\sin \tau}{2 (1-\cos \tau)} \omega_2 \  \\
   \eta_2 & = \frac{\tau \sin \tau}{(1-\cos \tau)^2+\sin^2\tau} \omega_1 
              +  \frac{\tau (1-\cos \tau)}{(1-\cos \tau)^2+\sin^2\tau} \omega_2 =
               \frac{\tau\sin \tau}{2 (1-\cos \tau)} \omega_1 
             +   \frac{\tau}{2} \omega_2 
\end{split}
\end{equation*}
now are well-defined and smooth over $(S^1 \times V )\setminus (\{ 1 \} \times \R^2)$. 
They also solve \eqref{eq:system-equation-coefficient-fcts}. We are done when we can show that they can be extended smoothly to 
the whole domain $S^1 \times V$.  But this is clear since the function 
$(-\pi, \pi)\setminus \{ 0 \} \to \R$, $t\mapsto  \frac{t \sin t}{2 (1-\cos t)}$ has a holomorphic
extension near the origin as one verifies by power series expansion. 

\textit{2.~Case:} $k=2$.
Let $\omega \in \Omega^2_{S^1 \ltimes V \to S^1} (S^1 \ltimes V )$ and $Y \lrcorner \, \omega =0$. 
Then $\omega = \varphi dx_1 \wedge dx_2$ for some smooth function $\varphi \in S^1 \ltimes V \to S^1$. 
Now compute using \eqref{eq:expansionY}
\begin{equation*}
  \begin{split}
    0&=Y\lrcorner\,\omega=\varphi\cdot (Y_1-Y_2)=
    \varphi\cdot\big( x_1(1-\cos\tau)-x_2\sin\tau -x_2(1-\cos\tau)  - x_1\sin\tau \big)=\\
    &=\varphi\cdot (x_1-x_2) \cdot (1-\cos\tau-\sin\tau)\ .
  \end{split}
\end{equation*}
Hence $\varphi=0$ and $\omega=0$.

\textit{3.~Case:} $k=1$.
 Observe that in this case $\omega$ can be written in the form $\omega= \omega_1 dx_1 + \omega_2 dx_2$ with
 $\omega_1,\omega_2 \in \mathcal{J} ( S^1 \times V ) \subset \calC^\infty( S^1 \times V )$. 
 By Lemma \ref{eq:expansion-functions-vanishing-ideal},
 $\omega_j|_{(S^1 \setminus \{ -1\}) \times V} = \tau \Omega_j$ for $j=1,2$ and functions 
 $\Omega_j \in \calC^\infty((S^1 \setminus \{ -1\}) \times V) $. 
The condition $Y \lrcorner \, \omega = 0 $ implies 
 \begin{equation}
   \label{eq:cycle-condition-one-form}
   Y_1 \Omega_1 + Y_2 \Omega_2 = Y_1 \omega_1 + Y_2 \omega_2 = 0 \ . 
 \end{equation}
Now define the function $\varphi : (S^1 \times V )\setminus \Lambda_0 \to \R$ by
$ \varphi  = \left. \frac{1}{\|Y\|^2} (-Y_2 \omega_1 + Y_1 \omega_2 )\right|_{(S^1\times V) \setminus \Lambda_0} $. 
Since $\|Y\|^2 = 2 r^2 (1-\cos \tau)$, the vector field $Y$ vanishes nowhere on $ (S^1 \times V )\setminus \Lambda_0$,
so $\varphi$ is well-defined and smooth.
By \eqref{eq:cycle-condition-one-form} one computes 
\begin{equation*}
  \varphi (g,x) = 
  \begin{cases}
    \frac{ \omega_2}{Y_1}  (g,x)& \text{ if } g \neq 1, x \neq 0 \text{ and } Y_1(g,x) \neq 0 \ , \\
    \frac{- \omega_1}{Y_2}  (g,x) & \text{ if } g \neq 1, x \neq 0 \text{ and } Y_2(g,x) \neq 0 \ .
  \end{cases}
\end{equation*} 
Assume that $\varphi$ can be extended smoothly to $S^1 \times V $. 
Then $\eta = \varphi dx_1 \wedge dx_2 $ is a smooth form on $S^1 \times V$ which satisfies
\[
   Y \lrcorner \eta = \varphi ( Y_1 dx_2 - Y_2 dx_1 ) =  \omega \ . 
\]
So it remains to verify that $\varphi$ can be smoothly extended to $S^1 \times V$. 
To this end we use the complex coordinate $z= x_1 + \sqrt{-1} x_2$ of $V$ and introduce the 
complex valued function $\Omega = \Omega_1 + \sqrt{-1} \Omega_2$. Moreover, we define 
$y:  S^1 \times V \to \C $, $(g,z) \mapsto z - gz $. 
Then 
\begin{equation}
  \label{eq:representation-vector-field}
  y = (1 - e^{\sqrt{-1}\tau})z = Y_1 + \sqrt{-1}Y_2
\end{equation}
and, by Eq.~\ref{eq:cycle-condition-one-form}, 
\begin{equation}
  \label{eq:complex-cycle-condition}
   \frac{1}{2}( y\overline{\Omega}+\overline{y}\Omega) =  Y_1\Omega_1+Y_2\Omega_2 = 0 \ .
\end{equation}
 Next observe that $1-e^{\sqrt{-1}\tau} = -\sqrt{-1}\tau \big(1-\sqrt{-1}\tau (\zeta \circ \tau)\big)$ for some holomorphic $\zeta :\C \to \C$ 
 which fulfills $\zeta (0)=\frac 12$.  Then Eq.~\ref{eq:complex-cycle-condition} entails
 \[
   \big(1-\sqrt{-1}\tau (\zeta \circ \tau)\big) z \overline{\Omega} = \big(1 + \sqrt{-1}\tau (\overline{\zeta} \circ \tau)\big)  \overline{z} \Omega \ .
 \]
 By power series expansion it follows that 
 $\left. \frac{\partial\Omega}{\partial \overline{z}}  \right|_{z=0} = 0$ for all $k \in \N$. Hence, by Taylor's Theorem 
 $\Omega = z \Phi $ for some smooth $\Phi : S^1 \times V \to \C$. 
 Since by Lemma \ref{lem:coordinate-representations-vector-fields} $\|Y\|^2 = r^2 \tau^2 (\xi \circ \tau)$ for some 
 holomorphic  function $\xi$ not vanishing on $(-\pi,\pi)$ the following equality holds over 
 $(S^1 \setminus \{ \pm 1\}) \times (V \setminus \{ 0\})$
 \begin{equation*}
   \begin{split}
     \varphi &  =  
     \frac{1}{\tau r^2(\xi \circ \tau)} (- Y_2 \Omega_1 + Y_1 \Omega_2)
     = \frac{\sqrt{-1}}{2\tau r^2(\xi \circ \tau)}\big(y\overline{\Omega}-\overline{y}\Omega \big)  = \\
     & =  \frac{1}{2r^2(\xi \circ \tau)} 
     \Big( \big(1-\sqrt{-1}\tau (\zeta \circ \tau)\big) z \overline{z} \overline{\Phi} + \big(1+\sqrt{-1}\tau (\overline{\zeta} \circ \tau)\big) z \overline{z} \Phi
     \Big) = \\ & =  \left.\frac{1}{(\xi \circ \tau)}  
     \big(1-\sqrt{-1}\tau (\zeta \circ \tau)\big) \overline{\Phi} \right|_{(S^1 \setminus \{ \pm 1\}) \times (V \setminus \{ 0\})}  \ .
   \end{split}
 \end{equation*}
 Since the right hand side has a smooth extension to $S^1\setminus \{ -1 \} \times V$,
 the function  $\varphi$ can be smoothly extended to $S^1 \times V$ and the claim is proved. 
\end{proof}

\subsection{$S^1$ rotation in $\mathbb{R}^{2m}$}  
In this subsection, we work with complex-valued functions, and differential
forms over complex numbers. Since tensoring an $\mathbb{R}$-vector
space  with $\mathbb{C}$ is a faithfully flat functor, our results  in this
section still hold true for the algebra of real-valued functions. 

We consider a linear representation of $S^1$ on $\mathbb{R}^{2m}$. We identify $\mathbb{R}^{2m}$ with $\mathbb{C}^m$, and decompose $\mathbb{C}^m$ into the following two subspaces, i.e.
\begin{equation}\label{eq:action}
\mathbb{C}^m=V_0 \oplus V_1, 
\end{equation}
where $V_0$ is the subspace of $\mathbb{C}^m$ on which  $S^1$ acts trivially, and $V_1$ is the $S^1$-invariant subspace of $\mathbb{C}^m$ orthogonal to $V_0$ with respect to an $S^1$-invariant hermitian metric on $\mathbb{C}^m$. Furthermore, $V_1$ is decomposed into irreducible unitary representations of $S^1$, i.e. 
\[
V_1=\bigoplus_{j=1}^{t} \mathbb{C}_{w_j},
\]
where  $\mathbb{C}_{w_j}$ is an irreducible representation $\rho_{w_j}$ of $S^1$ with the weight $0\neq w_j\in \mathbb{Z}$, i.e. 
\[
\rho_{w_j}(\exp(2\pi \sqrt{-1})t)\big(z\big):=\exp(2w_j \pi \sqrt{-1}t)z.
\]
We observe that $\calC^\infty(\mathbb{C}^m)\rtimes S^1$ is isomorphic to $\big(\calC^\infty(V_0)\otimes \calC^\infty(V_1)\big)\rtimes S^1$. As $S^1$ acts on $V_0$ trivially, we have
\[
\calC^\infty(\mathbb{C}^m)\rtimes S^1\cong \calC^\infty(V_0)\otimes \big( \calC^\infty(V_1)\rtimes S^1 \big).
\]
The K\"unneth formula for Hochschild homology \cite[Theorem 4.2.5]{LodCH} gives 
\[
HH_{\bullet}\Big( \calC^\infty(\mathbb{C}^m)\rtimes S^1\Big)=HH_\bullet\big(  \calC^\infty(V_0)\big)\otimes HH_\bullet \big( \calC^\infty(V_1)\rtimes S^1 \big). 
\]
The Hochschild-Kostant-Rosenberg theorem shows $HH_\bullet\big(  \calC^\infty(V_0)\big)=\Omega^\bullet (V_0)$. Hence, we have reduced  the computation of $HH_\bullet \big( \calC^\infty(\mathbb{C}^m)\rtimes S^1 \big)$ to $HH_\bullet \big( \calC^\infty(V_1)\rtimes S^1 \big)$. Without loss of generality, we assume in the left of this subsection that $\mathbb{C}^m=V_1$, i.e. 
\[
\mathbb{C}^m=\bigoplus_{j=1}^{m} \mathbb{C}_{w_j},\ 0\neq w_j\in \mathbb{Z}. 
\]

Let $w$ be the lowest common multiplier of $w_1, ..., w_m$. We observe that for $t\in [0, 1)$, if $t \neq \frac{j}{w}, j=0, ..., w-1$, the fixed point subspace of $t$ is $\{0\}$; if $t=\frac{j}{w}$, the fixed point subspace of $t$ is 
\[
\mathbb{C}_{w_{k_1}}\oplus \cdots \oplus \mathbb{C}_{w_{k_l}}, 
\]
for $w_{k_1}, ..., w_{k_l}$ that $w$ divides $jw_{k_1},\cdots, jw_{k_l}$. Hence the loop space $\Lambda_0(S^1\ltimes \mathbb{C}^m)$ has the following form, 
\[
\begin{split}
\Lambda_0(S^1\ltimes \mathbb{C}^m)=\Big\{ \big(\exp(2\pi \sqrt{-1}t), & (0,\cdots, z_{w_{k_1}}, \cdots, z_{w_{k_l}},0,\cdots) \big)\Big{|} \\
&(0,\cdots, z_{w_{k_1}}, \cdots, z_{w_{k_l}},0,\cdots)\in \mathbb{C}^m, t w_{k_1}, \cdots, tw_{k_l}\in \mathbb{Z} w\Big\}. 
\end{split}
\]

Let $\sigma: \Lambda_0(S^1\ltimes \mathbb{C}^m)\to S^1$ be the forgetful map mapping $(\exp(2\pi \sqrt{-1}t), z)\in \Lambda_0(S^1\ltimes \mathbb{C}^m)$ to $\exp(2\pi \sqrt{-1}t)$. 

Following Proposition \ref{prop:chain-map-horizontal-relative-forms} and Eq. (\ref{eq:ConvolutionConnesKoszulChainCpl}), we can compute the Hochschild homology of $\calC^\infty(\mathbb{C}^m)\rtimes S^1$ is computed by  the $S^1$-invariant part of the cohomology of the following Koszul type complex, 
\begin{equation}\label{eq:S1koszul}
       \Omega^{2m}_{S^1\ltimes \mathbb{C}^m \to S^1} (S^1\ltimes \mathbb{C}^m)  \overset{i_{Y_{S^1\ltimes \mathbb{C}^m}}}{\longrightarrow} \ldots 
       \overset{i_{Y_{S^1 \ltimes \mathbb{C}^m }}}{\longrightarrow}   \Omega^1_{S^1 \ltimes \mathbb{C}^m \to S^1} (S^1\ltimes \mathbb{C}^m)
       \overset{i_{Y_{S^1 \ltimes \mathbb{C}^m}}}{\longrightarrow}  \calC^\infty (S^1 \ltimes \mathbb{C}^m) \longrightarrow 0 \ ,
\end{equation}
where $Y_{S^1 \ltimes \mathbb{C}^m } : S^1\ltimes \mathbb{C}^m \to  s^*T\mathbb{C}^m$ is defined by $Y_{S^1\ltimes \mathbb{C}^m}(g,v) =v -gv$. Below, we sometimes abbreviate the symbol $Y_{S^1 \ltimes \mathbb{C}^m }$ by $Y$ by abusing the notations. Fix a choice of coordinates $(z_1, \cdots, z_m)$ for $z_j\in \mathbb{C}_{w_j}$. The vector field $Y:=Y_{S^1\ltimes \mathbb{C}^m}(\exp(2\pi \sqrt{-1}t),z)$ is written as
\[
Y:=Y_{S^1\ltimes \mathbb{C}^m}(\exp(2\pi \sqrt{-1}t),z)=\sum_{k=1}^m \big(\exp(2\pi \sqrt{-1} w_kt)-1\big)z_k\frac{\partial}{\partial z_k}+ \big(\exp(-2\pi \sqrt{-1} w_kt)-1\big)\bar{z}_k\frac{\partial}{\partial \bar{z}_k} \ .
\]
Define an analytic function $a(z)$ on $\mathbb{C}$ by  
\[
a(z):=\frac{\exp(2\pi \sqrt{-1}z)-1}{z} \ . 
\]
Then  we have 
\[
\begin{split}
\exp(2\pi \sqrt{-1}w_k t)-1&= w_k t a(w_k t),\\
\exp(-2\pi \sqrt{-1}w_k t)-1&=w_k t \bar{a}(w_k t).
\end{split}
\]
Observe that for $t\in \mathbb{R}$, $a(t)=\bar{a}(t)$, and $a(t)\ne 0$ for all $t$ sufficiently close to $0$. For a sufficiently small $\epsilon$, the vector field $Y$ on $(-\epsilon, \epsilon)\times \mathbb{C}^m$ is of the following form
\[
  Y= t \sum_{k=1}^m  w_k \left( a(w_k t) z_k \frac{\partial}{\partial z_k} +
    \overline{a(w_k t)}\bar{z}_k \frac{\partial }{\partial \bar{z}_k}\right).
\]
This leads to the following property of the vector field $Y$. 

\begin{lemma}\label{lem:vectorfieldY} The vector field $Y: S^1\times \mathbb{C}^m\to \mathbb{C}^m,\ (g, z)\mapsto z-gz$ has a coordinate representation $Y=\sum_{k=1}^m Y^k z_k \frac{\partial }{\partial z_k}+\overline{Y}^k \bar{z}_k\frac{\partial }{\partial \bar{z}_k}$ with coefficients given by
\[
Y^k\big(\exp(2\pi \sqrt{-1}t)\big)=\exp (2\pi \sqrt{-1}w_k t)-1. 
\]
Denote $w=l.c.m.(w_1, \cdots, w_m)$. When $t_0=\frac{j}{w}$, for $0\leq j<w$, there is a sufficiently small $\epsilon>0$ such that on $ (\frac{j}{w}-\epsilon, \frac{j}{w}+\epsilon)$, $Y^k$ is of the following form,
\[
Y^k\big( \exp(2\pi \sqrt{-1}t)\big)= w_k (t-\frac{j}{w}) a\big(w_k( t-\frac{j}{w})\big),\ \text{for}\ w_k j \in \mathbb{Z}w,
\]  
where $a\big(w_k( t-\frac{j}{w})\big)\ne 0$ for all $t\in (\frac{j}{w}-\epsilon, \frac{j}{w}+\epsilon)$. And for $k$ with $w_k j \notin \mathbb{Z}w$, $Y^k\big(\exp(2\pi \sqrt{-1}t)\big)\ne 0$ for all $t\in (\frac{j}{w}-\epsilon, \frac{j}{w}+\epsilon)$. 

When $t_0\neq\frac{j}{w}$, there is a sufficiently small $\epsilon>0$ such that on $(t_0-\epsilon, t_0+\epsilon)$, $Y^k\big(\exp(2\pi \sqrt{-1}t)\big)\ne 0$ for all $t\in(t_0-\epsilon, t_0+\epsilon) $. 
\end{lemma}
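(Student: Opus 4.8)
The plan is to deduce everything from two elementary inputs: the arithmetic of the weights $w_k$ relative to $w=\mathrm{l.c.m.}(w_1,\dots,w_m)$, and the analytic properties of the entire function $a(z)=(\exp(2\pi\sqrt{-1}z)-1)/z$. The coordinate representation itself requires no new work: since $S^1$ acts on the summand $\mathbb{C}_{w_k}$ through the character $\rho_{w_k}$, the element $g=\exp(2\pi\sqrt{-1}t)$ sends $z_k$ to $\exp(2\pi\sqrt{-1}w_kt)\,z_k$, so the $k$-th holomorphic component of $Y(g,z)=z-gz$ is $(\exp(2\pi\sqrt{-1}w_kt)-1)z_k$ and its conjugate governs the antiholomorphic part. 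This is precisely the computation carried out immediately before the lemma, so I would merely restate it to fix the coefficient $Y^k(\exp(2\pi\sqrt{-1}t))=\exp(2\pi\sqrt{-1}w_kt)-1$.

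The central observation I would isolate is that $a$ is entire with $a(0)=2\pi\sqrt{-1}\neq 0$, that its zero set is exactly $\mathbb{Z}\setminus\{0\}$, and consequently that $a$ is zero-free on the disc $|z|<1$. Rewriting $Y^k(\exp(2\pi\sqrt{-1}t))=w_k t\,a(w_kt)$ then shows that $Y^k$ vanishes precisely on $\frac{1}{w_k}\mathbb{Z}$.

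For $t_0=\frac{j}{w}$ with $0\leq j<w$ I would substitute $s=t-\frac{j}{w}$ and split the indices according to whether $w_k j\in\mathbb{Z}w$. If $w_kj\in\mathbb{Z}w$, then $\frac{w_kj}{w}$ is an integer, so $\exp(2\pi\sqrt{-1}w_kt)=\exp(2\pi\sqrt{-1}w_ks)$, whence
\[
Y^k\big(\exp(2\pi\sqrt{-1}t)\big)=\exp(2\pi\sqrt{-1}w_ks)-1=w_k\Big(t-\tfrac{j}{w}\Big)\,a\Big(w_k\big(t-\tfrac{j}{w}\big)\Big),
\]
which is the asserted factorization; the cofactor is nonzero on the interval once $\epsilon<1/\max_k|w_k|$, since then $|w_ks|<1$. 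If instead $w_kj\notin\mathbb{Z}w$, then $\frac{w_kj}{w}\notin\mathbb{Z}$, so $Y^k(\exp(2\pi\sqrt{-1}t_0))=\exp(2\pi\sqrt{-1}\tfrac{w_kj}{w})-1\neq 0$, and continuity yields nonvanishing on a small interval. Shrinking $\epsilon$ to handle the finitely many indices simultaneously gives a single admissible radius.

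For $t_0\neq\frac{j}{w}$ I would argue by contradiction: if $Y^k(\exp(2\pi\sqrt{-1}t_0))=0$ then $w_kt_0\in\mathbb{Z}$, say $t_0=n/w_k$; since $w_k\mid w$ one may write $t_0=\frac{n(w/w_k)}{w}$, exhibiting $t_0$ as $\frac{j}{w}$ with $j=n(w/w_k)\in\{0,\dots,w-1\}$ (using $t_0\in[0,1)$), contradicting the hypothesis. Hence every $Y^k$ is nonzero at $t_0$, and a common $\epsilon$ exists by continuity and finiteness of $m$. The argument has no genuine obstacle; the only care needed is the zero-locus analysis of $a$ guaranteeing that the extracted cofactor does not vanish, together with the bookkeeping that the divisibility condition $w_kj\in\mathbb{Z}w$ is exactly what detects the vanishing of $Y^k$ at $t_0=\frac{j}{w}$.
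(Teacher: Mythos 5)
Your proposal is correct and follows essentially the same route as the paper: the paper states this lemma without a separate proof, treating it as an immediate consequence of the coordinate computation and the factorization $\exp(2\pi\sqrt{-1}w_kt)-1 = w_kt\,a(w_kt)$ carried out in the paragraphs just before the lemma, which is exactly the argument you spell out. Your only additions are useful bookkeeping the paper leaves implicit, namely the identification of the zero set of $a$ as $\mathbb{Z}\setminus\{0\}$ (hence the explicit admissible radius $\epsilon < 1/\max_k|w_k|$) and the divisibility argument $w_k \mid w$ showing that all zeros of the $Y^k$ lie among the points $\frac{j}{w}$.
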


Analogous to the expression of the vector field $Y$, we study in the following lemma the local expression of the vanishing ideal $\calJ$ of the loop space $\Lambda_0(S^1\ltimes \mathbb{C}^m)$ for the $S^1$ action on $\mathbb{C}^m$ defined by Equation (\ref{eq:action}). 

\begin{lemma}
\label{lem:vanishingideal}
The vanishing ideal $\calJ$ of $\Lambda_0(S^1\ltimes \mathbb{C}^m)$ has the following local form.
\begin{itemize}
\item Near $\big(\exp(2\pi \sqrt{-1}\frac{j}{w}), 0\big)\in S^1\times \mathbb{C}^m$, the vanishing ideal $\calJ\big( (\frac{j}{w}-\epsilon, \frac{j}{w}+\epsilon)\times B_{\varrho}(0)\big)$ for a sufficiently small $\epsilon>0$ and a ball $B_{\varrho}(0)\subset \mathbb{C}^m$ centered at $0$ with a sufficiently small radius $\varrho>0$ consists of all smooth functions $f\in \calC^\infty\big( (\frac{j}{w}-\epsilon, \frac{j}{w}+\epsilon)\times B_{\varrho}(0)\big)$ which can be written in the form 
\[
f=(t-\frac{j}{w})\sum _{k, w_k j\in w\mathbb{Z} }  (z_k f_k+\bar{z}_k g_k)+\sum_{k, w_kj\notin w\mathbb{Z}} (z_k f_k+\bar{z}_k g_k),
\]
for $f_k, g_k\in  \calC^\infty\big( (\frac{j}{w}-\epsilon, \frac{j}{w}+\epsilon)\times B_{\varrho}(0)\big)$.
\item Near $\big(\exp(2\pi \sqrt{-1}\frac{j}{w}), Z\big)\in S^1\times \mathbb{C}^m$ with $Z\ne 0$ and $\exp(2\pi \sqrt{-1}\frac{j}{w}) Z=Z$, the vanishing ideal $\calJ\big( (\frac{j}{w}-\epsilon, \frac{j}{w}+\epsilon)\times B_{\varrho}(Z)\big)$ for a sufficiently small $\epsilon>0$ and a ball $B_{\varrho}(Z)\subset \mathbb{C}^m$ centered at $Z$ with a sufficiently small radius $\varrho>0$ consists of all smooth functions $f\in \calC^\infty\big( (\frac{j}{w}-\epsilon, \frac{j}{w}+\epsilon)\times B_{\varrho}(Z)\big)$ which can be written in the form 
\[
f=(t-\frac{j}{w})f+\sum_{k, w_kj\notin w\mathbb{Z}} (z_k f_k+\bar{z}_k g_k),
\]
for $f, f_k, g_k\in \calC^\infty\big( (\frac{j}{w}-\epsilon, \frac{j}{w}+\epsilon)\times B_{\varrho}(Z)\big)$. 
\item Near $\big(\exp(2\pi \sqrt{-1}t_0), 0\big)\in S^1\times \mathbb{C}^m$  such that $t_0\ne \frac{j}{w}$ for all $j$ and $0\in \mathbb{C}^m$,  the vanishing ideal $\calJ\big( (t_0-\epsilon, t_0+\epsilon)\times B_{\varrho}(0)\big)$ for a sufficiently small $\epsilon>0$ and a ball $B_{\varrho}(0)\subset \mathbb{C}^m$ centered at $0$ with a sufficiently small radius $\varrho>0$ consists of all smooth functions $f\in \calC^\infty\big( (t_0-\epsilon, t_0+\epsilon)\times B_{\varrho}(0)\big)$ which can be written in the form 
\[
f=\sum_{k=1}^m (z_k f_k+\bar{z}_k g_k), 
\]
for $f_k, g_k\in \calC^\infty\big( (t_0-\epsilon, t_0+\epsilon)\times B_{\varrho}(0)\big)$. 
\end{itemize}
\end{lemma}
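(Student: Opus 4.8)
The plan is to treat the three cases in order of increasing difficulty, in each case first reading off the local structure of the loop space from Lemma \ref{lem:vectorfieldY} and then determining the vanishing ideal by repeated use of Hadamard's lemma (parametric Taylor expansion with integral remainder), exactly in the spirit of the weight-one computation in Proposition \ref{prop:vanishingideal}. The common starting observation is that, in the coordinates of Lemma \ref{lem:vectorfieldY}, a point $(g,z)$ lies in $\Lambda_0(S^1\ltimes\mathbb C^m)$ if and only if $Y^kz_k=0$ for every $k$, and that $Y^kz_k=0$ holds precisely when $Y^k=0$ or $z_k=0$. Thus Lemma \ref{lem:vectorfieldY} translates directly into equations cutting out $\Lambda_0$ locally.

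I would dispose of the two regular cases first, since there $\Lambda_0$ is a submanifold. Around $(\exp(2\pi\sqrt{-1}t_0),0)$ with $t_0\neq\frac jw$, Lemma \ref{lem:vectorfieldY} gives $Y^k\neq 0$ for all $k$ on a small interval, so locally $\Lambda_0=\{z=0\}$ and Hadamard's lemma immediately yields $f=\sum_{k=1}^m(z_kf_k+\bar z_kg_k)$. Around $(\exp(2\pi\sqrt{-1}\tfrac jw),Z)$ with $Z\neq 0$ fixed, the non-fixed coordinates ($w_kj\notin w\mathbb Z$) satisfy $Y^k\neq 0$ and must vanish, while some fixed coordinate stays near a nonzero value $Z_{k_0}$; the relation $(t-\frac jw)z_{k_0}=0$ then forces $t=\frac jw$, so $\Lambda_0$ is the smooth slice $\{t=\frac jw\}\cap\{z_k=0:w_kj\notin w\mathbb Z\}$, and Hadamard in the coordinates $t-\frac jw$ and the non-fixed $z_k,\bar z_k$ produces the stated form.

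The crux is the singular case around $(\exp(2\pi\sqrt{-1}\tfrac jw),0)$. Here the non-fixed coordinates are again forced to vanish, but for each fixed $k$ the factorization $Y^k=w_k(t-\frac jw)\,a(w_k(t-\frac jw))$ with $a$ nowhere zero turns $Y^kz_k=0$ into $(t-\frac jw)z_k=0$. Hence $\Lambda_0$ is locally contained in $\{z_k=0:w_kj\notin w\mathbb Z\}$, and inside this coordinate subspace, with coordinates $(t,z_k)_{w_kj\in w\mathbb Z}$, it equals the transverse union of the hyperplane $\{t=\frac jw\}$ and the line $\{z_k=0\text{ for all fixed }k\}$ --- the ``line union hyperplane'' singularity announced in the introduction. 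Writing $J$ for the ideal generated by $\{(t-\frac jw)z_k,(t-\frac jw)\bar z_k:w_kj\in w\mathbb Z\}$ together with $\{z_k,\bar z_k:w_kj\notin w\mathbb Z\}$, the inclusion $J\subseteq\calJ$ is immediate since each generator vanishes on both branches. For the reverse inclusion I would take $f\in\calJ$; vanishing on $\{z=0\}$ gives $f=\sum_{k=1}^m(z_kf_k+\bar z_kg_k)$, and the non-fixed summands already lie in $J$, so it remains to place the fixed part $\phi:=\sum_{w_kj\in w\mathbb Z}(z_kf_k+\bar z_kg_k)$ in $J$. Restricting $\phi$ to $\{z_k=0:w_kj\notin w\mathbb Z\}$ yields a function that vanishes on $\{t=\frac jw\}$ (because $f$ does) and on $\{z=0\}$ (by its shape); a first Hadamard division extracts a factor $t-\frac jw$, and a second Hadamard application in the fixed $z_k,\bar z_k$ exhibits the restriction inside $J$, after which absorbing the residual non-fixed dependence into $J$ completes the argument.

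The main obstacle is precisely this last step: because $\Lambda_0$ is genuinely singular at $(\exp(2\pi\sqrt{-1}\tfrac jw),0)$, its vanishing ideal is the intersection of the ideals of the two smooth branches rather than the ideal of a single submanifold, so Hadamard's lemma cannot be invoked just once. The delicate point is to organize the two divisions --- first by $t-\frac jw$, then by the fixed coordinates --- so that the common factor $t-\frac jw$ multiplies only the fixed-direction generators, matching the asymmetric shape of the claimed expansion; I expect the bookkeeping of reducing modulo the non-fixed coordinate ideal before performing the double division to be the only genuinely subtle part, everything else paralleling Proposition \ref{prop:vanishingideal}.
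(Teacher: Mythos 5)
Your proof is correct, and it is both more complete and more elementary than the one in the paper. The paper proves only the most singular case, the point $(1,0)$ (i.e.\ $j=0$, where every coordinate is fixed and the non-fixed sum is empty), explicitly leaving the remaining points to the reader; and at that point it extracts the factor $t$ by invoking the Malgrange preparation theorem applied to $f(t,z)+t$, deducing $c(t,0)=1$, $a_0\equiv 0$ near the origin, and hence $f=t(c-1)$, before finishing with a parametric Taylor expansion of $c-1$ in $z$. You instead handle all three bullets, including the genuinely asymmetric case of general $j$ where fixed and non-fixed coordinates coexist, and you replace Malgrange preparation by a double Hadamard division: split off the non-fixed part $\psi\in J$ by Taylor expansion in $z$, restrict the fixed part $\phi$ to $\{v=0\}$, divide by $t-\tfrac jw$ using vanishing on the hyperplane branch, observe by continuity that the quotient still vanishes on the line $\{u=0\}$ (since $(t-\tfrac jw)\chi(t,0)=0$ forces $\chi(t,0)=0$), divide again in $u$, and finally absorb $\phi-\phi|_{v=0}$ into the non-fixed generators by one more Hadamard expansion. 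The continuity step is exactly what lets you avoid the preparation theorem, and your ordering of the divisions (reduce modulo the non-fixed ideal first, then divide by $t-\tfrac jw$, then by $u$) is what produces the asymmetric shape of the ideal in which $t-\tfrac jw$ multiplies only the fixed-direction generators; this is the bookkeeping the paper's "a similar proof works for the other points" glosses over. Both arguments are valid; yours buys elementariness (only Taylor with integral remainder on star-shaped domains) and full generality across the three local models, at the cost of a slightly longer chain of reductions.
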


\begin{proof} We will prove the case around the most singular point $(1,0)\in S^1\times \mathbb{C}^m$. A similar proof works for the other points. We leave the details to the reader. 

For $(1, 0)\in S^1\times \mathbb{C}^m$, choose a sufficiently small $\epsilon>0$ such that there is no other point in the interval $(-\epsilon, \epsilon)$ of the form $\frac{j}{w}$ for an integer $0<j<w$.  We identify $(-\epsilon, \epsilon)$ with a neighborhood of $1$ in $S^1$ via the exponential map. For a positive $\varrho$, the loop space $\Lambda_0(S^1\ltimes \mathbb{C}^m)$ in $ (-\epsilon, \epsilon)\times B_{\varrho}(0)$ is of the form 
\[
\Lambda_0(S^1\times \mathbb{C}^m)_{(0, 0)}=\{ (0, z)|z\in B_{\varrho}(0)\}\cup \{(t, 0)\}.
\]
A smooth function $f$ on $(-\epsilon, \epsilon)\times B_{\varrho}(0)$ belongs to $\calJ\big((-\epsilon, \epsilon)\times B_{\varrho}(0)\big)$ if and only if
\[
f(0,z)=f(t,0)=0. 
\]
We consider $f$ as a function of $t\in (-\epsilon, \epsilon)$. By the Malgrange preparation theorem, we have the expansion
\[
f(t,z)+t=c(t, z)(t+a_0(z)),
\]
where $c(t, z)$ and $a_0(z)$ are smooth and $a_0(0)=0$.  Since $t=c(t,0)t$ for all $t\in (-\epsilon, \epsilon)$, $c(t,0)=1$. Putting $t=0$ gives $0=c(0,z)a_0(z)$ for all $z\in B_{\varrho}(0)$. Recall that $c(0,0)=1$. Therefore, $a_0(z)=0$ for all $z$ in a neighborhood of $0$. After possibly shrinking $\varrho$, we can assume that $a_0(z)=0$ on $B_{\rho}(0)$. Hence, we conclude that 
\[
f(t,z)=t(c(t,z)-1).
\]
Taking the parametric Taylor expansion of $c(t,z)-1$ gives
\[
c(t,z)-1=\sum_{j=1}^m z_jf_j(t,z)+\bar{z}_j g_j(t,z),
\]
where $f_j$ and $g_j$ are smooth functions on $(-\epsilon, \epsilon)\times B_{\varrho}(0)$. 
\end{proof}

In the following, we compute the cohomology of the complex (\ref{eq:S1koszul}). We observe that the complex $(\Omega^\bullet_{S^1\ltimes \mathbb{C}^m\to S^1}(S^1\ltimes \mathbb{C}^m), i_{Y})$ for $Y:=Y_{S^1 \ltimes \mathbb{C}^m } $ forms a sheaf of complexes over $S^1$ via the map $\sigma:  \Lambda_0(S^1\ltimes \mathbb{C}^m)\to S^1$.  Accordingly, we compute the cohomology $\big( \Omega^{\bullet}_{S^1\ltimes \mathbb{C}^m \to S^1}(S^1 \ltimes \mathbb{C}^m ), i_{Y}\big)$ as a sheaf over $S^1$. 

\begin{proposition}
\label{prop:rel-F} For all open subsets $U$ of the loop space $\Lambda_0 = \Lambda_0 (S^1 \ltimes \mathbb{C}^m)$ and all $k\in \N$ the map 
 \[
   \Theta^k_U : \rOmega{k}{\Lambda_0}(U) \to \Gamma^\infty (U,\bigwedge^k F) 
 \]
 from Prop.~\ref{prop:factorization-relative-forms} is injective. 
\end{proposition}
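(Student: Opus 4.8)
The plan is to follow the proof of the analogous statement for the planar rotation $S^1\ltimes\mathbb{R}^2$ given above, replacing the two-case analysis there by the three local normal forms for the vanishing ideal recorded in Lemma \ref{lem:vanishingideal}. Since $\rOmega{0}{\Lambda_0}(U)=\calC^\infty(U)=\Gamma^\infty(U,{\bigwedge}^0F)$ with $\Theta^0_U=\id$, only the range $k\geq 1$ requires attention. Recalling that $\Theta^k_U$ sends a class $[\omega]_{\Lambda_0}$ represented by $\omega\in\Omega^k_{S^1\ltimes\mathbb{C}^m\to S^1}(\widetilde U)$ to $[\omega]_F$, injectivity amounts to showing that $[\omega]_F=0$ implies $[\omega]_{\Lambda_0}=0$. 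As the latter is a local condition, it suffices to fix a point $(g_0,v_0)\in\Lambda_0$, take $\widetilde U=(t_0-\epsilon,t_0+\epsilon)\times B_\varrho(v_0)$ with $g_0=\exp(2\pi\sqrt{-1}\,t_0)$, and show
\[
  \omega\in\calK(\widetilde U):=\calJ(\widetilde U)\,\Gamma^\infty(\widetilde U,{\bigwedge}^k s^*T^*M)+d_\textup{rel}\calJ(\widetilde U)\wedge\Gamma^\infty(\widetilde U,{\bigwedge}^{k-1}s^*T^*M).
\]

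The first step is to translate the hypothesis. The morphism $\theta^k$ is fibrewise restriction of covectors along $T_pM^g\hookrightarrow T_p\mathbb{C}^m$, so $[\omega]_F=0$ means $\iota^*_{M^g}\omega_{(g,p)}=0$ for all $(g,p)\in U$. When $t_0=\tfrac jw$ I would split the coordinates $z_1,\ldots,z_m$ into the \emph{fixed} indices $A_j=\{k:w_kj\in\mathbb{Z}w\}$ and the \emph{moving} indices $B_j$, so that $M^{g_0}$ is cut out by $\{z_k=0:k\in B_j\}$, and correspondingly decompose $\omega$ in the frame $dz_k,d\bar z_k$ into a part $\omega_{\mathrm B}$ whose every monomial contains at least one moving differential, and a part $\omega_{\mathrm A}$ built only from fixed differentials. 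Each monomial of $\omega_{\mathrm B}$ has a factor $dz_k=d_\textup{rel}z_k$ (or $d\bar z_k=d_\textup{rel}\bar z_k$) with $k\in B_j$, and since $z_k,\bar z_k\in\calJ(\widetilde U)$ by Lemma \ref{lem:vanishingideal}, it lies in $d_\textup{rel}\calJ\wedge\Gamma^\infty$ regardless of the hypothesis. For $\omega_{\mathrm A}$ the restriction to $M^{g_0}$ is mere evaluation of coefficients, so $[\omega]_F=0$ forces each coefficient of $\omega_{\mathrm A}$ to vanish on the fixed slice $\{t=\tfrac jw,\ z_k=0\ (k\in B_j)\}$; a parametric Hadamard expansion then rewrites each such coefficient as $(t-\tfrac jw)a+\sum_{k\in B_j}(z_kb_k+\bar z_kc_k)$, and because $t-\tfrac jw$ and the $z_k,\bar z_k$ with $k\in B_j$ all lie in $\calJ(\widetilde U)$, this places $\omega_{\mathrm A}$ in $\calJ\cdot\Gamma^\infty$. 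Hence $\omega\in\calK(\widetilde U)$.

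This scheme covers the three cases of Lemma \ref{lem:vanishingideal} uniformly. If $t_0\neq\tfrac jw$ the only fixed point is the origin, $F$ has rank $0$, the hypothesis is vacuous, and every coordinate lies in $\calJ$, so every $k$-monomial with $k\geq 1$ is in $d_\textup{rel}\calJ\wedge\Gamma^\infty$; if $v_0\neq 0$ then $A_j\neq\emptyset$ and the loop space is the smooth slice $\{g_0\}\times M^{g_0}$, so only the Hadamard step is needed. The genuinely delicate case, and the one I expect to be the main obstacle, is the most singular point $(g_0,v_0)=(\exp(2\pi\sqrt{-1}\tfrac jw),0)$, where the loop space is the transverse union of the fixed slice $\{t=\tfrac jw,\ z_{B_j}=0\}$ and the $t$-axis $\{z=0\}$: one must verify that the vanishing forced by $[\omega]_F=0$ controls exactly the fixed-direction part $\omega_{\mathrm A}$ on the slice while the moving-direction part $\omega_{\mathrm B}$ is absorbed by $d_\textup{rel}\calJ$, and that these two contributions together exhaust $\omega$. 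This bookkeeping is precisely what the local description of $\calJ$ in Lemma \ref{lem:vanishingideal} supplies, so that the Hadamard expansions land in the right summand of $\calK$; modulo this, the argument is a routine generalization of the planar case.
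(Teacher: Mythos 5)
Your overall plan---localize at a point of $\Lambda_0$, split coordinates into fixed indices $A_j$ and moving indices $B_j$, absorb every monomial containing a moving differential into $d_\textup{rel}\calJ\wedge\Gamma^\infty$, and treat the purely fixed part $\omega_{\mathrm A}$ by a Hadamard expansion of its coefficients---is sound and is essentially the paper's argument (the paper writes out only the most singular point $(1,0)$ and leaves the other points to the reader). However, the step you use to finish off $\omega_{\mathrm A}$ fails at exactly the points you yourself flag as delicate, namely $(g_0,v_0)=\big(\exp(2\pi\sqrt{-1}\,\tfrac jw),0\big)$. There the loop space is the union of the fixed slice $\{t=\tfrac jw,\ z_{B_j}=0\}$ \emph{and} the $t$-axis $\{z=0\}$, so the function $t-\tfrac jw$ does \emph{not} vanish on the loop space and hence does \emph{not} lie in $\calJ(\widetilde U)$: by the first bullet of Lemma \ref{lem:vanishingideal}, every element of $\calJ$ there has the form $(t-\tfrac jw)\sum_{k\in A_j}(z_kf_k+\bar z_kg_k)+\sum_{k\in B_j}(z_kf_k+\bar z_kg_k)$, and $t-\tfrac jw$ is not of this form (evaluate on the $t$-axis). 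Consequently the term $(t-\tfrac jw)a$ produced by your Hadamard expansion does not place $\omega_{\mathrm A}$ in $\calJ\cdot\Gamma^\infty$: every element of $\calJ\cdot\Gamma^\infty$ has coefficients vanishing on the $t$-axis, whereas for instance $t\,dz_1$ (take $j=0$, where $B_j=\emptyset$ and your moving-part mechanism is vacuous) does not, yet it represents the zero class. So the assertion ``this places $\omega_{\mathrm A}$ in $\calJ\cdot\Gamma^\infty$'' is false at the singular points, and the concluding deferral that ``the Hadamard expansions land in the right summand of $\calK$'' does not close this gap as stated.

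The missing observation is that at these points the $(t-\tfrac jw)$-part of $\omega_{\mathrm A}$ must be absorbed by the \emph{other} summand of $\calK$. For a fixed index $k\in A_j$ one has $(t-\tfrac jw)z_k\in\calJ$, hence
\[
  (t-\tfrac jw)\,dz_k \;=\; d_\textup{rel}\big((t-\tfrac jw)z_k\big)\;\in\; d_\textup{rel}\calJ
\]
(and likewise for $\bar z_k$); since in degree $k\geq1$ every monomial of $\omega_{\mathrm A}$ contains at least one fixed differential, the term $(t-\tfrac jw)\,a\,dz_{i_1}\wedge\cdots$ equals $d_\textup{rel}\big((t-\tfrac jw)z_{i_1}\big)\wedge\big(a\,\cdots\big)\in d_\textup{rel}\calJ\wedge\Gamma^\infty$, while the terms $\sum_{k\in B_j}(z_kb_k+\bar z_kc_k)$ do lie in $\calJ\cdot\Gamma^\infty$. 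With this substitution your proof goes through and coincides with the paper's: the paper's computation at $(1,0)$ is precisely the identification $\calJ\,\Omega^k+d_\textup{rel}\calJ\wedge\Omega^{k-1}=t\,\Omega^k$ for the relative forms near that point---an ideal that strictly contains $\calJ\,\Omega^k$---after which the Hadamard factorization of the coefficients by $t$ concludes the argument.
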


\begin{proof}
We will prove the case around the most singular point $(1,0)\in S^1\times \mathbb{C}^m$. A similar proof works for the other points. We leave the detail to the reader. 

Recall that we show in Lemma \ref{lem:vanishingideal} that near $(1,0)$, the vanishing ideal $\calJ\big( (-\epsilon, \epsilon)\times B_{\varrho}(0)\big)$ for a sufficiently small $\epsilon>0$ and a ball $B_{\varrho}(0)\subset \mathbb{C}^m$ centered at $0$ with  a sufficiently small radius $\varrho>0$ consists of all smooth functions $f\in \calC^\infty\big( (-\epsilon, \epsilon)\times B_{\varrho}(0)\big)$ which can be written in the form 
\[
f=t\sum _{k=1}^m  (z_k f_k+\bar{z}_k g_k),
\]
for $f_k, g_k\in  \calC^\infty\big( (-\epsilon, \epsilon)\times B_{\varrho}(0)\big)$.
Recall that by definition, $\rOmega{p}{\Lambda_0} \big( (-\epsilon, \epsilon)\times B_{\varrho}(0) \big)$
is the quotient 
\[
  \Omega^p_{S^1\ltimes \mathbb{C}^m\to S^1} \big( (-\epsilon, \epsilon)\times B_{\varrho} (0) \big) /
  \calJ \Omega^p_{S^1\ltimes \mathbb{C}^m\to S^1} + d\calJ \wedge \Omega^p_{S^1\ltimes \mathbb{C}^m\to S^1}
  \big( (-\epsilon, \epsilon)\times B_{\varrho}(0) \big) \ .
\]
In the following, we will discribe
$\rOmega{p}{\Lambda_0} \big( (-\epsilon, \epsilon)\times B_{\varrho}(0) \big)$
in more detail and, for ease of notation, will use the symbols
$\Omega^p_{S^1\ltimes \mathbb{C}^m\to S^1}$ and $\rOmega{p}{\Lambda_0}$ to stand for
$\Omega^p_{S^1\ltimes \mathbb{C}^m\to S^1}\big((-\epsilon, \epsilon)\times B_{\varrho}(0) \big)$ and
$\rOmega{p}{\Lambda_0}\big((-\epsilon, \epsilon)\times B_{\varrho}(0) \big)$, respectively, and
$\calJ$ for the vanishing ideal $\calJ\big( (-\epsilon, \epsilon)\times B_{\varrho}(0)\big)$.

In degree $p=0$, $\rOmega{0}{\Lambda_0}$ coincides with the quotient of
$\calC^\infty\big((-\epsilon, \epsilon)\times B_{\varrho}(0)\big)$ by
$\calJ\big( (-\epsilon, \epsilon)\times B_{\varrho}(0) \big)$.

In degree $p=1$, we know by Lemma \ref{lem:vanishingideal} that $d\calJ$ consists of $1$-forms
which can be expressed as follows:
\[
t\sum_{k=1}^m (f_k dz_k+g_k d\bar{z}_k),\ f_k, g_k\in \calC^\infty\big( (-\epsilon, \epsilon)\times B_{\varrho}(0)\big). 
\]
Hence, $d\calJ$ is of the form $t \Omega^1_{S^1\ltimes \mathbb{C}^m\to S^1}$,
which contains $\calJ \Omega^1_{S^1\ltimes \mathbb{C}^m\to S^1}$. 
Notice that for $(0, z)\in S^1\times \mathbb{C}^m$,  $F_{(0, z)}$ coincides with $T^*_z\mathbb{C}^m$.
For $\omega=\sum_{k=1}^m f_k dz_k+g_k d\bar{z}_k \in \rOmega{1}{\Lambda_0}$, if $\Theta(\omega)=0$, then $f_k(0,z)=g_k(0,z)=0$ for $1\leq k\leq m$. Therefore, taking the parametric Talyor expansion of  $f_k, g_k$ at $(0,z)$, we have that there are $\tilde{f}_k$ and $\tilde{g}_k$ in $\calC^\infty\big((-\epsilon, \epsilon)\times B_{\varrho}(0) \big)$ such that $f_k=t\tilde{f}_k$ and $g_k=t\tilde{g}_k$. Hence, $\omega=t\sum_{k=1}^m \tilde{f}_k dz_k +\tilde{g}_k d\bar{z}_k\in d\calJ$ and $[\omega]=0$ in $\rOmega{1}{\Lambda_0}$. 

In degree $p>1$, the above description of $\rOmega{1}{\Lambda_0}$ generalizes with the
above expression for $d\calJ$. As $\Omega^k_{S^1\ltimes \mathbb{C}^m\to S^1}$ is of the form 
\[
\sum_{j}dz_j\wedge \Omega^{k-1}_{S^1\ltimes \mathbb{C}^m\to S^1}+d\bar{z}_j \wedge \Omega^{k-1}_{S^1\ltimes \mathbb{C}^m\to S^1}, 
\]
we conclude that $d\calJ \wedge\Omega^{k-1}_{S^1\ltimes \mathbb{C}^m\to S^1}$ can be identified as $t\Omega^{k}_{S^1\ltimes \mathbb{C}^m\to S^1}$,
which contains $\calJ \Omega^{k-1}_{S^1\ltimes \mathbb{C}^m\to S^1}$ as a subspace. 

We notice that at $(0,z)\in S^1\times \mathbb{C}^m$, $\bigwedge^k F_{(0,z)}$ is $\bigwedge^k T^*_{(0,z)}\mathbb{C}^m$. For $\omega=\sum_{I,J}f_{I,J}dz_{I_1}\wedge\cdots \wedge dz_{I_s}\wedge d\bar{z}_{J_{s+1}}\wedge \cdots \wedge d\bar{z}_{J_{k}}$, with $1\leq I_1<\cdots <I_s\leq m$ and $1\leq J_{s+1}<\cdots<J_{k}\leq m$, if $\Theta(\omega)=0$, we then get $f_{I,J}(0,z)=0$ for all $I,J$. And we can conclude from the Taylor expansion that there exists $\tilde{f}_{I,J}$ such that $f_{I,J}=t\tilde{f}_{I,J}$, and $\omega=t\sum_{I, J} \tilde{f}_{I,J}dz_{I_1}\wedge\cdots \wedge dz_{I_s}\wedge d\bar{z}_{J_{s+1}}\wedge \cdots \wedge d\bar{z}_{J_{k}}$ which is an element in $d\calJ \wedge \Omega^{k-1}_{S^1\ltimes \mathbb{C}^m\to S^1}$. Therefore, $[\omega]=0$ in $\rOmega{k}{\Lambda_0}$. 
\end{proof}

\begin{proposition}\label{prop:equivariant-koszul}
  For each  $S^1$-invariant open $V \subset \mathbb{C}^m$ the chain map 
  \[
    \mathfrak{R}: \big( \Omega^\bullet_{S^1 \ltimes V \to S^1} (S^1 \ltimes V ), Y \lrcorner \big) 
    \to
    \big( \hrOmega{\bullet}{\Lambda_0} (\Lambda_0(S^1 \ltimes V )), 0 \big)
  \]
  is a quasi-isomorphism. 
\end{proposition}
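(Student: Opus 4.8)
The plan is to establish the quasi-isomorphism locally and then globalize. Since both the source complex $\big(\Omega^\bullet_{S^1\ltimes V\to S^1}(S^1\ltimes V),Y\lrcorner\big)$ and the target $\big(\hrOmega{\bullet}{\Lambda_0}(\Lambda_0(S^1\ltimes V)),0\big)$ are complexes of fine sheaves (over $S^1$ via $\sigma$, and as $\calC^\infty$-module sheaves in the $\mathbb{C}^m$-directions), it suffices to check that $\mathfrak{R}=[-]_{\Lambda_0}$ induces an isomorphism on homology on arbitrarily small neighborhoods of the points of $\Lambda_0(S^1\ltimes\mathbb{C}^m)$, i.e.\ on the three local models isolated in Lemma \ref{lem:vanishingideal}: the most singular points $\big(\exp(2\pi\sqrt{-1}\tfrac{j}{w}),0\big)$, the points $\big(\exp(2\pi\sqrt{-1}\tfrac{j}{w}),Z\big)$ with $Z\neq 0$, and the regular points $\big(\exp(2\pi\sqrt{-1}t_0),0\big)$ with $t_0\neq\tfrac{j}{w}$.

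On such a neighborhood I would use Lemma \ref{lem:vectorfieldY} to split the coordinates into the \emph{rotating} block $R=\{k:Y^k\neq 0\text{ near }t_0\}$ and the \emph{fixed} block $F$ of the remaining indices, so that $Y=Y_R+Y_F$ and $i_Y=i_{Y_R}+i_{Y_F}$. Because interior products anticommute and $i_{Y_R},i_{Y_F}$ lower the number of rotating, respectively fixed, cotangent factors, the relative Koszul complex carries a bicomplex structure with these two anticommuting differentials. In the rotating directions the functions $i_{Y_R}(dz_k)=Y^k z_k$ differ from $z_k$ by the nowhere-vanishing factor $Y^k$, so $(\Omega_R^\bullet,i_{Y_R})$ is isomorphic to the Koszul complex of the regular sequence $(z_k,\bar z_k)_{k\in R}$ with coefficients in the full algebra; by the parametric Koszul resolution of Proposition \ref{prop:parametrizedkoszuleulervectorfield} (parametrized by $(s,z_F)$, with $s=t-\tfrac{j}{w}$) it is a resolution of $\calC^\infty(\{z_R=0\})$ concentrated in degree $0$. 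Taking homology in the rotating direction first, the spectral sequence of the bicomplex collapses and identifies the full complex with the fixed-block complex $\big(\Omega_F^\bullet|_{\{z_R=0\}},i_{Y_F}\big)$; on the target this corresponds exactly to the restriction of horizontal relative forms to the slice $M^{g}=\{z_R=0\}$, and it remains to treat the fixed block.

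For the fixed block Lemma \ref{lem:vectorfieldY} gives $Y^k=w_k\,s\,a(w_k s)$ with $a(w_ks)\neq 0$ for $k\in F$, so $Y_F=s\,W$ where $W$ is again a regular Koszul section vanishing transversally along $\{z_F=0\}$ and, crucially, whose restriction $W|_{s=0}$ equals $2\pi\sqrt{-1}\sum_{k\in F}w_k(z_k\partial_{z_k}-\bar z_k\partial_{\bar z_k})$, namely the fundamental vector field $\xi_{M^g}$ of the $S^1$-rotation on $M^g$. Hence $i_{Y_F}=s\,i_W$, and I would compute directly: a smooth form killed by $s$ off the nowhere-dense set $\{s=0\}$ vanishes identically, so $\ker(s\,i_W)=\ker(i_W)=\im(i_W)$ in positive degrees by Koszul exactness, giving $H_p=\im(i_W)_p/s\,\im(i_W)_p$ for $p\geq1$, while $H_0=\calC^\infty/\im(s\,i_W)=\calC^\infty/\calJ=\calC^\infty(\Lambda_0)$ by the description of $\calJ$ in Lemma \ref{lem:vanishingideal}. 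A Hadamard division-by-$s$ argument, together once more with Koszul exactness in degree $p+1$, identifies $\im(i_W)_p/s\,\im(i_W)_p$ with the restriction $\im(i_W)_p|_{s=0}=\im(i_{\xi_{M^g}})_p=\ker(i_{\xi_{M^g}})_p$; by Proposition \ref{prop:rel-F} and the remark following Proposition \ref{prop:factorization-relative-forms} this kernel is precisely the space of horizontal relative $p$-forms on the stratum $\{s=0\}$. Chasing the maps shows the resulting isomorphism is induced by $[-]_{\Lambda_0}=\mathfrak{R}$. The two remaining point-types fall out of the same argument with $F=\emptyset$ (the regular points, where both complexes vanish in positive degree) or with $\xi_{M^g}$ nonvanishing at the base point $Z\neq 0$ (where the stratum is smooth).

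I expect the fixed-block step to be the main obstacle. Controlling the quotient $\im(i_W)_p/s\,\im(i_W)_p$ requires the smooth parametric Koszul complex to be exact in positive degrees \emph{uniformly in the parameter $s$}, and the division-by-$s$ and the Koszul contracting homotopies to be carried out continuously in the nuclear Fr\'echet (bornological) topology, so that passing to completed tensor products and to the quotient preserves exactness. The delicate conceptual point is the identity $W|_{s=0}=\xi_{M^g}$: it is exactly this that forces the homology to be the \emph{horizontal} forms $\ker(i_{\xi_{M^g}})$ rather than arbitrary forms on $M^g$, and reconciling the image $\im(i_{\xi_{M^g}})=\ker(i_{\xi_{M^g}})$ with the conormal description of $\hrOmega{\bullet}{\Lambda_0}$ through the injectivity of Proposition \ref{prop:rel-F} is where the argument must be handled with care.
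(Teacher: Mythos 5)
Your proposal is correct and follows essentially the same route as the paper's proof: your rotating/fixed block splitting $Y=Y_R+Y_F$ is the paper's decomposition $Y=Y_1+(t-\tfrac{j}{w})\widetilde{Y}_2$, your collapse of the bicomplex via parametric Koszul exactness in the rotating directions is the paper's $E_1$-page computation, and your kernel/image manipulations with Hadamard division by $s$ and the identification of $W|_{s=0}$ with the fundamental vector field reproduce the paper's computation of $\ker(i_{\widetilde{Y}_2})/(t-\tfrac{j}{w})\ker(i_{\widetilde{Y}_2})$ and the surjectivity/injectivity argument for $\mathfrak{R}_2$. The only cosmetic difference is that you localize at points of $\Lambda_0$ (three local models) while the paper takes stalks over $S^1$ (two cases), which amounts to the same reduction.
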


\begin{proof} We consider both sides as sheaves over $S^1$, and prove that $\mathfrak{R}$
  is a quasi-isomorphism of sheaves over $S^1$. 
  It is sufficient to prove the quasi-isomorphism $\mathfrak{R}$ at each stalk.
  We split our proof into two parts according to the point $t_0$ in $S^1$,
\begin{enumerate} 
\item at $\exp(2\pi \sqrt{-1}t_0)$ with $t_0\ne \frac{j}{w}$ for $0\leq j<w$ and $t\in [0,1)$, 
\item at $\exp(2\pi \sqrt{-1}\frac{j}{w})$ for $0\leq j<w$. 
\end{enumerate}

\noindent{\textit{Case (1)}}.
We prove that
\[
  \mathfrak{R}_{\exp(2\pi \sqrt{-1}t_0)}: \big( \Omega^\bullet_{S^1 \ltimes V \to S^1, \exp(2\pi \sqrt{-1}t_0)} (S^1 \ltimes V ), Y \lrcorner \big)\to  {\hrOmega{\bullet}{\Lambda_0 ,\exp(2\pi \sqrt{-1}t_0)}} (\Lambda_0(S^1 \ltimes V ))
\]
is a quasi-isomorphism for $t_0\ne \frac{j}{w}$ for $0\leq j<w$ and $t_0\in [0,1)$. It is crucial to observe that for a sufficiently small $\epsilon>0$, on $(t_0-\epsilon, t_0+\epsilon)\times \mathbb{C}^m$, the vector field $Y$ is of the  form
\[
Y=\sum_{j=1}^m \big(\exp(2\pi \sqrt{-1}w_j t)-1\big)z_j\frac{\partial}{\partial z_j}+ \big(\exp(-2\pi \sqrt{-1}w_j t)-1\big)z_j\frac{\partial}{\partial \bar{z}_j}.
\]
Observe that the vector field $Y$ vanishes exactly at $(t, 0)$.
Morover,
\[ \big( \Omega^\bullet_{S^1 \ltimes V \to S^1, \exp(2\pi \sqrt{-1}t_0)} \big((t_0-\epsilon, t_0+\epsilon)\times \mathbb{C}^m \big), Y \lrcorner \big)
\]
is a smooth family of generalized Koszul complexes over $t\in (t_0-\epsilon, t_0+\epsilon)$. Its cohomology can be computed using Proposition \ref{prop:parametrizedkoszul} as
\[
H^\bullet \big(  \Omega^\bullet_{S^1 \ltimes V \to S^1, \exp(2\pi \sqrt{-1}t_0)} \big((t_0-\epsilon, t_0+\epsilon)\times \mathbb{C}^m \big), Y \lrcorner \big)=\left\{
\begin{array}{ll}
\calC^\infty\big(t_0-\epsilon, t_0+\epsilon\big),& \bullet=0,\\
0,&\text{otherwise}.
\end{array}
\right.
\]
At the same time, for every $t$ in $(t_0-\epsilon, t_0+\epsilon)$, the fixed point of $\exp(2\pi \sqrt{-1}t)$ is $0$ in $\mathbb{C}^m$. And therefore, the complex $ \hrOmega{\bullet}{\Lambda_0}\big( (t_0-\epsilon, t_0+\epsilon)\times \mathbb{C}^m\big)$ identified with $\Gamma^\infty\big((t_0-\epsilon, t_0+\epsilon)\times \{0\}, \bigwedge^\bullet F\big)$ is computed as follows,
\[
\Gamma^\infty\big((t_0-\epsilon, t_0+\epsilon)\times \{0\}, {\bigwedge}^\bullet F\big)=\left\{
\begin{array}{ll}
\calC^\infty\big(t_0-\epsilon, t_0+\epsilon\big),& \bullet=0,\\
0,&\text{otherwise}.
\end{array}
\right.
\]
From the above computation, it is straight forward  to conclude that $\mathfrak{R}_{\exp(2\pi \sqrt{-1}t_0)}$ is a quasi-isomorphism. 

\noindent{\textit{Case (2)}}. We prove that at $\exp(2\pi \sqrt{-1}\frac{j}{w})$, the morphism $\mathfrak{R}_{\exp(2\pi \sqrt{-1}\frac{j}{w})}$ is a quasi-isomorphism. 
Following Lemma \ref{lem:vectorfieldY}, we write  the vector field $Y$ as a sum of two components
\[
\begin{split}
Y&=Y_1+Y_2\\
Y_1&=\sum_{k, kj\notin w\mathbb{Z}} Y^k z_k\frac{\partial}{\partial z_k}+\overline{Y}^k \bar{z}_k \frac{\partial}{\partial \bar{z}_k}\\
Y_2&=(t-\frac{j}{w}) \sum_{k, kj\in w\mathbb{Z}} w_k(a_k z_k\frac{\partial}{\partial z_k}+\bar{a}_k \bar{z}_k\frac{\partial}{\partial z_k}),
\end{split}
\] 
where $a_k=a\big(w_k(t-\frac{j}{w})\big)$. Define $\widetilde{Y}_2$ to be $\sum_{k, kj\in w\mathbb{Z}} w_k(a_k z_k\frac{\partial}{\partial z_k}+\bar{a}_k \bar{z}_k\frac{\partial}{\partial z_k})$. Then we have the following expression for $Y$,
\[
Y=Y_1+(t-\frac{j}{w})\widetilde{Y}_2. 
\]
Accordingly, we can decompose $\mathbb{C}^m$ as a direct sum of two subspaces, that is write 
$\mathbb{C}^m=S_1\times  S_2$ with 
\[
\begin{split}
S_1&:=\bigoplus _{k, kj\notin w\mathbb{Z}} \mathbb{C}_{w_k},\\
S_2&:=\bigoplus_{k, kj\in w\mathbb{Z}} \mathbb{C}_{w_k}. 
\end{split}
\]
Both $S_1$ and $S_2$ are equipped with $S^1$-actions such that the above decomposition of $\mathbb{C}^m$ is $S^1$-equivariant. As our argument is local, we can assume to work with an open set $V$, which is of the product form $V=V_1\times V_2$ such that $V_1$ (and $V_2$) is an $S^1$-invariant neighborhood of $0$ in $S_1$ (and $S_2$). 

We consider $\left(\Omega^\bullet_{S^1 \ltimes V_l \to S^1}\left( \big(\frac{j}{w}-\epsilon, \frac{j}{w}+\epsilon\big) \times V_l \right), i_{Y_l}\right)$ for $l=1,2$. Observe that each complex $\Omega^\bullet_{S^1 \ltimes V_l \to S^1}\left( \big(\frac{j}{w}-\epsilon, \frac{j}{w}+\epsilon\big) \times V_l \right)$ is a $\calC^\infty\big(\frac{j}{w}-\epsilon, \frac{j}{w}+\epsilon\big)$-module, and their tensor product over the algebra $\calC^\infty\big(\frac{j}{w}-\epsilon, \frac{j}{w}+\epsilon \big)$ defines a bicomplex 
\[
\Omega^p_{S^1 \ltimes V_1 \to S^1} \left( \big(\frac{j}{w}-\epsilon, \frac{j}{w}+\epsilon\big) \times V_1 \right)\otimes_{\calC^\infty\big(\frac{j}{w}-\epsilon, \frac{j}{w}+\epsilon\big)} \Omega^q_{S^1 \ltimes V_2 \to S^1} \left( \big(\frac{j}{w}-\epsilon, \frac{j}{w}+\epsilon\big) \times V_2 \right)
\]
with $i_{Y_1}\otimes 1$ being the horizontal differential and $1\otimes i_{Y_2}$ being the vertical one. The total complex of this double complex is exactly 
\[
  \Omega^\bullet_{S^1 \ltimes V \to S^1} \left(
  \big( \frac{j}{w}-\epsilon, \frac{j}{w}+\epsilon\big) \times V \right)
\]
with the differential $i_Y=i_{Y_1}\otimes 1+1\otimes i_{Y_2}$. 
The $E_1$-page of the spectral sequence associated to the bicomplex
\[
  \Omega^\bullet_{S^1 \ltimes V_1 \to S^1}
 \left( \big(\frac{j}{w}-\epsilon, \frac{j}{w}+\epsilon\big) \times V_1 \right)
 \otimes_{\calC^\infty\big(\frac{j}{w}-\epsilon, \frac{j}{w}+\epsilon\big)}
 \Omega^\bullet_{S^1 \ltimes V_2 \to S^1}
 \left(\big(\frac{j}{w}-\epsilon, \frac{j}{w}+\epsilon\big) \times V_2\right)
\] is 
\[
H^\bullet\left(\Omega^\bullet_{S^1 \ltimes V_1\to S^1} \big(\big(\frac{j}{w}-\epsilon, \frac{j}{w}+\epsilon\big) \times V_1\big), i_{Y_1}\right)\otimes _{\calC^\infty\big(\frac{j}{w}-\epsilon, \frac{j}{w}+\epsilon\big)}  \Omega^q_{S^1 \ltimes V_2 \to S^1} \big(\big(\frac{j}{w}-\epsilon, \frac{j}{w}+\epsilon\big) \times V_2\big),
\]
with the differential $1\otimes i_{Y_2}$. We observe that $Y_1$ vanishes only at $0$ for every fixed $t$. Therefore, $\left(\Omega^\bullet_{S^1 \ltimes V_1\to S^1} \big(\big(\frac{j}{w}-\epsilon, \frac{j}{w}+\epsilon\big) \times V_1\big), i_{Y_1}\right)$ is a smooth family of generalized Koszul complexes. Its cohomology is computed by Proposition \ref{prop:parametrizedkoszul} as follows,
\[
  H^\bullet\left(\Omega^\bullet_{S^1 \ltimes V_1\to S^1} \big(\big(\frac{j}{w}-\epsilon, \frac{j}{w}+\epsilon\big) \times V_1\big), i_{Y_1}\right)=\left\{
    \begin{array}{ll}
      \calC^\infty(\frac{j}{w}-\epsilon, \frac{j}{w}+\epsilon)&\bullet=0\\
      0&\bullet\neq 0 \ .
    \end{array}
  \right. 
\]
Therefore, we get the following expression of $E_1^{p,q}$, 
\[
E_1^{p,q}=\left\{\begin{array}{ll}
																							 \Omega^q_{S^1 \ltimes V_2 \to S^1} \big(\big(\frac{j}{w}-\epsilon, \frac{j}{w}+\epsilon\big) \times V_2\big),&p=0\\
																				0,&p\neq0. 
																							\end{array}
																						\right. 
\]

Next we compute the cohomology of $(E_1^{0,q}, i_{Y_2})$. Recall by Lemma \ref{lem:vectorfieldY} that $Y_2$ has the form $Y_2=(t-\frac{j}{w})\widetilde{Y}_2$, where $\widetilde{Y}_2$ vanishes exactly at $0$ for every fixed $t\in (\frac{j}{w}-\epsilon, \frac{j}{w}+\epsilon)$. At degree $q$, we notice that if an element $\omega\in \Omega^q_{S^1 \ltimes V_2 \to S^1} ((\frac{j}{w}-\epsilon, \frac{j}{w}+\epsilon) \times V_2)$ belongs to $\ker(i_{Y_2})$, $(t-\frac{j}{w})i_{\widetilde{Y}_2}\omega=0$. Hence $\omega$ belongs to $\ker(i_{\widetilde{Y}_2})$. Hence, we have reached the following equation
\[
\ker(i_{Y_2})=\ker(i_{\widetilde{Y}_2}). 
\]
It is also easy to check that 
\[
  i_{Y_2}\Omega^{q+1}_{S^1 \ltimes V_2 \to S^1}
  \left(\big(\frac{j}{w}-\epsilon, \frac{j}{w}+\epsilon\big) \times V_2\right)=
  \big(t-\frac{j}{w}\big)i_{\widetilde{Y}_2}
  \Omega^{q+1}_{S^1 \ltimes V_2 \to S^1}
  \left( \big(\frac{j}{w}-\epsilon, \frac{j}{w}+\epsilon\big) \times V_2 \right)
\]
We conclude that the quotient
$\ker(i_{Y_2})/ i_{Y_2}\Omega^{q+1}_{S^1 \ltimes V_2 \to S^1} \left(\big(\frac{j}{w}-\epsilon, \frac{j}{w}+\epsilon\big) \times V_2\right)$ is isomorphic to 
\[
\ker (i_{\widetilde{Y}_2})/(t-\frac{j}{w})i_{\widetilde{Y}_2} \Omega^{q+1}_{S^1 \ltimes V_2 \to S^1} \left(\big(\frac{j}{w}-\epsilon, \frac{j}{w}+\epsilon\big) \times V_2\right)
\]
Recall that  the cohomology of $\left(\Omega^{\bullet}_{S^1 \ltimes V_2 \to S^1} \big(\big(\frac{j}{w}-\epsilon, \frac{j}{w}+\epsilon\big) \times V_2\big), i_{\widetilde{Y}_2}\right)$ is computed as follows, 
\[
H^q\left( \Omega^{\bullet}_{S^1 \ltimes V_2 \to S^1} \big(\big(\frac{j}{w}-\epsilon, \frac{j}{w}+\epsilon\big) \times V_2\big), i_{\widetilde{Y}_2}\right)=\left\{
																									\begin{array}{ll}
																									\calC^\infty(\frac{j}{w}-\epsilon, \frac{j}{w}+\epsilon),&q=0\\
																									0,&q\neq 0. 
																									\end{array}
																									\right.
\]
Therefore, for all $q$, we conclude that
\[
i_{\widetilde{Y}_2} \Omega^{q+1}_{S^1 \ltimes V_2 \to S^1} \left(\big(\frac{j}{w}-\epsilon, \frac{j}{w}+\epsilon\big) \times V_2\right)=\ker(i_{\widetilde{Y}_2}), 
\]
and the quotient $\ker(i_{Y_2})/ i_{Y_2}\Omega^{q+1}_{S^1 \ltimes V_2 \to S^1} \left(\big(\frac{j}{w}-\epsilon, \frac{j}{w}+\epsilon\big) \times V_2\right)$ is isomorphic to
\[
\ker(i_{\widetilde{Y}_2})/\big(t-\frac{j}{2}\big) \ker(i_{\widetilde{Y}_2}).
\]

As the $E_2$ page has only nonzero component when $p=0$, the spectral sequence collapses at the $E^2$ page, and we conclude that the cohomology of the total complex, which is the cohomology of $ \Omega^\bullet_{S^1 \ltimes V \to S^1} \left(\big(\frac{j}{w}-\epsilon, \frac{j}{w}+\epsilon\big) \times V\right)$ with the differential $i_{Y_1}\otimes 1+1\otimes i_{Y_2}$, is equal to the quotient
\[
\ker(i_{\widetilde{Y}_2})/\big(t-\frac{j}{2}\big) \ker(i_{\widetilde{Y}_2}) 
\]
for the contraction $i_{\widetilde{Y}_2}$ on $\Omega^\bullet_{S^1 \ltimes V_2 \to S^1} \left(\big(\frac{j}{w}-\epsilon, \frac{j}{w}+\epsilon\big) \times V_2\right)$. 

We now prove that the morphism 
\[
 \mathfrak{R}: \left( \Omega^q_{S^1 \ltimes V \to S^1} (S^1 \ltimes V ), Y \lrcorner \right) 
    \to
    \big( \hrOmega{\bullet}{\Lambda_0} (\Lambda_0(S^1 \ltimes V )), 0 \big)
\]
is a quasi-isomorphism. The above discussion  and description of
$\Lambda_0((\frac{j}{w}-\epsilon, \frac{j}{w}+\epsilon) \times V)$ reduces us to prove
that  the morphism
\[
\mathfrak{R}_2: \left( \Omega^\bullet_{S^1 \ltimes V_2 \to S^1} \big(\big(\frac{j}{w}-\epsilon, \frac{j}{w}+\epsilon\big) \times V_2 \big), Y_2 \lrcorner \right) 
    \to
    \left( \hrOmega{\bullet}{\Lambda_0} \left(\Lambda_0\big(\big(\frac{j}{w}-\epsilon, \frac{j}{w}+\epsilon\big) \times V_2 \big)\right), 0 \right)
\]
is a quasi-isomorphism. 
We prove this by examination of $\mathfrak{R}_2$ in degree $q$. Hereby, we will work with
$\bigwedge^\bullet F$ as it is isomorphic to $\rOmega{\bullet}{\Lambda_0}$ by Proposition \ref{prop:rel-F}. \\

\noindent{ $\bullet\ q\geq 1$}. Recall that $\Gamma^\infty\big((\frac{j}{w}-\epsilon, \frac{j}{w}+\epsilon) \times V_2 , \bigwedge^q F\big)$ is $\bigwedge^q F_{(\frac{j}{w}, z)}$. We observe that the vector field $\widetilde{Y}_2$ at $t=\frac{j}{w}$ coincides with the fundamental vector field of the $S^1$ action on $V_2$. Hence, if $\phi\in \bigwedge^q F_{(\frac{j}{w}, z)}$ is horizontal, $\phi$ satisfies the equation $i_{\widetilde{Y}_2(\frac{j}{w},z)}\phi=0$. As the cohomology of the $(\Omega^\bullet(V_2), i_{\widetilde{Y}_2(0,z)})$ at degree $q$ vanishes, there is a degree $q+1$ form $\psi \in \Omega^\bullet(V_2)$ such that $i_{\widetilde{Y}_2(\frac{j}{w},z)}\psi=\phi$. Define $\omega\in \Omega^\bullet_{S^1 \ltimes V_2 \to S^1} ((\frac{j}{w}-\epsilon, \frac{j}{w}+\epsilon) \times V_2 )$ by $\omega:=i_{\tilde{Y}_2}\psi$, where $\psi$ is viewed as an element in  $\Omega^\bullet_{S^1 \ltimes V_2 \to S^1} ((\frac{j}{w}-\epsilon, \frac{j}{w}+\epsilon) \times V_2 )$ constant along the $t$ direction. Then we can easily check that $\omega $ belongs to the kernel of $i_{\widetilde{Y}_2}$ and $\mathfrak{R}_2(\psi)=\phi$. We conclude that $\mathfrak{R}_2$ is surjective. 

For the injectivity of $\mathfrak{R}_2$, we suppose that $\omega\in \ker(i_{\widetilde{Y}_2})$. Hence, $\mathfrak{R}_2(\omega)(\frac{j}{w},z)=\omega(\frac{j}{w},z)=0$. Then by the parametrized Taylor expansion, we can find a form $\tilde{\omega}\in \Omega^\bullet_{S^1 \ltimes V_2 \to S^1} ((\frac{j}{w}-\epsilon, \frac{j}{w}+\epsilon) \times V_2 )$ such that $\omega=(t-\frac{j}{w})\tilde{\omega}$. As $0=i_{\widetilde{Y}_2}\omega=(t-\frac{j}{w})i_{\widetilde{Y}_2}\tilde{\omega}_2$, $i_{\widetilde{Y}_2}\tilde{\omega}=0$. Hence $\omega=(t-\frac{j}{w})\tilde{\omega}$ belongs to $(t-\frac{j}{w})\ker{i_{\widetilde{Y}_2}}$, and $[\omega]$ is zero in the cohomology of $i_{Y_2}$. 

\noindent{$\bullet\ q=0$}. Recall that $\widetilde{Y}_2$  is of the form $\sum_{k} w_k\big(a(w_k(t-\frac{j}{w})) z_k\frac{\partial}{\partial z_k}+\bar{a}(w_k(t-\frac{j}{w}))\big)\bar{z}_k\frac{\partial}{\partial \bar{z}_k}$, where $a(w_k(t-\frac{j}{w}))\neq 0$ for all $t\in (\frac{j}{w}-\epsilon, \frac{j}{w}+\epsilon)$. Therefore, the space $(t-\frac{j}{w})i_{\widetilde{Y}_2}$ is of the form
\[
\big( t-\frac{j}{w}  \big)\sum_k z_k f_k +\bar{z}_k g_k, 
\]
which is exactly the vanishing ideal $\calJ\big((\frac{j}{w}-\epsilon, \frac{j}{w}+\epsilon) \times V_2 \big)$. This shows that the cohomology of $ \big( \Omega^\bullet_{S^1 \ltimes V_2 \to S^1} ((\frac{j}{w}-\epsilon, \frac{j}{w}+\epsilon) \times V_2 ), Y_2 \lrcorner \big)$ at degree 0 coincides with $\calC^\infty\big(\Lambda_0(S^1\ltimes V_2)\big)|_{(\frac{j}{w}-\epsilon, \frac{j}{w}+\epsilon) \times V_2}$. One concludes that $\mathfrak{R}_2$ is an isomorphism in degree $0$. 

\end{proof}
\subsection{Stitching it all together}

We are now in a position to prove the Conjecture \ref{BryConjBHF} in the case of circle actions:
\begin{theorem}
Let $M$ be an $S^1$-manifold and regard $\hrOmega{\bullet}{\Lambda_0} \big(\Lambda_0 (S^1\ltimes M)\big)$
  as a chain complex endowed with the zero differential. Then the chain map 
   \[
      \Phi_{\bullet,M/S^1} : C_\bullet  \big( \calC^\infty (M) , \calA (M/S^1)\big)
      \to \hrOmega{\bullet}{\Lambda_0} \big(\Lambda_0 (S^1\ltimes M)\big)
  \]
  is a quasi-isomorphism.
\end{theorem}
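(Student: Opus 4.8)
The plan is to reduce the global statement to the local computation carried out in Proposition~\ref{prop:equivariant-koszul}, using the sheafification technology of Section~\ref{Sec:LHCH}. First I would record that both sides of $\Phi_{\bullet,M/S^1}$ are global sections of fine sheaf complexes over the orbit space $X=M/S^1$. On the source, the quasi-isomorphism $\varrho$ established in Subsection~\ref{sec:group-manifold-case} identifies $C_\bullet\big(\calC^\infty(M),\calA(M/S^1)\big)$ with $\Gamma\big(X,\sheafC_\bullet(\calC^\infty_M,\calA)\big)$; on the target, Proposition~\ref{prop:chain-map-horizontal-relative-forms} promotes $\Phi_{\bullet,M/S^1}$ to a morphism of sheaf complexes $\Phi_\bullet\colon\sheafC_\bullet(\calC^\infty_M,\calA)\to\pi_*(s_{|\Lambda_0})_*\hrOmega{\bullet}{\Lambda_0}$, the target being endowed with the zero differential. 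Both complexes consist of fine sheaves, so the hypercohomology argument of the proof of Theorem~\ref{thm:hochschild-homology-global-sections} reduces the claim to showing that $\Phi_\bullet$ induces an isomorphism on all homology stalks. Hence it suffices to treat a single orbit $\orbO\in X$.

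At the stalk I would invoke the linear model. By Proposition~\ref{prop:local-model} the stalk $\sheafC_{\bullet,\orbO}(\calC^\infty_M,\calA)$ is quasi-isomorphic to the stalk of the linearized model, namely the isotropy group $K=(S^1)_p$ acting linearly on the normal space $N_p\orbO$. Using the Connes--Koszul resolution exactly as in the proof of Proposition~\ref{prop:chain-map-horizontal-relative-forms}, the Hochschild stalk of the linear model is computed by the complex~\eqref{eq:ConvolutionConnesKoszulChainCpl}, i.e.\ $\big(\Omega^\bullet_{K\ltimes N_p\orbO\to K}(N_p\orbO),\,i_{Y}\big)$ with $Y(g,v)=v-gv$, and under this identification the morphism $\Phi_\bullet$ becomes the restriction-to-loop-space map $\mathfrak{R}$ appearing in Proposition~\ref{prop:equivariant-koszul}. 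Since the target sheaf localizes, via the groupoid isomorphism $\Theta$ of~\eqref{eq:tubular}, to $\hrOmega{\bullet}{\Lambda_0}\big(\Lambda_0(K\ltimes N_p\orbO)\big)$, the stalk statement becomes precisely the assertion that $\mathfrak{R}$ is a quasi-isomorphism for the linear $K$-action.

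Now $K$ is a closed subgroup of $S^1$, hence either finite cyclic or all of $S^1$, and I would dispose of the two cases separately. If $K$ is finite, the Koszul vector field $Y$ decouples over the group elements $\gamma\in K$, so~\eqref{eq:ConvolutionConnesKoszulChainCpl} splits into classical Koszul complexes with homology $\Omega^\bullet\big((N_p\orbO)^\gamma\big)$, and $\mathfrak{R}$ is the fibrewise restriction onto $\prod_\gamma\Omega^\bullet\big((N_p\orbO)^\gamma\big)=\hrOmega{\bullet}{\Lambda_0}$; this is the finite group case, settled in Corollary~\ref{cor:finitegroup} as recorded in the remark after Conjecture~\ref{BryConjBHF}. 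If $K=S^1$, then $p$ is a fixed point and $N_p\orbO$ is a linear $S^1$-representation, which I would decompose as $V_0\oplus V_1$ into its trivial and nontrivial isotypical parts. The K\"unneth theorem together with the classical Hochschild--Kostant--Rosenberg isomorphism strips off the $V_0$-factor, reducing the claim to the case $\mathbb{C}^m=V_1$ with all weights nonzero; this is exactly Proposition~\ref{prop:equivariant-koszul}, which establishes that $\mathfrak{R}$ is a stalkwise quasi-isomorphism over $S^1$. Combining the two cases closes the stalk statement and therefore the theorem.

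The step I expect to be the main obstacle is the bookkeeping in the second paragraph: one must check that the two identifications used at the stalk --- the linearization quasi-isomorphism of Proposition~\ref{prop:local-model} and the Connes--Koszul resolution of Proposition~\ref{prop:chain-map-horizontal-relative-forms} --- are natural enough to be compatible with $\Phi_\bullet$, so that the induced stalk map is genuinely $\mathfrak{R}$ and not merely some map sharing its source and target. Concretely, this amounts to verifying that $\Theta$ carries the distinguished vector field $Y_{S^1\ltimes M}$ to $Y_{K\ltimes N_p\orbO}$ up to the localization homotopies $\Psi_\varepsilon$ already constructed, and that the sheaf $\hrOmega{\bullet}{\Lambda_0}$ pushes forward compatibly under $\Theta$; once this naturality is in place, Proposition~\ref{prop:equivariant-koszul} and Corollary~\ref{cor:finitegroup} finish the proof.
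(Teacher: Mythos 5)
Your proposal is correct and follows essentially the same route as the paper's own (very terse) proof: reduce to stalks via the fine-sheaf argument, then split by the isotropy type of the orbit, invoking Corollary \ref{cor:finitegroup} for finite isotropy and Proposition \ref{prop:equivariant-koszul} for full $S^1$ isotropy. The additional detail you supply --- the linearization via Proposition \ref{prop:local-model}, the $V_0\oplus V_1$ K\"unneth reduction, and the naturality bookkeeping flagged in your last paragraph --- is precisely what the paper leaves implicit.
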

\begin{proof}
Since $\Phi_{\bullet,M/S^1}$ is the global sections of a morphism of fine sheaves on $M/S^1$, it suffices to prove that 
\[
    \Phi_\bullet: \sheafC_\bullet  ( \calC^\infty_M , \calA)\big)
    \to \pi_* (s_{|\Lambda_0})_* \rOmega{\bullet}{\Lambda_0} \ , 
\] 
is a quasi-isomorphism, i.e., that the induced map on the stalks $\Phi_{\bullet,\calO}$ is. Now there are two cases, depending on the isotropies
of the orbit $\calO$: when the isotropy subgroup $\Gamma_x\subset S^1$ of a point $x\in S^1$ is a finite group, this follows from the (proof of) 
Corollary \ref{cor:finitegroup}. When the isotropy group is $S^1$ itself, it follows from Proposition \ref{prop:equivariant-koszul}.
\end{proof}


\appendix

\section{Tools from singularity theory}
\label{AppTollsSingularityTheory}
\subsection{Differentiable stratified spaces}
\label{AppDifferentiableStratifiedSpaces}
Recall that for every locally closed subset $X\subset \R^n$ of euclidean space
the sheaf $\calC_X^\infty$ of smooth functions on $X$ is defined as the quotient sheaf 
$\calC_U^\infty/\calJ_{X,U}$, where $U\subset \R^n$ is an open subset such that $X \subset U$ 
is relatively closed, $\calC_U^\infty$ is the sheaf of smooth functions on $U$, and 
$\calJ_{X,U}$ the ideal sheaf of smooth functions on open subsets of $U$ vanishing on $X$.
Note that $\calC_X^\infty$ does not depend on the particular choice of the ambient open
subset $U\subset \R^n$.
\begin{definition}
  A commutative locally ringed space $(A,\calO)$ is called an \emph{affine differentiable space}
  if there is a closed subset $X\subset \R^n$ and an isomorphism of ringed  spaces
  $(f,F) : (A,\calO) \to (X,\calC_X^\infty)$.

  By a \textit{differentiable stratified space} we understand a  commutative locally ringed space
  $(X,\calC^\infty)$ consisting of a separable locally compact topological Hausdorff space $X$
  equipped with a stratification $\calS$ on $X$ in the sense of Mather \cite{MatSM} (cf.~also 
  \cite[Sec.~1.2]{PflAGSSS}) and a sheaf $\calC^\infty$  of commutative local $\C$-rings
  on $X$ such that for every point $x\in X$ there is an open neighborhood $U$ together with 
  $\varphi_1, \ldots , \varphi_n \in \calC^\infty (U) $ having the following properties:
\begin{enumerate}[(DS1)]
\item The map $\varphi: U \rightarrow \R^n$, 
      $y \mapsto (\varphi_1 (y) , \ldots , \varphi_n (y) ) $ is a homeomorphism 
      onto a locally closed subset $\widetilde U := \varphi (U) \subset \R^n$.
      and induces an isomorphism of ringed  spaces
      $\varphi: (U ,\calC^\infty_{|U}) \to (\widetilde U,\calC^\infty_{\widetilde U})$.
\item The map $\varphi$ endows $(U ,\calC^\infty_{|U})$ with the structure of an affine
      differentiable space which means that
      $(\varphi,\varphi^*): (U ,\calC^\infty_{|U}) \to (\widetilde U, \calC^\infty_{\widetilde U})$
      is an isomorphism of ringed  spaces, where $\calC^\infty_{\widetilde U}$ denotes the sheaf
      of smooth functions on $\widetilde U$ as defined above. 
\item For each stratum $S\subset U$, $\varphi_{| S\cap U}$ is a diffeomorphism of 
      $S\cap U$ onto a submanifold $\varphi (S\cap U) \subset \R^n$.
\end{enumerate}
A map $\varphi: U \rightarrow \R^n$ fulfilling the axioms (DS1) to (DS3) will often be called 
a \emph{singular chart} of $X$ (cf.~\cite[Sec.~1.3]{PflAGSSS}).  
\end{definition}

A differentiable stratified space  is in particular a reduced differentiable space 
in the sense of Spallek \cite{SpaDR} or Gonz\'ales--de Salas \cite{GonSalDS}. Moreover, 
differentiable stratified spaces defined as above coincide with the stratified spaces with 
smooth structure as in \cite{PflAGSSS}.

\begin{proposition}[cf.~{\cite[Thm.~1.3.13]{PflAGSSS}}]
  The structure sheaf of a differentiable stratified space is fine. 
\end{proposition}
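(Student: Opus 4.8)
The plan is to verify that the structure sheaf $\calC^\infty$ admits smooth partitions of unity subordinate to every locally finite open cover, since this condition immediately yields fineness: multiplication by the members of such a partition furnishes the required family of sheaf endomorphisms, each supported in one member of the cover and summing to the identity. Note that the stratification axiom (DS3) plays no essential role here; fineness is a consequence of the smooth structure encoded in (DS1)--(DS2) together with paracompactness of $X$.

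First I would record that $X$ is paracompact and normal. By hypothesis $X$ is separable, locally compact and Hausdorff, and by (DS1) every point has a neighborhood homeomorphic to a locally closed subset of some $\R^n$. Hence $X$ is locally metrizable and second countable, therefore metrizable, and in particular paracompact and normal. Next I would produce local bump functions. Fix $x\in X$ and a singular chart $\varphi : U \to \widetilde U \subset \R^n$ as in (DS1)--(DS2), so that $\varphi$ induces an isomorphism $(U,\calC^\infty_{|U}) \cong (\widetilde U, \calC^\infty_{\widetilde U})$, where $\calC^\infty_{\widetilde U}$ is by construction the quotient $\calC^\infty_{\R^n}/\calJ_{\widetilde U, \R^n}$. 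Consequently the restriction of any smooth function on an ambient open subset of $\R^n$ represents an element of $\calC^\infty(U)$, and for every neighborhood $W \subset U$ of $x$ one obtains, by restricting a Euclidean bump function, some $\psi \in \calC^\infty(U)$ with $\psi \equiv 1$ near $x$, $0 \le \psi \le 1$, and $\supp \psi \subset W$.

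Then I would globalize. Given a locally finite open cover $(U_i)_{i\in I}$, normality of $X$ permits shrinking it to a cover $(W_i)_{i\in I}$ with $\overline{W_i} \subset U_i$. Covering each $\overline{W_i}$ by finitely many chart domains and assembling the local bump functions, I obtain $\psi_i \in \calC^\infty(X)$ with $\supp \psi_i \subset U_i$, $\psi_i \ge 0$, and $\sigma := \sum_{i\in I} \psi_i > 0$ everywhere, the sum being locally finite and hence a section of $\calC^\infty$. Setting $\varphi_i := \psi_i/\sigma$ produces functions $\varphi_i \in \calC^\infty(X)$ with $\supp \varphi_i \subset U_i$ and $\sum_{i\in I} \varphi_i = 1$. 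Multiplication by $\varphi_i$ then defines sheaf endomorphisms $\eta_i$ of $\calC^\infty$ with $\supp \eta_i \subset U_i$ and $\sum_{i\in I} \eta_i = \id$, which is exactly the fineness condition.

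The main obstacle is the single delicate verification that $\calC^\infty$ is stable under forming reciprocals of nowhere-vanishing sections, so that $1/\sigma$ is genuinely a section of the structure sheaf. This is where the quotient description of $\calC^\infty_X$ must be exploited carefully: locally, in a singular chart, $\sigma$ is represented by a smooth function on an open subset $V\subset\R^n$ whose zero locus is disjoint from $\widetilde U$; after shrinking $V$ one arranges that this representative is nowhere zero on $V$, so its Euclidean reciprocal is smooth and restricts to the desired representative of $1/\sigma$. One must also check that these locally chosen reciprocals are compatible under the chart identifications, which follows from uniqueness of the reciprocal together with the fact that the quotient sheaf $\calC^\infty_{\R^n}/\calJ_{\widetilde U,\R^n}$ is intrinsic to $X$ and independent of the ambient embedding. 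With this closure property secured, the construction above goes through and establishes the claim.
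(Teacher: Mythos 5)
The paper offers no argument of its own for this proposition --- it is quoted from \cite[Thm.~1.3.13]{PflAGSSS} --- and the proof there is precisely the one you outline: smooth partitions of unity subordinate to any locally finite open cover exist, and multiplication by their members yields the sheaf endomorphisms required for fineness. You also identify and correctly settle the one genuinely delicate point, namely closure of $\calC^\infty_X$ under reciprocals of nowhere-vanishing sections: since the structure sheaf is the \emph{reduced} quotient $\calC^\infty_U/\calJ_{X,U}$, its sections are honest functions on $X$ that locally extend to ambient smooth functions, the Euclidean reciprocal of a local extension restricts to the pointwise reciprocal, and because sections are determined by their pointwise values the local reciprocals glue automatically (no separate ``chart compatibility'' check is really needed).

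One step does fail as literally written: you propose to cover each $\overline{W_i}$ by \emph{finitely many} chart domains, which presupposes that $\overline{W_i}$ is compact. Members of a locally finite open cover of a merely locally compact ($\sigma$-compact) space need not be relatively compact --- cover $\R$ by $(-\infty,1)$ and $(0,\infty)$, for instance --- so in general no finite subfamily of chart domains suffices. The standard repair keeps your architecture intact: first refine $(U_i)_{i\in I}$ by a countable, locally finite cover $(V_j)_{j\in\N}$ of relatively compact chart domains (possible since $X$ is locally compact and second countable, hence paracompact; the paper uses the resulting compact exhaustions in its appendix), build bumps $\chi_j\geq 0$ subordinate to $(V_j)_{j \in \N}$ with $\sigma:=\sum_j\chi_j>0$ everywhere, choose a map $\iota:\N\to I$ with $V_j\subset U_{\iota(j)}$, and set $\varphi_i:=\sum_{j\in\iota^{-1}(i)}\chi_j/\sigma$. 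Local finiteness gives
\[
  \supp\varphi_i\subset\overline{\bigcup_{j\in\iota^{-1}(i)}\{\chi_j\neq 0\}}
  =\bigcup_{j\in\iota^{-1}(i)}\overline{\{\chi_j\neq 0\}}\subset U_i \ ,
\]
and $\sum_{i\in I}\varphi_i=1$. With this modification your argument is complete and agrees with the proof in the cited reference.
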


 To formulate the next result, we introduce the commutative ringed space 
 $(\R^\infty, \calC^\infty_{\R^\infty})$. It is defined as the limit of the direct system of ringed spaces 
 $\big( (\R^n , \calC^\infty_{\R^n}) , \iota_{nm} \big)_{n,m\in \N, \, n\leq m} $, 
 where $\iota_{nm} :\R^n \hookrightarrow \R^m$ is the embedding given by
 \[ 
  \iota_{nm} (v_1,\cdots , v_n) =(v_1,\cdots v_n,0,\cdots ,0).
 \]
 Note that for each open set $U\subset \R^\infty$ the section space 
 $\calC^\infty_{\R^\infty} (U)$ coincides with the inverse limit of the 
 projective system of nuclear Fr\'echet algebras 
 $\big( \calC^\infty_{\R^n} (U\cap \R^n) , \iota_{nm}^* \big)_{n,m\in \N, \, n\leq m} $.
 Hence the $\calC^\infty_{\R^\infty} (U)$  and in particular 
 $\calC^\infty_{\R^\infty} (\R^\infty)$ are  nuclear Fr\'echet algebras  by 
 \cite[Prop.~50.1]{TreTVSDK}.
\begin{proposition}
\label{ThmEmbAppendix} 
  For every differentiable stratified space $(X,\calC^\infty)$  there exists a proper 
  embedding $\varphi: (X,\calC^\infty) \hookrightarrow (\R^\infty, \calC^\infty_{\R^\infty})$.
\end{proposition}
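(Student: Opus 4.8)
The plan is to build the embedding by hand as a countable "partition of unity times singular charts" map, and then to verify in turn the four properties that make it a proper embedding of differentiable spaces: continuity into the colimit $\R^\infty$, injectivity, surjectivity of the comorphism on structure sheaves, and properness (which will upgrade injectivity to a homeomorphism onto a closed subspace). First I would use that $X$ is separable, locally compact, Hausdorff and locally modelled on euclidean space to choose a countable, locally finite open cover $(U_i)_{i\in\N}$ by \emph{relatively compact} domains of singular charts $\varphi_i = (\varphi_{i,1},\ldots,\varphi_{i,n_i}): U_i \to \R^{n_i}$ as in (DS1)--(DS3), with each $\overline{U_i}$ compact. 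Since $\calC^\infty$ is fine, pick a subordinate smooth partition of unity $(\chi_i)_{i\in\N}$ with $\supp\chi_i \subset U_i$, set $\psi_i := \chi_i\varphi_i \in \calC^\infty(X,\R^{n_i})$ (extended by $0$ off $U_i$), and define
\[
  \Phi := \big( \chi_i , \psi_i \big)_{i\in\N} : X \to \bigoplus_{i\in\N} \big( \R \times \R^{n_i} \big).
\]
By local finiteness each $x$ lies in only finitely many $U_i$, so $\Phi(x)$ has only finitely many nonzero entries; after reindexing the coordinate slots by $\N$ the target is identified with $(\R^\infty,\calC^\infty_{\R^\infty})$. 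Each coordinate of $\Phi$ lies in $\calC^\infty(X)$, so $\Phi$ is a morphism of differentiable spaces. For continuity into the colimit topology I would use local finiteness again: around any point a neighborhood $W$ meets only finitely many $U_i$, so $\Phi|_W$ factors through a fixed finite stage $\R^N \hookrightarrow \R^\infty$ on which it is smooth, and as the stage inclusions are topological embeddings $\Phi$ is continuous.

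Next I would settle injectivity and the structure-sheaf condition. If $\Phi(x)=\Phi(y)$, choose $i$ with $\chi_i(x)>0$; then $\chi_i(y)=\chi_i(x)>0$, both points lie in $U_i$, and $\chi_i(x)\varphi_i(x) = \psi_i(x) = \psi_i(y) = \chi_i(x)\varphi_i(y)$ forces $\varphi_i(x)=\varphi_i(y)$, so injectivity of the chart $\varphi_i$ gives $x=y$. For the comorphism, observe that on the open set $\{\chi_i>0\}$ one has $\varphi_{i,j} = \psi_{i,j}/\chi_i$, which is a smooth function of the $\R^\infty$-coordinate functions $\psi_{i,j}$ and $\chi_i$ (the latter nonvanishing there); since by (DS2) the $\varphi_{i,j}$ generate $\calC^\infty$ over $U_i$, every germ of $\calC^\infty_X$ on the cover $\{\chi_i>0\}$ is a pullback, so the induced map $\Phi^{-1}\calC^\infty_{\R^\infty} \to \calC^\infty_X$ is surjective.

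Finally I would prove properness and conclude. Given compact $K\subset \R^\infty$, it sits in some finite stage $\R^N$, so every $x\in\Phi^{-1}(K)$ has all coordinate blocks beyond index $N$ vanishing; as $\sum_i\chi_i=1$, some $\chi_i(x)>0$ with $i$ bounded by a constant $m=m(N)$, whence $\Phi^{-1}(K)\subset\bigcup_{i\le m}\overline{U_i}$, a compact set. Being closed in this compactum, $\Phi^{-1}(K)$ is itself compact, so $\Phi$ is proper. Because $\R^\infty$ is a Hausdorff compactly generated space, a proper continuous map into it is closed: one tests closedness of $\Phi(A)$ against compacta via $\Phi(A)\cap K = \Phi\big(A\cap\Phi^{-1}(K)\big)\cap K$, which is compact hence closed in $K$. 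An injective proper closed map is a homeomorphism onto a closed subset, and together with the surjectivity of the comorphism this exhibits $\Phi$ as the desired proper embedding $(X,\calC^\infty)\hookrightarrow(\R^\infty,\calC^\infty_{\R^\infty})$.

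The delicate points are all topological and stem from the fact that $\R^\infty$ is \emph{not} locally compact, so the routine "a proper map into a locally compact space is closed" is unavailable. I expect the crux to be organizing everything around the compact generation ($k_\omega$-property) of $\R^\infty$ and the observation that compacta lie inside finite stages, which is precisely what reduces both properness and closedness to finite-dimensional statements; the other point to nail down carefully is securing the countable locally finite cover by relatively compact charts, where the $\sigma$-compactness of $X$ is genuinely used.
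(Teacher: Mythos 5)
Your proof is correct, but it takes a genuinely different route from the paper's. The paper does not build the embedding from a partition of unity at all: it fixes a compact exhaustion $(K_k)_{k\in\N}$ of $X$ and invokes \cite[Lem.~1.3.17]{PflAGSSS} to produce an \emph{inductively embedding atlas}, i.e.\ singular charts $\varphi_k : K_{k+1}^\circ \to \R^{n_k}$ together with relatively compact opens $U_k\supset K_k$ satisfying the compatibility $\varphi_{k+1}|_{U_k} = \iota_{n_k n_{k+1}}\circ\varphi_k|_{U_k}$; these glue to a single map $\varphi:X\to\R^\infty$ which near every point literally \emph{is} a singular chart into a finite stage, so the embedding property is immediate, and properness is then arranged by appending one extra coordinate, a smooth exhaustion function $\psi$ with $\psi\geq k$ on $K_{k+1}\setminus K_k^\circ$, giving $(\varphi,\psi):X\to\R^\infty\times\R\cong\R^\infty$. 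Your Whitney-style map $\Phi=(\chi_i,\chi_i\varphi_i)_{i\in\N}$ trades that cited lemma for explicit work: the embedding property is no longer automatic and you must recover the charts via the division trick $\varphi_{i,j}=\psi_{i,j}/\chi_i$ on $\{\chi_i>0\}$, but in exchange properness comes for free from the $\chi_i$-coordinates combined with the fact that compacta of $\R^\infty$ lie in finite stages, so no auxiliary exhaustion function is needed. What each approach buys: the paper's argument is short because the genuinely technical step (constructing charts compatible across stages) is outsourced to the reference, while yours is self-contained modulo standard facts about $k_\omega$-spaces, and it is the only one of the two that addresses head-on the closedness and properness subtleties caused by $\R^\infty$ not being locally compact, which the paper's ``proper by construction'' passes over in silence. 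Note finally that both proofs consume the same point-set hypotheses at the same spot: your countable locally finite cover by relatively compact chart domains uses exactly the $\sigma$-compactness/paracompactness that the paper uses for its compact exhaustion, and both need fineness of $\calC^\infty_X$ for the cutoff functions.
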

\begin{proof}
  Since $X$ is separable and locally compact there exists a compact exhaustion that is a family
  $(K_k)_{k\in \N}$ of compact subsets $K_k\subset X$ such that $K_k\subset K_{k+1}^\circ$ for all
  $k\in \N$ and such that $\bigcup_{k\in \N}K_k = X$. By \cite[Lem.~1.3.17]{PflAGSSS} there
  then exists an inductively embedding atlas that is a family $(\varphi_k)_{k\in \N}$ of
  singular charts $\varphi_k : K_{k+1}^\circ \to \R^{n_k}$ together with a
  family $(U_k)_{k\in \N}$ of relatively compact open subsets $U_k \subset\subset K_{k+1}^\circ$
  such that $K_k \subset U_k$ and
  $\varphi_{k+1}|_{U_k}  = \iota_{n_k n_{k+1}}\circ  \varphi_k|_{U_k}$
  for all $k\in \N$. Now define $\varphi : X \to \R^\infty $ by
  $\varphi (x) = \varphi_k (x)$ whenever $x \in U_k$. 
  Then $\varphi$ is well defined and an embedding  by construction.
  By a straightforward partition of unity argument one constructs a smooth function
  $\psi:X \to \R$ such that $\psi(x)\geq k $ for all $x \in K_{k+1} \setminus K_k^\circ$.
  The embedding $(\varphi,\psi):X \to \R^\infty \times \R \cong \R^\infty$
  then is proper. 
\end{proof}

\begin{corollary}
  \label{Cor:SmoothMetricAppendix}
  $(X,\calC^\infty)$ be a differential stratified space. Then there exists a
  complete metric $d:X\times X \to \R$ such that $d^2 \in \calC^\infty (X \times X)$. 
\end{corollary}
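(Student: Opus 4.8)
The plan is to build the metric by pulling back the Euclidean distance under the proper embedding furnished by Proposition \ref{ThmEmbAppendix}. More precisely, I would invoke that proposition together with its proof to obtain a proper embedding $\Phi = (\varphi,\psi) : (X,\calC^\infty) \hookrightarrow (\R^\infty,\calC^\infty_{\R^\infty}) \times \R \cong (\R^\infty,\calC^\infty_{\R^\infty})$, where $\psi : X \to \R$ is the exhaustion function constructed there satisfying $\psi(x) \geq k$ for $x \in K_{k+1}\setminus K_k^\circ$; in particular, choosing $k>R$ one sees that $\psi^{-1}\big((-\infty,R]\big)$ is a closed subset of $K_k$ and hence compact for every $R$. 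I would then set
\[
  d(x,y) := \big\| \Phi(x) - \Phi(y)\big\|
  = \Big( \|\varphi(x)-\varphi(y)\|^2 + \big(\psi(x)-\psi(y)\big)^2\Big)^{1/2},
  \quad x,y \in X,
\]
where $\|\cdot\|$ is the Euclidean norm, which is well defined since $\Phi(x),\Phi(y)$ lie in a common finite-dimensional $\R^n$. That $d$ is a metric is immediate from injectivity of $\Phi$ and the triangle inequality in $\R^n$.

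Smoothness of $d^2$ is the routine part. The function $\mathfrak{d}(v,w) := \|v-w\|^2$ lies in $\calC^\infty_{\R^\infty}(\R^\infty\times\R^\infty)$: on each finite stage $\R^n\times\R^n$ it restricts to the polynomial $\sum_{i=1}^n (v_i-w_i)^2$, and these are compatible under the inclusions $\iota_{nm}$ since the adjoined coordinates vanish on both arguments, so $\mathfrak{d}$ defines an element of the inverse limit describing $\calC^\infty_{\R^\infty}$. Since $\Phi\times\Phi$ is a morphism of ringed spaces, $d^2 = (\Phi\times\Phi)^*\mathfrak{d}$ pulls back to an element of $\calC^\infty(X\times X)$.

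The hard part will be completeness, together with checking that $d$ induces the topology of $X$; here the exhaustion function $\psi$ is essential. The obstacle to overcome is that a Euclidean--Cauchy sequence in $\R^\infty$ need not converge in $\R^\infty$, since it may escape to infinity in dimension while remaining bounded, so the naive pullback of the Euclidean metric under an \emph{arbitrary} proper embedding can fail to be complete. The point of including $\psi$ is that for a $d$-Cauchy sequence $(x_j)$ one has $|\psi(x_i)-\psi(x_j)| \leq d(x_i,x_j)$, so $(\psi(x_j))$ is Cauchy, hence bounded by some $R$, forcing all $x_j$ into the compact set $L := \psi^{-1}\big((-\infty,R]\big)$. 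On $L$ the map $\Phi$ is a homeomorphism onto a compact subset of a single $\R^n$, so $(L,d|_L)$ is isometric to a compact metric space and is therefore complete; thus $(x_j)$ converges in $L \subseteq X$. The same localization onto the compact sets $\psi^{-1}\big((-\infty,R]\big)$ shows that the $d$-topology agrees with the topology of $X$: one inclusion is clear because $d$ is $X$-continuous, and for the other, given an $X$-open $U \ni x_0$ one chooses $R > \psi(x_0)$ and a radius $\rho < R - \psi(x_0)$, so that every $y$ with $d(x_0,y) < \rho$ lies in the compact set $L$, on which $d$ and the ambient topology coincide. I expect the completeness step to be the only genuinely delicate point, the remainder being formal once Proposition \ref{ThmEmbAppendix} is in hand.
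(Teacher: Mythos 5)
Your proposal is correct and follows essentially the same route as the paper: pull back the Euclidean metric on $\R^\infty$ along the proper embedding furnished by Proposition \ref{ThmEmbAppendix}, observe that $d^2$ is smooth by construction, and deduce completeness from properness. The paper disposes of the last step in a single sentence (``$d$ is complete since the embedding is proper and each $d_{\R^n}$ is complete''), whereas you spell out exactly why this works — namely that the exhaustion function $\psi$ forces every $d$-bounded set into a compact $\psi^{-1}\big((-\infty,R]\big)$ — which is a worthwhile elaboration, since properness with respect to the colimit topology of $\R^\infty$ alone would not suffice for completeness of the pullback metric.
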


\begin{proof}
  The euclidean inner product  $\langle -,- \rangle_{\R^n}$ extends in a unique way to an
  inner product $\langle -,- \rangle_{\R^\infty}$ on $\R^\infty$ such that
  $\langle j_n (x),j_n(y) \rangle_{\R^\infty}  = \langle x,y \rangle_{\R^n}$ for all
  $n\in \N $ and $x,y\in \R^n$,
  where $j_n: \R^n \hookrightarrow \R^\infty$ is the canonical embedding 
  $(x_1,\ldots ,x_n)  \mapsto (x_1,\ldots ,x_n, 0, \ldots,0,\ldots )$.
  The associated metric  $d_{\R^\infty}: \R^\infty \times \R^\infty \to \R $, $(x,y)
  \mapsto \sqrt{\langle x-y,x-y \rangle_{\R^\infty}}$
  then is related to the euclidean metric $d_{\R^n}$ by 
  $d_{\R^\infty} \big( j_n (x),j_n(y) \big)  = d_{\R^n}(x,y)$ for $x,y\in \R^n$.
  Now choose a proper embedding $X \hookrightarrow  \R^\infty$ and denote the restriction of
  $d_{\R^\infty}$ to $X$ by $d$. By construction, $d^2$ then is smooth. Moreover,
  $d$ is a complete metric since the embedding is proper and each of the metrics $d_{\R^n}$
  is complete. 
\end{proof}



%
%
\section{The cyclic homology  of  bornological algebras} 
\label{AppCyHomBornAlg}
\subsection{Bornological vector spaces and tensor products}
\label{AppBornVecSp}
We recall some basic notions from the theory of bornological vector spaces 
and their tensor products. For details we refer to \cite{HogBFA} and
\cite[Chap.~1]{MeyLACH}.
\begin{definition}[cf.~{\cite[Chap.~I, 1:1 Def.]{HogBFA}}]
  By a \emph{bornology} on a  set $X$ one understands a set $\scrB$ of subset 
  of $X$ such that the following conditions hold true:
  \begin{enumerate}[(BS)]
  \item $\scrB$ is a covering of $X$, $\scrB$ is hereditary under inclusions,
        and $\scrB$ is stable under finite unions.
   \end{enumerate}

  A map $f: X \to Y$ from a set $X$ with bornology $\scrB$ to a set $Y$ carrying
  a bornology $\scrD$ is called \emph{bounded}, if the following is satisfied:
  \begin{enumerate}[(BM)]
  \item 
    The map $f$ preserves the bornologies, i.e.~$f(B) \in \scrD$ for all 
    $B \in \scrB$.    
  \end{enumerate}
  
  If $V$ is a vector space over $\Bbbk = \R$ or $\Bbbk = \C$, 
  a bornology $\scrB$ is called a \emph{convex vector bornology} on $V$, if 
  the following additional properties hold true:
  \begin{enumerate}[(BV)]
  \item The bornology $\scrB$ is stable under addition,
        under scalar multiplication, under forming balanced hulls, and
        finally under forming convex hulls.
  \end{enumerate}
 
  A set together with a bornology is called a \emph{bornological set}, a 
  vector space with a convex vector bornology a \emph{bornological vector space}. 
  For clarity, we sometimes denote a bornological vector space as a pair $(V,\scrB)$,
  where $V$ is the underlying vector space, and $\scrB$ the corresponding convex vector 
  bornology.  

  A bornological vector space $(V,\scrB)$ is called \emph{separated},
  if the condition (S) below is satisfied. 
  If in addition condition (C) holds true as well,  $(V,\scrB)$ is called
  \emph{complete}.  
  \begin{enumerate}
  \item[(S)] The subspace $\{ 0 \}$ is the only bounded subvector space of $V$.
  \item[(C)] Every bounded set is contained in a completant bounded disk, 
             where a disk $D \subset V$ is called \emph{completant}, if the space 
             $V_D$ spanned by $D$ and semi-normed by the gauge of $D$ is a Banach space.
  \end{enumerate}
\end{definition}

As for the category of topological vector spaces there exist functors of separation and 
completion within the category of bornological vector spaces. 

\begin{example}
  Let $V$ be a locally convex topological vector space. The \emph{von Neumann bornology}
  on $V$ consists of all (von Neumann) bounded subsets of $V$, ie.~of all 
  $B\subset V$ which are absorbed by every $0$-neighborhood. One immediately checks that
  the von Neumann bornology is a convex vector bornology on $V$. We sometimes denote 
  this bornology by $\scrB_\textup{vN}$.
\end{example}

Similarly to the topological case, the 
bornological tensor product is defined by a universal property. 

\begin{definition}
\label{DefProjBorTensorProd}
The (\emph{projective}) \emph{bornological tensor product} of two bornological spaces 
vector spaces $\big( V_1 , \calB_1 \big)$ and $\big( V_2 , \calB_2 \big)$ is defined as 
the up to isomorphism uniquely defined bornological vector space 
$\big( V_1 \otimes V_2 , \scrB_1 \otimes \scrB_2 \big)$ together with a 
bounded map $V_1 \times V_2 \to V_1 \otimes V_2$  such that  for each 
bornological vector space $( W , \scrD ) $ and bounded bilinear map 
$\lambda : V_1 \times V_2 \to W$ there is a unique bounded map 
$\overline{ \lambda}:V_1 \otimes V_2 \to W$ making the diagram 
\begin{displaymath}
  \xymatrix{
  V_1 \otimes V_2 \ar[r]^\lambda\ar[d] & W \\ 
  V_1\otimes V_2 \ar[ur]_{\overline{\lambda}}
}
\end{displaymath}
commute. The completion of the bornological tensor product will be denoted 
by $V_1 \hat{\otimes} V_2$.  
\end{definition}

\begin{remark}
  \begin{enumerate}
  \item Note that the underlying vector space of the bornological tensor product coincides with 
        the algebraic tensor product of the vector spaces $V_1 \otimes V_2$.
  \item
     Since tensor products of topological vector spaces are also needed in this paper, let us briefly recall that 
     the  complete projective (resp.~inductive) topological tensor product $\hat{\otimes}_\pi$ (resp.~$\hat{\otimes}_\iota$) 
     can be defined as the (up to isomorphism) unique bifunctor on the category of complete locally convex  
     topological vector spaces which is universal with respect to jointly (resp.~separately) 
     continuous bilinear maps with values in complete locally convex topological vector spaces.
     For Fr\'echet spaces, the complete projective and complete inductive tensor products coincide, since
     separately continuous bilinear maps on Fr\'echet spaces are automatically jointly continuous. 
     See \cite{GroPTTEN} and \cite{MeyLACH} for details. 
  \end{enumerate}
\end{remark}
\subsection{The Hochschild chain complex}
In this section we recall  the construction of the cyclic bicomplex associated 
to a complete bornological algebra $A$ which not necessarily is assumed to be 
unital. 
To this end observe first that the space of Hochschild $k$-chains 
$C_k (A) :=  A^{\hat{\otimes} (k+1)} $ is defined using the complete projective bornological tensor 
product $\hat{\otimes}$. Together with the face maps 
\[
  b_{k,i} : C_k (A) \to C_{k-1} (A), \: a_0 \otimes \ldots \otimes a_k \mapsto  
  \begin{cases}
    a_0 \otimes \ldots \otimes a_ia_{i+1} \otimes \ldots \otimes a_k, & \text{if $0\leq i<k$},\\
    a_ka_0 \otimes \ldots \otimes a_{k-1}, & \text{if $i =k$},
  \end{cases}
\]
and the cyclic operators 
\[
  t_k : C_k (A) \to C_k (A), \: a_0 \otimes \ldots \otimes a_k \mapsto  (-1)^k \,
  a_k \otimes a_0 \otimes \ldots \otimes a_{k-1}
\] 
the graded linear space of Hochschild chains $C_\bullet ( A ) := \big( C_k (A) \big)_{k\in \N}$ then
becomes a pre-cyclic object (see for example \cite{LodCH} for the precise commutation relations
of the face and cyclic operators).

From the pre-cyclic structure one obtains two boundary maps, namely the one of the Bar complex 
$b' :  C_k (A) \to C_{k-1} (A)$, $b' := := \sum_{i=0}^{k-1} (-1)^i b_i$ 
and the Hochschild boundary $b :  C_k (A) \to C_{k-1} (A)$, $b := b' + (-1)^k b_k$. 
The commutation relations for the $b_i$ immediately entail $b^2 = (b')^2 = 0$.
This gives rise to the following two-column bicomplex.
\begin{displaymath}
  \xymatrix{ 
  \vdots \ar[d] & \vdots \ar[d] \\
  C_2 (A) \ar[d]_b & \ar[l]_{1-t} C_2(A) \ar[d]_{-b'}\\
  C_1(A)  \ar[d]_b & \ar[l]_{1-t} C_1(A) \ar[d]_{-b'} \\
  C_0(A) & \ar[l]_{1-t} C_0(A)
  }
\end{displaymath}
We will denote this two-column bicomplex by $C_{\bullet,\bullet} (A)^{\{2\}}$. By definition, the homology of
its total complex is the Hochschild homology 
\begin{equation}
  \label{eq:DefHochschildHom}
  HH_\bullet (A ) := H_\bullet \big(\Tot_\bullet\big( C_{\bullet,\bullet}(A)^{\{2\}}\big)\big) \: .
\end{equation}
\subsection{A twisted version of the theorem by Hochschild--Kostant--Rosenberg and Connes}
The classical theorem by Hochschild--Kostant--Rosenberg identifies the Hochschild homology of 
the algebra of regular functions on a smooth affine variety with the graded module of 
K\"ahler forms of that algebra \cite{HocKosRosDFRAA}. In his seminal paper \cite{ConNDG}, Connes proved 
that for compact smooth manifolds an analogous result holds true that is the (continuous) Hochschild
homology of the algebra of smooth functions on a manifold coincides naturally with the 
complex of differential forms over the manifold (see \cite{PflCHHCG} for the non-compact case of that result).
Here we show a twisted version of this theorem. That result appears to be folklore, cf.~also \cite{BroDavNis}. 

Assume that $h$ is an  orthogonal transformation acting on some euclidean space  $\R^d$.
Let $V$ be an open ball around the origin of $\R^d$. 
Then we denote by $\ltwist{\calC^\infty (V)}{h}$ the space $\calC^\infty (V)$  with the $h$-twisted 
$\calC^\infty (V)$-bimodule structure
\begin{displaymath}
  \calC^\infty (V) \hatotimes \, \ltwist{\calC^\infty (V)}{h} \hatotimes  \calC^\infty (V) \to 
  \ltwist{\calC^\infty(V)}{h} , \:
  f \otimes a \otimes f' \mapsto \Big( V \ni v \mapsto f(h v) \, a(v) f'(v) \in \R \Big) \ .
\end{displaymath}
In the following we compute the \emph{twisted} Hochschild homology $H_\bullet \big(\calC^\infty (V), \ltwist{\calC^\infty(V)}{h}\big)$. 
Denote by $\langle -,-\rangle$ the euclidean inner product on $\R^d$. By the orthogonality assumption  
$\langle -,-\rangle$ is $G$-invariant, hence $V$ is so, too. 
Recall that for every topological projective resolution $R_\bullet \to \calC^\infty (V)$ of $\calC^\infty (V)$
as a $\calC^\infty (V)$-bimodule the Hochschild homology groups $H_k (\calC^\infty (V), \ltwist{\calC^\infty(V)}{h}$
are naturally isomorphic to the homology groups $H_k \big( R_\bullet , \ltwist{\calC^\infty(V)}{h} \big)$,
see \cite{HelHBTA}.
Recall further that a topological projective resolution of the $\calC^\infty (V)$-bimodule  
$\calC^\infty (V)$ is given by the Connes-Koszul resolution \cite[p.~127ff]{ConNDG}
\begin{equation}
  \label{eq:ResSmoothFunc}
  \Gamma^\infty (V \times V , E_d ) \overset{i_Y}{\longrightarrow} \ldots \overset{i_Y}{\longrightarrow}
  \Gamma^\infty (V \times V , E_1 ) \overset{i_Y}{\longrightarrow} \calC^\infty (V \times V) 
  \longrightarrow \calC^\infty (V) \longrightarrow 0,
\end{equation} 
where $E_k$ is the pull-back bundle $\prtwo^* \big(\Lambda^k T^*\R^d\big) $ along the 
projection $\prtwo : \R^d \times \R^d \to \R^d$, $(v,w) \mapsto w$, and $i_Y$ denotes contraction with the vector 
field $Y : V \times V \rightarrow \prtwo^* (T\R^d)$, $(v,w) \mapsto w-v$. By tensoring the Connes-Koszul 
resolution with $\ltwist{\calC^\infty(V)}{h}$ one obtains the chain complex
\begin{equation}
  \label{eq:TwistedChainCpl}
       \Omega^d(V)  \overset{i_{Y_h}}{\longrightarrow} \ldots 
       \overset{i_{Y_h}}{\longrightarrow}  \Omega^1(V)
       \overset{i_{Y_h}}{\longrightarrow}  \calC^\infty (V) \longrightarrow 0 ,
\end{equation} 
where the vector field $Y_h : V \rightarrow T\R^d$ is given by $Y_h(v)= v -hv$. Denote by 
$V^h$ the fixed point set of $h$ in $V$, let $\iota_h : V^h \hookrightarrow V$ be the canonical embedding, 
and $\pi_h : V \rightarrow V^h$ the restriction of the orthogonal projection onto the fixed point space 
$(\R^d)^h$. One obtains the following commutative diagram.
\begin{equation}
  \label{eq:DiagQuismsTwisted}
  \vcenter{\vbox{
  \xymatrix{ \Omega^d(V)  \ar[r]^{\hspace{2em}i_{Y_h}} \ar[d]_{\iota_h^*}  & \ldots \ar[r]^{\hspace{-1em}i_{Y_h}}   &  
       \Omega^1(V) \ar[r]^{i_{Y_h}} \ar[d]_{\iota_h^*} & \calC^\infty (V) \ar[d]_{\iota_h^*}  \\
       \Omega^d(V^h) \ar[r]^{\hspace{2em}0} \ar[d]_{\pi_h^*} & \ldots \ar[r]^{\hspace{-1em}0} &  \Omega^1(V^h) \ar[r]^{0}\ar[d]_{\pi_h^*} & 
       \calC^\infty (V^h) \ar[d]_{\pi_h^*}\\
       \Omega^d(V) \ar[r]^{\hspace{2em}i_{Y_h}} & \ldots \ar[r]^{\hspace{-1em}i_{Y_h}}   &  
       \Omega^1(V) \ar[r]^{i_{Y_h}}& \calC^\infty (V)  
  }}}
\end{equation}
\begin{proposition}\label{prop:homotopy}
  The chain maps $\iota_h^*$ and $\pi_h^*$ are quasi-isomorphisms. 
\end{proposition}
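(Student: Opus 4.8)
The goal is to show that the vertical chain maps $\iota_h^*$ and $\pi_h^*$ in diagram \eqref{eq:DiagQuismsTwisted} are quasi-isomorphisms, which reduces the twisted Hochschild homology $H_\bullet\big(\calC^\infty(V),\ltwist{\calC^\infty(V)}{h}\big)$ to the homology of the middle row $\big(\Omega^\bullet(V^h),0\big)$, i.e.\ to $\Omega^\bullet(V^h)$. The first thing I would do is record the two geometric facts that make the diagram commute: since $h$ is orthogonal, $\R^d$ splits $h$-equivariantly as $(\R^d)^h\oplus W$ with $W$ the orthogonal complement, on which $\id-h$ is invertible; and $\pi_h\circ\iota_h=\id_{V^h}$ so that $\iota_h^*\circ\pi_h^*=\id$ at the level of forms. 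Commutativity of the top two squares is then checked directly, using that $\iota_h^* Y_h$ is the zero vector field on $V^h$ (because $Y_h(v)=v-hv$ vanishes exactly on the fixed locus), which is precisely why the middle row carries the zero differential; commutativity of the bottom squares is immediate from naturality of $i_{Y_h}$ under $\pi_h$.

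The heart of the argument is to produce, on the complex $\big(\Omega^\bullet(V),i_{Y_h}\big)$, a contracting homotopy onto the subcomplex supported on the fixed-point directions. First I would decompose $\Omega^\bullet(V)$ according to the splitting $\R^d=(\R^d)^h\oplus W$: writing coordinates $(u,y)$ with $u$ tangent to the fixed space and $y$ to $W$, every form decomposes into a sum of terms $\alpha\wedge\beta$ with $\alpha$ involving only the $du_i$ and $\beta$ only the $dy_j$. Because $Y_h$ has no component in the $u$-directions, $i_{Y_h}$ acts only on the $\beta$-factor, namely as contraction with the vector field $y\mapsto y-hy$ on $W$. Since $\id-h$ is invertible on $W$, this restricted vector field is a nowhere-degenerate Euler-type field on $W$ (it is linear with invertible linear part), so the Koszul-type complex $\big(\Omega^\bullet(W\text{-part}),i_{(\id-h)y}\big)$ is acyclic in positive degree and has homology $\calC^\infty(V^h)$ in degree zero; this is exactly the content of the parametrized/Euler-vector-field Koszul acyclicity already used in the paper (cf.\ the Connes--Koszul resolution and Proposition~\ref{prop:manifold-one-isotropy-type}). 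I would build the explicit homotopy operator by integrating along the flow of $(\id-h)y$ on $W$, or equivalently by the standard Koszul contraction $k=\tfrac{1}{\text{(scaling)}}\,dy\wedge(-)$ adapted to the invertible linear field, and then tensor it with the identity on the fixed-direction factor $\Omega^\bullet(V^h)$.

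With the homotopy in hand the proof concludes formally: the composite $\pi_h^*\circ\iota_h^*$ is chain-homotopic to the identity on $\big(\Omega^\bullet(V),i_{Y_h}\big)$ via this operator, while $\iota_h^*\circ\pi_h^*=\id$ on the middle row exactly, so $\iota_h^*$ and $\pi_h^*$ are mutually inverse in homology and hence both quasi-isomorphisms. I expect the main obstacle to be the analytic bookkeeping that the homotopy operator is \emph{bounded} (or continuous) for the relevant Fr\'echet/bornological structures and that the parametric Taylor/Hadamard expansions used to invert $\id-h$ on $W$ and to split off the fixed directions produce genuinely smooth coefficients down to the fixed locus; this is where the orthogonality of $h$ and the invertibility of $\id-h$ on $W$ are essential, and where I would lean on the topological projectivity of the Connes--Koszul resolution rather than attempting to verify exactness term by term.
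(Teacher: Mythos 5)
Your proposal is correct and follows essentially the same route as the paper: the same orthogonal splitting $\R^d=(\R^d)^h\oplus W$, the observation that $i_{Y_h}$ only acts in the $W$-directions, a homotopy operator built by integrating pullbacks along the flow of $Y_h$ (the paper writes $S\omega=\sum dv^{I}\wedge\int_{-\infty}^0\varphi_t^*(d_W\omega_I)\,dt$ and verifies $Si_{Y_h}+i_{Y_h}S=\id-\pi_h^*\iota_h^*$ via Cartan's formula, using the explicit rotation-block form of $h$ to get convergence), and the formal conclusion from $\iota_h^*\pi_h^*=\id$. One small caution: your parenthetical appeal to the parametrized Koszul acyclicity would be circular as a citation, since in the paper that result is deduced from this very proposition, but your self-contained flow construction makes this harmless.
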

\begin{proof}
  Since the restriction of the vector field $Y_h$ to $V^h$ vanishes, the diagram \eqref{eq:DiagQuismsTwisted} 
  commutes, and the $\iota_h^*$ and $\pi_h^*$ are chain maps indeed. 
  Let $W$ be the orthogonal complement of $(\R^d)^h$ in $\R^d$, $m = \dim W$, and $\pi_W := \id_V -\pi_h$ 
  the orthogonal projection onto $W$.
  Since the $h$-action on   $W$ is orthogonal and has as only fixed point  the origin, there exists an orthonormal basis 
  $w_1, \ldots ,w_m$ of $W$, a natural $l\leq \frac m2$, and 
  $\theta_1, \ldots , \theta_l \in (-\pi,\pi) \setminus \{ 0 \}$ 
  such that the following holds: 
  \[
    hw_k =
    \begin{cases}
      \cos \theta_i \, w_{2i-1} + \sin \theta_i \, w_{2i} & \text{if $ k = 2i -1 $ with $i \leq l$}, \\
      - \sin \theta_i \, w_{2i-1} + \cos \theta_i \, w_{2i} & \text{if $k=2i$ with $i \leq l$}, \\
      - w_k & \text{if $2l < k \leq m$}.
    \end{cases}
  \]
  Denote by $\varphi_t : \R^d \to \R^d$, $t \in \R$ the flow of the complete vector field $Y_h$ or in other words 
  the solution of the initial value problem $\frac{d}{dt} \varphi_t  = ( \id_V - h )  \varphi_t $,
  $\varphi_0  = \id_V$. Then $\varphi_t v = v$ for all $v \in (\R^d)^h$, and 
  \begin{equation}
    \label{eq:FundSolutions}
    \varphi_t (w_k) =
    \begin{cases}
      e^{(1-\cos \theta_i)t}\big(  \cos ( t \sin \theta_i ) \, w_{2i-1} + \sin ( t \sin \theta_i ) \, w_{2i} \big),
      & \text{if $k=2i-1$ with $i \leq l$}, \\ 
      e^{(1-\cos \theta_i)t}\big( - \sin ( t \sin \theta_i ) \, w_{2i-1} + \cos ( t \sin \theta_i ) \, w_{2i} \big),
      & \text{if $ k = 2i $ with $i \leq l$}, \\
      e^{2t} w_k, & \text{if $2l < k \leq m$}.
    \end{cases}
  \end{equation}
  Now let $v_1,\ldots , v_n$ be a basis of $V^h$, and denote by $v^1,\ldots,v^n,w^1, \ldots ,w^m$ the basis of $V'$
  dual to $v_1,\ldots , v_n, w_1, \ldots ,w_m$. Then every $k$-form $\omega$ on $V$ is the sum 
  of monomials $dv^{i_1} \wedge \ldots \wedge dv^{i_l} \wedge \omega_{i_1 , \ldots , i_l}$, where 
  $1 \leq i_1 < \ldots < i_l \leq n$ and 
  $\omega_{i_i , \ldots , i_l} = i_{v_{i_1} \wedge \ldots \wedge v_{i_l}} \omega \in \Gamma^\infty \big(\pi_W^* \Lambda^{k-l} T^*W\big)$. 
  Let $d_W$ be the restriction of the exterior differential to $\Gamma^\infty \big(\pi_W^* \Lambda^\bullet T^*W\big)$ and
  define $S: \Omega^k (V) \to \Omega^{k+1}(V) $ by its action on the monomials:
  \[
    S\omega = \sum_{l=0}^k \sum_{1 \leq i_1 < \ldots < i_l \leq n} dv^{i_1} \wedge \ldots \wedge dv^{i_l} \wedge 
    \int_{-\infty}^0 \varphi_t^* (d_W\omega_{i_1 , \ldots , i_l}) \, dt . 
  \]
  Note that  the integral is well-defined since 
  $\varphi_t (  V ) \subset V$ for all $t \leq 0$ by Eq.~\eqref{eq:FundSolutions}.   
  Observe that  ${\varphi_t}_* Y_h = Y_h$ by construction of $\varphi_t$ and 
  that  the fibers of the projection 
  $\pi_h$ are left invariant by $\varphi_t$. Hence one concludes by Cartan's magic formula
  \begin{equation}
  \label{eq:HomProp}
  \begin{split}  
     (S i_{Y_h} +  i_{Y_h} S) \omega = &\,  \sum_{l=0}^k\sum_{1 \leq i_1 < \ldots < i_l \leq n} dv^{i_1} \wedge \ldots \wedge dv^{i_l} \wedge 
     \int_{-\infty}^0 ( d_W i_{Y_h}  +  i_{Y_h} d_W) \varphi_t^* \omega_{i_1 , \ldots , i_l} \, dt =\\
     = &\,  \sum_{l=0}^k\sum_{1 \leq i_1 < \ldots < i_l \leq n} dv^{i_1} \wedge \ldots \wedge dv^{i_l} \wedge 
     \int_{-\infty}^0 \calL_{Y_h}  \varphi_t^* \omega_{i_1 , \ldots , i_l} \, dt = \\
     = &\,  \sum_{l=0}^k\sum_{1 \leq i_1 < \ldots < i_l \leq n} dv^{i_1} \wedge \ldots \wedge dv^{i_l} \wedge 
     \int_{-\infty}^0 \frac{d}{dt} \varphi_t^* \omega_{i_1 , \ldots , i_l} \, dt = \\
     = &\, \sum_{l=0}^{k-1}\sum_{1 \leq i_1 < \ldots < i_l \leq n} dv^{i_1} \wedge \ldots \wedge dv^{i_l} \wedge \omega_{i_1 , \ldots , i_l} \: +\\
     &  + \sum_{1 \leq i_1 < \ldots < i_k \leq n} dv^{i_1} \wedge \ldots \wedge dv^{i_k} \wedge 
        (\omega_{i_1 , \ldots , i_k} -  \pi_h^* \iota_h^* \omega_{i_1 , \ldots , i_k}   ) =\\
     = & \, \omega - \pi_h^* \iota_h^* \omega .  
  \end{split}
  \end{equation}
  To verify the second last equality observe that the $\omega_{i_1 , \ldots , i_k}$ are smooth functions which satisfy
  $$
     \lim_{t \rightarrow -\infty} \varphi_t^* \omega_{i_1 , \ldots , i_k} =  \pi_h^* \iota_h^* \omega_{i_1 , \ldots , i_k} .
  $$
  Eq.~\eqref{eq:HomProp} proves the claim. 
\end{proof}

The proposition immediately entails the following twisted version of the theorem by Hochschild--Kostant--Rosenberg and Connes. 
\begin{theorem}\label{thm:twisted-hochschild-homology} 
   Let $h:\R^d \to\R^d$ be an orthogonal linear transformation and $V \subset \R^d$ an open ball around the origin.
   Then the  Hochschild homology $H_\bullet \big(\calC^\infty (V), \ltwist{\calC^\infty(V)}{h}\big)$
   is naturally isomorphic to $\Omega^\bullet (V^h)$, where $V^h$ is the fixed point manifold of $h$ in $V$. 
   A quasi-isomorphism inducing this identification is given by 
   \[
    \ltwist{\calC^\infty(V)}{h} \hatotimes \, C_k(\calC^\infty (V) ) \to 
    \Omega^k (V^h), \: f_0 \otimes f_1 \otimes \ldots \otimes f_k 
    \mapsto {f_0}_{|V^h} \, d{f_1}_{|V^h} \wedge \ldots \wedge d {f_k}_{|V^h}  \ . 
   \]
\end{theorem}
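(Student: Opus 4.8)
The plan is to deduce the theorem essentially at once from Proposition \ref{prop:homotopy}, the only genuinely new content being the verification of the explicit formula. First I would recall (as is done just before the statement, citing \cite{HelHBTA}) that the twisted Hochschild homology $H_\bullet\big(\calC^\infty(V),\ltwist{\calC^\infty(V)}{h}\big)$ may be computed from \emph{any} topological projective bimodule resolution of $\calC^\infty(V)$, independently of the choice. Since tensoring the Connes--Koszul resolution \eqref{eq:ResSmoothFunc} with $\ltwist{\calC^\infty(V)}{h}$ yields exactly the twisted Koszul-type complex \eqref{eq:TwistedChainCpl}, the groups $H_\bullet\big(\calC^\infty(V),\ltwist{\calC^\infty(V)}{h}\big)$ are the homology of \eqref{eq:TwistedChainCpl}.

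Next I would invoke Proposition \ref{prop:homotopy} directly. It supplies the chain map $\iota_h^*$ from the twisted complex \eqref{eq:TwistedChainCpl} to the complex with terms $\Omega^k(V^h)$ and \emph{zero} differential occurring in the middle row of Diagram \eqref{eq:DiagQuismsTwisted}, and asserts that $\iota_h^*$ is a quasi-isomorphism. As the target carries the zero differential, its homology in degree $k$ is simply $\Omega^k(V^h)$. Hence $H_k\big(\calC^\infty(V),\ltwist{\calC^\infty(V)}{h}\big)\cong\Omega^k(V^h)$, naturally in $h$ because $\iota_h^*$ is. This already establishes the abstract identification.

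It remains to match the induced isomorphism with the displayed formula. Here I would use the comparison between the Bar resolution and the Connes--Koszul resolution recalled in the proof of Proposition \ref{prop:chain-map-horizontal-relative-forms}, namely the maps $\Psi_{k,V}$, which after tensoring with $\ltwist{\calC^\infty(V)}{h}$ exhibit the twisted Hochschild complex $C_\bullet\big(\calC^\infty(V),\ltwist{\calC^\infty(V)}{h}\big)$ and the complex \eqref{eq:TwistedChainCpl} as computing the same homology. The explicit chain-level map in the opposite direction is the antisymmetrization (Hochschild--Kostant--Rosenberg) map
\[
  \varepsilon:\; f_0\otimes f_1\otimes\ldots\otimes f_k\;\mapsto\; f_0\, df_1\wedge\ldots\wedge df_k\;\in\;\Omega^k(V),
\]
and composing $\varepsilon$ with the restriction $\iota_h^*$ gives precisely $f_0\otimes\ldots\otimes f_k\mapsto {f_0}_{|V^h}\, d{f_1}_{|V^h}\wedge\ldots\wedge d{f_k}_{|V^h}$. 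The one point that genuinely needs checking---and which I expect to be the main technical obstacle---is that $\varepsilon$ is a chain map intertwining the \emph{twisted} Hochschild boundary with the Koszul contraction $i_{Y_h}$: the $h$-twist in the last face operator, $b_k^h(a_0\otimes\ldots\otimes a_k)=(h\cdot a_k)\,a_0\otimes\ldots\otimes a_{k-1}$, must be reproduced by the $h$-dependence of the vector field $Y_h(v)=v-hv$. This is a direct but slightly delicate Leibniz-rule computation; once it is in place, the chain homotopy furnished by Proposition \ref{prop:homotopy} upgrades $\varepsilon$ to the asserted quasi-isomorphism and completes the proof.
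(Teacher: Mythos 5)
Your reduction of the abstract isomorphism $H_\bullet\big(\calC^\infty(V),\ltwist{\calC^\infty(V)}{h}\big)\cong\Omega^\bullet(V^h)$ is correct and coincides with the paper's own argument (invariance of Hochschild homology under the choice of topologically projective bimodule resolution, the Connes--Koszul resolution, tensoring to obtain \eqref{eq:TwistedChainCpl}, then Proposition \ref{prop:homotopy}). The gap is in your last step, and it is genuine: the claim you propose to verify --- that the antisymmetrization map $\varepsilon$ intertwines the twisted Hochschild boundary with the Koszul contraction $i_{Y_h}$ --- is false, so the ``direct but slightly delicate Leibniz-rule computation'' cannot be carried out. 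Already for $k=1$ one has
\[
  \varepsilon\big(b(f_0\otimes f_1)\big)(v)=f_0(v)\big(f_1(v)-f_1(hv)\big),
  \qquad
  \big(i_{Y_h}\,\varepsilon(f_0\otimes f_1)\big)(v)=f_0(v)\,df_1(v)(v-hv),
\]
and these disagree as soon as $f_1$ is not affine: for $h=-\id$ and $f_1=\|\cdot\|^2$ the left-hand side vanishes identically while the right-hand side equals $4\|v\|^2f_0(v)$. (Exact differences are only approximated to first order by derivatives; this is also why the comparison map $\Psi_{k,V}$ goes from the Koszul resolution \emph{to} the Bar resolution and not the other way.) What is true, and what you actually need, is that the \emph{composite} $\iota_h^*\circ\varepsilon$ --- precisely the map displayed in the theorem --- is a chain map into $\big(\Omega^\bullet(V^h),0\big)$: on $V^h$ one has $f\circ h=f$, so the twisted last face acts like the untwisted one and the standard HKR telescoping kills all boundaries. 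The chain-map verification must therefore be done for the composite, not for $\varepsilon$.

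Second, even after that repair, your proposal does not yet show that $\iota_h^*\circ\varepsilon$ is a \emph{quasi-isomorphism}: knowing via $\Psi_{k,V}$ that the twisted Hochschild complex and \eqref{eq:TwistedChainCpl} have the same homology, together with having \emph{some} chain map out of the former, does not imply that this particular chain map induces the isomorphism; likewise, postcomposing a chain map with the quasi-isomorphism of Proposition \ref{prop:homotopy} does not ``upgrade'' it. To close this, tensor $\Psi_{\bullet,V}$ with $\ltwist{\calC^\infty(V)}{h}$ to get a quasi-isomorphism $\id\hatotimes\Psi_{\bullet,V}\colon\big(\Omega^\bullet(V),i_{Y_h}\big)\to C_\bullet\big(\calC^\infty(V),\ltwist{\calC^\infty(V)}{h}\big)$ (it lifts the identity between two topologically projective resolutions, hence induces the canonical identification on homology), and then compute the composite on a form $\omega\in\Omega^k(V)$: evaluating the derivatives in the inner variables at the twisted diagonal gives
\[
  \big(\iota_h^*\circ\varepsilon\big)\circ\big(\id\hatotimes\Psi_{k,V}\big)(\omega)
  =(-1)^k\,k!\,\iota_h^*\omega ,
\]
a nonzero multiple in each degree of the quasi-isomorphism $\iota_h^*$ of Proposition \ref{prop:homotopy}. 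A two-out-of-three argument then yields that $\iota_h^*\circ\varepsilon$ is a quasi-isomorphism, which is the statement of the theorem; the paper hides this verification behind the word ``immediately'', but your proposal replaces it by an assertion that is false as stated.
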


We consider a finite subgroup $\Gamma$ of the orthogonal linear transformation group of $\R^d$.  Let $V\subset \R^d$ be an open ball around
the origin that is invariant with respect to the $\Gamma$ action on $\R^d$.  We can apply the quasi-isomorphism from
Section \ref{sec:group-manifold-case} to compute $HH_\bullet \big(\calC^\infty(V)\rtimes \Gamma)$ by the  homology of the complex
$ C^\Gamma_\bullet(\calC^\infty(V))$. Since $\Gamma$ is a finite group, the homology of  $ C^\Gamma_\bullet(\calC^\infty(V))$ is computed by 
 \[
\Big(\bigoplus_{\gamma\in \Gamma} H_\bullet \big(\calC^\infty (V), \ltwist{\calC^\infty(V)}{\gamma}\big)\Big)^\Gamma. 
 \]
 As a corollary to Theorem \ref{thm:twisted-hochschild-homology}, we thus obtain the following computation of the Hochschild homology of
 $\calC^\infty(V)\rtimes \Gamma$. 
 \begin{corollary}\label{cor:finitegroup}
   The Hochschild homology $HH_\bullet \big(\calC^\infty(V)\rtimes \Gamma)$ is naturally
   isomorphic to 
\[
  \left(\bigoplus_{\gamma \in \Gamma} \Omega^\bullet (V^\gamma) \right)^\Gamma,
\]
  where $\Gamma$ acts on the disjoint union  $\coprod_{\gamma \in \Gamma} V^\gamma$
  by $\gamma'(\gamma,x)=(\gamma' \gamma(\gamma')^{-1}, \gamma' x)$. 
\end{corollary}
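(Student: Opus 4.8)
The plan is to follow the strategy sketched in the paragraph preceding the statement: transport the computation through the equivariant Hochschild complex of Section~\ref{sec:group-manifold-case}, then exploit finiteness of $\Gamma$ to split everything into twisted pieces to which Theorem~\ref{thm:twisted-hochschild-homology} applies, and finally take invariants.

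First I would invoke the machinery of Section~\ref{sec:group-manifold-case}. Since $V$ is $\Gamma$-invariant, the convolution algebra $\calA(V/\Gamma)$ is precisely the crossed product $\calC^\infty(V)\rtimes\Gamma$, and Proposition~\ref{prop:qismequivariantcplx} together with Remark~\ref{rem:qismequivariantcplx} supplies a quasi-isomorphism
\[
  \widehat{\phantom{F}}: C_\bullet\big(\calC^\infty(V)\rtimes\Gamma\big)\longrightarrow C^\Gamma_\bullet\big(\calC^\infty(V)\big),
\]
where we use that $\calC^\infty(V)$ is commutative so its opposite algebra coincides with itself. Hence $HH_\bullet(\calC^\infty(V)\rtimes\Gamma)\cong H_\bullet\big(C^\Gamma_\bullet(\calC^\infty(V))\big)$, with $C^\Gamma_\bullet(\calC^\infty(V))=\calC^\infty(\Gamma,\calC^\infty(V)^{\otimes(\bullet+1)})^{\rm inv}$ carrying the twisted Hochschild differential $b_{\rm tw}$.

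Next I would use that $\Gamma$ is finite. Then $\calC^\infty(\Gamma,\calC^\infty(V)^{\otimes(\bullet+1)})$ is simply the direct sum $\bigoplus_{\gamma\in\Gamma}\calC^\infty(V)^{\otimes(\bullet+1)}$, and since $b_{\rm tw}$ acts on the value of a chain at each group element separately, the complex $\big(\calC^\infty(\Gamma,\calC^\infty(V)^{\otimes(\bullet+1)}),b_{\rm tw}\big)$ splits as a direct sum indexed by $\gamma\in\Gamma$. The $\gamma$-summand, by the explicit formula for $b_{\rm tw}$ (whose last face carries the twist $b_{q+1}^{\gamma^{-1}}$), is exactly the twisted Hochschild complex $C_\bullet(\calC^\infty(V),\ltwist{\calC^\infty(V)}{\gamma^{-1}})$. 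By Theorem~\ref{thm:twisted-hochschild-homology} its homology is naturally $\Omega^\bullet(V^{\gamma^{-1}})=\Omega^\bullet(V^\gamma)$, the identifying quasi-isomorphism being $f_0\otimes\cdots\otimes f_k\mapsto f_0|_{V^\gamma}\,df_1|_{V^\gamma}\wedge\cdots\wedge df_k|_{V^\gamma}$. Thus the homology of the full (non-invariant) complex is $\bigoplus_{\gamma\in\Gamma}\Omega^\bullet(V^\gamma)$.

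Finally I would restore the invariants. The subcomplex $C^\Gamma_\bullet(\calC^\infty(V))$ is the $\Gamma$-fixed part of the above for the adjoint-diagonal $\Gamma$-action, and because $|\Gamma|$ is invertible over $\R$ (or $\C$), the averaging idempotent $\tfrac{1}{|\Gamma|}\sum_{\gamma'\in\Gamma}\gamma'$ is a chain map, so taking $\Gamma$-invariants commutes with passage to homology. This yields $HH_\bullet(\calC^\infty(V)\rtimes\Gamma)\cong\big(\bigoplus_{\gamma\in\Gamma}\Omega^\bullet(V^\gamma)\big)^\Gamma$. The step I expect to be the main obstacle is verifying that, under the HKR identification above, the adjoint-diagonal action — which sends the $\gamma$-slot to the $\gamma'\gamma(\gamma')^{-1}$-slot and acts on the tensor factors diagonally via the action on $\calC^\infty(V)$ — corresponds exactly to the geometric action $\gamma'(\gamma,x)=(\gamma'\gamma(\gamma')^{-1},\gamma'x)$ on $\coprod_{\gamma}V^\gamma$. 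This reduces to checking that the diffeomorphism $V^\gamma\to V^{\gamma'\gamma(\gamma')^{-1}}$, $x\mapsto\gamma'x$, intertwines the pullback of forms with the diagonal action on chains, which is a direct naturality computation with the explicit HKR map.
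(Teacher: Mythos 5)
Your proposal is correct and follows essentially the same route as the paper: reduce to the equivariant Hochschild complex $C^\Gamma_\bullet(\calC^\infty(V))$ via Proposition \ref{prop:qismequivariantcplx} and Remark \ref{rem:qismequivariantcplx}, split it for finite $\Gamma$ into the $\gamma$-twisted Hochschild complexes, apply the twisted HKR Theorem \ref{thm:twisted-hochschild-homology}, and pass to $\Gamma$-invariants (the paper leaves the averaging and the matching of the adjoint-diagonal action with the geometric action on $\coprod_\gamma V^\gamma$ implicit, which you spell out). Your remark that the $\gamma$-slot actually carries the $\gamma^{-1}$-twist, harmless since $V^{\gamma^{-1}}=V^\gamma$, is a correct refinement of the paper's terser statement.
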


In the case of a smooth affine algebraic variety, Corollary \ref{cor:finitegroup} is proved
by \cite[Thm.~2.19]{BroDavNis}. We refer the reader to \cite{BryNis, FeiTsy,Wassermann, Ponge}
for related developments.

We end with a generalisation of Proposition \ref{prop:homotopy} which is a useful tool in our computations.
Observe that in the complex (\ref{eq:TwistedChainCpl})
\[
       \Omega^d(V)  \overset{i_{Y_h}}{\longrightarrow} \ldots 
       \overset{i_{Y_h}}{\longrightarrow}  \Omega^1(V)
       \overset{i_{Y_h}}{\longrightarrow}  \calC^\infty (V) \longrightarrow 0 ,
\]
the vector field $Y_h$ can be extended to be a more general linear vector field
$Y_H: \mathbb{R}^n\to T\mathbb{R}^d$ of the form $Y_H(v)=H(v) \in T_v \mathbb{R}^d$ where
$H: \mathbb{R}^d\to \mathbb{R}^d$ is a diagonalizable linear map. A construction similar to the homotopy operator $S$ in the proof of Proposition \ref{prop:homotopy} (see also \cite{Wassermann}) computes the homology of
$(\Omega^\bullet(V), i_{Y_H})$ to be $(\Omega^\bullet(V^H), 0)$ where $V^H=\ker(H)$. Furthermore, if $H:S\to \End(\mathbb{R}^d)$ is a smooth family of diagonalizable linear operators parametrized by a smooth manifold $S$, $H$ is called regular if $H$ satisfies the following properties:
\begin{enumerate}
\item the kernel $\ker(H):=\{\ker(H(s))\}_{s\in S}\subset S\times \mathbb{R}^d$ is a smooth subbundle of the trivial vector bundle $S\times \mathbb{R}^d$;
\item near every $s_0\in S$, there is a  local frame of $S\times \mathbb{R}^d$ on a neighborhood $U_{s_0}$ of $s_0$ in $S$ consisting of  $\xi_1, \cdots, \xi_d$ such that 
\begin{itemize}
\item the collection $\{\xi_1, \cdots, \xi_k\}$ is a local frame of the subbundle $\ker(H)$ on $U_{s_0}$,
\item for every $j=k+1,\cdots, d$, there is a smooth eigenfunction $\lambda_j(s)$ defined on $U_{s_0}$ satisfying $H(s)\xi_j(s)=\lambda_j(s)\xi_j(s)$ and $\lambda_j(s)\ne 0,\ \forall s\in U_{s_0}$. 
\end{itemize}
\end{enumerate}
The proof of Proposition \ref{prop:homotopy} generalizes to the following result.    
\begin{proposition}\label{prop:parametrizedkoszul} Let $H: S\to \End(\mathbb{R}^d)$ be a smooth family of diagonalizable linear operators parametrized by a smooth manifold $S$. Assume that $H$ is regular. Let $i_{\ker(H)}: \ker(H)\to S\times \mathbb{R}^d$ be the canonical embedding, and $\Omega^\bullet \big(\ker(H)\big)$ the restriction of $\calC^{\infty}(S, \Omega^\bullet(V))$ to $\ker(H)$ along $i_{\ker(H)}$. Then the restriction map $R_{\ker(H)}:\big(\calC^{\infty}(S, \Omega^\bullet(V)), i_{Y_H}\big)\to \Big(\Omega^\bullet \big( \ker(H)\big), 0\Big)$ is a quasi-isomorphism. 
\end{proposition}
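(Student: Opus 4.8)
The plan is to generalize the homotopy-operator argument of Proposition \ref{prop:homotopy} to the parametrized, several-eigenvalue situation, and to deduce the global statement from a local one on $S$ via the regularity hypothesis. Since both $\big(\calC^\infty(S,\Omega^\bullet(V)),i_{Y_H}\big)$ and $\big(\Omega^\bullet(\ker H),0\big)$ are complexes of sheaves of $\calC^\infty_S$-modules and $R_{\ker H}$ is a morphism of such sheaf complexes, being a quasi-isomorphism is local on $S$: it suffices to show that $R_{\ker H}$ is a homotopy equivalence over each neighborhood $U_{s_0}$ furnished by the regularity condition, since the homology sheaves are computed locally and, the sheaves being fine, this upgrades to the asserted statement exactly as in the proof of Theorem \ref{thm:hochschild-homology-global-sections}. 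So I fix $U := U_{s_0}$ with its local frame $\xi_1,\dots,\xi_d$, where $\xi_1,\dots,\xi_k$ frame $\ker H$ and $H(s)\xi_j(s)=\lambda_j(s)\xi_j(s)$ with $\lambda_j(s)\neq 0$ for $k<j\leq d$.

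First I would pass to the smoothly $s$-dependent linear fibre coordinates $y_1,\dots,y_d$ dual to the frame, so that $V^{H(s)}=\ker H(s)\cap V=\{y_{k+1}=\dots=y_d=0\}$ and the contraction field takes the diagonal form $Y_H=\sum_{j=k+1}^d \lambda_j(s)\,y_j\,\partial/\partial y_j$. In these coordinates the restriction map $R_{\ker H}$ becomes $\iota_H^*$, sending $f\,dy_I$ to $f|_{y_{>k}=0}\,dy_I$ when $I\subset\{1,\dots,k\}$ and to $0$ otherwise, so that its image is precisely $\Omega^\bullet(\ker H)$. The essential observation is that $i_{Y_H}$ is a sum of the mutually anticommuting single-direction contractions $i_{\lambda_j y_j\,\partial/\partial y_j}$, so that the Koszul complex $\big(\calC^\infty(U\times V),i_{Y_H}\big)$ factors over $\calC^\infty(U)$ as a completed tensor product of the trivial factors carried by the kernel directions $y_1,\dots,y_k$ (which contribute no differential) with the two-term factors $K_j:\ \calC^\infty\,dy_j\xrightarrow{\ \cdot\,\lambda_j y_j\ }\calC^\infty$ attached to each complementary eigendirection.

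The key step is to contract each $K_j$ by a homotopy that is smooth in the parameter $s$. Because $\lambda_j(s)$ is smooth and nowhere zero on $U$, Hadamard's lemma writes $f-f|_{y_j=0}=y_j\,g$ with $g$ smooth in $(s,y)$, and the operator $S_j(f):=\lambda_j^{-1}g\,dy_j$ for functions $f$, together with $S_j(\omega):=0$ for one-forms $\omega$, satisfies $S_j i_{Y_H}+i_{Y_H}S_j=\id-P_j$, where $P_j$ restricts to $\{y_j=0\}$ and re-includes constantly. This works uniformly for real or complex $\lambda_j$ and is the robust replacement for the flow-integral homotopy of Proposition \ref{prop:homotopy}, which would require control on the real parts of the $\lambda_j$. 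Assembling the $S_j$ over the complementary directions by the standard tensor-product homotopy formula yields a total homotopy $S$ with $S\,i_{Y_H}+i_{Y_H}\,S=\id-\pi_H^*\iota_H^*$, where $\pi_H^*$ is pullback along the fibrewise projection onto $\ker H$ along the complementary eigenspaces. This exhibits $\iota_H^*=R_{\ker H}$ as a homotopy equivalence over $U$ with homotopy inverse $\pi_H^*$, and the local statements glue to the claimed quasi-isomorphism over $S$.

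I expect the main obstacle to be the parametrized smoothness: one must verify that the eigenline decomposition, the coordinates $y_j$, the functions $\lambda_j^{-1}$, and hence the combined homotopy $S$ all depend smoothly on $s$ across $U$, which is exactly what the two clauses of the regularity hypothesis are designed to guarantee, and one must check that the tensor-product assembly of the $S_j$ is compatible with the completed bornological/Fr\'echet tensor products used throughout. A secondary point worth care is confirming that $R_{\ker H}$ is genuinely the restriction $\iota_H^*$ under the chosen trivialization, so that the homotopy identity indeed identifies the homology in each degree with $\Omega^\bullet(\ker H)$ equipped with the zero differential.
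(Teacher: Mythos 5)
Your argument is correct, but it takes a genuinely different route from the paper's. The paper offers no detailed proof at all: it simply declares that the proof of Proposition \ref{prop:homotopy} generalizes, i.e.\ the geometric homotopy $S\omega=\int_{-\infty}^0\varphi_t^*(d_W\omega)\,dt$ built from the flow of the vector field together with Cartan's magic formula. You replace this by a purely algebraic parametrized Koszul contraction: diagonalize $Y_H=\sum_{j>k}\lambda_j(s)\,y_j\,\partial/\partial y_j$ in the smooth eigenframe, factor the complex into two-term pieces, and contract each piece by the Hadamard-division homotopy $S_jf=\lambda_j^{-1}g\,dy_j$ where $f-f|_{y_j=0}=y_jg$. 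This buys real robustness: the flow integral converges only when the eigenvalues of the generator have positive real part (in Proposition \ref{prop:homotopy} this holds because $\operatorname{Re}(1-e^{\sqrt{-1}\theta})>0$ for $\theta\neq 0$), whereas in the paper's own application (Proposition \ref{prop:equivariant-koszul}) the eigenvalues $w_k\,a\big(w_k(t-\tfrac{j}{w})\big)$ are \emph{purely imaginary} at $t=\tfrac{j}{w}$, since $a(0)=2\pi\sqrt{-1}$; there the literal flow homotopy does not converge, so your remark about controlling real parts identifies an actual weakness in the paper's one-line "generalization", and your division homotopy is exactly what repairs it. Your sheaf-theoretic localization over $S$ is sound, and can even be streamlined: since the spectral projection onto $\ker H$ along the sum of the nonzero eigenbundles is canonical and smooth, the operator $P=\pi_H^*\iota_{\ker(H)}^*$ is globally defined and your local homotopies are $\calC^\infty(U)$-linear in the parameter, so a partition of unity glues them directly to a global $S$ with $Si_{Y_H}+i_{Y_H}S=\id-P$, bypassing the hypercohomology argument. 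One technical caution, which the paper's sketch shares rather than resolves: the Hadamard step requires the fibre domain to contain the segments $(y',ty_j,y'')$, and in the skewed $s$-dependent eigencoordinates an arbitrary ball $V$ need not be star-shaped in this sense; one should take $V=\R^d$, shrink to a frame-adapted convex neighborhood, or note that in the paper's applications the eigenframe is constant in $s$ and $V$ is invariant, so the issue does not arise there.
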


In a certain sense, the final result is  variant of the latter. To formulate it
recall that by an Euler-like vector field for an embedded smooth manifold  $S \hookrightarrow M$
one understands a vector field $Y:M\to TM$ such that $S$ is the zero set of $Y$ and such that
for each $f\in \calC^\infty (M)$ vanishing on $S$ the function $Yf -f$ vanishes to second order on $S$;  
cf.~\cite[Def.~1.1]{SadHigEulerLike}.

\begin{proposition}\label{prop:parametrizedkoszuleulervectorfield}
  Let $M$ be a smooth manifold of dimension $d$, $S\hookrightarrow M$ an embedded submanifold and
  $Y :M\to TM$a smooth vector field which is Euler like with respect to $S$. Then the complex
  \begin{equation}\label{eq:parametrizedkoszuleulervectorfield}
     \Omega^d(M)  \overset{i_{Y}}{\longrightarrow} \ldots 
     \overset{i_{Y}}{\longrightarrow}  \Omega^1(M)
     \overset{i_{Y}}{\longrightarrow}  \calC^\infty (M) \longrightarrow \calC^\infty (S) \longrightarrow 0 ,
  \end{equation}
  is exact and will be  called the \emph{parametrized Koszul resolution} of $\calC^\infty (S)$.
\end{proposition}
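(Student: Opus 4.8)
The plan is to prove exactness \emph{locally} and then globalize by softness. The sheaves $\Omega^k_M$, $\calC^\infty_M$ and the push-forward $\iota_*\calC^\infty_S$ along $\iota\colon S\hookrightarrow M$ are all fine, hence soft, sheaves of $\calC^\infty_M$-modules, and the contractions $i_Y$ as well as the restriction $\calC^\infty_M\to\iota_*\calC^\infty_S$ are morphisms of sheaves. Since a bounded complex of soft sheaves that is exact on every stalk has exact spaces of global sections --- split it into short exact sequences and observe that softness passes to the occurring kernel sheaves --- it suffices to check exactness of the associated sheaf complex \eqref{eq:parametrizedkoszuleulervectorfield} at each stalk. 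I would therefore fix $p\in M$ and distinguish the cases $p\notin S$ and $p\in S$.

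For $p\notin S$ one has $Y(p)\neq 0$, so on a neighborhood of $p$ there is a $1$-form $\alpha$ with $\alpha(Y)=1$. The wedge operator $h:=\alpha\wedge(-)$ is then a contracting homotopy for the contraction, since the anti-derivation property gives $i_Y h+h\,i_Y=\alpha(Y)\cdot\id=\id$ on $\Omega^\bullet$. Hence the Koszul part of the complex is acyclic and $i_Y\colon\Omega^1\to\calC^\infty$ is surjective near $p$; as the stalk of $\iota_*\calC^\infty_S$ at $p$ vanishes, \eqref{eq:parametrizedkoszuleulervectorfield} is exact at $p$.

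The heart of the matter is the stalk at a point $p\in S$, and here I would reduce the Euler-like field to its linear normal form. By the structure theory of Euler-like vector fields (cf.\ \cite{SadHigEulerLike}) an Euler-like $Y$ is linearizable: there is an open neighborhood of $S$ together with a tubular-neighborhood diffeomorphism onto an open subset of the normal bundle $NS\to S$ under which $Y$ corresponds to the fibrewise Euler (radial) vector field $\mathcal{E}$ of $NS$. In a local trivialization $NS\cong U\times\R^c$ over a chart $U\subset S$, with fibre coordinates $x=(x_1,\dots,x_c)$ and $c=\operatorname{codim}S$, the field $\mathcal{E}$ is the constant family $\sum_{i=1}^c x_i\,\partial_{x_i}$, so that $i_Y$ is carried to the contraction $i_{Y_H}$ associated with the constant diagonalizable fibre endomorphism $H=\id_{\R^c}$, whose kernel bundle over $U$ is the zero section, i.e.\ $S$ itself. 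Applying Proposition \ref{prop:parametrizedkoszul} with parameter manifold $U$ and this $H$ then identifies the homology of the fibre Koszul complex $(\Omega^\bullet,i_{\mathcal{E}})$ with $\calC^\infty(S)$ placed in degree $0$, the degree-zero identification being exactly the restriction map of \eqref{eq:parametrizedkoszuleulervectorfield} to $S$. This gives exactness of the augmented stalk complex at $p\in S$, and together with the previous paragraph and the softness reduction establishes the claim. An entirely parallel, more hands-on route would generalize the homotopy from the proof of Proposition \ref{prop:homotopy}: using the backward flow $\varphi_t$ of $Y$, which retracts a neighborhood of $S$ onto $S$ as $t\to-\infty$, one integrates $\int_{-\infty}^{0}\varphi_t^*(\,\cdot\,)\,dt$ and invokes Cartan's formula $\calL_Y=d\,i_Y+i_Y\,d$ to build a homotopy witnessing that the complex retracts onto its restriction to $S$. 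The main obstacle in either approach is precisely this linearization/retraction step: one must check that the Euler-like hypothesis genuinely produces a smooth tubular retraction intertwining $i_Y$ with the normal Euler contraction, and that the resulting local normal forms patch compatibly with the off-$S$ homotopy so that the global softness argument goes through.
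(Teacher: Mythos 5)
Your softness reduction and the off-$S$ contracting homotopy $h=\alpha\wedge(-)$ are fine, but the stalk computation at points of $S$ — which you correctly identify as the heart of the matter — contains a genuine error. After linearizing $Y$ to the fiberwise Euler field $\mathcal{E}$ on $NS\cong U\times\R^c$, you silently replace the complex appearing in the statement, namely the \emph{full} de Rham complex $\big(\Omega^\bullet(U\times\R^c), i_{\mathcal{E}}\big)$, by the \emph{fiberwise} Koszul complex $\big(\calC^\infty(U,\Omega^\bullet(\R^c)),i_{\mathcal{E}}\big)$ before applying Proposition \ref{prop:parametrizedkoszul}. These two complexes are not quasi-isomorphic when $\dim S\geq 1$. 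The full complex also contains forms with legs along $S$: for a coordinate $u$ on $U$, the $1$-form $du$ satisfies $i_{\mathcal{E}}\,du=du(\mathcal{E})=0$ identically (since $\mathcal{E}$ is vertical), yet $du$ cannot lie in the image of $i_{\mathcal{E}}$, because every contraction $i_{\mathcal{E}}\omega$ vanishes at each point of $S$ (where $\mathcal{E}=0$) while $du|_S\neq 0$. Concretely, for $M=\R^2$ with coordinates $(u,x)$, $S=\{x=0\}$ and $Y=x\,\partial_x$ (which is Euler-like in the sense of the definition), the class of $du$ is a nonzero degree-one homology class, so the complex in the statement is \emph{not} exact at $\Omega^k(M)$ for $1\leq k\leq\dim S$. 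Decomposing forms by their number of $dU$-legs, exactly as in the proof of Proposition \ref{prop:homotopy}, shows that the homology of the full complex is $\Omega^\bullet(S)$ in all degrees, not $\calC^\infty(S)$ concentrated in degree zero. Your alternative flow-based route has the same defect: Cartan's formula produces the homotopy identity $\id-\pi^*\iota^*$, i.e.\ a retraction onto $\pi^*\Omega^\bullet(S)$, all degrees, which disproves rather than proves exactness.

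What this reveals is that the proposition, read literally with $Y$ a vector field (so $E=TM$, of rank $d$) and $\Omega^k(M)$ the honest de Rham forms, holds only when $\operatorname{codim} S=d$, i.e.\ when $S$ is discrete; no proof can close your gap under that reading. The version the paper actually needs — the one its one-line proof via the classical Koszul resolution of Wassermann refers to, and the one invoked in the proof of Proposition \ref{prop:manifold-one-isotropy-type} — is the bundle-theoretic statement: $Y$ is a section of a vector bundle $E\to M$ with $\rk E=\operatorname{codim} S$, vanishing non-degenerately on $S$, and the complex consists of sections of $\bigwedge^\bullet E^*$ (in the application, $E=\operatorname{pr}_2^*TM$ over $G\times M$ and the forms are the \emph{relative} forms $\Omega^\bullet_{G\ltimes M\to G}$, not forms on the total space). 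In that setting no tangential directions of $S$ enter, and your strategy — localize by softness, linearize the Euler-like section, apply Proposition \ref{prop:parametrizedkoszul} with trivial kernel bundle — does go through. So the fix is not a better homotopy but a restatement: formulate the proposition for bundle sections (equivalently, relative forms along a submersion with $Y$ vertical), and only then is your reduction to the fiberwise Koszul complex legitimate.
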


\begin{proof}
  The claim is an immediate consequence of the Koszul resolution as for example stated
  in \cite[]{Wassermann}.
\end{proof}


\bibliographystyle{amsalpha}    
\bibliography{ncgeometry&groupoids}

\end{document}